\author{Derek Krepski}
\title{Pre-quantization of the Moduli Space of Flat $G$-Bundles}
\begin{document}



\newcommand{\N}{\mathbb{N}}
\newcommand{\Z}{\mathbb{Z}}
\newcommand{\Zp}{\Z_p}
\newcommand{\Zn}{\Z_n}
\newcommand{\R}{\mathbb{R}}
\newcommand{\Q}{\mathbb{Q}}
\newcommand{\RP}{\R P}
\newcommand{\half}{\frac{1}{2}}
\newcommand{\Ztwo}{\Z_2}
\newcommand{\toby}[1]{\stackrel{#1}{\longrightarrow}}
\newcommand{\Lg}{L\mathfrak{g}}
\newcommand{\Lghat}{\widehat{\Lg}}
\newcommand{\LGhat}{\widehat{LG}}
\newcommand{\Hol}{\mathrm{Hol}}
\newcommand{\pr}{\mathrm{pr}}
\newcommand{\id}{\mathrm{id}}

\newcommand{\C}{\mathcal{C}}
\newcommand{\SU}{\mathrm{SU}}
\newcommand{\PU}{\mathrm{PU}}
\newcommand{\SO}{\mathrm{SO}}
\newcommand{\nobtwomat}[4]{ \begin{array}{cc} #1 & #2 \\
                                             #3 & #4 \\ \end{array} }

\newarrow{Equals}{=}{=}{=}{=}{=}


\theoremstyle{plain}
\newtheorem{thm}{Theorem}[section]
\newtheorem{lemma}[thm]{Lemma}
\newtheorem{fact}[thm]{Fact}
\newtheorem{prop}[thm]{Proposition}
\newtheorem{cor}[thm]{Corollary}
\newtheorem*{thmnonumber}{Theorem}
\newtheorem*{propnonumber}{Proposition}
\newtheorem{appthm}{Theorem}[chapter]

\theoremstyle{definition}
\newtheorem{defn}[thm]{Definition}
\newtheorem{eg}[thm]{Example}

\theoremstyle{remark}
\newtheorem{remark}[thm]{Remark}
\newtheorem{case}{Case}

\newtheorem*{mainthmone}{\emph{\textbf{Theorem \ref{thm:coho-calc-equiv}}}}
\newtheorem*{mainthmtwo}{\emph{\textbf{Theorem \ref{thm:preqofPM}}}}

\begin{preliminary}

\maketitle



\begin{abstract}


This thesis studies the pre-quantization of quasi-Hamiltonian group actions from a cohomological viewpoint.  The compatibility of pre-quantization with symplectic reduction and the fusion product are established, and are used to understand the necessary and sufficient conditions for the pre-quantization of $M_G(\Sigma)$, the moduli space of flat $G$-bundles over a closed surface $\Sigma$.

For a simply connected, compact, simple Lie group $G$, $M_G(\Sigma)$ is known to be pre-quantizable at integer levels.  For non-simply connected $G$, however, integrality of the level is not sufficient for pre-quantization, and this thesis determines the obstruction---namely a certain cohomology class in $H^3(G\times G;\Z)$---that places further restrictions on the underlying level. The levels that admit a pre-quantization of the moduli space are determined explicitly for all non-simply connected, compact, simple Lie groups $G$. Partial results are obtained for the case of a surface $\Sigma$ with marked points.

Also, it is shown that via the bijective correspondence between quasi-Hamiltonian group actions and Hamiltonian loop group actions, the corresponding notions of pre-quantization coincide. 

\end{abstract}



\begin{dedication}
\center{
I dedicate this work to my parents, Walter and Cecilia.  }
\end{dedication}
\newpage


\begin{acknowledgements}

Having written the last word of my thesis, I would like to take a moment to express my gratitude to those who have shaped my experience in graduate school.  

First, I thank my advisors, Eckhard Meinrenken and Paul Selick.  I am grateful for their guidance and support, in all its forms.  To have been able to work so closely with such talented, motivating and enthusiastic mathematicians has been a great pleasure.  

I would also like to thank Lisa Jeffrey and Velimir Jurdjevic for the many helpful  conversations about mathematics, and for their insights into academic life.   Thanks as well to Reyer Sjamaar for his feedback and comments on this work.

My experience as a graduate student would not have been possible without generous financial support from various sources.  In addition to my advisors,  I thank the Natural Sciences and Engineering Research Council of Canada, the Ontario Graduate Scholarship Program, the University of Toronto, and Prof. George Elliott. 

Many thanks as well to the always welcoming and genial Ida Bulat.  Her commitment to the Department of Mathematics and its graduate students is admirable. 

Notwithstanding our many conversations about mathematics, I am also thankful to have some very good friends who helped distract me along the way.  Thank you Gregson, Gadi, Brian, and James for your friendship.

Finally, I would like to thank Heather, whose love and encouragement are truly uplifting.  Every day I consider myself lucky to have you by my side.
\end{acknowledgements}

\tableofcontents



\end{preliminary}



\chapter{Setting the Stage}

\section{Introduction \& Summary of Results}

\subsection*{Geometric quantization}

Pioneered by Souriau \cite{Sou}, Kostant \cite{Kos}, and Kirillov \cite{Ki} in the  1960's, \textsl{geometric quantization} is  a construction for Hamiltonian group actions that was originally aimed at understanding the relationship between classical and quantum physics.  Recall that Hamiltonian group actions (and more generally, symplectic geometry) provide a natural mathematical framework to describe classical physics, whereas the story of quantum physics is told in the language of Hilbert spaces and operator algebras.  Intuitively, quantization is a kind of inverse process to ``taking the classical limit''---realizing a classical theory as a limiting case of a given quantum theory.  Geometric quantization provides a (multi-step) recipe for this inverse process, and this thesis is concerned with the first step in this recipe, known as \textsl{pre-quantization}.  More precisely, this thesis studies pre-quantization in the realm of \textsl{quasi-Hamiltonian} group actions.  

Developed in 1998 by Alekseev, Malkin, and Meinrenken \cite{AMM}, the theory of quasi-Hamiltonian group actions provides an equivalent yet finite dimensional treatment of Hamiltonian loop group actions with proper moment map (see Chapter \ref{chapter:loopgroups}).   Hamiltonian loop group actions behave like ordinary Hamiltonian actions by a compact Lie group $G$, except of course that the underlying Lie group (i.e. the loop group) and the topological space on which it acts are infinite dimensional.  Deciding to work in the framework of quasi-Hamiltonian actions sidesteps any infinite dimensional quandaries that may arise, although it may require a small investment of adjusting one's intuition. 

There is a similarity between Hamiltonian group actions and their quasi-Hamiltonian counterparts that is noteworthy for the purposes of investigating pre-quantization.  Analogous to the way in which a Hamiltonian $G$-action on a symplectic manifold $(X,\omega)$ determines a $G$-equivariant cohomology class $[\omega_G]$  in $H^2_G(X;\R)$, a quasi-Hamiltonian $G$-action on a manifold $M$ determines a \emph{relative} $G$-equivariant cohomology class $ [(\omega,\eta_G)]$ in $H^3_G(\phi;\R)$, where $\phi:M\to G$ is the so-called \textsl{group-valued moment map} (see Chapter \ref{chapter:quasi}).  This similarity helps motivate the definition of pre-quantization in the quasi-Hamiltonian setting.

A pre-quantization of a quasi-Hamiltonian $G$-action on $M$ is defined to be an \textsl{integral lift}---a pre-image via the coefficient homomorphism $H_G^3(\phi;\Z) \to H^3_G(\phi;\R)$---of the distinguished relative cohomology class $[(\omega,\eta_G)] \in H^3_G(\phi;\R)$.   This is analogous to the Hamiltonian setting, where a pre-quantization of a Hamiltonian $G$-action on $X$ may be defined as an integral lift of $[\omega_G] \in H^2(X;\R)$ (see Section \ref{sec:forsymplectics}).  (The special case where $G$ is the trivial group may be used to define pre-quantization of symplectic manifolds.) In either setting, the question of whether a given $G$-action admits a pre-quantization is a topological one, and describing conditions for which such a pre-quantization exists is one of the main objectives of this thesis.
More specifically, this thesis aims to describe the obstructions to pre-quantization for a particular quasi-Hamiltonian $G$-space related to the moduli space of flat $G$-bundles over a closed orientable surface $\Sigma$, where $G$ is a simple compact Lie group.

\subsection*{The moduli space of flat $G$-bundles}

The moduli space  of flat $G$-bundles over a closed orientable surface $\Sigma$, denoted $M_G(\Sigma)$, is an intensely studied object in symplectic geometry, which appears in other areas of mathematics as well.  In algebraic geometry, for example, it is the space of semi-stable holomorphic vector bundles (of prescribed rank and degree) over $\Sigma$ \cite{NS}; and in topological quantum field theory, it is the phase space of Chern-Simons gauge theory on a 3-manifold $\Sigma \times \mathbb{R}$ \cite{Witten}.  The interest in $M_G(\Sigma)$ from the symplectic geometry viewpoint stems from the work of  Atiyah and Bott \cite{AB} in the 1980's, who showed that the moduli space $M_G(\Sigma)$ carries a natural symplectic structure.  

Perhaps the simplest description of the moduli space is as the (usually singular) orbit space $M_G(\Sigma)=\mathrm{Hom}(\pi_1(\Sigma),G)/G$, 
where $\mathrm{Hom}(\pi_1(\Sigma),G)$ is the representation space of the fundamental group $\pi_1(\Sigma)$ on which $G$ acts by  conjugation.

Notice that if $\Sigma$  has genus $g$, a choice of generators for $\pi_1(\Sigma)$ determines an identification 
$$
M_G(\Sigma)\cong \Big\{(a_1, b_1, \ldots, a_g, b_g) \in G^{2g} \, \Big| \,  \prod_{i=1}^g a_ib_ia_i^{-1}b_i^{-1}=1 \Big\} \big/ G
$$
where $G$ acts diagonally  by conjugation on each factor of $G^{2g}$.  Equivalently, if $\phi:G^{2g}\to G$ denotes the product of commutators map, then $M_G(\Sigma)\cong \phi^{-1}(1)/G$, the orbit space of the identity level set $\phi^{-1}(1)$. This  exhibits $M_G(\Sigma)$ as a \textsl{symplectic quotient} of a quasi-Hamiltonian $G$-space, namely $G^{2g}$ (see Chapter \ref{chapter:quasi}).  

Analogous to Meyer-Marsden-Weinstein reduction for Hamiltonian group actions, the orbit space $\phi^{-1}(1)/G$, known as the symplectic quotient,  inherits a symplectic structure from the quasi-Hamiltonian $G$-space $M$ with group-valued moment map $\phi:M\to G$.  Furthermore, as shown in Chapter \ref{chapter:prequantization} (Proposition \ref{prop:qeqpreq=reducedpreq}), a pre-quantization of $M$ induces a pre-quantization of $\phi^{-1}(1)/G$; therefore, one way to analyze the pre-quantization of $M_G(\Sigma)$ is to study its ``parent'' quasi-Hamiltonian $G$-space $G^{2g}$.

One of the significant achievements of Alekseev, Malkin, and Meinrenken's work \cite{AMM} is that the symplectic structure on $M_G(\Sigma)$ coming from the above description as a symplectic quotient of a quasi-Hamiltonian $G$-space, coincides with the symplectic structure given by Atiyah and Bott in \cite{AB}.  In either case, this symplectic structure depends on a choice of \textsl{level} $l>0$, which specifies an inner product on the Lie algebra of $G$ (see Chapter \ref{chapter:quasi}).  The main result of this thesis shows that the quasi-Hamiltonian $G$-space $G^{2g}$ admits a pre-quantization if and only if the underlying level $l$ is a multiple of an integer $l_0(G)$ that depends (only) on the Lie group $G$.  Moreover, $l_0(G)$ is determined for each compact simple Lie group $G$.  

For Lie groups $G$ that are simply connected, it is easy to see that $l_0(G)=1$ (see Remark \ref{remark:simplyconnectedG}).  If $G$ is not simply connected, however, the situation is more complicated; therefore, the main accomplishment of this thesis is the determination of $l_0(G)$ for non-simply connected $G$.

\subsection*{Main result}

The main result of this thesis is the following Theorem from Chapter \ref{chapter:prequantization}.  

\begin{mainthmone} 
Let $G$ be a non-simply connected compact simple
Lie group with universal covering group $\tilde{G}$.  The quasi-Hamiltonian $\tilde{G}$-space $M_G(\Sigma_1^g,b)=G^{2g}$ admits a pre-quantization if and only if the underlying level $l=ml_0(G)$ for some $m\in \N$, where $l_0(G)$ is given in Table \ref{table:main}.
\end{mainthmone}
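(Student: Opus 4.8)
The plan is to compute the obstruction to lifting the distinguished relative equivariant class $[(\omega,\eta_{\tilde G})] \in H^3_{\tilde G}(\phi;\R)$ to an integral class in $H^3_{\tilde G}(\phi;\Z)$, where $\phi:G^{2g}\to \tilde G$ is the product-of-commutators map lifted to the universal cover. By the compatibility results established earlier in the thesis (in particular the fact that pre-quantization is preserved under fusion products and that the relevant equivariant cohomology can be analyzed via the non-equivariant picture once one passes to the right model), one reduces to understanding a single obstruction class living in $H^3(G\times G;\Z)$---the class mentioned in the abstract---and tracking how it assembles over the $g$-fold fusion product $G^{2g} = (G\times G)^{\times_{\mathrm{fus}} g}$. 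So the first step is to set up this reduction carefully: express $[(\omega,\eta_{\tilde G})]$ at level $l$ in terms of the basic generator, identify the integral lattice in $H^3_{\tilde G}(\phi;\R)$ via the long exact sequence of the pair, and isolate the component of the obstruction that is not automatically killed by integrality of $l$.

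The second step is the genuinely group-theoretic heart of the argument. For the double $D(\tilde G) = \tilde G\times\tilde G$ with its conjugation action and moment map the commutator, the relevant obstruction is governed by the behavior of the level-$l$ class under the covering $\tilde G \to G$, i.e.\ by how $l$ times the basic inner product interacts with $\pi_1(G) = Z \subset Z(\tilde G)$. Concretely I expect the obstruction to be detected by a bilinear (or quadratic) form on $Z$ valued in $\Q/\Z$, built from the basic inner product restricted to the coweight lattice, and $l_0(G)$ emerges as the smallest positive integer clearing the denominators of that form. This is exactly the kind of data that appears in the classification of Chern-Simons theories for non-simply connected groups, so the expected values of $l_0(G)$ in Table~\ref{table:main} should match the known ``anomaly'' thresholds: $l_0$ depends on $|Z|$, on the order of torsion elements, and on parity phenomena (e.g.\ the familiar factor of $2$ for $\mathrm{SO}(n)$, and the level-$1$ exceptions where the form happens to be integral).

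The third step is to carry out the case-by-case computation: run through each non-simply connected compact simple $G$ --- the quotients of $\mathrm{SU}(n)$, $\mathrm{Spin}(n)$, $\mathrm{Sp}(n)$, $E_6$, $E_7$ by subgroups of the center --- evaluate the $\Q/\Z$-valued form on generators of $Z$, and read off $l_0(G)$. For each $G$ one must check both directions: that $l = m\,l_0(G)$ makes the obstruction vanish (producing an explicit integral lift), and that any smaller multiple leaves a nonzero class. The genus enters only mildly, since fusing $g$ copies of the double multiplies the relevant form by $g$ on a lattice where the obstruction already has order dividing $l_0(G)$; one should confirm that the threshold is genus-independent (as the statement asserts), which follows because $l_0(G)$ is defined so that the form vanishes identically mod $1$, not merely on a single element.

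The main obstacle I anticipate is the first step --- pinning down the precise integral lattice in the relative equivariant cohomology $H^3_{\tilde G}(\phi;\Z)$ and showing that the obstruction to lifting is faithfully captured by the finite $\Q/\Z$-valued form, with no extra torsion contributions slipping in from the equivariant or relative parts of the long exact sequence. Once the problem is correctly linearized, the case analysis should be a finite, if delicate, computation with root/weight lattice data.
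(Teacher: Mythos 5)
Your first step---reducing pre-quantization of $G^{2g}$ to the vanishing of an obstruction in $H^3(G\times G;\Z)$---is exactly the paper's route: since $\tilde{G}$ is simply connected the equivariant problem collapses to the non-equivariant one (Propositions \ref{lemma:simplyG2} and \ref{prop:eq-pqispq}), and since $H^2(G^{2g};\R)=0$ the long exact sequence of the mapping cone shows a level-$l$ pre-quantization exists iff $l\in\ker\tilde\phi^*\subset H^3(\tilde{G};\Z)\cong\Z$ (Proposition \ref{prop:restateprob}). The genus-independence is also handled correctly in spirit, though your ``the form gets multiplied by $g$'' heuristic is off: the paper's argument is simply that the generator $z_3$ of $H^3(\tilde G;\Z)$ is primitive, so $\ker\tilde\phi\subset\ker\tilde\phi_g$, and restriction to the first two factors gives the reverse inclusion; nothing is multiplied by $g$.

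The genuine gap is your second step. You assert, as an expectation by analogy with Chern--Simons theory, that the torsion class $\tilde\phi^*(l)$ is computed by a $\Q/\Z$-valued bilinear/quadratic form on $Z=\pi_1(G)$ obtained from the basic inner product on the coweight lattice, and that $l_0(G)$ is the denominator-clearing threshold of that form. Nothing in your proposal proves this identification, and it is not a routine verification: the thesis itself remarks that the equality of $l_0(G)$ with Toledano Laredo's lattice-theoretically defined integer $l_b(G)$ --- which is precisely what your form would compute --- ``is not yet understood.'' The paper instead computes $\tilde\phi^*$ head-on: it factors the commutator map through $\Delta$, $T$, $c$, and $\mu$, evaluates $\phi^*$ on $H^3(-;\Z_p)$ for each prime $p$ dividing $|Z|$ using the known Hopf-algebra structure of $H^*(G;\Z_p)$ (Theorems \ref{thm:BB}--\ref{thm:PEseven}), and assembles the integral answer with the Bockstein spectral sequence. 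Even this is not enough for $G=PU(2)\cong SO(3)$, where the algebra leaves a sign ambiguity that is resolved only by a homotopy-theoretic argument involving the Samelson product, the Whitehead product $[\iota,\iota]=2\eta$, and Wu's computation of $\pi_3(\Sigma\RP^2)$ (Proposition \ref{prop:essential}); your lattice-theoretic form gives no obvious handle on that $2$-torsion ambiguity. So while your predicted values of $l_0(G)$ would come out right, the bridge from the commutator-map obstruction to the $\Q/\Z$-valued form is a missing theorem, and as written the proposal does not constitute a proof.
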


\setcounter{chapter}{5}

\begin{table}[!h] 
\centering 
\begin{tabular}{|c||c|c|c|c|c|c|c|c|}
\hline
\multirow{2}{*}{$G$} & $PU(n)$ & $SU(n)/\mathbb{Z}_k$ & $PSp(n)$ & $SO(n)$ & $PO(2n)$ & $Ss(4n)$ & $PE_6$ & $PE_7$ \\
  & $n\geq 2$  &$n\geq 2$ &$n\geq 1$ &$n\geq 7$ &$n\geq 4$ &$n\geq 2$ & &\\
\hline
\multirow{2}{*}{$l_0(G)$} & \multirow{2}{*}{$n$} & \multirow{2}{*}{$\mathrm{ord}_k(\frac{n}{k})$} & 1, $n$ even & \multirow{2}{*}{1} & 2, $n$ even & 1, $n$ even & \multirow{2}{*}{3} & \multirow{2}{*}{2} \\
 & & & 2, $n$ odd & & 4, $n$ odd & 2, $n$ odd &  &    \\
\hline
\end{tabular}

\caption{\small{The integer $l_0(G)$. \underbar{Notation}: $\mathrm{ord}_k(x)$ denotes the order of $x$ mod $k$ in $\Z_k$.}}

\end{table}
\setcounter{chapter}{1}

Note that by the Cartan-Killing classification of compact simple Lie groups, all possible non-simply connected, compact simple Lie groups appear in Table \ref{table:main}.

As explained in Chapter \ref{chapter:quasi}, the quasi-Hamiltonian $G$-space $G^{2g}$ may be considered as the moduli space $M_G(\Sigma_1^g,b)$ of based flat $G$-bundles over a surface $\Sigma_1^g$ of genus $g$ with one boundary component.  The case of a surface with $r>1$ boundary components requires some extra care, and is discussed in Section \ref{sec:punconj}. 

An interesting curiosity is that Table \ref{table:main} appears in \cite{TL}, in a different context.  In \cite{TL}, Toledano-Laredo classifies irreducible positive energy representations of loop groups $LG$, by using an integer denoted $l_b(G)$ which happens to equal $l_0(G)$ for each $G$.  The reason for the coincidence is not yet understood.

\subsection*{Other results}

Leading up to the main result in Theorem  \ref{thm:coho-calc-equiv} are a pair of Propositions that describe how pre-quantization is compatible with symplectic quotients (Proposition \ref{prop:qeqpreq=reducedpreq}, as mentioned above), and the fusion product (Proposition \ref{prop:fusion}).  As reviewed in Chapter \ref{chapter:quasi}, the Cartesian product of two quasi-Hamiltonian $G$-spaces $M_1$ and $M_2$ is naturally a quasi-Hamiltonian $G$-space, called the \textsl{fusion product} and is denoted $M_1\circledast M_2$.  Proposition \ref{prop:fusion} shows that a pre-quantization of each factor $M_i$  induces a pre-quantization of the fusion product $M_1\circledast M_2$.  

This last result shows that it suffices to consider the pre-quantization of a fusion product ``factor by factor.''  In particular, $G^{2g}$ is the $g$-fold fusion product of the \textsl{double} $\mathbf{D}(G):=G\times G$ (see Example \ref{eg:double}), and hence it suffices to consider the case of genus $g=1$.  

As indicated earlier, the theory of quasi-Hamiltonian group actions is equivalent to Hamiltonian loop group actions with proper moment map.  This equivalence is reviewed in Chapter \ref{chapter:loopgroups}, and it is shown that, under this equivalence, the two notions of pre-quantization coincide. 
 
\section{Comments for topologists}\label{sec:fortopologists}

Since the pre-quantization of a quasi-Hamiltonian group action is defined as an integral lift of a certain relative cohomology class, it is not surprising that the results of this thesis have a strong topological flavour.  In particular, the main result that determines the obstruction to the existence of a pre-quantization of $G^{2g}$ amounts to computing the induced map $\tilde\phi^*:H^3(\tilde{G};\Z) \to H^3(G\times G;\Z)$, where $\tilde\phi:G\times G\to \tilde{G}$ is the canonical lift of $\phi:G\times G\to G$, the commutator map in the simple compact Lie group $G$, to the universal covering group $\tilde{G}$.  
The motivation for wanting to compute this map comes from symplectic geometry, where the result is cast as a condition on the underlying level of the quasi-Hamiltonian $G$-space $G^{2g}$.  

Using the Cartan-Killing classification of compact simple Lie groups, and known results regarding their cohomology (reprinted here in the Appendix), the determination of $l_0(G)$  becomes an algebra computation.  For most Lie groups, the calculation goes through without any real surprises.   However, for $G=PU(2)\cong SO(3)$ a more delicate analysis is required since the algebraic approach leaves an ambiguity.  

The resolution of this ambiguity uses more subtle techniques from homotopy theory.  Using the fact that $SO(3)$ is homeomorphic to $\mathbb{R}P^3$, the obstruction turns up in the suspension of the projective plane $\Sigma \mathbb{R}P^2$.  The resolution of this algebraic ambiguity uses some results regarding the homotopy group $\pi_3(\Sigma\mathbb{R}P^2)$, described by Wu \cite{W}.

\section{Comments for symplectic geometers} \label{sec:forsymplectics}

In this thesis, pre-quantization is defined as an integral lift of a certain cohomology class.  As illustrated by the calculations of Chapter \ref{chapter:calc}, this point of view helps cast the obstruction to the existence of a pre-quantization in terms of an algebraic problem.  Although very useful, this point of view pushes the differential geometric picture aside.

Recall that in the framework of Hamiltonian $G$-actions on a symplectic manifold $(X,\omega)$, a pre-quantization may also be defined as a $G$-equivariant principal $U(1)$-bundle whose equivariant curvature class is $[\omega_G]$ in $H^2_G(X;\R)$.  The classification of $G$-equivariant principal $U(1)$-bundles over a space $X$ by $H^2_G(X;\Z)$ helps to show that the two approaches (i.e. integral lifts of $[\omega_G]$ or equivariant $U(1)$-bundles with equivariant curvature $\omega_G$) are equivalent \cite{GGK}.  

In the quasi-Hamiltonian setting, there is an analogous bundle-theoretic approach to pre-quantization due to Shahbazi \cite{Sh}.  What is required is a geometric understanding of the relative cohomology group $H^3_G(\phi;\Z)$ (where $\phi$ is a smooth map), which is shown to classify \textsl{relative gerbes} \cite{Sh2}.  Pre-quantization may also be studied in the language of \textsl{pre-quasi-symplectic groupoids}---see \cite{LX} for such a treatment. 

Although this will not be discussed in this work, one may continue beyond pre-quantization and study the quantization of quasi-Hamiltonian group actions.  In this context, quantization has only recently been defined using twisted equivariant K-theory (see \cite{AM} and \cite{M-quant}).

\section{Outline of thesis}

The contents of this thesis are organized into two parts. The first part reviews some of the techniques and tools used from homotopy theory, and collects some facts about Lie groups that appear throughout.  The second part is the main body of the thesis, introducing the setting of quasi-Hamiltonian group actions, and containing the study of pre-quantization in the quasi-Hamiltonian framework.  

The following is a Chapter-by-Chapter summary/commentary of the contents of this thesis.
\\

\noindent\textbf{Chapter 2}.  This chapter reviews the homotopy theoretic methods that are used in this work.  It assumes some familiarity with algebraic topology, and principal bundles; however, it is meant to be accessible and does not assume any prior experience with homotopy theory.  References are given should the reader desire further study. 
\\

\noindent\textbf{Chapter 3}.  Various facts about Lie groups are collected here.  Besides being a convenient reference, and establishing notation that is used later on, this chapter discusses $H^3(G)$ which figures prominently throughout this work.  It also reviews some Lie theory that is necessary to understand Section \ref{sec:punconj} in Chapter \ref{chapter:prequantization}.
\\

\noindent\textbf{Chapter 4}.  This is the first chapter in the main body of the thesis.  It reviews the definition of quasi-Hamiltonian group actions, and some fundamental properties and examples.  In particular, the moduli space of flat $G$-bundles is reviewed extensively as it is the main example whose pre-quantization is being analyzed in Chapter \ref{chapter:prequantization}. Also, a pullback construction of quasi-Hamiltonian $G$-spaces is reviewed.
\\

\noindent\textbf{Chapter 5}. This chapter defines pre-quantization of quasi-Hamiltonian group actions, and establishes some compatibility properties with regards to symplectic reduction and fusion product.  The pre-quantization of the moduli space of flat $G$-bundles is studied, and the main theorem (whose proof uses the calculations appearing in Chapter \ref{chapter:calc}) is stated.  Some partial results in the case where the underlying surface has $r>1$ boundary components are also obtained.
\\

\noindent\textbf{Chapter 6}. The calculations that support Theorem \ref{thm:coho-calc-equiv} appear in this chapter.  By far the most interesting case is when the underlying group $G=PU(n)$ or $SU(n)/\Z_k$, where $k$ divides $n$ (see Section \ref{sec:interestingcase}).  Consequently, this section contains the most detail, as the techniques used there appear in other cases as well.  Of particular interest is the case $G=PU(2)$, as the algebraic methods used in other cases do not completely solve the problem and homotopy theoretic methods are applied.  
\\

\noindent\textbf{Chapter 7}. The final chapter of this thesis discusses the correspondence between Hamiltonian loop group actions and quasi-Hamiltonian group actions.  In particular, each framework admits a notion of pre-quantization, and it is shown that, under the aforementioned correspondence, these notions coincide.

\part{Background}
\chapter{Elementary Homotopy Theory}

This chapter recalls some elementary topics used throughout the thesis, as well as some tools and methods from homotopy theory that appear in Chapters \ref{chapter:liegroups} and \ref{chapter:calc}.  Most (if not all) of the material is well established in the literature, and is included here for the convenience of the reader.  More thorough treatments can be found in \cite{S}, \cite{May}, \cite{Ma}, \cite{Mc}, and \cite{Weibel}, among others.

\section{Fibrations and cofibrations} \label{sec:fibcofib}

The homotopy theoretical techniques used in this thesis can be loosely described as manipulating maps of spaces that are well behaved.  Here, ``well behaved'' means that the maps facilitate the capture of certain topological information, such as cohomology and homotopy groups.  A homotopy equivalence, for example, is a very well behaved map in this respect, and there are two other kinds of maps that are necessary to do homotopy theory: fibrations, and cofibrations. 

Note that all topological spaces in this Chapter will be assumed to have the homotopy type of a connected $CW$-complex. 

\subsection*{Fibrations}

Recall that a surjective map $\pi:E\to B$ is called a \emph{fibration} if it satisfies the \emph{homotopy lifting property} for any space $Y$;
\begin{diagram}[h=3em,w=3em]
Y  & \rTo^{h} & E \\
\dTo^{\iota_0} & \ruDashto<{\tilde{H}} & \dTo>\pi \\
Y\times I & \rTo^H& B \\
\end{diagram}
That is,  given a homotopy $H:Y\times I \to B$, and a map $h:Y\to E$, there exists a lift $\tilde{H}:Y\times I \to E$ such that the two triangles in the above diagram commute, where $\iota_0$ includes $Y$ as $Y\times 0$.

Important examples of fibrations are covering projections, and more generally principal $G$-bundles, where $G$ is a topological group.  

Every map (that is surjective on path components) is a fibration up to homotopy.  That is, given a map $f: X\to Y$, it can be factored as a composition,
$$ 
X \toby{\approx} X' \toby{f'} Y
$$
of a homotopy equivalence $X\toby{\approx} X'$ and a fibration $f':X' \to Y$.  Indeed, let $X'$ be  the \emph{mapping path space}, $P^f=\{(x,\omega)\in X\times Y^I|\, \omega(0)=f(x) \}$.  The homotopy equivalence $X\to P^f$ given by $x\mapsto (x,\mathrm{const}_{f(x)})$, and the fibration $f':P^f \to Y$ sending $(x,\omega) \mapsto \omega(1)$ provide the required factorization of $f$.  Therefore, in order to study topological invariants such as cohomology groups, nothing is lost by studying the fibration $f':X'\to Y$ in place of $f:X\to Y$.

\subsection*{Homotopy fibres}

Recall that for principal $G$-bundles $\pi:E\to B$, the fibre $\pi^{-1}(b)$ over any point $b$ in $B$ is homeomorphic to $G$.  For a fibration  $\pi:E\to B$, with connected base $B$, the fibres $F_b=\pi^{-1}(b)$ are only homotopy equivalent.  Nevertheless, the \emph{fibre} of a fibration is defined to be the fibre $F_b$ over some chosen base point $b$ in the base $B$.  

For an arbitrary map $f:X\to Y$,  the \emph{homotopy fibre} $F^f$ of $f$ is the fibre of the fibration $f'$ where $f$ has been factored as $f:X\toby{\approx}P^f\toby{f'} Y$.  This exhibits $F^f$ as the pullback of $f$ with the fibration $p:PY\to Y$, where $PY=\{\omega \in Y^I \, | \, \omega(1)=*\}$ is the (contractible) \emph{path space} of $Y$, and $p(\omega)=\omega(0)$.

The homotopy type of $F^f$ is uniquely determined by the homotopy class of $f$.  
In fact, the homotopy fibre of a map $f:X\to Y$ may also be obtained by forming the pullback of $f$ with any fibration $p:E\to Y$, where $E$ is contractible.  
To see this, factor $f$ as the composition $f:X\toby{\approx}P^f \toby{f'} Y$, and consider the following diagram of successive pullbacks, which defines $F$.
\begin{diagram}
F & \rTo& F' & \rTo^k & E \\
\dTo & & \dTo &  & \dTo>p \\
X & \rTo^\approx & P^f & \rTo^{f'} & Y \\
\end{diagram}
Since $f'$ is a fibration, the map $k:F'\to E$ is also a fibration.  And since the diagram is a pullback, the fibres of $k$ and $f'$ are equal.  Note that $F'$ is homotopy equivalent to the fibre of $k$ because the base $E$ is contractible. That is, $F'\approx F^f$. Since $F$ is the pullback of $F'$ over a homotopy equivalence, $F\approx F'$, and hence $F\approx F^f$.

\subsection*{Cofibrations}

The other kind of well behaved map is a cofibration. Recall that an inclusion $j:A \hookrightarrow X$ is called a \emph{cofibration} if it satisfies the \emph{homotopy extension property} for any space $Y$;
\begin{diagram}
A & & \rTo^{j} & & X \\
    &  &                     & \ldTo^{\iota_0} & \\
\dTo<{\iota_0} & & X\times I &  &  \dTo>h \\
   & \ruTo<{j\times \mathrm{id}} &  &   \rdDashto>{H'} & \\
 A\times I &  & \rTo^H  &  & Y\\
\end{diagram}
that is, if given a homotopy $H:A\times I \to Y$, and a map $h:X\to Y$ such that $h\circ j=H\circ \iota_0$, there exists an extension $H':X\times I \to Y$ of $H$ such that $H'\circ \iota_0=h$.
The inclusion of a nonempty subcomplex $A\hookrightarrow X$ is an example of a cofibration.  

Every map $X\to Y$ is also a cofibration up to homotopy.  That is, any map $f:X\to Y$ can be factored as a composition
$$
X \toby{j} Y' \toby{\approx} Y
$$
of a cofibration $j:X\to Y'$ and a homotopy equivalence $Y'\to Y$. Indeed, let $Y'$ be the \emph{mapping cylinder} $M_f$, which is the quotient space 
 $$ 
 M_f=\left( (X\times I) \coprod Y \right) \big/ \sim \qquad\text{where }(x,0)\sim f(x). 
$$
The cofibration $X\hookrightarrow M_f$ that includes $X$ as $X\times 1$, and the homotopy equivalence $r:M_f\to Y$ defined by $r(y)=y$ for $y\in Y$ and 
$r(x,t)= f(x)$ otherwise provide the required factorization of $f$.

\subsection*{Homotopy cofibres}

The \emph{cofibre} $X/A$ of a cofibration $A\hookrightarrow X$ is the quotient space obtained from $X$ by identifying $A$ to a single point.  For a map $f:X\to Y$, the cofibre $C_f$ of $X\to M_f$ is called the \emph{mapping cone} of $f$.  More generally, the cofibre of the cofibration $f':X\to Y'$ in a factorization of $f$ as above is called the \emph{homotopy cofibre} of $f$, well defined up to homotopy.  Observe that in the special case where $f:S^{n-1} \to Y$, the mapping cone $C_f$ is simply $Y\cup_f e^{n}$, the space obtained  by attaching an $n$-cell $e^{n}$ to $Y$ with attaching map $f$.

\section{Loops, suspensions, and adjoints} \label{sec:loopsetc}

Recall the following elementary constructions for spaces $X$ and $Y$ with base points.  

\begin{itemize}
\item[$\cdot$] The \emph{wedge product} $X\vee Y=\{(x,y)\in X\times Y | x=* ,\text{ or } y=*\}$.
\item[$\cdot$] The \emph{smash product} $X\wedge Y=(X\times Y)/(X\vee Y)$.
\item[$\cdot$] The (\emph{reduced}) \emph{suspension} of $X$ is $\Sigma X = X\wedge S^1$, which may be represented by points $(x,t)\in X\times I$ with the understanding that points of the form $(x,0)$, $(x,1)$, and $(*,t)$ all represent the same point, the base point of $\Sigma X$. 
\item[$\cdot$] The (\emph{based}) \emph {mapping space} $\mathrm{Map}_*(X,Y)$ is the set of based continuous maps $X\to Y$ (with compact-open topology).
\item[$\cdot$] The (\emph{based}) \emph{loop space} $\Omega X=\mathrm{Map}_*(S^1,X)$.
\item[$\cdot$] The set of \emph{homotopy classes of maps} $[X,Y]=\pi_0\mathrm{Map}_*(X,Y)$.
\end{itemize}

The operations ``suspension'' $\Sigma$ and ``loop'' $\Omega$ are functors. That is, for any map $f:X\to Y$ there are maps $\Sigma f:\Sigma X\to \Sigma Y$ 
and $\Omega f: \Omega X \to \Omega Y$ defined by
$$
\Sigma f (x,t)=(f(x),t), \quad \text{and} \quad \Omega f ( \gamma)(t)=f(\gamma(t)).
$$
Since a map $h:\Sigma X\to Y$ is of the form  $h(x,t)$, it also describes a map $\mathrm{ad} h:X\to \Omega Y$, which assigns $x\mapsto h(x,t)$,  describing a loop in $Y$ as $t$ varies.  This correspondence is bijective,
$$
\mathrm{ad}:[\Sigma X,Y]\toby{\cong}[X,\Omega Y],
$$
and the suspension and loop functors, $\Sigma(\quad)$ and $\Omega(\quad)$ are adjoint to each other.  

Let $\alpha:X\to \Omega \Sigma X$ denote the adjoint of the identity map $\Sigma X\to \Sigma X$, so that $\alpha(x)\in \Omega\Sigma X$ is the loop $\alpha(x)(t)=(x,t)$.  The the adjoint $\mathrm{ad} h: X\to \Omega Y$ of any map $h:\Sigma X\to Y$ may be computed by the composition $\mathrm{ad} h:X\toby{\alpha} \Omega\Sigma X \toby{\Omega h}\Omega Y$.  Indeed, $\Omega h (\alpha(x))(t)=h(\alpha(x)(t))=h(x,t)$, which shows that the loop described by the adjoint of $h$ is the loop $\Omega h(\alpha(x))$. Similarly, if $\beta: \Sigma \Omega X\to X$ denotes the adjoint of the identity map $\Omega X\to \Omega X$, then the adjoint  $\mathrm{ad}^{-1} g$ of a map $g:X\to \Omega Y$ may be computed by the composition $\mathrm{ad}^{-1} g:\Sigma X \toby{\Sigma g} \Sigma\Omega X \toby{\beta} Y$.

Note that the adjunction of $\Sigma$ and $\Omega$ yields the identification $[S^{n+1},X]=\pi_{n+1}(X)\cong \pi_n(\Omega X)=[S^n,\Omega X]$, since $\Sigma S^n=S^{n+1}$.

\section{Exact sequences}

Fibrations and cofibrations can be used to generate long exact sequences in homotopy groups and cohomology groups, respectively,  by using fibration and cofibration sequences.

\subsection*{Fibration sequences}

Recall that a principal $G$-bundle $E\to X$ is classified by a map $f:X\to BG$ (see Section \ref{sec:classifyingspaces}). Therefore, up to bundle isomorphism,  $E$ fits in the pullback diagram
\begin{diagram}
E & \rTo & EG \\
\dTo & & \dTo \\
X & \rTo^f & BG \\
\end{diagram}
where $EG$ is contractible.  This identifies $E=F^f$, the homotopy fibre of the classifying map $f:X\to BG$.  Also, recall that the fibre of the principal bundle $E\to X$ is $G$.  Therefore in this setting, the homotopy fibre of $F^f\to X$ is $G\approx \Omega(BG)$.  This generalizes to homotopy fibres of arbitrary maps  $f: X\to B$ as follows.

Let $f:X\to B$ be a map, and recall that the homotopy fibre $F^f$ may be obtained by taking the pullback of $f$ with path space fibration of $B$:
\begin{diagram}
F^f &\rTo& PB \\
\dTo & & \dTo \\
X & \rTo^f & B \\
\end{diagram}  
Since $PB\to B$ is a fibration, the pullback $F^f \to X$ is also fibration, and these maps have equal fibres, namely $\Omega B$.  In other words, the homotopy fibre of $F^f \to X$ is $\Omega B$. 

This procedure of taking homotopy fibres may be continued indefinitely.  That is, starting with a map $f:X\to B$, with homotopy fibre $F^f$, the previous paragraph shows that the homotopy fibre of $F^f\to X$ is $\Omega B$. Therefore, the sequence of maps $\Omega B\to F^f \to X$ is (up to homotopy) the inclusion of a fibre followed by a fibration, in the same sense that the sequence $F^f \to X \to B$ is the inclusion of a fibre followed by a fibration.

 Repeating this procedure with the map $F^f \to X$, with homotopy fibre $\Omega B$, shows that the homotopy fibre of $\Omega B\to F^f$ is $\Omega X$.  Continuing, there is a sequence 
\begin{equation} \label{eqn:fibseq}
 \cdots\to \Omega^2 X \to \Omega^2B \to \Omega F^f \to \Omega X \to \Omega B \to F^f \to X \to B
\end{equation}
in which each pair of successive maps is (up to homotopy) the inclusion of a fibre followed by a fibration $F\hookrightarrow E\toby{\mathrm{fib.}} B$.  Such a sequence of maps is called a \emph{fibration sequence}. 

The  fibration sequence (\ref{eqn:fibseq}) may be used to produce the long exact sequence of homotopy groups associated to a fibration sequence $F\to X\to B$.  The following proposition follows directly from the homotopy lifting property enjoyed by fibrations.

\begin{prop}\label{prop:lift} Let $F\toby{j}X\toby\pi B$ be a fibration sequence. For any space $Y$, the sequence of pointed sets 
$$
[Y,F]\toby{j_\sharp} [Y,X]\toby{\pi_\sharp} [Y,B]
$$ is exact.  That is, if $f:Y\to X$ is any map such that  $\pi\circ f$ is null homotopic, then there exists a lift $\tilde{f}:Y\to F$ such that $f=\tilde{f}\circ j$.
\end{prop}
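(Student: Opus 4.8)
The plan is to unwind the definitions and reduce everything to the homotopy lifting property. Recall that the fibration sequence $F \toby{j} X \toby{\pi} B$ means precisely that, up to homotopy, $\pi$ is a fibration and $j$ is the inclusion of its fibre; so without loss of generality I may assume $\pi:X\to B$ is an honest fibration and $F = \pi^{-1}(*)$ with $j$ the inclusion. The claim then has two halves: first, that $\pi_\sharp \circ j_\sharp$ is trivial (the composite $[Y,F]\to[Y,B]$ sends everything to the class of the constant map), and second, the exactness statement proper, that any $f:Y\to X$ with $\pi\circ f$ null-homotopic factors up to homotopy through $F$.

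For the first half: given $g:Y\to F$, the composite $\pi\circ j\circ g$ is constant at the basepoint of $B$ (since $j(F)\subseteq \pi^{-1}(*)$), hence certainly null homotopic; so $\pi_\sharp j_\sharp = *$. For the second half, suppose $f:Y\to X$ with $\pi\circ f \simeq \mathrm{const}_*$ via a homotopy $H:Y\times I\to B$ with $H_0 = \pi\circ f$ and $H_1 = \mathrm{const}_*$. Now invoke the homotopy lifting property of $\pi$ with respect to $Y$: the square with top map $f:Y\to X$, bottom map $H:Y\times I\to B$, and left inclusion $\iota_0:Y\hookrightarrow Y\times I$ commutes, so there is a lift $\tilde H:Y\times I\to X$ with $\tilde H_0 = f$ and $\pi\circ \tilde H = H$. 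Set $\tilde f := \tilde H_1 : Y\to X$. Then $\pi\circ \tilde f = H_1 = \mathrm{const}_*$, so $\tilde f$ lands in $\pi^{-1}(*) = F$, giving a map $Y\to F$; and $\tilde f \simeq \tilde H_0 = f$ via $\tilde H$ itself. Thus $f$ is homotopic to $j\circ(\text{a map }Y\to F)$, which is exactly the assertion that the class of $f$ lies in the image of $j_\sharp$.

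I should also note the basepoint/pointed-set bookkeeping: to keep the homotopies basepoint-preserving one works with the based homotopy lifting property (valid for fibrations of well-pointed spaces, which holds under the standing assumption that all spaces have the homotopy type of a $CW$-complex), or alternatively one observes that the unbased argument above can be rigidified since $j$ is a cofibration up to homotopy. The only genuine subtlety — and the step I expect to need the most care — is the reduction "without loss of generality $\pi$ is a fibration and $F$ is its strict fibre": one must check that replacing $f:X\to B$ by the associated fibration $f':P^f\to B$ (and $F$ by the strict fibre $F^f$) does not change any of the relevant pointed sets $[Y,-]$, which follows because $X\toby{\approx}P^f$ and $F\toby{\approx}F^f$ are homotopy equivalences and $[Y,-]$ is a homotopy functor. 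Everything else is a direct, essentially formal consequence of the homotopy lifting property, so the proposition is really just a packaging of that property.
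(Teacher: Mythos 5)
Your argument is correct and is precisely the standard one the paper has in mind: the text states only that the proposition "follows directly from the homotopy lifting property enjoyed by fibrations," and your lift of the null homotopy $H$ to $\tilde H$ with $\tilde H_0=f$, followed by the observation that $\tilde H_1$ lands in the strict fibre and is homotopic to $f$, is exactly that direct argument. Your remarks on basepoints and on reducing to the case of an honest fibration are appropriate care, consistent with the paper's standing conventions, and do not change the approach.
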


Applying this proposition repeatedly to the fibration sequence (\ref{eqn:fibseq}), and using the adjunction of $\Sigma$ and $\Omega$, as in the previous section gives the familiar long exact sequence of homotopy groups 
$$
\cdots \pi_n(F) \to \pi_n(X) \to \pi_n(B) \to \pi_{n-1}(F) \to \cdots
$$

\subsection*{Cofibration sequences}

One of the defining characteristics of singular cohomology $H^*(-)$ is that to a pair of spaces $(X,A)$, where $A$ is a subspace of $X$, there is a long exact sequence of cohomology groups (with any coefficient ring):
$$
\cdots \to H^p(X) \to H^p(A) \to H^{p+1}(X,A) \to H^{p+1}(X) \to \cdots
$$ 
Furthermore, if the inclusion $A\hookrightarrow X$ is a cofibration and $A$ is closed in $X$, then $\tilde{H}^*(X,A)\cong \tilde{H}^*(X/A)$.  In other words, a cofibration $A\to X$ gives rise to a long exact sequence in cohomology:
$$
\cdots \to \tilde{H}^p(X) \to \tilde{H}^p(A) \to \tilde{H}^{p+1}(X/A) \to \tilde{H}^{p+1}(X) \to \cdots
$$ 

 Long exact sequences in cohomology may also be obtained from an arbitrary map $f:X \to Y$ by replacing $f$ by a cofibration, as discussed in Section \ref{sec:fibcofib}. Using the mapping cylinder, for example, the cofibration $X\to M_f $ yields the long exact sequence:
\begin{equation} \label{eqn:cohomseq}
\cdots \to H^p(Y) \to H^p(X) \to H^{p+1}(C_f) \to H^{p+1}(Y) \to \cdots
\end{equation}
where the homotopy equivalence $M_f\approx Y$ has been used.

These exact sequences may also be described using cofibration sequences, in a way analogous to homotopy groups and fibration sequences.  Let $f:X\to Y$ be a map, and consider the homotopy cofibre $C_f$, the mapping cone of $f$.  Notice that $Y\to C_f$ is a cofibration, and that the cofibre $Y/C_f\approx \Sigma X$.  Therefore, the homotopy cofibre of the induced map into the cofibre $Y\to C_f$  is $\Sigma X$. This procedure may be repeated indefinitely to yield the Barrat-Puppe sequence,
\begin{equation} \label{eqn:cofibseq}
X \to Y \to C_f \to \Sigma X \to \Sigma Y \to \Sigma C_f \to \Sigma^2 X \to \Sigma^2 Y \to \cdots
\end{equation}
in which every pair of successive maps is (up to homotopy) a cofibration followed by the induced map into the cofibre $A\toby{\mathrm{cofib.}} X \to C$.  Such a sequence is called a \emph{cofibration sequence}.

The following proposition follows immediately from the homotopy extension property enjoyed by cofibrations.

\begin{prop}\label{prop:extend} Let $X\toby{f}Y\toby{q} C_f$ be a cofibration sequence. For any space $K$, the sequence of pointed sets 
$$
[C_f,K]\toby{q^*} [Y,K]\toby{f^*} [X,K]
$$ is exact.  That is, if $h:Y\to K$ is any map whose restriction $h \circ f$ to $X$ is null homotopic, then there exists an extension $\bar{h}:C_f\to K$ such that $h=\bar{h}\circ q$.
\end{prop}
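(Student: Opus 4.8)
The plan is to prove exactness at $[Y,K]$ of the sequence $[C_f,K]\toby{q^*}[Y,K]\toby{f^*}[X,K]$ by dualizing the argument for Proposition \ref{prop:lift}: in place of lifting homotopies through a fibration one extends maps across the mapping cone $C_f=Y\cup_f CX$, and the role played there by the homotopy lifting property is played here by the homotopy extension property. Throughout it is enough to argue with the concrete model $C_f=Y\cup_f CX$, where $CX=(X\times I)/(X\times\{1\})$, with $(x,0)$ identified with $f(x)\in Y$ and $q:Y\to C_f$ the canonical inclusion; replacing $f$ by the mapping-cylinder inclusion $X\hookrightarrow M_f\toby{\approx}Y$ changes none of the homotopy classes in sight and, if desired, makes $f$ a genuine cofibration.

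The inclusion $\mathrm{im}(q^*)\subseteq\ker(f^*)$ is formal: the composite $q\circ f:X\to C_f$ factors through the contractible cone $CX$ (it sends $x$ to the class of $(x,0)$, which lies in $CX$), hence is null homotopic, so $f^*q^*=(q\circ f)^*$ kills every class.

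For the essential inclusion $\ker(f^*)\subseteq\mathrm{im}(q^*)$: given $h:Y\to K$ with $h\circ f$ null homotopic, choose a based homotopy $H:X\times I\to K$ with $H_0=h\circ f$ and $H_1\equiv *$. Then glue: set $\bar h|_Y=h$ and $\bar h(x,t)=H(x,t)$ on $CX$. The condition $H_0=h\circ f$ is precisely the compatibility required along the identification $(x,0)\sim f(x)$, and $H_1\equiv *$ is precisely what makes $\bar h$ well defined at the cone point; continuity is then automatic from the pushout topology on $C_f$, and $\bar h\circ q=h$ by construction, so $[h]\in\mathrm{im}(q^*)$. Equivalently, once $f$ is taken to be a cofibration this is just the homotopy extension property applied to the partial homotopy given by $h$ on $Y\times\{0\}$ together with $H$ on $X\times I$: the resulting extension $\tilde H:Y\times I\to K$ satisfies $\tilde H_1\circ f=H_1\equiv *$, so $\tilde H_1$ collapses $f(X)$ and factors as $\bar h\circ q$ with $[\bar h\circ q]=[\tilde H_1]=[h]$.

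The argument is essentially formal; the only place to be careful — the closest thing to an obstacle here — is the bookkeeping of basepoints, so that all homotopies are based and the value collapsed to the cone point really is the base point of $K$, together with (if one prefers the homotopy-extension phrasing) the harmless replacement of $f$ by a cofibration and the correct identification of $C_f$ in that model. Iterating this exactness statement along the Barratt--Puppe sequence (\ref{eqn:cofibseq}) and using the $\Sigma$--$\Omega$ adjunction then produces the long exact cohomology sequence (\ref{eqn:cohomseq}), dual to the way Proposition \ref{prop:lift} yields the long exact sequence of homotopy groups.
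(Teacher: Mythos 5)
Your proof is correct and matches the paper's approach: the paper simply remarks that the proposition "follows immediately from the homotopy extension property," and your explicit gluing of the null homotopy over the cone $CX\subset C_f$ (equivalently, your HEP phrasing) is exactly the standard argument being invoked. The basepoint bookkeeping you flag is indeed the only point requiring care, and you handle it correctly.
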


Applying this proposition repeatedly to the Barrat-Puppe sequence (\ref{eqn:cofibseq}), yields the long exact sequence
$$
\cdots  [\Sigma^2 X,K]\to [\Sigma C_f,K] \to [\Sigma Y,K]\to [\Sigma X,K] \to [C_f,K]\to[Y,K]\to [X,K] \cdots
$$

To complete the analogy with fibration sequences and homotopy groups, this sequence of pointed sets may be used to yield long exact sequences in cohomology. Using the fact that cohomology is representable by Eilenberg-MacLane spaces $H^p(X;R)\cong[X,K(p,R)]$, applying the above long exact sequence to $K=K(p,R)$, and using the suspension isomorphism $H^{p+1}(\Sigma X;R) \cong H^p(X;R)$ gives the familiar long exact sequence (\ref{eqn:cohomseq}). (This description of the long exact sequence in cohomology is not used in this work, and the reader is left to consult \cite{S}, or \cite{May} for details.)

\subsection*{The algebraic mapping cone}

The above discussion of long exact sequences in cohomology may also be modeled on the level of cochain complexes, using the algebraic mapping cone, which is a model for $S^*(C_f)$ that is built from the cochain complexes $S^*(Y)$ and $S^*(X)$.  

Let $\varphi:A^* \to B^*$ be a map of cochain complexes.  The \emph{algebraic mapping cone of $\varphi$} is the cochain complex $C^*(\varphi)$ defined by:
$$
C^p(\varphi) := B^{p-1}\oplus A^p
$$ 
with differential  $d_\varphi(\beta,\alpha)=(d\beta+\varphi \alpha, -d\alpha)$.  The homology of this cochain complex fits in a long exact sequence
$$
\cdots \to H^p(B) \to H^p(C) \to H^{p+1}(C(\varphi)) \to H^{p+1}(B) \to \cdots
$$
where the connecting map is the map induced by $\varphi$.  (Details may be found in \cite{Ma}.)

Applying this construction to the induced map of $f:X\to Y$ on cochain complexes $f^*:S^*(Y)\to S^*(X)$ gives the \emph{relative cohomology} group $H^*(f):=H(C^*(f^*),d_{f^*})$, which fits in the long exact sequence: 
$$
\cdots \to H^p(Y) \to H^p(X) \to H^{p+1}(f) \to H^{p+1}(Y) \to \cdots
$$
which will be used extensively in later chapters.

\section{Samelson and Whitehead products} \label{sec:wheadprod}

An \emph{$H$-space} is a based space $G$ equipped with a map $\mu:G\times G \to G$ such that $\mu\circ \iota_1\approx \mathrm{id}_G \approx \mu\circ\iota_2$, where $\iota_1(x)=(x,*)$ and $\iota_2(x)=(*,x)$.  The map $\mu$ is referred to as the multiplication map, since it a homotopy theoretic generalization of a topological group's multiplication map. A map $f:G\to H$ of $H$-spaces is an \emph{$H$-map} if it is a homomorphism up to homotopy; that is, if $\mu_H \circ (f\times f)\approx f\circ \mu_G$.

An \emph{H-group} is an $H$-space that is \emph{homotopy associative} (i.e.~$\mu\circ(\mathrm{id}_G \times \mu) \approx \mu \circ (\mu\times \mathrm{id}_G)$), and has a \emph{homotopy inverse} (a map $c:G\to G$ satisfying $\mu\circ(\mathrm{id}_G\times c) \circ \Delta \approx * \approx \mu\circ(c\times \mathrm{id}_G) \circ \Delta$, where $\Delta$ is the diagonal map).  Clearly, topological groups are $H$-spaces. As with topological groups, $\mu(x,y)$ may be written as $xy$, while $c(x)$ may be written as $x^{-1}$.
\\

Suppose $G$ is an $H$-group, and let $f:X\to G$ and $g:Y\to G$ be continuous maps.  The map $(x,y) \mapsto f(x)g(y)f(x)^{-1}g(y)^{-1}$ is null homotopic when restricted to $X\vee Y$.  Therefore, by Proposition \ref{prop:extend} there is an induced map 
$$
\langle f, g\rangle: X\wedge Y\to G,
$$
 called the \emph{Samelson product} of $f$ and $g$.  To see that $\langle f,g\rangle$ is well defined (up to homotopy), note that the sequence $[X\wedge Y,G] \to [X\times Y,G] \to [X\vee Y,G]$ splits, because any map $f_1\vee f_2: X\vee Y\to G$ can be extended to $X\times Y$ by  $\mu\circ(f_1\times f_2):X\times Y \to G\times G \to G$.  Therefore, the homotopy class of $\langle f,g\rangle$ depends only on the homotopy classes of $f$ and $g$.

For  $f:\Sigma X\to B$, and $g:\Sigma Y \to B$, the \emph{Whitehead product} of $f$ and $g$,  
$$
[f,g]:\Sigma(X\wedge Y) \to B, \quad [f,g]=\mathrm{ad}^{-1}\,\langle \mathrm{ad}\, f,\mathrm{ad}\,g\rangle
$$ 
where  recall $\mathrm{ad} \,h: A\to \Omega B$ denotes the adjoint of $h:\Sigma A \to B$. (Note that the loop space $\Omega B$ is an $H$-group, with multiplication defined by concatenation of paths.)

In the special case where $X$ and $Y$ are spheres, the Whitehead product 
induces a product on homotopy groups.  Indeed, if $f:S^n=\Sigma S^{n-1}\to B$ and $g:S^m=\Sigma S^{m-1}\to B$ represent elements in $\pi_n(B)$ and $\pi_m(B)$ respectively, then their Whitehead product $[f,g]:S^{n+m-1}=\Sigma(S^{n-1}\wedge S^{m-1}) \to B$ represents an element in $\pi_{n+m-1}(B)$.  For example, the Whitehead product of the identity map $\iota:S^2\to S^2$ with itself yields a map $[\iota,\iota]:S^3\to S^2$, which represents some element in $\pi_3(S^2)$.  

In fact, from the long exact sequence of homotopy groups associated to the Hopf fibration sequence $S^1\to S^3 \toby{\eta} S^2$, it is easy to see that $\pi_3(S^2)\cong \pi_3(S^3)\cong \Z$, and that $\eta$ represents a generator.  It will be of crucial importance in Chapter \ref{chapter:calc} to recognize that $[\iota,\iota]$ is homotopy equivalent to $2\eta$ in $\pi_3(S^2)$, a well known fact that is reviewed next.

\begin{eg} $[\iota,\iota]=2\eta$ \label{eg:whiteheadistwicehopf}
\end{eg}

This exercise uses the Hopf invariant $h(f)$ of a map $f:S^{2n-1}\to S^n$ for $n\geq 2$.  Recall that such a map may be viewed as describing the attaching map of a $2n$ cell to $S^n$, producing a $CW$-complex $X$ with exactly two cells: one in dimension $n$, and one in dimension $2n$.  Since there are no cells in adjacent dimensions, the cohomology $X$ is easy to calculate, namely $\tilde{H}^p(X;\Z)\cong \Z$ when $p=n$ or $p=2n$, and $0$ otherwise.  If $x\in H^n(X;\Z)$ and $y\in H^{2n}(X;\Z)$ denote generators, then the \emph{Hopf invariant} $h(f)$ is the integer (defined up to sign) satisfying $x^2=h(f)y$.  It is an easy exercise to check that the Hopf invariant defines a homomorphism $h:\pi_{2n-1}(X)\to \Z$, that depends only on the homotopy type of $f$ \cite{Ma}.

Recall that the complex projective plane $\mathbb{C}P^2$ is obtained from $\mathbb{C}P^1=S^2$ by attaching a $4$-cell, with attaching map $\eta:S^3\to S^2$, the  Hopf fibration.  Therefore,  $h(\eta)=1$, since $H^*(\mathbb{C}P^2;\Z)\cong \Z[x]/(x^3)$, and the homomorphism $h:\pi_3(S^2)\to \Z$ is an isomorphism able to detect the homotopy class of a given map $S^3\to S^2$.  To check that $[\iota,\iota]=2\eta$, it suffices to verify that the Hopf invariant $h([\iota,\iota])=2$.

The Whitehead product $[\iota,\iota]$ may also be described in the following way (see \cite{S}).  The product $S^2 \times S^2$ has a cell decomposition with two cells in dimension 2 and one cell in dimension 4.  More precisely, it can be obtained by attaching a 4-cell to the 3-skeleton $S^2\vee S^2$, via an attaching map $w:S^3\to S^2 \vee S^2$.  The composition of $w$ with the fold map $f:S^2\vee S^2 \to S^2$ (which is the identity on each copy of $S^2$) is homotopy equivalent to the Whitehead product $[\iota,\iota]$.

Continuing with the computation of the Hopf invariant $h([\iota,\iota])$, let $J=C_{[\iota,\iota]}$ be the homotopy cofibre of the Whitehead product, and let $x\in H^2(J;\Z)$, and $y\in H^4(J;\Z)$ be generators.  Consider the following homotopy commutative diagram of cofibration sequences.
\begin{diagram}
S^3 &\rTo^w & S^2\vee S^2 &\rTo & S^2 \times S^2   \\
\dEquals & & \dTo>f & & \dTo>g   \\
S^3 & \rTo^{[\iota,\iota]} & S^2 & \rTo & J   \\
\end{diagram} 
Recall that $H^*(S^2\times S^2;\Z)$ is generated by $a$ and $b$ in $H^2(S^2\times S^2;\Z)\cong H^2(S^2\vee S^2;\Z)$, and $ab\in H^4(S^2\times S^2;\Z)$, with $a^2=b^2=0$.  Applying $H^*(\quad;\Z)$ to the above diagram gives a diagram with exact rows,
\begin{diagram}
H^{q-1}(S^3) & \rTo & H^q(J)& \rTo & H^q(S^2) & \rTo & H^q(S^3)  \\
\dEquals & & \dTo>{g^*} & & \dTo>{f^*} & & \dEquals \\
H^{q-1}(S^3) &\rTo & H^q(S^2\times S^2) &\rTo & H^q(S^2 \vee S^2) & \rTo & H^q(S^3)   \\
\end{diagram}
showing that the induced map $g^*:H^4(J;\Z) \to H^4(S^2 \times S^2;\Z)$ is an isomorphism, and $g^*(y)=ab$ (up to a sign).  The exactness of the top row  shows that $x\in H^2(J;\Z)$ may be identified with the generator of $H^2(S^2;\Z)$.  And since $f^*(x)=a+b$, $g^*(x^2)=(g^*(x))^2=(a+b)^2=2ab$, which shows that $h([\iota,\iota])=2$. \hfill $\qed$

\section{De Rham theory} \label{sec:classifyingspaces}

 Let $S^*(X;R)$ denote the singular cochain complex of  a topological space $X$ with coefficients in the commutative ring $R$.  If in addition $X$ is a smooth manifold, recall that the subcomplex $S^*_{sm}(X;R)$ of smooth cochains on $X$ is chain homotopy equivalent to $S^*(X;R)$ for any coefficient ring $R$ (see Theorem 2.1 in Appendix A of \cite{Ma}).

Let $(\Omega^*(X),d)$ be the de Rham cochain complex of differential forms on a smooth oriented manifold $X$, with differential $d$, the exterior derivative.   Integration of differential forms defines a chain map $\Omega^*(X)\to S_{sm}^*(X;\mathbb{R})$, and the de Rham theorem states that this map is a chain homotopy equivalence.  This yields a natural isomorphism $H(\Omega^*(X),d)\cong H^*(X;\mathbb{R})$, which throughout this work will serve as an identification.  (See \cite{Ma} for more details.)

For a $G$-space $X$, where $G$ is a compact, connected Lie group, there is a $G$-equivariant version of the de Rham theorem.  It is defined using the Borel construction, which will be reviewed next.

To begin, recall that for a topological group $G$ there is a contractible space $EG$ on which $G$ acts freely.  The orbit space $BG=EG/G$ is called the \emph{classifying space} of $G$, and is unique up to homotopy type. The first construction for any topological group $G$ is due to Milnor \cite{Milnor2}.  (Constructions of classifying spaces also exist for associative $H$-spaces, as in  \cite{Milgram} for example.)

An important feature of classifying spaces is that they classify principal bundles.  That is, there is a one-to-one correspondence  between isomorphism classes of principal $G$-bundles over $X$ and homotopy classes of maps $[X,BG]$.  Given a principal $G$-bundle $E\to X$, the corresponding map $\rho:X\to BG$ is called the \emph{classifying map} of the bundle.  Furthermore, $E\to X$ is isomorphic to the pullback of $EG\to BG$ along $\rho$.  

Let $X_G$ denote the \emph{homotopy quotient}---the orbit space $(X\times EG)/G$ under the diagonal $G$-action.  Then the $G$-equivariant cohomology of $X$ with coefficients in $R$ is
$$
H^*_G(X; R):=H^*(X_G; R).
$$  It is well known that this definition does not depend on the choice of $EG$ (see \cite{GS}).  

Observe that the pullback diagram 
\begin{diagram}
X \times EG & \rTo & EG \\
\dTo & & \dTo \\
X_G & \rTo^\rho & BG \\
\end{diagram}
identifies $X$ as the homotopy fibre of the classifying map $\rho:X_G\to BG$.  The fibre bundle $X_G\to BG$ is sometimes called the \emph{Borel construction}.

As with the de Rham complex above, the $G$-equivariant cohomology of $X$ with real coefficients may be modeled at the chain level geometrically.  First, let $S\mathfrak{g}^*$ denote the symmetric algebra on $\mathfrak{g}^*$, the dual of the Lie algebra $\mathfrak{g}$ of $G$, with $G$-action determined by the co-adjoint action of $G$ on $\mathfrak{g}^*$.  The Cartan complex $(\Omega_G^*(X), d_G)$ of $G$-equivariant differential forms on $X$ is the $G$-invariant subcomplex
$$
\Omega_G^*(X):=(S\mathfrak{g}^*\otimes \Omega^*(X))^G, \qquad
d_G=1\otimes d - \sum \xi_i \otimes \iota_{\xi_i^\sharp}
$$
where $\xi_1, \ldots, \xi_{\mathrm{dim}G}$ is a choice of basis for $\mathfrak{g}$,  $\zeta^\sharp$ denotes the generating vector field of $\zeta\in \mathfrak{g}$ on $X$ induced by the $G$-action, and $\iota_v$ denotes contraction by the vector field $v$.  Note that elements $\alpha\otimes \beta \in \Omega_G^*(X)$ are assigned grading $2|\alpha| + |\beta|$.

An element in $\Omega_G^*(X)$ may be viewed as a $G$-equivariant polynomial $\alpha:\mathfrak{g}\to \Omega^*(X)$, in which case the differential $d_G$ may be described by $(d_G\alpha)(\zeta)=d(\alpha(\zeta)) - \iota_\zeta\alpha(\zeta)$. In any case, the equivariant de Rham theorem states that there is a canonical isomorphism $H(\Omega_G^*(X),d_G)\cong H_G^*(X;\mathbb{R})$, which will also serve as an identification throughout this work. (See \cite{GS} for more details.)
 
The natural map $ev_0:\Omega_G^*(X) \to \Omega^*(X)$ that sends $\alpha:\mathfrak{g} \to \Omega^*(X)$ to $\alpha(0)$ induces a map $H_G^*(X;\R)\to H^*(X;\R)$, which may be regarded as the induced map on cohomology of $X\to X_G$.  A differential form $\omega=ev_0(\omega_G)$ is said to have an \emph{equivariant extension} $\omega_G$.  Notice that in order to have an equivariant extension, $\omega$ must be $G$-invariant.

\section{Spectral sequences}\label{sec:spectralsequences}

There are two kinds of spectral sequences used in this thesis: the Serre spectral sequence, and the mod $p$ Bockstein spectral sequence.  The main theorems about each of these will be outlined in this section.  Although it will be assumed that the reader is familiar with the basic definitions concerning spectral sequences (i.e.~filtrations, convergence, etc.), the discussion of the Bockstein spectral sequence will assume no prior exposure.  Consult \cite{Mc}, \cite{S}, or \cite{Weibel} for more thorough treatments. 
\\

First, recall how spectral sequences arise in the framework of exact couples (due to Massey \cite{Ma-exco}).  A pair $(D,E)$ of modules is an \emph{exact couple} if there is a diagram:
\begin{diagram}[height=1.5em]
D & & \\
    & \rdTo>i & \\
  \uTo<k  &	& D \\
  &\ldTo>j & \\
E & & \\
\end{diagram}
that is exact at each vertex.  The \emph{derived couple} $(D', E')$ is the exact couple obtained from $(D,E)$ by setting $D'=i(D)$, $E'=\mathrm{ker}\, d/\mathrm{im}\, d$, where $d=jk$.  The corresponding diagram for $(D', E')$
\begin{diagram}[height=1.5em]
D' & & \\
    & \rdTo>{i'} & \\
  \uTo<{k'}  &	& D' \\
  &\ldTo>{j'} & \\
E' & & \\
\end{diagram}
has maps $i'=i|_{D'}$, $j'(iy)=[j(y)]$, and $k'([x])=k(x)$, which one can show are well defined.  Clearly, this construction may be repeated ad infinitum. 

Therefore, from an exact couple $(D,E)$ arises a sequence $(E_r, d_r)$  of modules with differentials defined by 
$$
(E_1,d_1)=(E, d),\quad (E_2,d_2)=(E',d'), \quad (E_3, d_3)=(E'', d''),\quad \text{and so on}.
$$
Such a sequence of differential modules $(E_r, d_r)$ with $E_{r+1}=H(E_r,d_r)$ is called a \emph{spectral sequence}. 

Typically, a bigrading $E_*^{p,q}$ is introduced to recover the familiar form of spectral sequences, as in the Serre spectral sequence discussed below. (Note, however, that the Bockstein spectral sequence is not naturally bigraded, and in that case it is not necessary to introduce the extra bookkeeping.)  More specifically, it is assumed that the modules $D=D^{p,q}$ and $E=E^{p,q}$ are bigraded, and that the maps $i$, $j$, and $k$ have bidegrees
$$
|i|=(-1,1), \quad |j|=(0,0), \quad \text{and}\quad |k|=(1,0),
$$
which give $d$ a bidegree $|d|=(1,0)$.  The resulting spectral sequence  $(E_r^{p,q}, d_r^{p,q})$ assumes a bigrading, with differentials $d_r^{*,*}$ of bidegree $(r,1-r)$. 

The spectral sequences considered below are \emph{bounded} spectral sequences---that is, for each $n$ there are only finitely many nonzero terms of total degree $n$ in $E_1^{*,*}$.  Therefore, for each $p$ and $q$ the differentials $d_r^{p,q}: E_r^{p,q}\to E_r^{p+r,q-r+1}$ eventually become zero because there can only be finitely many nonzero terms in total degree $p+q+1$.  Similarly for the differentials $d_r^{p-r,q+r-1}:E_r^{p-r,q+r-1}\to E_r^{p,q}$, hence there is an index $s$ for which $E_r^{p,q}=E_{r+1}^{p,q}$ for $r\geq s$.  This stable value is denoted $E_\infty^{p,q}$.

A bounded spectral sequence $E_*^{p,q}$  \emph{converges} to $H^*$ if there is a finite degree preserving filtration
$$
0=F^sH^* \subset F^{s-1}H^* \subset \cdots \subset F^{t+1}H^* \subset F^tH^n=H^*
$$
and isomorphisms $E_\infty^{p,q}\cong F^pH^{p+q}/F^{p+1}H^{p+q}$.  If the spectral sequence, and the graded module $H^*$ are also algebras, then the spectral sequence may converge to $H^*$ as algebras, which means the above isomorphisms respect the induced algebra structures (see \cite{Mc}).

\subsection*{The Serre spectral sequence}

Given a fibration $E\toby{\pi} X$ with fibre  $F$, the Serre spectral sequence is an algebraic tool that relates $H^*(X)$ and $H^*(F)$ to $H^*(E)$, using any coefficient ring. To see how the spectral sequence arises,  consider the filtration of $E$ by subspaces $F^p:=\pi^{-1}(X^{(p)})$, where $X^{(p)}$ is the $p$-skeleton of the connected $CW$-complex $X$.  The inclusions $F^{p-1}\hookrightarrow F^p$ each induce long exact sequences in cohomology, 
$$
\cdots \to H^s(F^p) \to H^s(F^{p-1}) \to H^{s+1}(F^p, F^{p-1}) \to H^{s+1}(F^p) \to \cdots
$$
which can be spliced together to yield  the bigraded exact couple $(D,E)$:
$$
D^{p,q}=H^{p+q-1}(F^p), \qquad E^{p,q}=H^{p+q}(F^p, F^{p-1}).
$$
The filtration on $E$ induces a filtration on the cochain complex $S^*(E)$ by setting $F^pS^*(E)=\ker\{S^*(E)\to S^*(F^{p-1})\}$.  It follows that the resulting spectral sequence with $E_1^{p,q}=H^{p+q}(F^p, F^{p-1})$ converges to $H^*(E)$.

In his thesis, Serre \cite{Serre} computed the $E_2$-term of this spectral sequence, which is given in the following theorem. 

\begin{thmnonumber}[Serre] Let $\pi:E\to X$ be a fibration with fibre $F$, where $X$ is connected.  The $E_2$-term of the above spectral sequence is
$E_2^{p,q} \cong H^p(X; \mathcal{H}^q(F))$, and $E_*^{p,q}$ converges to $H^*(E)$ as an algebra.
\end{thmnonumber}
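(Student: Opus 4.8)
The plan is to follow the standard exact-couple derivation and then identify the $E_2$-term. First I would set up the filtration of $E$ by the subspaces $F^p = \pi^{-1}(X^{(p)})$ and splice together the long exact sequences of the pairs $(F^p, F^{p-1})$ into the bigraded exact couple with $D^{p,q} = H^{p+q-1}(F^p)$ and $E^{p,q} = H^{p+q}(F^p,F^{p-1})$, exactly as recorded just before the theorem statement. This produces a spectral sequence with $E_1^{p,q} = H^{p+q}(F^p,F^{p-1})$, and boundedness (the fibre and base being of the homotopy type of $CW$-complexes) guarantees convergence; since the filtration $F^p S^*(E) = \ker\{S^*(E) \to S^*(F^{p-1})\}$ is exhaustive and degreewise finite, the spectral sequence converges to $H^*(E)$ with the associated filtration.

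The substantive step is computing $E_2^{p,q}$. Because $X$ is a $CW$-complex, $H^{p+q}(F^p, F^{p-1})$ is concentrated appropriately and, by excision applied cell-by-cell, $E_1^{p,q} = H^{p+q}(F^p, F^{p-1})$ is a product of copies of $H^q(F_\alpha)$, one for each $p$-cell $e^p_\alpha$ of $X$, where $F_\alpha$ is the fibre over a point of that cell. The $d_1$ differential is then identified with the cellular coboundary of $X$ with coefficients in the local system $\mathcal{H}^q(F)$ determined by the monodromy action of $\pi_1(X)$ on $H^q(F)$; this is where one uses that the fibres over different points of $X$ are only homotopy equivalent, not canonically identified, so the coefficients genuinely form a local system rather than a constant one. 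Taking homology of the cellular cochain complex $(C^*_{CW}(X;\mathcal{H}^q(F)), d_1)$ gives $E_2^{p,q} \cong H^p(X;\mathcal{H}^q(F))$. The multiplicative structure comes from the cup product on singular cochains: the filtration $F^pS^*(E)$ satisfies $F^p S^* \cdot F^{p'} S^* \subseteq F^{p+p'} S^*$, so the cup product descends to the spectral sequence, each $d_r$ is a derivation, and the product on $E_2$ is the cup product in $H^*(X;\mathcal{H}^*(F))$ induced by the pairing $\mathcal{H}^q(F) \otimes \mathcal{H}^{q'}(F) \to \mathcal{H}^{q+q'}(F)$; convergence as algebras then follows because the filtration on $H^*(E)$ is multiplicative and the $E_\infty$-page is the associated graded ring.

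The main obstacle I anticipate is the precise identification of the $d_1$ differential with the cellular coboundary twisted by the local system—in particular, checking that the attaching maps of the cells of $X$ induce, via the homotopy lifting property, exactly the monodromy isomorphisms on $H^q$ of the fibres, and that the signs and indexing match the cellular cochain complex. This requires carefully choosing trivializations of the restricted fibration over each (contractible) cell and tracking how they differ on overlaps; the bookkeeping, while routine in principle, is where the argument has real content. (Alternatively, one can avoid some of this by using the Leray sheaf-theoretic formulation, but I would carry out the $CW$-filtration argument since the rest of the thesis works with singular cochains and cellular structures.) Since this is the classical theorem of Serre, for the writeup I would cite \cite{Serre}, \cite{Mc}, or \cite{S} for the detailed verification rather than reproduce it in full.
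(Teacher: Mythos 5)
Your proposal follows exactly the skeletal-filtration exact couple that the paper itself sets up immediately before the theorem ($F^p=\pi^{-1}(X^{(p)})$, $E_1^{p,q}=H^{p+q}(F^p,F^{p-1})$), and the paper offers no further proof beyond citing Serre for the computation of the $E_2$-term; your sketch of that computation (excision cell-by-cell, identification of $d_1$ with the cellular coboundary twisted by the monodromy local system, multiplicativity of the filtration) is the standard and correct completion of that setup. So this is essentially the same approach as the paper, with the remaining details correctly deferred to the same references.
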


Here, $H^*(X; \mathcal{H}^q(F))$ denotes the cohomology of $X$ with coefficients in the local coefficient system $\mathcal{H}^q(F)$ (see \cite{S}).  When the base $X$ is simply connected, the local coefficient system is trivial, and $H^*(X;\mathcal{H}^q(F)) \cong  H^*(X; H^q(F))$.  Moreover, if in addition the underlying coefficient ring $R$ is a field, then $E_2^{p,q} \cong H^p(X; R) \otimes_R H^q(F; R)$.

\subsection*{The Cartan-Leray spectral sequence}

Let $\pi:E\to X$ be  a covering projection, and let $\Gamma=\pi_1(X)$.  Consider the classifying map (see Section \ref{sec:classifyingspaces}) $\rho: X\to B\Gamma$ of the covering projection, with homotopy fibre $E$.  As discussed above, there is a spectral sequence with $E_2^{p,q}\cong H^p(B\Gamma; \mathcal{H}^q(E))$ that converges to $H^*(X)$.  For a discrete group such as $\Gamma$, it is well known that $H^*(B\Gamma)\cong H_{alg}^*(\Gamma)$, where the right side denotes the algebraically defined cohomology of $\Gamma$.  The Cartan-Leray spectral sequence, which will be introduced shortly, is a re-working of the Serre spectral sequence under this identification.

Recall that the homology and cohomology of a discrete group $\Gamma$ with coefficients in the $\Gamma$-module $A$ (i.e.~the $\Z \Gamma$-module $A$) are defined as
$$
H^{alg}_*(\Gamma;A) = \mathrm{Tor}_*^{\Z\Gamma}(\Z,A), \quad \text{and} \quad
H_{alg}^*(\Gamma;A) = \mathrm{Ext}^*_{\Z\Gamma}(\Z,A) .
$$
For a covering projection $E\to X$, the fundamental group $\Gamma$ acts on $E$ by deck transformations, and hence on $H^*(E)$ and $H_*(E)$, which may then be viewed as $\Gamma$-modules.  

\begin{thmnonumber}[Cartan-Leray] Let $E\to X$ be a covering projection, and let $\Gamma=\pi_1(X)$.  There is a spectral sequence converging to $H^*(X)$ with $E_2^{p,q}\cong H_{alg}^p(\Gamma; H^q(E))$.
\end{thmnonumber}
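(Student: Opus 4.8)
The plan is to obtain the Cartan–Leray spectral sequence by directly specializing the Serre spectral sequence of the fibration sequence $E \to X \toby{\rho} B\Gamma$, where $\rho$ is the classifying map of the covering projection $\pi:E\to X$. The key input, already essentially assembled in the excerpt, is that the homotopy fibre of $\rho$ is $E$ itself (since $EG\to BG$ has fibre $G$, and analogously a covering projection is the pullback of $E\Gamma\to B\Gamma$ along its classifying map), together with the fact that $B\Gamma = K(\Gamma,1)$ since $\Gamma$ is discrete. Applying the Serre spectral sequence (as stated in the theorem of Serre above) to this fibration immediately yields a spectral sequence converging to $H^*(X)$ with $E_2^{p,q} \cong H^p(B\Gamma;\mathcal{H}^q(E))$, where $\mathcal{H}^q(E)$ is the local coefficient system on $B\Gamma$ determined by the $\pi_1(B\Gamma)=\Gamma$-action on $H^q(E)$.

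The remaining step is to identify the $E_2$ term with the algebraically defined group cohomology. First I would check that the local coefficient system $\mathcal{H}^q(E)$ on $B\Gamma$ is precisely the $\Z\Gamma$-module $H^q(E)$ with the deck-transformation action: the monodromy of the Serre fibration acts on the fibre's cohomology, and under the identification of the fibre with $E$ this monodromy action coincides with the action of $\Gamma=\pi_1(X)$ on $E$ by deck transformations (hence on $H^q(E)$). Then I would invoke the standard identification $H^p(B\Gamma;\mathcal{A}) \cong H_{alg}^p(\Gamma;A) = \mathrm{Ext}^p_{\Z\Gamma}(\Z,A)$ for a local system $\mathcal{A}$ on $B\Gamma = K(\Gamma,1)$ corresponding to a $\Z\Gamma$-module $A$ — this is exactly the identification $H^*(B\Gamma) \cong H_{alg}^*(\Gamma)$ alluded to in the text just before the theorem, extended to twisted coefficients. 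Combining these gives $E_2^{p,q} \cong H_{alg}^p(\Gamma;H^q(E))$ converging to $H^*(X)$, as claimed.

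The main obstacle I anticipate is making the monodromy identification genuinely careful: one must verify that the action of $\pi_1(X)$ on the fibre-cohomology of the Serre fibration $E \to X \to B\Gamma$ — defined via path-lifting in that fibration — matches the deck-transformation action of $\Gamma$ on $H^*(E)$, rather than, say, its inverse or a twist by an automorphism of $\Gamma$. This amounts to tracing through the pullback square identifying $E\to X$ with the pullback of $E\Gamma\to B\Gamma$, and checking that the connecting data in the fibre-transport construction is the tautological $\Gamma$-action. Everything else is a matter of citing the Serre spectral sequence, the contractibility of $E\Gamma$, and the representability of group cohomology by $K(\Gamma,1)$ with twisted coefficients; I would relegate the detailed verifications to references such as \cite{Mc} or \cite{S}.
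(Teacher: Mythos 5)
Your proposal is correct and follows essentially the same route as the paper: the text obtains the Cartan--Leray spectral sequence precisely by taking the classifying map $\rho:X\to B\Gamma$ of the covering (with homotopy fibre $E$), applying the Serre spectral sequence, and rewriting $H^p(B\Gamma;\mathcal{H}^q(E))$ as $H^p_{alg}(\Gamma;H^q(E))$ via the identification of the local system with the deck-transformation action. Your extra care about matching the monodromy action with the deck-transformation action is a point the paper simply asserts, so nothing is missing.
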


The following Proposition, which will be used in Chapter \ref{chapter:prequantization}, is a direct application of the Cartan-Leray spectral sequence of a covering.  It may be viewed as the Serre exact sequence for the fibration $X\to B\pi_1(X)$ that classifies the universal covering projection $\tilde{X}\to X$.

\begin{prop} \label{prop:serreexactsequence}
Let $\tilde{X}\to X$ be a universal covering projection, where $H^q(\tilde{X};\Z)=0$ for $1\leq q < N$.  Then there is an exact sequence
\[ 0\to H^N_{alg}(\pi;\Z) \to H^N(X;\Z) \to H^N(\tilde{X};\Z)^\pi \to H^{N+1}_{alg}(\pi;\Z)  \]
where $\pi=\pi_1(X)$.
\end{prop}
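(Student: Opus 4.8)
The plan is to read off the exact sequence from the Cartan--Leray spectral sequence of the universal covering $\tilde X \to X$, which by the theorem quoted above has $E_2$-page $E_2^{p,q} \cong H^p_{alg}(\pi; H^q(\tilde X;\Z))$ and converges to $H^*(X;\Z)$ (only the module structure is needed). First I would record the shape of $E_2$ in low total degree. Since $\tilde X$ is path-connected, $H^0(\tilde X;\Z)=\Z$ with trivial $\pi$-action, so the bottom row is $E_2^{p,0}=H^p_{alg}(\pi;\Z)$; by hypothesis $H^q(\tilde X;\Z)=0$ for $1\le q\le N-1$, so the rows $q=1,\dots,N-1$ vanish identically, and the next nonzero row is $q=N$ with $E_2^{p,N}=H^p_{alg}(\pi;H^N(\tilde X;\Z))$. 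In particular $E_2^{0,N}=H^0_{alg}(\pi;H^N(\tilde X;\Z))=H^N(\tilde X;\Z)^\pi$ and $E_2^{N+1,0}=H^{N+1}_{alg}(\pi;\Z)$.

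Next I would run the standard edge-homomorphism and transgression bookkeeping. Because the only entries of total degree $N$ that can be nonzero are $E_2^{N,0}$ and $E_2^{0,N}$, the filtration of $H^N(X;\Z)$ collapses to a short exact sequence $0\to E_\infty^{N,0}\to H^N(X;\Z)\to E_\infty^{0,N}\to 0$. Inspecting differentials: a $d_r$ ($r\ge 2$) into $E_r^{N,0}$ would have source $E_r^{N-r,r-1}$, which is nonzero only if $r-1=0$ or $r-1\ge N$, impossible together with $N-r\ge 0$ for $r\ge 2$; and $d_r$ out of $E_r^{N,0}$ lands in negative $q$. Hence $E_\infty^{N,0}=E_2^{N,0}=H^N_{alg}(\pi;\Z)$. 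Likewise nothing hits $E_r^{0,N}$ (its incoming differentials have negative $p$ source), the only potentially nonzero differential leaving it is the transgression $d_{N+1}:E_{N+1}^{0,N}\to E_{N+1}^{N+1,0}$, and the sparse pattern of vanishing rows forces $E_{N+1}^{0,N}=E_2^{0,N}=H^N(\tilde X;\Z)^\pi$ and $E_{N+1}^{N+1,0}=E_2^{N+1,0}=H^{N+1}_{alg}(\pi;\Z)$. Therefore $E_\infty^{0,N}=\ker\!\big(d_{N+1}:H^N(\tilde X;\Z)^\pi\to H^{N+1}_{alg}(\pi;\Z)\big)$, and splicing this description into the short exact sequence produces precisely
\[ 0\to H^N_{alg}(\pi;\Z)\to H^N(X;\Z)\to H^N(\tilde X;\Z)^\pi\to H^{N+1}_{alg}(\pi;\Z), \]
with the last arrow the transgression $d_{N+1}$ and the middle two the inflation and restriction maps.

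The argument is essentially formal once the spectral sequence is in hand, so there is no serious obstacle; the points that deserve a little care are the identification of the edge terms --- that $H^0(\tilde X;\Z)$ carries the trivial $\pi$-module structure, which uses connectedness of the total space of the cover, and that $H^0_{alg}(\pi;M)=M^\pi$ --- together with the bidegree count verifying that the only surviving differential among the relevant terms is the single transgression $d_{N+1}$. If one prefers to sidestep invoking the convergence of a (multiplicative) spectral sequence, the same four-term exact sequence can be extracted directly as the low-degree exact sequence attached to the fibration $X\to B\pi$ classifying $\tilde X\to X$ (Proposition~\ref{prop:lift} applied along that fibration), but the Cartan--Leray route above is the most economical given the machinery already assembled.
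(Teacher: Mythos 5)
Your proof is correct and follows essentially the same route as the paper: both extract the four-term sequence from the Cartan--Leray spectral sequence of $\tilde X\to X$ by noting that the rows $1\le q\le N-1$ vanish and splicing the edge short exact sequence for $H^N(X;\Z)$ with the kernel description of $E_\infty^{0,N}$ via the transgression. Your differential bookkeeping is in fact slightly more careful than the paper's (which misindexes the transgression as a $d_N$ rather than $d_{N+1}$), but the substance is identical.
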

\begin{proof}
Consider the Cartan-Leray spectral sequence for the covering $\tilde{X}\to X$ with $E_2^{p,q}=H^p_{alg}(\pi; H^q(\tilde{X};\Z))$, which is zero when $1\leq q < N$. Therefore, the first non-trivial differential is $d:E_N^{0,N}\to E_N^{N+1,0}$. The result follows from the family of exact sequences
$$
 0 \to E_\infty^{0,n} \to E_n^{0,n} \to E_n^{n+1,0}\to E_\infty^{n+1,0} \to 0 
 $$
for $n\leq N$, and in particular for $n=N-1$ and $n=N$, as well as the exact sequence
$$
 0\to E_\infty^{n,0} \to H^n(X;\Z) \to E_\infty^{0,n} \to 0  
 $$
in the same range. Indeed, splicing these two sequences together gives the desired sequence, since $H^p_{alg}(\pi; H^0(\tilde{X};\Z))=H^p_{alg}(\pi;\Z)$ and $H^0_{alg}(\pi;H^q(\tilde{X};\Z))\cong H^q(\tilde{X};\Z)^\pi$ (see \cite{Weibel}). 
\end{proof}

\subsection*{The Bockstein spectral sequence}

In Chapters \ref{chapter:liegroups} and \ref{chapter:calc} it will be necessary to extract information about cohomology with integer coefficients using cohomology with $\Z_p$ coefficients for every prime $p$.   The main tool used in this setting is the Bockstein spectral sequence, which will be reviewed here.

Let $p$ be prime, and consider the long exact sequence in cohomology 
$$
\cdots \to H^q(X;\Z) \toby{p} H^q(X;\Z) \toby{\mathrm{red}} H^q(X;\Z_p) \toby{\tilde\beta} H^{q+1}(X;\Z) \to H^{q+1}(X;\Z) \to \cdots
$$
corresponding to the short exact sequence of coefficient groups $0\to \Z \toby{p} \Z \toby{\mathrm{red}} \Z_p \to 0$, where $p:\Z\to \Z$ denotes multiplication by $p$, and $\mathrm{red}$ denotes reduction modulo $p$.   This long exact sequence rolls up into an exact couple,
\begin{diagram}[h=1.8em]
D^{q} &&&D^{q+1}&&&D^{q+2}&& \\
    &\rdTo(5,2)^p&&&\rdTo(5,2)^p &&&&\\
    &&D^{q-1}&&&D^{q}&&&D^{q+1}\\
\uTo<{\tilde\beta} &\ldTo^{\mathrm{red}}&&\uTo<{\tilde\beta}&\ldTo{\mathrm{red}}&&\uTo<{\tilde\beta} &\ldTo^{\mathrm{red}}&\\
E^{q-1} &&&E^q&&&E^{q+1}&& \\
\end{diagram}
with $E^*=H^*(X;\Z_p)$ and $D^*=H^*(X;\Z)$, which gives rise to a spectral sequence called the \emph{Bockstein spectral sequence}.  

From the diagram of short exact sequences,
\begin{diagram}
0 	& \rTo	&  \Z	& \rTo^p 	& \Z	& \rTo	& \Z_p	& \rTo	& 0 \\	
	&		& \dTo>{\mathrm{red}}	& 		& \dTo	&		&\dEquals&	& \\	
0 	& \rTo	&  \Z_p 	& \rTo 	& \Z_{p^2}	& \rTo	& \Z_p	& \rTo	& 0 \\	
\end{diagram}
it is clear that the $d^1$ differential is  $d^1=\mathrm{red}\circ\tilde\beta=\beta:H^q(X;\Z_p) \to H^{q+1}(X;\Z_p)$, the connecting homomorphism corresponding to the long exact sequence in cohomology of the bottom row.  The $d^r$ differential is denoted $\beta^{(r)}$.

The Bockstein spectral sequence is singly graded and has the effect of filtering out $p^r$-torsion in $H^*(X;\Z)$.  The following proposition summarizes its main features. See \cite{S}, \cite{Mc}, or \cite{Weibel} for more precise statements and details.

\begin{propnonumber}
If $H^*(X;\Z)$ is finitely generated in each dimension, then the Bockstein spectral sequence converges, and 
$$
E_\infty \cong (H^*(X;\Z))_{\mathrm{free}}\otimes \Z_p,
$$
where $A_\mathrm{free}=A/A_{\mathrm{torsion}}$ denotes the free part of the finitely generated abelian group $A$. Moreover, let  $y=\mathrm{red}(z)\in H^q(X;\Z_p)$ be non-zero for some $z \in H^q(X;\Z)$.  If $y$ survives to $E_r$ and becomes $0$ in $E_{r+1}$ (because it is in the image of $\beta^{(r)}$), then (ignoring torsion prime to $p$) $z$ generates a $\Z_{p^r}$-summand in $H^q(X;\Z)$. 
\end{propnonumber}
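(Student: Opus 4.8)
The plan is to establish the statement in two independent pieces: first the identification of $E_\infty$ with $(H^*(X;\Z))_{\mathrm{free}}\otimes\Z_p$, and then the interpretation of a class that dies at stage $r$ as detecting a $\Z_{p^r}$-summand. For the first piece I would start from the exact couple displayed above, with $D^*=H^*(X;\Z)$ and $E^*=H^*(X;\Z_p)$, and unwind what the derived couples compute. Because $H^*(X;\Z)$ is finitely generated in each degree, the structure theorem for finitely generated abelian groups lets me write $H^q(X;\Z)\cong \Z^{b_q}\oplus T_q$ with $T_q$ finite; I would then split $T_q$ further as a sum of cyclic $p$-groups $\Z_{p^{e}}$ and a part $T_q'$ of order prime to $p$. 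The key observation is that multiplication by $p$, the map $i$ in the couple, is an isomorphism on $T_q'$ and a monomorphism (up to that prime-to-$p$ part) on the free part, so after one passage to the derived couple the prime-to-$p$ torsion is annihilated and plays no further role. The heart of the computation is to check that for a single summand $\Z_{p^e}$, the associated piece of $E_1=H^*(X;\Z_p)$ (which contributes a $\Z_p$ in degree $q$ from reduction and a $\Z_p$ in degree $q+1$ from the connecting map $\tilde\beta$) supports exactly one nontrivial differential, namely $\beta^{(e)}$, linking these two classes, so that this pair disappears at $E_{e+1}$. Meanwhile each free $\Z$ summand contributes a $\Z_p$ that is a permanent cycle and never a boundary. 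Summing over all summands gives $E_\infty\cong\bigoplus_q \Z^{b_q}\otimes\Z_p = (H^*(X;\Z))_{\mathrm{free}}\otimes\Z_p$, with convergence meaning the spectral sequence stabilizes in each degree (which follows from finite generation, since in each fixed degree only finitely many summands occur and each dies by a bounded stage).

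For the second piece, suppose $y=\mathrm{red}(z)\ne 0$ survives to $E_r$ and is hit by $\beta^{(r)}$, i.e. becomes zero in $E_{r+1}$. I would trace this statement back through the definition of the derived couple: surviving to $E_r$ means $z$ is divisible by $p^{r-1}$ in $H^q(X;\Z)$ modulo lower filtration, equivalently the class is not killed by $\beta^{(1)},\dots,\beta^{(r-1)}$; and being in the image of $\beta^{(r)}$ means there is a class $w$ in $H^{q-1}(X;\Z_p)$ with a $\Z$-lift whose image under $\tilde\beta$ is $p^{r-1}$ times a generator of the relevant cyclic group. Chasing the two short exact sequences of coefficient groups that define the $d^1$ and iterated Bocksteins, this forces $z$ to generate a cyclic subgroup of order exactly $p^r$ in $H^q(X;\Z)$ after discarding the prime-to-$p$ torsion, which is the claim. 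The cleanest way to package this is to say: the filtration on $E_*$ is the $p$-adic filtration on $H^*(X;\Z)\otimes\Z_p$ coming from the long exact sequence, and the pages record precisely at which power of $p$ a given torsion generator becomes visible.

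The main obstacle I expect is bookkeeping rather than conceptual difficulty: one must be careful that the spectral sequence is only singly graded, so the usual $(p,q)$-indexed convergence arguments have to be replaced by a direct argument that in each total degree the terms stabilize — this is where finite generation in each dimension is essential, and where a sloppy argument could silently assume more. A secondary subtlety is correctly handling the prime-to-$p$ torsion: it must be checked that it contributes nothing to $E_1$ at all (since $\Z_{p^e}\otimes\Z_p=\Z_p$ only for the $p$-primary part, and a $\Z_m$ with $\gcd(m,p)=1$ has $\mathrm{red}$ and $\tilde\beta$ both landing in $0$), so the phrase ``ignoring torsion prime to $p$'' is not an approximation but literally the statement that such torsion is invisible to this spectral sequence. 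I would also want to cite \cite{S} or \cite{Weibel} for the standard verification that the $d^1$ differential is indeed the mod-$p$ Bockstein $\beta$, as indicated already in the text preceding the proposition, rather than re-deriving it.
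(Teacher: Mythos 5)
The paper does not actually prove this proposition --- it states it as a summary and defers to \cite{S}, \cite{Mc}, and \cite{Weibel} --- so there is no in-text argument to compare against; what you have written is essentially the standard textbook proof, and your overall strategy is sound. One remark on efficiency: rather than splitting $H^q(X;\Z)$ via the structure theorem and the (non-natural) universal coefficient splitting of $H^q(X;\Z_p)$ and then tracking each cyclic summand separately, it is cleaner to observe that the $r$-th derived couple has $D_r=p^{r-1}H^*(X;\Z)$, which by finite generation stabilizes (in each degree) to a group with no $p$-torsion once $r$ exceeds the exponent of the $p$-primary part. Then $i_r$ is injective, so $\mathrm{im}\,k_r=\ker i_r=0$, all further differentials vanish, and exactness of the derived couple gives $E_r\cong D_r/pD_r\cong (H^*(X;\Z))_{\mathrm{free}}\otimes\Z_p$ in one stroke; this delivers convergence and the identification of $E_\infty$ simultaneously and sidesteps the naturality worry about the splitting. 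Your summand-by-summand bookkeeping reaches the same place but does more work.

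Two small corrections to the bookkeeping. First, the degrees in your parenthetical are reversed: a $\Z_{p^e}$ summand of $H^{n}(X;\Z)$ contributes the class $\mathrm{red}(z)$ in degree $n$ and a partner class in degree $n-1$ (the one with nontrivial $\tilde\beta$, coming from the $\mathrm{Tor}$ term), and $\beta^{(e)}$ carries the degree-$(n-1)$ class onto the degree-$n$ one; as written you have the reduction class one degree too low relative to the $\tilde\beta$ class. Second, in the detection argument the phrase ``a class $w$ with a $\Z$-lift whose image under $\tilde\beta$ is $p^{r-1}$ times a generator'' is garbled: $\tilde\beta$ is applied to the mod-$p$ class $w$ itself, and the correct statement is that $d_r[w]=[\mathrm{red}(z')]$ means $\tilde\beta(w)=p^{r-1}z'$ for some integral $z'$; since $\mathrm{im}\,\tilde\beta=\ker(\times p)$, this forces $p^rz'=0$ while $p^{r-1}z'\neq 0$, which is exactly the order-$p^r$ claim. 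Also note that any $y=\mathrm{red}(z)$ is automatically a permanent cycle (because $\tilde\beta\circ\mathrm{red}=0$), so ``survives to $E_r$'' here only means ``is not a boundary before stage $r$''; your gloss in terms of divisibility of $z$ conflates this with membership in the image of $j_r$ and should be dropped or reworded.
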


\begin{remark}
The statement regarding convergence says that if a non-zero cohomology class $x\in H^q(X;\Z_p)$ survives to $E_\infty$ (i.e.~$\beta^{(j)}(x)=0$ for all $j$ and $x$ is not in the image of $\beta^{(r)}$ for any $r$), then $x$ is the reduction mod $p$ of a torsion free class $z\in H^q(X;\Z)$ that is not divisible by $p$.  
\end{remark}

\begin{remark} \label{remark:bockstein}
The above proposition shows how to verify that  a given element $z\in H^q(X;\Z)$ generates a $\Z_m$ summand.  Write $m=p_1^{r_1} \cdots p_s^{r_s}$, so that 
$$
\Z_m\cong \Z_{p_1^{r_1}} \oplus \cdots \oplus \Z_{p_s^{r_s}},
$$
and let $y_i=\mathrm{red}(z)\in H^q(X;\Z_{p_i})$  for $i=1, \ldots, s$.  According to the proposition, it would suffice to verify $y_i$ survives to $E_{r_i}$ but lies in the image of $\beta^{(r_i)}$ in the mod $p_i$ Bockstein spectral sequence, for each prime $p_i$.  
\end{remark}

The following example computes the mod 2 Bockstein spectral sequence  for the Moore space $P^n(4)$ directly from the definition, and verifies the above proposition in that case. 

\begin{eg} The Moore space $P^n(4)$
\end{eg}

Let $P^n(4)$ be the CW-complex obtained by attaching an $n$-cell to the sphere $S^{n-1}$ via the attaching map $4:S^{n-1} \to S^{n-1}$ of degree 4.  In other words, $P^n(4)$ is the cofibre of $4:S^{n-1} \to S^{n-1}$. The long exact sequence in cohomology easily gives the cohomology of $P^n(4)$; namely, $\tilde{H}^q(P^n(4);\Z)\cong\Z_4$ for $q=n$, and $0$ otherwise. In fact, the same long exact sequence with $\Z_m$  coefficients, where $m=2$ or $4$, yields
$$ \tilde{H}^q(P^n(4);\Z_m) \cong \left\{ \begin{array}{cl} 
\Z_m & \text{if }q=n-1\text{, or }n \\
0 & \text{otherwise}.
\end{array} \right.
$$

The Bockstein spectral sequence has $E_1^*=H^*(P^n(4);\Z_2)$, with the only possible non-trivial differential $\beta:H^{n-1}(P^n(4);\Z_2)\to H^{n}(P^n(4);\Z_2)$.  To compute $\beta$, recall the exact couple formalism:
\begin{diagram}[h=2em]
H^{n}(P^n(4);\Z) &&&0&&&    && \\
    &\rdTo(5,2)^2&&&    &\\
    &&0&&&H^{n}(P^n(4);\Z)     \\
\uTo<{\tilde\beta} &\ldTo^{\mathrm{red}}&&\uTo&\ldTo{\mathrm{red}}&\\
H^{n-1}(P^n(4);\Z_2) &&&H^{n}(P^n(4);\Z_2)&& \\
\end{diagram}
It is clear that $\tilde\beta$ is non-trivial, since the above sequence is exact.  Therefore, $\tilde\beta$ sends the non-trivial element in $H^{n-1}(P^n(4);\Z_2)\cong \Z_2$ to the element of order 2 in $H^{n}(P^n(4);\Z)\cong \Z_4$, from which it follows that $\beta=\mathrm{red}\tilde\beta=0$.  
Therefore, $E_2^*=E_1^*$.  

To compute $\beta^{(2)}$, note that $D'=2 H^*(P^n(4);\Z)$, and recall that $\mathrm{red}':D'\to E'=E_2$ is defined by $\mathrm{red}'(2x)=\mathrm{red}(x)$.  Therefore $\mathrm{red}'$ is an isomorphism.  And since $\tilde\beta$ sends a generator of $H^{n-1}(P^n(4);\Z_2)$ to twice a generator of $H^{n}(P^n(4);\Z)$, the induced map $\tilde\beta'$ is an isomorphism.  Therefore, $\beta^{(2)}:H^{n-1}(P^n(4);\Z_2) \to H^{n}(P^n(4);\Z_2)$ is an isomorphism as well, and the spectral sequence collapses with $E_\infty=E_3=0$.  

To verify the proposition, note that the diagram of long exact sequences
\begin{diagram}
H^{n-1}(S^{n-1};\Z) & \rTo^4 & H^{n-1}(S^{n-1};\Z) & \rTo & H^{n}(P^n(4);\Z) & \rTo & 0 \\
\dTo & & \dTo & & \dTo & & \\
H^{n-1}(S^{n-1};\Z_2) & \rTo^0 & H^{n-1}(S^{n-1};\Z_2) & \rTo & H^{n}(P^n(4);\Z_2) & \rTo & 0 \\
\end{diagram}
shows that the generator $y\in H^{n}(P^n(4);\Z_2)$ is the reduction mod $2$ of a generator in $H^{n}(P^n(4);\Z) \cong \Z_4$.  And if $x\in H^{n-1}(P^n(4);\Z_2)\cong \Z_2$ denotes the non-trivial element, the above shows that $\beta^{(2)}(x)=y$, as required. \hfill $\qed$
\\

The following elementary example illustrates the underlying method of the calculations appearing in Chapter \ref{chapter:calc}.

\begin{eg} The pinch map $P^n(4)\to S^{n+1}$
\end{eg}

Recall from the above example that the $n$-skeleton of  $P^n(4)$ is the sphere $S^n$.  Collapsing the $n$-skeleton of $P^n(4)$ to a point clearly yields the sphere $S^{n+1}$, and the resulting map $f:P^n(4)\to S^{n+1}$ is called the \emph{pinch map}.  This example computes the induced map $f^*$ with integer coefficients using the tools described above.  Note that this is a trivial computation, which essentially appeared in the previous example; however, the discussion that follows is meant to illustrate the methods appearing in Chapter \ref{chapter:calc}.  

First, observe that $f$ is the map which sends $P^n(4)$ to the cofibre of the inclusion of the $n$-skeleton $S^n \hookrightarrow P^n(4)$.  And from the corresponding long exact sequence,
$$
0 \to H^n(P^n(4);\Z_2) \to H^n(S^n;\Z_2) \to H^{n+1}(S^{n+1};\Z_2) \toby{f^*} H^{n+1}(P^n(4);\Z_2) \to 0
$$
simply knowing that each of the groups is $\Z_2$ (by the previous example) shows that $f^*$ is an isomorphism with $\Z_2$ coefficients.   

To compute $f^*$ over the integers, notice that the generator $a\in H^{n+1}(S^{n+1};\Z_2)$ is the reduction mod $2$ of a generator in $H^{n+1}(S^{n+1};\Z)$.  The above verifies that $f^*(a)=y$, using the notation of the previous example (which also showed that $y$ is in the image of $\beta^{(2)}$).  Therefore, by the commutative diagram
\begin{diagram}
H^*(S^{n+1};\Z) & \rTo^{f^*} & H^*(P^n(4);\Z) \\
\dTo>{\mathrm{red}} & & \dTo>{\mathrm{red}} \\
H^*(S^{n+1};\Z_2) & \rTo^{f^*} & H^*(P^n(4);\Z_2) \\
\end{diagram}
$f^*:H^{n+1}(S^{n+1};\Z) \to H^{n+1}(P^n(4);\Z)$ is reduction mod $4$. 
\hfill $\qed$

\chapter{Topology of Lie Groups} \label{chapter:liegroups}

This chapter recalls some basic definitions and facts regarding  Lie groups and their Lie algebras, mainly to establish notation and for convenient reference. In particular, the classification of compact simple Lie groups, and results concerning their cohomology are reviewed.  The reader may find it useful to consult \cite{MT}, \cite {Ka}, \cite{BtD}, or \cite{H}  for details.

\section{Simple Lie groups} \label{sec:liegroupsprelim}

Let $G$ be a compact Lie group with Lie algebra $\mathfrak{g}$.  Recall that a non-abelian (i.e. $[\, , \,]\neq0$) Lie algebra $\mathfrak{g}$ is \emph{simple} if $\mathfrak{g}$ is the only non-trivial ideal, and that a Lie group $G$ is simple if its Lie algebra $\mathfrak{g}$ is simple.  Much of this work deals with compact  simple Lie groups, for which one has the well known Cartan-Killing classification. An exhaustive list of such Lie groups  appears in Table \ref{table:list} below.  

The classification of compact simple Lie groups follows from the classification of   (finite dimensional) simple Lie algebras.  Each such Lie algebra determines a connected, simply connected Lie group (Lie's third theorem), and hence the classification of simple Lie algebras determines all possible simply connected compact simple Lie groups $\tilde{G}$.  The remaining compact simple Lie groups are necessarily quotients $G=\tilde{G}/Z$, where $Z\cong \pi_1(G)$ is a central subgroup of $\tilde{G}$. 


Table \ref{table:list} below lists each simply connected compact simple Lie group $\tilde{G}$ with center $Z(\tilde{G})$ and the possible quotients $G=\tilde{G}/Z$ by central subgroups $Z \subset Z(\tilde{G})$.  
Note that the center of the spinor group $Spin(4n)$ has two central subgroups of order 2. If $a$ denotes the generator of the kernel of the double cover $Spin(4n)\to SO(4n)$, and $b$ denotes another generator of the center $Z(Spin(4n))\cong\Ztwo \oplus \Ztwo$, then the  \emph{semi-spinor group} $Ss(4n)$ is the quotient $Spin(4n)/\langle b \rangle$ \cite{IKT}.

\begin{table}[h] \label{table:list}
\begin{center}
\caption{Compact simple Lie groups}
\begin{tabular}{|c|c|c|c|}
\hline
 $\tilde{G}$ & $Z(\tilde{G})$ & $Z\subset Z(\tilde{G})$ & $G=\tilde{G}/Z$   \\
  \hline\hline
  \multirow{2}{*}{$SU(n)$}   & \multirow{2}{*}{$\Zn$}  & $\Zn$ & $PU(n)$  \\
  		   &  &$\Z_k$, $k|n$& $SU(n)/\Z_k$   \\
  \hline
  $Sp(n)$  &$ \Ztwo$  & $\Ztwo$ & $PSp(n)$  \\
  \hline
    $Spin(2n+1)$   & $\Ztwo$ &  $\Ztwo$ & $SO(2n+1)$   \\
  \hline
  $Spin(4n+2)$ & \multirow{2}{*}{$\Z_4$} & $\Z_4$ & $PO(4n+2)$   \\
	 &	& $\Ztwo$ & $SO(4n+2)$    \\
\hline
 \multirow{3}{*}{$Spin(4n)$}   & \multirow{3}{*}{$\Ztwo \times \Ztwo$} & $ \Ztwo \times \Ztwo$ & $PO(4n) $  \\
  	  & &  $\Ztwo=\langle a\rangle$ & $SO(4n)$   \\
	  & & $\Ztwo=\langle b\rangle$ & $Ss(4n)$ \\
\hline
  $E_6$  & $\Z_3$ & $\Z_3$ &$PE_6$   \\
  \hline
  $E_7$  &  $\Ztwo$& $\Ztwo$ & $PE_7$   \\
\hline
$E_8$, $F_4$, $G_2$ & 0 & \multicolumn{1}{c}{} & \multicolumn{1}{c}{} \\
\cline{1-2}
\end{tabular}
\end{center}
\end{table}

\section{Cohomology of Lie groups}\label{sec:coholiegroup}

The calculations appearing in Chapter \ref{chapter:calc} use known results regarding the cohomology of Lie groups.  This section reviews some elementary  results surrounding the cohomology of Lie groups that will be used later.

The structure maps of a topological group endow the cohomology ring with some extra algebraic structure.  Recall that the multiplication map $\mu:G\times G\to G$ of a topological group, or more generally an $H$-group (see Section \ref{sec:wheadprod}), endows the cohomology ring $H^*(G;\mathbb{F})$, where $\mathbb{F}$ is a field,  with a coproduct
$$
\mu^*: H^*(G;\mathbb{F}) \to H^*(G\times G;\mathbb{F}) \cong H^*(G;\mathbb{F})\otimes H^*(G;\mathbb{F}).
$$
For $x\in H^n(G;\mathbb{F})$, write $\mu^*x=x\otimes 1 + 1 \otimes x + \sum x_i' \otimes x_j''$, for some $x_k'$ and $x_k''$ of degree $k<n$.  Often, one writes $\bar\mu^*x= \sum x_i' \otimes x_j'' $, which is known as the \emph{reduced coproduct}.  
The diagonal map $\Delta:G\to G\times G$ induces the multiplication,
$$
\Delta^*:H^*(G;\mathbb{F})\otimes H^*(G;\mathbb{F}) \cong H^*(G\times G;\mathbb{F}) \to H^*(G;\mathbb{F}),
$$
which makes $H^*(G;\mathbb{F})$  a \emph{Hopf algebra} (see \cite{MM}). 
Appendix \ref{chapter:app}  lists the cohomology Hopf algebras of all non-simply connected compact simple Lie groups.

The induced homomorphism on cohomology for the inversion map $c(g)=g^{-1}$ of a topological group may be computed from the relation $\mu \circ(\mathrm{id} \times c) \circ \Delta \approx *$.  Indeed, suppose first that $x\in H^*(G;\mathbb{F})$ is \emph{primitive} (i.e. $\mu^*x=x\otimes 1 + 1 \otimes x$).  Then,
\begin{align*}
0&=\Delta^*(\mathrm{id}\times c)^*\mu^* x \\
&= \Delta ^*(\mathrm{id}\times c)^*(x\otimes 1 + 1\otimes x) \\
&= \Delta ^*(x\otimes 1 + 1\otimes c^*x) \\
&=x+c^*x
\end{align*}
and hence $c^*x=-x$.  The non-zero cohomology classes of smallest degree are necessarily primitive.  Therefore, if $x$ is not primitive, $c^*x$ may be computed recursively from the knowledge of $c^*x'$ for all $x'$ of smaller degree.

\subsection*{Degree 3 cohomology}

It will be helpful in subsequent chapters to be familiar with some aspects of degree 3 cohomology of Lie groups, and this section recalls some important features of $H^3(G;R)$ with coefficient ring $R=\Z$ and $R=\R$.

For a Lie group $G$ with Lie algebra $\mathfrak{g}$, recall that the left-invariant, and right invariant Maurer-Cartan forms, denoted $\theta^L$ and $\theta^R$ respectively,  are the $\mathfrak{g}$-valued 1-forms defined by 
$$
\theta^L_g(v)= L_{g^{-1}*}(v), \quad \text{and} \quad \theta ^R_g(v)=R_{g^{-1}*}(v),
$$
for all $v \in T_gG$, where $L_h$ and $R_h$ denote left and right multiplication by $h$, respectively.   Let $(-,-)$ denote an $Ad$-invariant inner product on $\mathfrak{g}$.  

For any space $X$, recall that the Lie bracket $[-,-]:\mathfrak{g} \otimes \mathfrak{g} \to \mathfrak{g}$ induces  a map  $[-,-]:\Omega^*(X;\mathfrak{g} \otimes \mathfrak{g}) \to \Omega^*(X;\mathfrak{g})$, which in turn induces a map $[-,-]:\Omega^*(X;\mathfrak{g}) \otimes \Omega^*(X;\mathfrak{g}) \to \Omega^*(X;\mathfrak{g})$ given by the composition with the wedge product:
$$
\Omega^*(X;\mathfrak{g}) \otimes \Omega^*(X;\mathfrak{g}) \toby{\wedge} \Omega^*(X;\mathfrak{g} \otimes \mathfrak{g}) \toby{[-,-]} \Omega^*(X;\mathfrak{g}).
$$
Similarly, the inner product on $\mathfrak{g}$ induces a map $(-,-):\Omega^*(X;\mathfrak{g})\otimes \Omega^*(X;\mathfrak{g}) \to \Omega^*(X)$.  

The canonical 3-form $\eta \in \Omega^3(G)$ is defined using the $Ad$-invariant inner product by
$$
\eta=\frac{1}{12}(\theta^L,[\theta^L,\theta^L])=\frac{1}{12}(\theta^R,[\theta^R,\theta^R]);
$$
therefore, $\eta$ is bi-invariant and hence closed (by Proposition 12.6 in  \cite{Bredon}, for example),  defining  a (de Rham) cohomology class $[\eta] \in H^3(G;\R)$.  Recall that $H^*(G;\R)$ may be calculated from the complex of bi-invariant forms on $G$ with trivial differential, and hence $H^*(G;\R)\cong (\wedge^*\mathfrak{g}^*)^G$, from which it follows that $H^3(G;\R)\cong\R$ when $G$ is simple (see \cite{Bredon}).

For a compact simple Lie group $G$ whose fundamental group $Z=\pi_1(G)$ is cyclic, $H^3(G;\Z)\cong \Z$.  Only $G=PO(4n)$ in Table \ref{table:list} has a non-cyclic fundamental group, in which case $H^3(PO(4n);\Z) \cong \Z\oplus \Z_2$.   One way to see this is as follows.  Let $\pi:\tilde{G}\to G$ be the universal covering homomorphism, with kernel $Z\subset \tilde{G}$.  Since $Z$ acts on $\tilde{G}$ by translation, it induces the identity map on cohomology.  Indeed, for any $z\in Z$, choosing a path in the connected group $\tilde{G}$ yields a homotopy $L_z\sim 1$, where $1:G\to G$ denotes the identity map.  In other words, $Z$ acts trivially on $H^*(\tilde{G};\Z)$, which implies that the local coefficient system $\mathcal{H}^*(\tilde{G};\Z)$ on $BZ$ (see Section \ref{sec:classifyingspaces}) is trivial.  Hence, in the Serre spectral sequence for the fibration $G\to BZ$ with homotopy fibre $\tilde{G}$,  $E_2  \cong H^*(BZ; H^*(\tilde{G};\Z))$. Since $\tilde{G}$ is $2$-connected, $E_2^{p,q}=0$ for $q=1,2$, and by the Universal Coefficient theorem and the Hurewicz theorem $H^3(\tilde{G};\Z) \cong H_3(\tilde{G};\Z)\cong \pi_3(\tilde{G}) \cong \Z$ (see \cite{MT}).  Therefore, $E_2^{0,3}=\Z$.  If $Z$ is cyclic, $H^3(BZ;\Z)=0$, and  $H^3(G;\Z)\cong \Z$ in this case.  If $Z\cong \Z_2\oplus \Z_2$, then $H^3(BZ;\Z)\cong \Z_2$, which shows that $H^3(PO(4n);\Z)\cong \Z\oplus \Z_2$.  

\begin{remark} The familiar fact that $H^3(G;\R)\cong \R$ may now be recovered by the Universal Coefficient theorem, and the discussion in the preceding paragraph.  In fact, since $\tilde{H}^*(BZ;\R)=0$, the above shows that the induced map $\pi^*:H^*(G;\R) \to H^*(\tilde{G};\R)$ is an isomorphism.
\end{remark}

Note also that the covering $\pi:\tilde{G} \to G$ induces an isomorphism of Lie algebras, which will actually be used as an identification.  In other words, by abuse of notation, $\pi^*\eta=\eta$.  
By the above discussion, it follows that some multiple of $[\eta]$ lies in the image of the coefficient homomorphism $\iota_\R:H^*(\tilde{G};\Z) \to H^*(\tilde{G};\R)$, or equivalently, for a certain choice of $Ad$-invariant inner product on $\mathfrak{g}$, $[\eta]=\iota_\R(1)$, where $1\in H^3(\tilde{G};\Z)$ is a generator.  It is well known (see \cite{PS}) that the basic inner product (see Remark \ref{remark:innerproducts} in Section \ref{sec:liethy}) has this property.

Let $G$ act on itself by conjugation.  
There is an equivariant extension $\eta_G$ of the canonical 3-form $\eta$ with similar cohomological properties.  In the Cartan model of equivariant differential forms, $\eta_G$ is given by
$$
\eta_G(\xi)=\eta+\half(\theta^L+\theta^R,\xi).
$$
It is easy to check that $d_G\eta_G=0$, and hence defines a cohomology class $[\eta_G]\in H^3_G(G;\R)$.  Proposition \ref{lemma:simplyG2} below shows that $H^3_{\tilde{G}}(\tilde{G};\Z)\cong \Z$, and that $[\eta_G]=\iota_\R(1)$, where $1\in H^3_{\tilde{G}}(\tilde{G};\Z)$ is a generator.

\begin{prop} \label{lemma:simplyG2} If $G$ is simply
connected, then the canonical map $H_G^3(G) \to  H^3(G)$ is an isomorphism (with any coefficients).
\end{prop}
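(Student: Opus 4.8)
The plan is to compute $H^3_G(G;R)$ via the Serre spectral sequence of the Borel fibration $G\to G_G\to BG$ attached to the conjugation action, and to show that in total degree $3$ only the fibre term $H^3(G;R)$ survives. First I would assemble the low‑degree input: since $G$ is simply connected, $\pi_1(G)=\pi_2(G)=0$, so Hurewicz together with universal coefficients gives $H^1(G;R)=H^2(G;R)=0$ for every coefficient ring $R$; and since $\pi_n(BG)\cong\pi_{n-1}(G)$ while $\pi_2$ of any Lie group vanishes, $BG$ is $3$‑connected, hence $\tilde H^i(BG;R)=0$ for $i\le 3$. As $BG$ is simply connected the spectral sequence reads $E_2^{p,q}\cong H^p(BG;H^q(G;R))$ and converges to $H^*_G(G;R)$; in total degree $3$ the terms $E_2^{1,2}$ and $E_2^{2,1}$ vanish because $H^2(G;R)=H^1(G;R)=0$, and $E_2^{3,0}=\tilde H^3(BG;R)=0$, leaving $E_2^{0,3}=H^3(G;R)$ as the only possibly non‑zero one.

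Next I would check that no differential disturbs the column $E_*^{0,3}$. Nothing maps into it, and the differentials out of it, $d_r\colon E_r^{0,3}\to E_r^{r,4-r}$, have zero target for $r=2$ (target $E_2^{2,2}=0$), for $r=3$ (target a subquotient of $E_2^{3,1}=0$), and for $r\ge 5$ (negative fibre degree). The sole remaining possibility is the transgression $d_4\colon E_4^{0,3}\to E_4^{4,0}=H^4(BG;R)$, and this is where the real work is, since $H^4(BG;R)$ need not vanish. To kill it I would use that $\pi\colon G_G\to BG$ admits a section: the identity $e\in G$ is a conjugation‑fixed point, so $\{e\}_G=BG$ sits inside $G_G$ compatibly with the projection. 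Hence $\pi^*\colon H^*(BG;R)\to H^*_G(G;R)$ is split injective, which forces the bottom‑row edge map $E_2^{4,0}\twoheadrightarrow E_\infty^{4,0}$ to be an isomorphism; equivalently every differential into $E_*^{4,0}$, in particular $d_4\colon E_4^{0,3}\to E_4^{4,0}$, is zero.

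It then follows that $E_\infty^{0,3}=E_2^{0,3}=H^3(G;R)$, and since this is the only non‑zero piece of the associated graded of $H^3_G(G;R)$, convergence yields an isomorphism $H^3_G(G;R)\cong H^3(G;R)$. This isomorphism is the edge homomorphism of the spectral sequence, which is precisely restriction to the fibre $G\hookrightarrow G_G$, i.e.\ the canonical map of the statement (compare Section \ref{sec:classifyingspaces}, where this map is identified with $ev_0$). The main obstacle is thus the transgression $d_4$, and the key device for handling it is the section coming from the fixed point $e$; everything else is bookkeeping with the connectivity of $G$ and $BG$. (One could instead eliminate $d_4$ by comparing with the universal bundle $G\to EG\to BG$, but the section argument is shorter.)
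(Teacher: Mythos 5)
Your proof is correct, but it takes a genuinely different route from the paper's. The paper works with homotopy groups: it shows the fibre inclusion $p\colon G\to G_G$ induces isomorphisms on $\pi_k$ for $k\le 3$ via the long exact sequence of $G\to G_G\to BG$, where the only nontrivial point is that the connecting map $\partial\colon\pi_4(BG)\to\pi_3(G)$ vanishes; this is seen by factoring $\partial$ through the fibre inclusion $j\colon G\to G\times EG$ of the principal bundle $G\times EG\to G_G$, which is null homotopic because the conjugation orbit of $(e,*)$ lies in the contractible slice $\{e\}\times EG$. Hurewicz (both spaces being $2$-connected) then gives the isomorphism on $H_3$, hence on $H^3$ with any coefficients. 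Your argument stays entirely in cohomology: the Serre spectral sequence reduces everything to the transgression $d_4\colon E_4^{0,3}\to E_4^{4,0}=H^4(BG;R)$, and you kill it with the section $BG\cong\{e\}_G\hookrightarrow G_G$ coming from the conjugation-fixed point $e$. Note that both proofs ultimately exploit the same geometric fact — that $e$ is a fixed point — but package it differently. Your version has two advantages: it handles arbitrary coefficients directly without routing through Hurewicz and universal coefficients, and the section argument proves more than is needed here (split injectivity of $\pi^*$ in all degrees, so no differential ever hits the bottom row), which makes it a reusable complement to Lemma \ref{lemma:simplyG1}. The paper's version, in exchange, gives the slightly stronger conclusion that $p$ is an isomorphism on $\pi_k$ for $k\le3$. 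Your identification of the resulting isomorphism with the canonical map (the edge homomorphism, i.e.\ restriction to the fibre) is also correct and necessary to close the argument.
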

\begin{proof}  It will suffice to show that the inclusion $p:G \to G_G$ of the fiber in the Borel construction $G_G:=(G\times EG)/G \to BG$ induces an isomorphism  $p_\sharp:\pi_k(G)\toby{\cong} \pi_k(G_G)$ for $k=1$, $2$, and $3$.  Indeed, since $\pi_1(G)=\pi_2(G)=0$, $p_*:H_3(G;\Z) \to H_3(G_G;\Z)$ is an isomorphism by the Hurewicz Theorem.  Therefore the same is true for $p^*$ as well, which is the statement of the lemma.  

To see that the induced homomorphism $p_\sharp$ is an isomorphism on $\pi_k(-)$ for $k=1$, $2$, and $3$, consider the long exact sequence of homotopy groups for the fibration  $G \toby{p} G_G \to BG$.  For $k=1$ and $2$ this is clear, since $BG$ is $3$-connected, and for $k=3$, $p_\sharp: \pi_3(G) \to \pi_3(G_G)$ is surjective for the same reason.  To check that $p_\sharp$ is also injective, it is equivalent to check that the connecting homomorphism $\partial: \pi_4(BG) \to \pi_3(G)$ is zero.  This homomorphism may be factored as $\pi_4(BG) \cong \pi_3(G)  \toby{j_\sharp} \pi_3(G\times EG) \cong \pi_3(G)$, where  $j:G\to G\times EG$ includes $G$ as the fiber of $p':G\times EG \to G_G$. But $j$ is null homotopic, as it factors through the contractible space $EG$, sending $g\in G$ to $g\cdot(e,*)=(geg^{-1},g\cdot *)=(e,g\cdot *)$.  Hence $\partial=0$.
\end{proof}

\section{Alcoves and the Weyl group} \label{sec:liethy}

The center $Z(G)$ of a compact simple Lie group $G$ acts on $G$ by translation.  Since this action commutes with the conjugation action of $G$, $Z(G)$ acts on the set of conjugacy classes  in $G$.   As will be recalled below, if $G$ is simply connected, there is a bijection between the set of conjugacy classes in $G$ and an alcove of $G$, and the main purpose of this section is to describe the corresponding $Z(G)$-action on the alcove.  Such a description will take some time to develop, since there are various definitions and facts that will need to  be recalled from Lie theory.  
\\

Let $G$  be a compact simple Lie group.  Choose a maximal torus $T\subset G$---a subgroup $T$  isomorphic to $S^1\times \cdots \times S^1$  that is not contained in any other such subgroup---and let $\mathfrak{t}$ be its Lie algebra.  Choose an  inner product $(-,-)$ on $\mathfrak{g}$ that is invariant under the adjoint action $Ad:G\times \mathfrak{g} \to \mathfrak{g}$. 

\begin{remark} \label{remark:innerproducts} Since $\mathfrak{g}$ is simple, all $Ad$-invariant inner products are (negative) multiples of the \emph{Killing form} $\kappa(\xi,\zeta)=\mathrm{Trace}(\mathrm{ad}\xi \circ \mathrm{ad} \zeta)$ \cite{BtD}. The \emph{basic inner product} $B(-,-)$ is the invariant inner product on $\mathfrak{g}$ normalized to make long roots (see Remark \ref{remark:longroots} below) have length $\sqrt{2}$.
\end{remark}

A non-zero element $\alpha \in \mathfrak{t}^*$ is a \emph{root}, if the subspace $$
V_\alpha = \{v\in \mathfrak{g}_\mathbb{C} \, | \, [x,v]=2\pi i \alpha(x)v , \text{for all }x\in \mathfrak{t}\}
$$ is non-zero, where $\mathfrak{g}_\mathbb{C}=\mathfrak{g}\otimes \mathbb{C}$.


\begin{remark} \label{remark:longroots}
The roots of $G$ form an \emph{irreducible root system} for the vector space $\mathfrak{t}^*$.   Using the inner product to identify $\mathfrak{t}\cong \mathfrak{t}^*$, it can be shown (see \cite{BtD}, for example) that there are at most two different lengths for a root.  Consequently, roots are either \emph{long} or \emph{short}.
\end{remark}

Each root $\alpha$ determines hyperplanes $L_\alpha(q)=\alpha^{-1}(q) \subset \mathfrak{t}$, where $q\in \Z$.   Let $s_\alpha:\mathfrak{t}\to \mathfrak{t}$ denote reflection in the hyperplane $L_\alpha(0)$.  Then there is a unique vector $\alpha^\vee \in \mathfrak{t}$ called the \emph{co-root} (or \emph{inverse root}) characterized by the equation $s_\alpha(x)=x-\alpha(x)\alpha^\vee$ (Proposition V-2.13 in \cite{BtD}).   Using the inner product to identify $\mathfrak{t}^*\cong \mathfrak{t}$,  $\alpha^\vee=\frac{2}{(\alpha,\alpha)}\alpha$ so that the reflections $s_\alpha$ take the form
$$
s_\alpha(x)=x-2\frac{(\alpha,x)}{(\alpha,\alpha)}\alpha.
$$ 
The group $W$ generated by the reflections $s_\alpha$ is called the \emph{Weyl group} of $G$.  

Co-roots lie in $I=\ker \exp_T$, the integer lattice (Proposition V-2.16 in \cite{BtD}), and therefore the group $\Gamma$ generated by co-roots is a subgroup of $I$. Furthermore, $\pi_1(G) \cong I/\Gamma$ (Theorem V-7.1 in \cite{BtD}).  (Therefore the inclusion $\Gamma\subseteq I$ is an equality for simply connected Lie groups.)

The \emph{Weyl chambers} of $G$ are the closures of the connected components of $\mathfrak{t}\setminus \bigcup_{\alpha} L_\alpha(0)$, and the  \emph{alcoves}  are the closures of the connected components of  $\mathfrak{t}\setminus \bigcup_{\alpha,q} L_\alpha(q)$.  

An important fact that will be used later is that for simply connected Lie groups, points in an alcove are in one-to-one correspondence with the set of conjugacy classes in the group.  The main reason for this correspondence is that each element of the group is conjugate to exactly one element of the maximal torus that lies in $\exp(\Delta) \subset T$.  The following theorem summarizes the relevant facts.  (See Section 2 in Chapter IV of \cite{BtD} for proofs.)
Let $W_{ext}$ be the group generated by the Weyl group $W$ and the group $\Gamma$ of translations by co-roots.  Similarly, let $W'$ be the group generated by $W$ and the group of translations $I$ generated by the integral lattice.

\begin{thmnonumber} Let $G$ be a compact Lie group, and let $\Delta\subset \mathfrak{t}$ be an alcove.  
\begin{enumerate}
\item $W_{ext}=\Gamma\ltimes W$ and $W'=I\ltimes W$.
\item $\Delta$ is a fundamental domain for the $W_{ext}$-action on $\mathfrak{t}$.
\item There is a one-to-one correspondence between $\mathfrak{t}/W'$ and $Con(G)$, the set of conjugacy classes in $G$, given by $\xi + W' \mapsto Ad_G\cdot \exp \xi$.
\end{enumerate}
\end{thmnonumber}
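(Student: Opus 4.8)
\emph{Proof strategy.} The plan is to establish the three assertions in turn, regarding $W_{ext}$ and $W'$ throughout as subgroups of the affine group $\mathrm{Aff}(\mathfrak{t})=\mathfrak{t}\rtimes GL(\mathfrak{t})$, in which $W$ acts linearly fixing $0$ and the lattices $\Gamma\subseteq I$ act by translations $t_\lambda\colon x\mapsto x+\lambda$. For (1), the crux is that $W$ normalizes both translation lattices. Since $W=N(T)/T$ acts on $T$ by conjugation and this action commutes with $\exp_T$, the induced linear action on $\mathfrak{t}$ carries $I=\ker\exp_T$ to itself; and since $W$ permutes the roots it permutes the co-roots, hence preserves $\Gamma$. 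Concretely $w\,t_\lambda\,w^{-1}=t_{w\lambda}$, which lies in $\Gamma$ (resp.\ $I$) whenever $\lambda$ does, so $\Gamma\trianglelefteq W_{ext}$ and $I\trianglelefteq W'$. Moreover $\Gamma\cap W=\{e\}$ and $I\cap W=\{e\}$ inside $\mathrm{Aff}(\mathfrak{t})$, because a non-trivial translation is not a linear map. Since, by definition, $W$ together with $\Gamma$ (resp.\ $I$) generates $W_{ext}$ (resp.\ $W'$), these remarks yield the internal semidirect product decompositions $W_{ext}=\Gamma\ltimes W$ and $W'=I\ltimes W$.

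For (2), I would first identify $W_{ext}$ with the affine reflection group of the hyperplane arrangement $\{L_\alpha(q)\}$. The affine reflection in $L_\alpha(q)$ is $s_{\alpha,q}=t_{q\alpha^\vee}\circ s_\alpha$, which lies in $W_{ext}$; conversely $s_{\alpha,0}=s_\alpha$ and $t_{\alpha^\vee}=s_{\alpha,1}s_{\alpha,0}$ recover $W$ and the generators of $\Gamma$, so $W_{ext}$ is exactly the group generated by the reflections $s_{\alpha,q}$, all of which are isometries for the chosen inner product. Since $G$ is compact there are finitely many roots, so the arrangement is locally finite, its chambers are precisely the alcoves, and $W_{ext}$ is a discrete group acting properly on $\mathfrak{t}$. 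It then remains to prove the strict fundamental domain property: each $W_{ext}$-orbit meets the closed alcove $\Delta$ in exactly one point. Existence of a representative in $\Delta$ follows by picking a point of the orbit nearest to an interior point of $\Delta$ and showing no wall separates the two; uniqueness follows by induction on the number of walls separating two putative representatives, reflecting in one of them. I expect this last point---the fundamental domain theorem for affine reflection groups---to be the main obstacle in a self-contained treatment, though it can simply be quoted from the general theory of Coxeter groups.

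For (3), consider the composite $\mathfrak{t}\toby{\exp}T\hookrightarrow G\to\mathrm{Con}(G)$, $\xi\mapsto Ad_G\cdot\exp\xi$. It is surjective by the maximal torus theorem (every element of a compact connected Lie group is conjugate into $T$) together with the surjectivity of $\exp_T\colon\mathfrak{t}\to T$. To compute the fibres I would invoke the two classical facts that $N(T)/T=W$ and that two elements of $T$ are conjugate in $G$ precisely when they lie in a common $W$-orbit; combined with $\ker\exp_T=I$ this gives $Ad_G\cdot\exp\xi=Ad_G\cdot\exp\eta$ iff $\eta\in w\xi+I$ for some $w\in W$, i.e.\ iff $\xi$ and $\eta$ lie in the same $W'$-orbit (here using $W'=I\ltimes W$ from (1)). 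Hence the composite descends to the asserted bijection $\mathfrak{t}/W'\toby{\cong}\mathrm{Con}(G)$. In particular, when $G$ is simply connected $\Gamma=I$, so $W'=W_{ext}$ and $\Delta$ itself parametrizes $\mathrm{Con}(G)$---the fact highlighted before the theorem. The genuinely deep inputs here, the maximal torus theorem and the identification of $G$-conjugacy on $T$ with the $W$-action, would be cited rather than reproved.
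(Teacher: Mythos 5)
Your outline is sound, but note that the paper does not prove this theorem at all: it is quoted as background, with the proofs deferred to Br\"ocker--tom Dieck (the parenthetical ``See Section 2 in Chapter IV of \cite{BtD} for proofs'' immediately precedes the statement). What you have written is essentially the standard argument one finds there. Part (1) is complete as you state it: $W$ preserves $I=\ker\exp_T$ because conjugation by $N(T)$ commutes with $\exp_T$, it preserves $\Gamma$ because it permutes co-roots, $w\,t_\lambda\,w^{-1}=t_{w\lambda}$ gives normality, and a non-trivial translation is not linear, so the intersection is trivial. Part (2) correctly reduces to the fundamental domain theorem for the affine reflection group generated by the $s_{\alpha,q}$ (your identity $t_{\alpha^\vee}=s_{\alpha,1}s_{\alpha,0}$ checks out), which is reasonable to quote from Coxeter group theory exactly as the paper quotes the whole statement. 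Part (3) correctly assembles the maximal torus theorem, the fact that $G$-conjugacy on $T$ is the $W$-action, and $\exp\xi=\exp\eta \iff \xi-\eta\in I$ (valid since $\exp_T$ is a homomorphism of abelian groups). The only caveats worth flagging are the implicit hypotheses: $G$ must be connected for the maximal torus theorem and for $N(T)/T$ to control conjugacy in $T$, and the roots must span $\mathfrak{t}^*$ (i.e.\ $G$ semisimple) for the chambers of the arrangement to be the bounded simplicial alcoves the paper has in mind; the surrounding text works with compact \emph{simple} $G$, so both hold in context even though the bare statement says only ``compact Lie group.''
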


\begin{remark}\label{remark:alcovecorrespondence}
 For a simply connected Lie group $G$, $I=\Gamma$.  Therefore, $W'=W_{ext}$, and $\mathfrak{t}/W' = \Delta$, since $\Delta$ is a fundamental domain.  For non-simply connected Lie groups $G$, the correspondence $\Delta \to Con(G)$ 
is merely a surjection.
\end{remark}

\subsection*{The $Z(G)$-action on the alcove}

Assume that the simple compact Lie group $G$ is simply connected.  
Since the action of the center $Z(G)$ by translation in $G$ commutes with the conjugation action, $Z(G)$ acts on $Con(G)$, the set of conjugacy classes in $G$.  Since $Con(G)\cong \Delta$, this induces an action of $Z(G)$ on $\Delta$.  

Let $z=\exp \zeta \in Z(G)\subset T$, where $T$ is the maximal torus.  Since $z$ acts by translation, it sends the conjugacy class of $\exp\xi$ to $z\exp\xi=\exp\zeta\exp\xi=\exp(\zeta+\xi)$.  Therefore, the induced action on $\xi \in\Delta$ can be described by declaring $z\cdot \xi$ to be the unique element in $\Delta$ satisfying $w(\zeta + \xi)=z\cdot \xi$ for some $w\in W_{ext}=W'$.  

Let $\Lambda=\exp^{-1} Z(G)$ be the \emph{central elements} in $\mathfrak{t}$, so that  $Z(G)\cong \Lambda/I$.  Since $Z(G)$ acts trivially on $\mathfrak{g}$, for any root $\alpha\in \mathfrak{t}^*$, $\alpha(\Lambda)\subset \Z$. That is, $\Lambda$ is contained in the family of hyperplanes $\{L_\alpha(q) | q\in \Z\}$ for each root $\alpha$.
 Conversely, any element that meets every family of hyperplanes $\{L_\alpha(q) | q\in \Z\}$ must act trivially on $\mathfrak{g}$, and is therefore contained in the center $Z(G)$.  

\begin{eg} $G=SU(n)$ \label{eg:conj}
\end{eg}

The alcove $\Delta$ for $SU(n)$ is an $(n-1)$-simplex, and it will be verified next that the center $Z(SU(n))\cong \Z_n$ acts by shifting the vertices.   That is, if $v_0, \ldots, v_{n-1}$ denote the vertices of the simplex $\Delta$ (with usual labelling---see below), the generator of $\Z_n$ sends the vertex $v_i$ to $v_{i+1}$ for $i=0,\ldots, n-2$ and $v_{n-1}$ to $v_0$.  

To begin, recall that the maximal torus $T$ consists of diagonal matrices in $SU(n)$, and hence $\mathfrak{t}$ may be identified with the subspace $\{x=\sum x_ie_i|\sum x_i=0\} \subset \R^n$ where $e_i$ denotes the standard basis vector.  The inner product inherited from $\R^n$ is $Ad$-invariant, and is readily checked to be the basic inner product.

The roots are $\theta_i-\theta_j$ ($i\neq j$) restricted to $\mathfrak{t}$, where $\theta_i:\R^n \to \R$ is given by $\theta_i(\sum x_j e_j)=x_i$.  Therefore, under the identification $\mathfrak{t}\cong\mathfrak{t}^*$, the roots are simply the vectors $e_i-e_j$, which all have squared length 2, and the co-roots coincide with the roots. 

An alcove is given by (see \cite{BtD} and \cite{MT})
$$
\Delta =\left\{ \sum x_i e_i \in \mathfrak{t}\, \big|\,  x_i\geq x_{i+1}, \text{ and } x_1 -x_n\leq 1\right\},
$$
from which it is easy to see that the alcove is indeed an $(n-1)$-simplex.  Indeed, each vertex is the solution to $n-1$ of the $n$ equalities 
$$
x_i=x_{i+1}, \, \text{for }\, i=1,\ldots, n-1,\,\text{and }\,x_1-x_n=1.
$$
This gives vertices 
\begin{align*}
v_0 & = 0 \\
v_1 & =\tfrac{1}{n} ((n-1)e_1 - e_2  -\cdots - e_{n-1} -e_n) \\
v_2 & =\tfrac{1}{n} ((n-2)  e_1 + (n-2)e_2 - \cdots - 2e_{n-1} + 2 e_n)\\
& \, \, \, \vdots \\
v_{n-1} & =\tfrac{1}{n} ( e_1 + e_2 + \cdots + e_{n-1} -(n-1) e_n).
\end{align*}

The vertices of the alcove are also central elements.  To describe the action of the center $Z(SU(n))\cong \Z_n$ it suffices to describe translation by a central element $\zeta \in \Lambda$ of order $n$ in $I$, such as $\zeta=v_1$.  

To that end, consider $\displaystyle v_k=\tfrac{n-k}{n} \sum_{j=1}^k e_j - \tfrac{k}{n}\sum_{j=k+1}^n e_j$, and compute $\zeta+v_k$:
\begin{align*}
\zeta + v_k & = \frac{1}{n}\left[ (n-1)e_1-\sum_{j=2}^ne_j + (n-k)\sum_{j=1}^k e_j   -k\sum_{j=k+1}^ne_j\right] \\
&= \frac{1}{n}\left[ (2n-k-1)e_1 + (n-k+1)\sum_{j=2}^k e_j -(k+1)\sum_{j=k+1}^n e_j \right]
\end{align*}
Notice that $\zeta+v_k-v_{k+1} = e_1-e_{k+1}$, a co-root.  Therefore, these elements are in the same $W_{ext}=\Gamma \ltimes W$-orbit, and the generator $z\in Z(SU(n))$ acts by $z\cdot v_k=v_{k+1}$ for $k=0, \ldots, n-2$.  A similar calculation verifies that $ z\cdot v_{n-1}=v_0$.

\begin{remark} \label{remark:specialconjclass}
Notice that since the center $Z(SU(n))$ acts by shifting the vertices, there is exactly one fixed point in $\Delta$, the barycenter.  Indeed, writing $x\in \Delta$ in barycentric coordinates $x=\sum t_iv_i$ with $\sum t_i=1$ shows that $x$ is fixed by a generator precisely when $t_i=\tfrac{1}{n}$, since in this case, the barycentric coordinates of $x$ are unique.  In other words, the conjugacy class corresponding to the barycenter $\zeta_0=\tfrac{1}{n}\sum v_i$ in $\Delta$ is invariant under the action of  $Z(SU(n))$ on $SU(n)$.
\end{remark}

\part{Foreground}
\chapter{Quasi-Hamiltonian $G$-Spaces} \label{chapter:quasi}

The theory of quasi-Hamiltonian $G$-spaces, introduced in 1998 by Alekseev, Malkin and Meinrenken \cite{AMM}, provides a natural framework in which to study the moduli space of flat $G$-bundles over a compact oriented surface (see Section \ref{eg:modulispace} and the following chapter).  This chapter reviews the definition of a quasi-Hamiltonian $G$-space, and surveys some key examples and theorems of \cite{AMM}.

\section{Definitions} 
\label{sec:quasi-intro}

Let $G$ be a compact Lie group, and $\mathfrak{g}$ its corresponding Lie algebra, with a chosen $Ad$-invariant positive definite inner product $(-,-)$, which may be used to identify $\mathfrak{g}\cong\mathfrak{g}^*$.  Let $\eta \in \Omega^3(G)$ be the canonical $3$-form on $G$. (See Section \ref{sec:coholiegroup}.)

On any $G$-manifold $X$, let $\xi^\sharp$, where $\xi\in \mathfrak{g}$, denote the generating vector field on $X$ induced by the action of $G$, following the convention:
\[ \xi^\sharp(x) =\frac{d}{dt}\Big|_{t=0}\exp(-t\xi)\cdot x.  \]
The group $G$ itself is considered as a $G$-manifold, acting by conjugation. 

\begin{defn}  
\label{defn:qhspace} A quasi-Hamiltonian $G$-space
is a triple $(M,\omega,\phi)$ consisting of a $G$-manifold $M$,
an invariant $2$-form $\omega$ in $\Omega^2(M)^G$, and an equivariant map
$\phi:M\to G$ (called the moment map) satisfying the following three properties:
\begin{itemize}
\item[$\mathrm{Q}1$.] $d\omega +\phi^*\eta =0$

\item[$\mathrm{Q}2$.] $\iota_{\xi^\sharp}\omega = -\half
\phi^*(\theta^L+\theta^R,\xi)$ for all $\xi\in\mathfrak{g}$

\item[$\mathrm{Q}3$.] At every point $p\in M$,
$\ker\omega_p \cap \ker{\mathrm{d}\phi|_p} = \{0\}$

\end{itemize}
\end{defn}

Notice that condition Q1 of Definition \ref{defn:qhspace} says
that the pair $(\omega,\eta)$ defines a cocycle of dimension $3$
in $\Omega^*(\phi)$, the algebraic mapping cone of
$\phi^*:\Omega^*(G)\to\Omega^*(M)$, and hence determines a
cohomology class $[(\omega,\eta)]\in H^3(\phi\, ;\R)$.
In fact, conditions Q1 and Q2 together can be re-expressed in
terms of the Cartan model for equivariant differential forms on
$M$ (see \cite{GS}) . Specifically, Q1 and Q2 may be replaced by the single
relation
\[d_G\omega+\phi^*\eta_G =0 \] where $\omega$ is viewed as an
equivariant differential form and $\eta_G$ is the equivariant extension given by
$\eta_G(\xi)=\eta+\half(\theta^L+\theta^R,\xi)$ in the Cartan
model. Therefore,  Q1 and Q2  give an
equivariant cohomology class $[(\omega,\eta_G)]\in H^3_G(\phi\,
;\R)$.  This is the salient feature of the above definition, for the purposes of this work. 

Before proceeding further, it may be useful to contrast the above definition
with that of a Hamiltonian $G$-space, the classical counterpart of a quasi-Hamiltonian $G$-space.  To that end, recall the following definition.

\begin{defn} 
\label{defn:hspace} A Hamiltonian $G$-space is a triple $(M, \sigma, \Phi)$ consisting of a $G$-manifold $M$, an invariant $2$-form $\sigma$ in $\Omega^2(M)^G$, and an equivariant map $\Phi:M\to \mathfrak{g}^*$, satisfying the following three properties:
\begin{itemize}
\item[$\mathrm{H}1$.] $d\sigma =0$
\item[$\mathrm{H}2$.] $ \iota_{\xi^\sharp}\sigma =-d(\Phi,\xi)$ for all $\xi \in \mathfrak{g}$
\item[$\mathrm{H}3$.] $\sigma$ is non-degenerate
\end{itemize}
\end{defn}

Despite not playing a major role in the chapters that follow, condition Q3 in Definition \ref{defn:qhspace} highlights a significant difference between quasi-Hamiltonian $G$-spaces and their classical counterparts.  In particular, the 2-form $\omega$ of a quasi-Hamiltonian $G$-space is sometimes degenerate, and hence not symplectic.  In fact, conditions Q1, Q2, and Q3 imply that $\ker \omega_m=\{ \xi^\sharp(m) \,|\, Ad_{\phi(m)}\xi=-\xi\}$ \cite{AMM}. 
Nevertheless, Example \ref{eg:exp} shows that a Hamiltonian $G$-space naturally provides all of the ingredients in Definition \ref{defn:qhspace}, except possibly condition Q3.  Also, both definitions single out cohomology classes.  Indeed, condition H1 of Definition \ref{defn:hspace} says that the 2-form $\sigma$ is closed, and hence defines a cohomology class $[\sigma]\in H^2(M;\R)$.  When coupled with condition H2, this gives a closed equivariant extension $\sigma_G(\xi)=\sigma+(\Phi,\xi)$ in the Cartan model, and hence a cohomology class $[\sigma_G]\in H^2_G(M;\R)$.

In subsequent chapters, the pre-quantization of quasi-Hamiltonian $G$-spaces will be addressed, and the choice of inner product on $\mathfrak{g}$ will be significant.  The definition below is used to highlight this choice.  Recall that when $G$ is simple, any invariant inner product on  $\mathfrak{g}$ is necessarily a multiple of the basic inner product $B$ (see Remarks \ref{remark:innerproducts} and \ref{remark:innerproducts} in Section \ref{sec:liethy}).

\begin{defn} 
\label{defn:qHlevels}
Let $G$ be a simple compact Lie group. Suppose that the invariant inner product in Definition \ref{defn:qhspace} is $l B$, where $B$ is the basic inner product and $l>0$. The level of the quasi-Hamiltonian $G$-space $(M,\omega,\phi)$ is the number $l$.
\end{defn}

Note that if the underlying level of a quasi-Hamiltonian $G$-space is $l$, then the canonical $3$-form $\eta=l\eta_1$, where $\eta_1=\frac{1}{12}B(\theta^L, [\theta^L,\theta^L])$ is the canonical $3$-form corresponding to the basic inner product.  When $G$ is simply connected, it is well known (see \cite{PS}, for example) that $[\eta_1]\in H^3(G;\R)$ is integral, and that its integral lift generates $H^3(G;\Z)\cong \Z$. Therefore, the level $l$ may also be interpreted as the cohomology class $[\eta] \in H^3(G;\R)\cong \R$ via the isomorphism that identifies $[\eta_1]$ with $1\in\R$ so that the coefficient homomorphism is just the standard inclusion $\Z\to\R$.

\section{Two fundamental theorems}

In order to describe some of the examples in the following section, two fundamental theorems about quasi-Hamiltonian $G$-spaces are reviewed.  The interested reader may wish to consult \cite{AMM} for proofs. 

The first theorem describes how  the product of two quasi-Hamiltonian $G$-spaces is naturally a quasi-Hamiltonian $G$-space.  It is used in the description of the moduli space of flat $G$-bundles over a compact oriented surface in Section \ref{eg:modulispace}.

\begin{thm}[Fusion] \cite{AMM}
\label{fact:fusion} Let $(M_1,\omega_1,\phi_1)$ and $(M_2,\omega_2,\phi_2)$ be quasi-Hamiltonian $G$-spaces. Then $M_1\times M_2$ is a quasi-Hamiltonian $G$-space, with diagonal $G$-action, invariant $2$-form $\omega=\pr_1^*\omega_1 + \pr_2^*\omega_2 +
\half(\pr_1^*\phi_1^*\theta^L,\pr_2^*\phi_2^*\theta^R)$, and group-valued moment map $\phi=\mu\circ(\phi_1\times\phi_2)$, where $\mu:G^2\to G$ is group multiplication. This quasi-Hamiltonian $G$-space is called the fusion product of $M_1$ and $M_2$, and is denoted $(M_1\circledast M_2,\omega,\phi)$.
\end{thm}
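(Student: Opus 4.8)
The plan is to verify the three conditions Q1, Q2, Q3 of Definition \ref{defn:qhspace} directly for the triple $(M_1\circledast M_2,\omega,\phi)$, where $\omega=\pr_1^*\omega_1+\pr_2^*\omega_2+\half(\pr_1^*\phi_1^*\theta^L,\pr_2^*\phi_2^*\theta^R)$ and $\phi=\mu\circ(\phi_1\times\phi_2)$, with the diagonal $G$-action on $M_1\times M_2$. The only external input needed is Maurer--Cartan calculus on $G$. Writing $(g_1,g_2)$ for a point of $G\times G$, I would first record the identities
$$\mu^*\theta^L=\mathrm{Ad}_{g_2^{-1}}\pr_1^*\theta^L+\pr_2^*\theta^L,\qquad \mu^*\theta^R=\pr_1^*\theta^R+\mathrm{Ad}_{g_1}\pr_2^*\theta^R,$$
and then the key lemma $\mu^*\eta=\pr_1^*\eta+\pr_2^*\eta-\half\,d(\pr_1^*\theta^L,\pr_2^*\theta^R)$, which follows by a short direct computation from $\eta=\tfrac1{12}(\theta^L,[\theta^L,\theta^L])$, the structure equation $d\theta^L=-\half[\theta^L,\theta^L]$, and $\mathrm{Ad}$-invariance of the inner product.

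For Q1, I would pull the key lemma back along $\phi_1\times\phi_2$ to obtain $\phi^*\eta=\pr_1^*\phi_1^*\eta+\pr_2^*\phi_2^*\eta-\half\,d(\pr_1^*\phi_1^*\theta^L,\pr_2^*\phi_2^*\theta^R)$, and combine this with $d(\pr_i^*\omega_i)=-\pr_i^*\phi_i^*\eta$ (condition Q1 for $M_i$). The exact term produced by $d$ applied to the cross term in $\omega$ cancels precisely the exact term in $\phi^*\eta$, leaving $d\omega+\phi^*\eta=0$.

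For Q2, observe that the generating vector field of the diagonal action is $\xi^\sharp=\xi_1^\sharp\oplus\xi_2^\sharp$, so $\iota_{\xi^\sharp}\pr_i^*\omega_i=-\half\pr_i^*\phi_i^*(\theta^L+\theta^R,\xi)$ by Q2 for $M_i$. It remains to contract the cross term; here one uses that the contractions of the conjugation generating vector field with the Maurer--Cartan forms are $\iota_{\xi^\sharp}\phi_i^*\theta^L=\phi_i^*(\xi-\mathrm{Ad}_{\phi_i^{-1}}\xi)$ and $\iota_{\xi^\sharp}\phi_i^*\theta^R=\phi_i^*(\mathrm{Ad}_{\phi_i}\xi-\xi)$, together with the pull-back formulas for $\mu^*\theta^{L}$ and $\mu^*\theta^{R}$ above and $\mathrm{Ad}$-invariance of the inner product; the terms reorganize so that the total contraction is $-\half\phi^*(\theta^L+\theta^R,\xi)$. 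In practice I would carry out Q1 and Q2 simultaneously by checking the single equivariant identity $d_G\omega+\phi^*\eta_G=0$ in the Cartan model, which keeps the bookkeeping compact.

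The step I expect to be the main obstacle is Q3, the minimal-degeneracy condition, since it is genuinely pointwise linear algebra rather than Maurer--Cartan manipulation. Here I would use the description $\ker\omega_{i,m}=\{\xi^\sharp_i(m)\mid\mathrm{Ad}_{\phi_i(m)}\xi=-\xi\}$ recorded after Definition \ref{defn:qhspace}. Given $(v_1,v_2)$ in $\ker\omega_{(m_1,m_2)}\cap\ker d\phi|_{(m_1,m_2)}$, I would pair $\omega$ against tangent vectors of the form $(u_1,0)$ and $(0,u_2)$: the first two summands of $\omega$ together with the explicit cross term force, after invoking $d\phi(v_1,v_2)=0$, first that each $v_i$ already lies in $\ker\omega_{i}$, say $v_i=\xi_i^\sharp(m_i)$ with $\mathrm{Ad}_{\phi_i(m_i)}\xi_i=-\xi_i$, and then that $\xi_1=\xi_2=:\xi$ with $\mathrm{Ad}_{\phi_1(m_1)\phi_2(m_2)}\xi=\xi$; these two constraints together force $\xi=0$, hence $(v_1,v_2)=0$. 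Keeping the various $\mathrm{Ad}$-twists and signs mutually consistent through this last argument is the delicate point; everything else is routine.
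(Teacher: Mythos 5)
Your treatment of Q1 and Q2 is correct, and it is the standard route: the identity $\mu^*\eta=\pr_1^*\eta+\pr_2^*\eta-\half\,d(\pr_1^*\theta^L,\pr_2^*\theta^R)$ is precisely the ingredient this thesis itself invokes later (in the proof of Proposition \ref{prop:fusion}), and your Maurer--Cartan pullback formulas and the contractions $\iota_{\xi^\sharp}\phi_i^*\theta^L=\phi_i^*(\xi-\mathrm{Ad}_{\phi_i^{-1}}\xi)$, $\iota_{\xi^\sharp}\phi_i^*\theta^R=\phi_i^*(\mathrm{Ad}_{\phi_i}\xi-\xi)$ are right; I checked that the cross-term contraction does reorganize to $-\half\phi^*(\theta^L+\theta^R,\xi)$ after using $\mathrm{Ad}$-invariance. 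Note that the thesis does not reprove this theorem but defers to \cite{AMM}, so there is no in-text proof to compare against.

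The gap is in Q3. Set $\alpha=\theta^L(d\phi_1(v_1))$ and $\beta=\theta^R(d\phi_2(v_2))$. Pairing $\omega$ with $(u_1,0)$ yields only $\iota_{v_1}\omega_1=\half\,\phi_1^*(\theta^L,\beta)$, a generally nonzero $1$-form; it does \emph{not} follow that $v_1\in\ker\omega_1$ until one knows $\beta=0$, which your argument never establishes. Moreover your endgame cannot close: if $\mathrm{Ad}_{\phi_1(m_1)}\xi=-\xi$ and $\mathrm{Ad}_{\phi_2(m_2)}\xi=-\xi$, then $\mathrm{Ad}_{\phi_1(m_1)\phi_2(m_2)}\xi=+\xi$ holds automatically, so the constraints you end with are mutually consistent for any nonzero $\xi$ in the intersection of the two $(-1)$-eigenspaces and force nothing. (This had to happen: $\ker\omega$ of the fusion product is genuinely nonzero in general --- only its intersection with $\ker d\phi$ vanishes --- so no argument that only pins down $\ker\omega$ can succeed.) The correct argument is shorter and needs neither the kernel description after Definition \ref{defn:qhspace} nor any discussion of $\xi^\sharp$'s: from $d\phi(v_1,v_2)=0$ and $\mu^*\theta^L=\mathrm{Ad}_{g_2^{-1}}\pr_1^*\theta^L+\pr_2^*\theta^L$ one gets $\beta=-\alpha$; contracting the relation $\iota_{v_1}\omega_1=\half\,\phi_1^*(\theta^L,\beta)$ with $v_1$ itself gives $0=\omega_1(v_1,v_1)=\half(\alpha,\beta)=-\half(\alpha,\alpha)$, hence $\alpha=\beta=0$ by positive definiteness of the inner product. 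Then $d\phi_i(v_i)=0$ and $\iota_{v_i}\omega_i=0$, so each $v_i\in\ker\omega_i\cap\ker d\phi_i|_{m_i}=\{0\}$ by Q3 for the factors.
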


The following theorem is the quasi-Hamiltonian analogue of Meyer-Marsden-Weinstein reduction in symplectic geometry.

\begin{thm}[Reduction] \cite{AMM}
\label{fact:reduction} Let $(M,\omega,\phi)$ be a quasi-Hamiltonian $G$-space.
For any $h\in G$ that is a regular value of $\phi$, the centralizer $Z_h$ of $h$ acts locally freely on the level set $\phi^{-1}(h)$, and the restriction of $\omega$ to $\phi^{-1}(h)$ descends to a symplectic form on the orbifold $\phi^{-1}(h)/Z_h$. In particular, if the identity $e$ is a
regular value of $\phi$, and $j:\phi^{-1}(e)\to M$ denotes the inclusion, then $j^*\omega$ descends to a symplectic form
$\omega_{red}$ on $M/\!/G:=\phi^{-1}(e)/G$, called the symplectic quotient of $M$.
\end{thm}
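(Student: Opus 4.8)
\emph{Proof proposal.} The plan is to verify, in order, that $Z_h$ genuinely acts on $\phi^{-1}(h)$, that this action is locally free, that the pulled-back form $j^*\omega$ is basic for this action, and finally that the form it induces on the quotient is non-degenerate; the last point is where the real work lies. Throughout I would use the component conditions Q1 and Q2 rather than the combined Cartan-model relation $d_G\omega+\phi^*\eta_G=0$, since it is the individual identities that the argument needs.

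First I would record that equivariance of $\phi$ gives $\phi(g\cdot p)=\mathrm{Ad}_g\,\phi(p)$, so for $p\in\phi^{-1}(h)$ one has $g\cdot p\in\phi^{-1}(h)$ exactly when $\mathrm{Ad}_g h=h$, i.e. $g\in Z_h$; hence $Z_h$ acts on $\phi^{-1}(h)$, and for $h=e$ this action is the full $G$-action. For local freeness, suppose $\xi\in\mathfrak{z}_h$ with $\xi^\sharp(p)=0$. Then Q2 forces $\phi^*(\theta^L+\theta^R,\xi)$ to vanish at $p$, and since $h$ is a regular value $d\phi_p$ is onto, so $(\theta^L_h+\theta^R_h,\xi)=0$ in $T_h^*G$. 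Evaluating on a left-translated vector $(L_h)_*X$ and using $\mathrm{Ad}$-invariance of the inner product turns this into $(\mathrm{Id}+\mathrm{Ad}_{h^{-1}})\xi=0$; combined with $\mathrm{Ad}_h\xi=\xi$ this gives $\xi=0$. As $Z_h$ is compact the action is proper, so the quotient orbifold $\phi^{-1}(h)/Z_h$ is well defined (and is a manifold when the action happens to be free, e.g. generically for $h=e$).

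Next I would show $j^*\omega$ descends. Closedness is immediate from Q1: $d(j^*\omega)=-j^*\phi^*\eta=-(\phi\circ j)^*\eta=0$, since $\phi\circ j$ is the constant map $h$. Invariance under $Z_h$ is inherited from $G$-invariance of $\omega$. Horizontality is again Q2: for $\xi\in\mathfrak{z}_h$, $\iota_{\xi^\sharp}j^*\omega=-\tfrac12(\phi\circ j)^*(\theta^L+\theta^R,\xi)=0$ because the constant map $\phi\circ j$ pulls back the $1$-form to zero. An invariant, horizontal form on a manifold with locally free, proper group action is basic, so there is a unique closed $2$-form $\omega_{red}$ on $\phi^{-1}(h)/Z_h$ with $j^*\omega=\pi^*\omega_{red}$, where $\pi$ is the quotient map.

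The crux is non-degeneracy of $\omega_{red}$, equivalently the identity $\ker(j^*\omega)_p=\{\xi^\sharp(p):\xi\in\mathfrak{z}_h\}$ at every $p\in\phi^{-1}(h)$. The inclusion $\supseteq$ is precisely the horizontality just established (together with $\xi^\sharp(p)\in T_p\phi^{-1}(h)$). For $\subseteq$, take $v\in\ker d\phi_p$ with $\omega_p(v,w)=0$ for all $w\in\ker d\phi_p$; then $\iota_v\omega_p$ annihilates $\ker d\phi_p$, so surjectivity of $d\phi_p$ lets me write $\iota_v\omega_p=\phi^*\beta$ for some $\beta\in T_h^*G$. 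The idea is to match $\beta$, through the inner product, with $-\tfrac12(\theta^L_h+\theta^R_h,\xi)$ for a suitable $\xi\in\mathfrak{g}$, so that $v-\xi^\sharp(p)\in\ker\omega_p$, and then invoke the identity $\ker\omega_p=\{\zeta^\sharp(p):\mathrm{Ad}_{\phi(p)}\zeta=-\zeta\}$ recorded after Definition \ref{defn:qhspace} to absorb the remaining ambiguity and conclude $v\in\{\xi^\sharp(p):\mathrm{Ad}_h\xi=\xi\}$. I expect the main obstacle to be that $\mathrm{Id}+\mathrm{Ad}_h$ need not be invertible on $\mathfrak{g}$, so not every $\beta$ has the form $-\tfrac12(\theta^L_h+\theta^R_h,\xi)$; one must split off the $(-1)$-eigenspace of $\mathrm{Ad}_h$ and use that $v$ already lies in $\ker d\phi_p$ (where $d\phi_p(\xi^\sharp(p))$ spans the tangent to the conjugacy class of $h$) to rule out the problematic components. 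Once $\ker(j^*\omega)_p$ is identified with the tangent space to the $Z_h$-orbit, $\omega_{red}$ is non-degenerate by construction; specializing to $h=e$ and $Z_e=G$ yields the final statement.
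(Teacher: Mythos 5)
The paper itself offers no proof of this theorem: it is quoted verbatim from \cite{AMM} with the remark that the reader should consult that paper for proofs, so there is no internal argument to compare against. Judged on its own terms, your proposal reconstructs the standard Alekseev--Malkin--Meinrenken argument, and every step you carry out in full is correct: the identification of the subgroup preserving $\phi^{-1}(h)$ with $Z_h$, the local-freeness argument (Q2 plus surjectivity of $d\phi_p$ gives $(\mathrm{Id}+\mathrm{Ad}_{h^{-1}})\xi=0$, which together with $\mathrm{Ad}_h\xi=\xi$ forces $\xi=0$), and the verification that $j^*\omega$ is closed, invariant and horizontal, hence basic.

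The only place you stop short of a proof is the inclusion $\ker(j^*\omega)_p\subseteq\{\xi^\sharp(p):\xi\in\mathfrak{z}_h\}$, and you correctly name the obstacle: $\mathrm{Id}+\mathrm{Ad}_h$ need not be invertible. The gap closes exactly along the lines you guess; here is the missing computation. Write $\iota_v\omega_p=(d\phi_p)^*\beta$ and identify $\beta$ with $\mu\in\mathfrak{g}$ via $\theta^L$ and the inner product, so $\omega_p(v,w)=(\mu,\theta^L(d\phi_p(w)))$. To see that $\mu$ is orthogonal to the $(-1)$-eigenspace $\mathfrak{g}_-$ of $\mathrm{Ad}_h$, take $\zeta\in\mathfrak{g}_-$: equivariance of $\phi$ gives $\theta^L(d\phi_p(\zeta^\sharp(p)))=\zeta-\mathrm{Ad}_{h^{-1}}\zeta=2\zeta$, while antisymmetry and Q2 give $\omega_p(v,\zeta^\sharp(p))=-\omega_p(\zeta^\sharp(p),v)=\half(\theta^L+\theta^R,\zeta)(d\phi_p(v))=0$ because $d\phi_p(v)=0$; hence $(\mu,2\zeta)=0$. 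Therefore $\mu=-\half(\mathrm{Id}+\mathrm{Ad}_{h^{-1}})\xi$ for some $\xi$, so $v-\xi^\sharp(p)\in\ker\omega_p=\{\zeta^\sharp(p):\mathrm{Ad}_h\zeta=-\zeta\}$ by the kernel description recorded after Definition \ref{defn:qhspace}, i.e.\ $v=(\xi+\zeta)^\sharp(p)$ with $\zeta\in\mathfrak{g}_-$. Finally $0=d\phi_p(v)$ forces $(\mathrm{Id}-\mathrm{Ad}_{h^{-1}})(\xi+\zeta)=0$, i.e.\ $\xi+\zeta\in\mathfrak{z}_h$ --- so there is no need to rule out ``problematic components'' separately; the single condition $v\in\ker d\phi_p$, applied at the end, does all the work. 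With that supplied, your outline is a complete and correct proof.
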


\section{Examples}

This section reviews a short list of examples of quasi-Hamiltonian $G$-spaces.  The verification that these examples satisfy the conditions of Definition \ref{defn:qhspace} appears in \cite{AMM}.  

\begin{eg} 
\label{eg:conjclass} Conjugacy classes in the Lie group $G$
\end{eg}
In analogy with co-adjoint orbits $\mathcal{O}\subset \mathfrak{g}^*$, central examples in the theory of  Hamiltonian $G$-spaces, conjugacy classes $\mathcal{C}\subset G$ are  important examples of  quasi-Hamiltonian $G$-spaces.  (Compare with Example \ref{eg:coadjointorbits} in Chapter \ref{chapter:loopgroups}.)  The group-valued moment map in this case is the inclusion $\iota:\mathcal{C}\hookrightarrow G$.  Since the tangent space at any point $g\in \mathcal{C}$ is spanned by generating vector fields, the 2-form $\omega$ can be described by the formula:
$$ \omega_g(\zeta^\sharp(g), \xi^\sharp(g))=\half \left( (\xi, Ad_g\zeta) - (\zeta, Ad_g\xi) \right). 
$$ \hfill $\qed$

\begin{eg} 
\label{eg:exp} Exponential of a Hamiltonian $G$-space
\end{eg}

As mentioned previously, if $(M,\sigma,\Phi)$ is a Hamiltonian $G$-space, there is a natural 2-form $\omega$ on $M$ such that the triple $(M,\omega,\phi)$, where $\phi=\exp\circ \,\Phi$, satisfies conditions Q1 and Q2 of Definition \ref{defn:qhspace}.  Specifically, $\omega=\sigma + \Phi^*\varpi$, where $\varpi$ is a 2-form  on $\mathfrak{g}$ that is $G$-invariant and satisfies $d\varpi+\exp^*\eta=0$ (see \cite{AMM} for details).  If in addition, the differential $\mathrm{d}(\exp)$ is a bijection on the image $\Phi(M)$, then Q3 is satisfied as well, and $(M,\omega,\phi)$ is a quasi-Hamiltonian $G$-space.  

As remarked in \cite{AMM}, it is interesting to note that if $\exp$ is invertible on some neighborhood containing $\phi(M)$, then one may reverse the above recipe, and produce a Hamiltonian $G$-space from a quasi-Hamiltonian $G$-space. \hfill $\qed$
\\

\begin{eg} 
\label{eg:double} The double $\mathbf{D}(G)$
\end{eg}

The double $\mathbf{D}(G):=G \times G$ of a compact Lie group $G$ is an important example of a quasi-Hamiltonian $G$-space.  As the next section shows, it is the basic building block of the moduli space of flat $G$-bundles over a closed oriented surface, which is a motivating example in the theory of quasi-Hamiltonian $G$-spaces.  

The group $G$ acts on $\mathbf{D}(G)$ diagonally (i.e., by conjugation on each factor).
The $G$-invariant 2-form $\omega$ is a bit complicated to describe. For the curious reader, here it is:
$$ \omega=\half \left( (\pr_1^*\theta^L,\pr_2^*\theta^R) + (\pr_1^*\theta^R, \pr_2^*\theta^L) -  (\mu^*\theta^L, (\mu\circ T)^*\theta^R) \right),
$$
where $\mu:G\times G \to G$ is group multiplication, and $T:G\times G \to G\times G$ is the map that switches factors.  The moment map $\phi:G\times G \to G$ is the commutator map $\phi(a,b)=aba^{-1}b^{-1}$.

 Let $\tilde{G}$ denote the universal covering group of $G$, with covering homomorphism $\pi:\tilde{G} \to G$.  Since $\ker \pi$ is contained in the center $Z(\tilde{G})$ (see Section \ref{sec:liegroupsprelim}), the moment map $\phi$ lifts canonically to a moment map $\tilde{\phi}:G\times G \to \tilde{G}$. Therefore, $\mathbf{D}(G)$ can also be viewed as a quasi-Hamiltonian $\tilde{G}$-space.  (Compare with Sections \ref{sec:nonsimplyconnectedG}, and \ref{sec:compofmoduli}.) \hfill $\qed$
 \\
 
 \begin{eg} 
 The spinning $2n$-sphere 
 \end{eg}
 
 The sphere $S^{2n}$ is a quasi-Hamiltonian $U(n)$-space, where $U(n)$ acts via the inclusions $U(n) \hookrightarrow O(2n) \subset O(2n+1)$.  A description of the moment map $\phi:S^{2n}\to U(n)$ and $2$-form $\omega$ may be found in \cite{HJS}.
 
 \section{Moduli spaces of flat $G$-bundles} 
 \label{eg:modulispace}

An elegant outgrowth of the theory of quasi-Hamiltonian $G$-spaces is an explicit finite dimensional description of Atiyah-Bott's symplectic structure (see \cite{AB}, and \cite{AMM}) on the moduli space of flat $G$-bundles over a compact oriented surface $\Sigma$.  Such a description comes from realizing the moduli space as a symplectic quotient of a quasi-Hamiltonian $G$-space.  This is a motivating example for much of this work, and will be reviewed in this section.

\subsection{Flat $G$-bundles}

Before diving into the quasi-Hamiltonian descriptions, it may be useful to quickly review the objects of study, and establish some notation.

Let $G$ be a compact  Lie group, with Lie algebra $\mathfrak{g}$.  A \emph{flat $G$-bundle (over a connected base $B$)} is a pair $(P,\theta)$ consisting of a principal $G$-bundle $\pi:P\to B$ equipped with a flat connection $\theta \in \Omega^1(P;\mathfrak{g})$.  Two flat $G$-bundles $(P_1, \theta_1)$ and $(P_2, \theta_2)$ are equivalent if there is a bundle isomorphism $f: P_1 \to P_2$ (a $G$-equivariant map covering the identity map on $B$) such that $f^*\theta_2=\theta_1$.

Fix a base point $b\in B$.
A \emph{based flat $G$-bundle}  is a triple  $(P,\theta,p)$ consisting of a flat $G$-bundle $(P,\theta)$ and a chosen base point $p$ in $P|_b$, the fiber over $b$.  Two based flat $G$-bundles $(P_1, \theta_1, p_1)$, and $(P_2, \theta_2, p_2)$ are equivalent if there is a bundle isomorphism $f:P_1\to P_2$ such that $f^*\theta_2=\theta_1$, and $p_2=f(p_1)$. 

There is a well known correspondence (see \cite{AB}, or \cite{Mor2}),
\begin{equation} 
\label{eqn:holonomy}
\left\{ \begin{array}{c}
\text{equivalence classes of} \\
\text{ based flat $G$-bundles}
\end{array} \right\} \stackrel{\text{1:1}}{\longleftrightarrow}
\mathrm{Hom}(\pi_1(B,b), G)
\end{equation}
which identifies the equivalence class of a based flat $G$-bundle $(P,\theta, p)$ with the holonomy homomorphism $\Hol: \pi_1(B,b) \to G$ determined by the connection $\theta$ and the base point $p \in P$.  (Recall that the holonomy $\Hol([\alpha])\in G$  of $[\alpha]\in \pi_1(B,b)$ is defined by the relation $\tilde\alpha(1)=p\cdot (\Hol([\alpha]))^{-1}$, where  $\tilde\alpha$ is a horizontal lift of $\alpha$ satisfying $\tilde\alpha(0)=p$.)

Furthermore, there is a $G$-action on each of the above. Namely, for $g\in G$, the action on the  spaces appearing in (\ref{eqn:holonomy}) is given by
$$ g\cdot[(P,\theta,p)]=[(P,\theta, p\cdot g^{-1})] \quad\text{and} \quad g\cdot \rho =Ad_g\circ \rho.
$$
The above correspondence is $G$-equivariant, and thus descends to a correspondence:
$$
\left\{ \begin{array}{c}
\text{equivalence classes} \\
\text{ of flat $G$-bundles}
\end{array} \right\} \stackrel{\text{1:1}}{\longleftrightarrow}
\mathrm{Hom}(\pi_1(B,b), G) /G.
$$
One may endow topologies on the sets appearing in (\ref{eqn:holonomy}), so that the correspondence becomes a homeomorphism (see \cite{AB}, and \cite{AMM} for details).  For future reference, let
$$M_G(B,b)=\left\{ \begin{array}{c}
\text{equivalence classes of} \\
\text{ based flat $G$-bundles}
\end{array} \right\}, \quad \text{and} \quad 
M_G(B)=\left\{ \begin{array}{c}
\text{equivalence classes} \\
\text{ of flat $G$-bundles}
\end{array} \right\}.
$$

\subsection{Flat $G$-bundles over compact oriented surfaces}

Now suppose that  the base $B$ is a compact, oriented surface $\Sigma$ of genus $g$ with $r\geq 0$ boundary circles $v_1,\ldots, v_r$.   (To emphasize the genus $g$ and the number of boundary components $r$, write $\Sigma=\Sigma^g_r$.)  In order to make use of the correspondence (\ref{eqn:holonomy}), it will be useful to have a description of the fundamental group $\pi_1(\Sigma,b)$, and to recall some elementary facts regarding the topology of compact, oriented surfaces.

To that end, recall that  $\Sigma$ is  homeomorphic to a  polygon whose edges have been identified in a certain way.  This is well known, and will be reviewed briefly.  (See \cite{Ma} for details.)

When $r=0$, the edges of a $4g$-gon can be identified as in Figure \ref{fig:polygon1}, where the oriented edges of the same name are identified. Consequently, the edges represent loops in $\Sigma$, and can therefore be viewed as elements in $\pi_1(\Sigma,b)$, where $b \in \Sigma$ is the point corresponding to the vertices of the polygon.

 \begin{figure}[ht]
   \centering
   \includegraphics[totalheight=0.25\textheight]{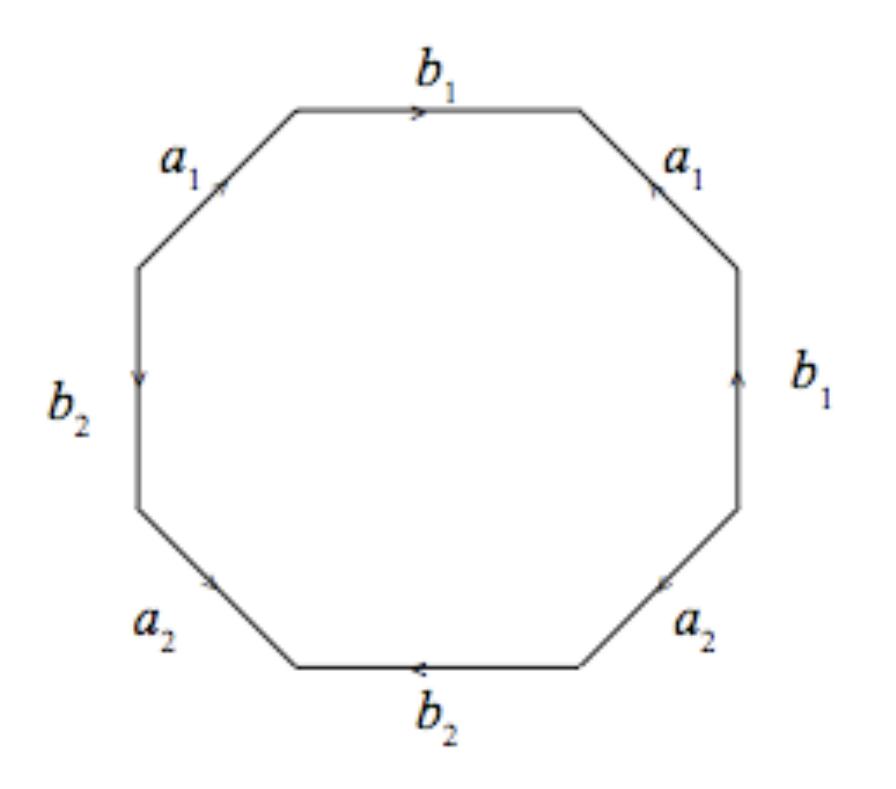} 
   \caption{$\Sigma^2_0$ as the quotient of a polygon}
   \label{fig:polygon1}
\end{figure}

The fundamental group $\pi_1(\Sigma,b)$ can then be easily calculated (using Van Kampen's Theorem, for example).  In particular, when $\partial \Sigma = \emptyset$, the fundamental group is freely generated with a single relation:
$$ \pi_1(\Sigma^g_0,b) \cong \langle a_1, b_1, \ldots, a_g, b_g \,|\, \prod [a_i, b_i]=e \rangle
$$
where $[x,y]$ denotes the commutator $xyx^{-1}y^{-1}$, and $e$ is the group identity.

Note that such a description may equivalently be viewed as a $CW$-structure for $\Sigma$, where $\Sigma$ is obtained by attaching a single 2-cell to a bouquet of circles.  Indeed, the 2-cell is the $4g$ sided polygon, and the above identifications define the attaching map from the boundary of this polygon to the wedge of $2g$ circles $a_1, b_1, \ldots, a_g, b_g$.  

When $r \geq 1$, the above picture may be modified by considering the same $4g$-gon, but with $r$ small discs removed, as in Figure \ref{fig:polygon2}.  The  boundary of $\Sigma$ is then the union of the boundaries $v_1, \ldots, v_r$ of the removed discs.   
 
 \begin{figure}[ht]
   \centering
   \subfigure[with boundary circles]{
   \includegraphics[totalheight=0.25\textheight]{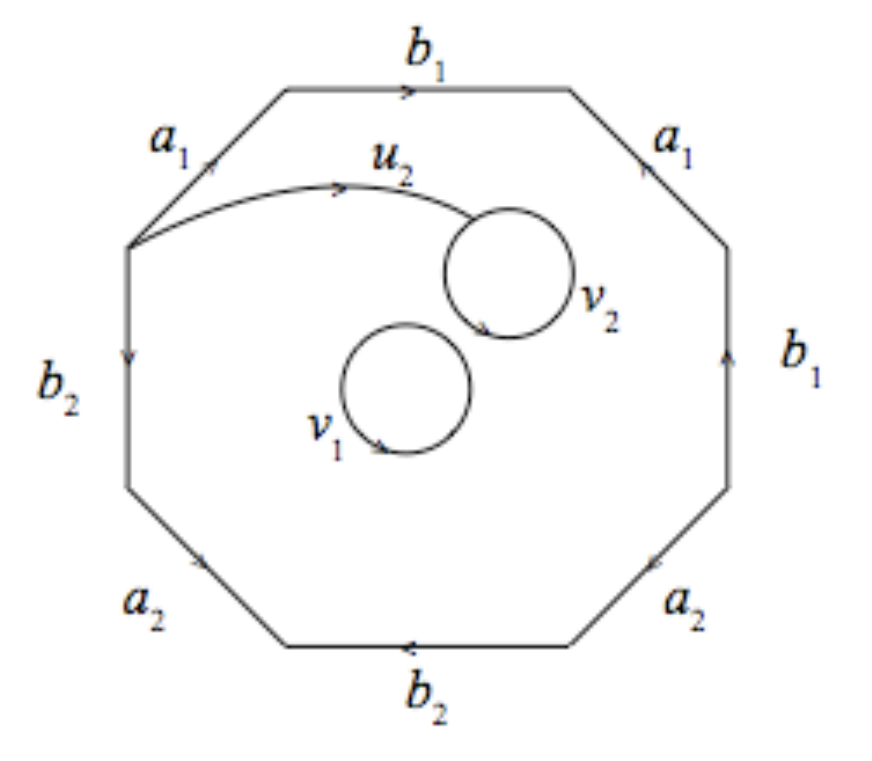} \label{fig:polygon2}}
   \subfigure[once-punctured polygon]{
   \includegraphics[totalheight=0.25\textheight]{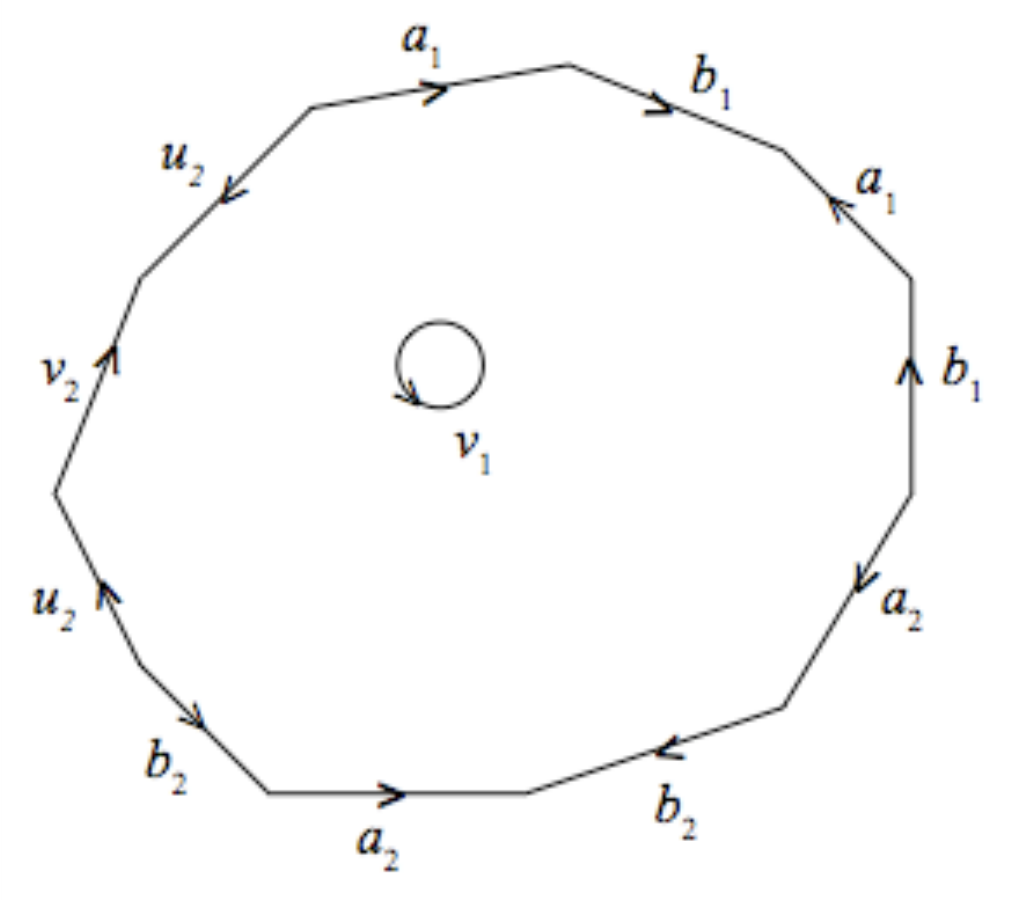} \label{fig:polygon3}}
   \caption{$\Sigma^2_2$ as the quotient of a polygon}
\end{figure}
 
 By introducing paths $u_2, \ldots, u_r$ from $b$ to chosen points on $v_2, \ldots, v_r$, one may equivalently realize $\Sigma$ as the quotient of the $4g+3(r-1)$ sided once-punctured polygon, as in Figure \ref{fig:polygon3}. To see this, note that either identification describes the same $CW$-structure on $\Sigma$: namely, attaching a single 2-cell (via the same attaching map) to the 1-skeleton $X^{(1)}$, which can be described as follows. $X^{(1)}=Y\coprod v_1$, where $Y$ consists of a bouquet of $2g$ circles $a_1, \ldots, b_g$ attached to $r-1$ disjoint circles $v_2, \ldots, v_r$ by $r-1$ contractible line segments $u_2, \ldots, u_r$. (See Figure \ref{fig:bouquet}.)

\begin{figure}[ht]
   \centering
   \includegraphics[totalheight=0.25\textheight]{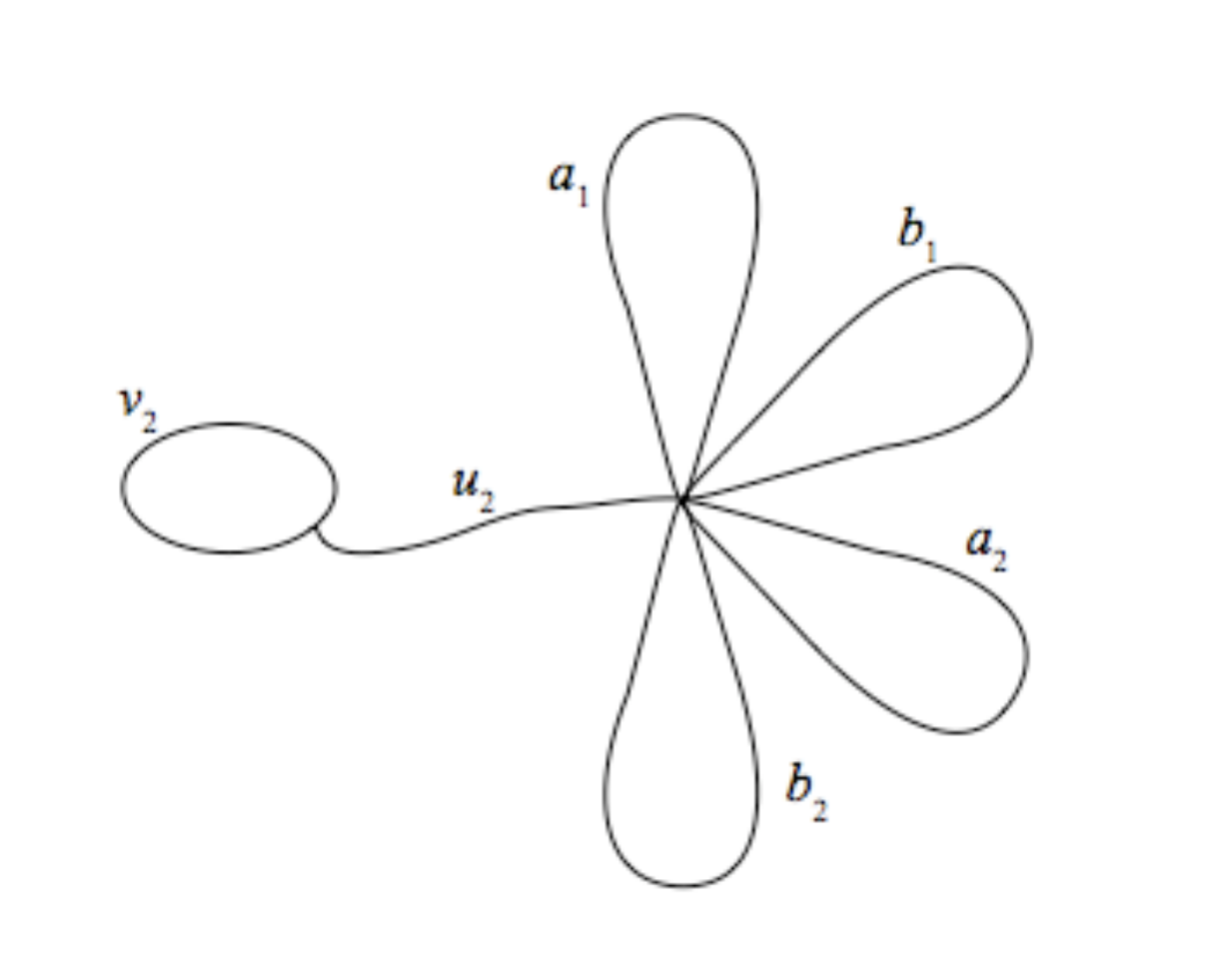} 
   \caption{The retract $Y\subset\Sigma^2_2$ }
   \label{fig:bouquet}
\end{figure}

In fact, from the description of $\Sigma$ as a quotient of a once-punctured polygon,  it is easy to see that $\Sigma$ retracts onto $Y$, which in turn is homotopy equivalent to a bouquet of $2g+r-1$ circles.  Indeed, the $4g+3(r-1)$ sided once-punctured polygon (as in Figure \ref{fig:polygon3}) clearly retracts onto its outer edges, which after identifications is $Y$. 

Therefore, when  $r\geq1$, 
$$
\pi_1(\Sigma^g_r,b)\cong \langle a_1, b_1, \ldots, a_g, b_g, v_2, \ldots, v_r \rangle
$$ 
where $v_j$ really means the boundary component of the same name composed with some path to and from the base point $b$. This choice of generators induces the identification $\mathrm{Hom}(\pi_1(\Sigma,b),G)\cong G^{2g+r-1}$, by sending the homomorphism $\rho:\pi_1(\Sigma,b)\to G$ to 
$$
\rho \mapsto (\rho(a_1), \rho(b_1), \ldots, \rho(a_g), \rho(b_g), \rho(v_2), \ldots, \rho(v_r)).
$$  
The correspondence (\ref{eqn:holonomy}) now reads $M_G(\Sigma,b)\cong G^{2g+r-1}$.

 Now suppose that each of the based flat $G$-bundles in $M_G(\Sigma,b)$ is to have prescribed holonomies $c_2, \ldots, c_r \in G$ around the generators $v_2, \ldots, v_r$ respectively.  Then,
$$
\left\{ \begin{array}{c}
\text{equivalence classes of based} \\
\text{ flat $G$-bundles over $\Sigma$ with} \\
\text{ holonomies $\Hol(v_j)=c_j$}
\end{array} \right\}  \cong G^{2g} \times \{c_2\} \times \cdots \times \{c_r\}. 
$$ 
Letting the $c_j$'s vary within their conjugacy class $\mathcal{C}_j$ gives
$$
M_G(\Sigma,b;\mathcal{C}_2,\ldots, \mathcal{C}_r)  :=\left\{ \begin{array}{c}
\text{equivalence classes of based} \\
\text{ flat $G$-bundles over $\Sigma$ with} \\
\text{ holonomies $\Hol(v_j)\in \mathcal{C}_j$}
\end{array} \right\} 
\cong G^{2g} \times \mathcal{C}_2 \times \cdots \times \mathcal{C}_r 
$$
which is a quasi-Hamiltonian $G$-space.  Specifically, it is shown in \cite{AMM} that the above identification  realizes $M_G(\Sigma, b; \C_2, \ldots, \C_r)$ as the fusion product of $g$ copies of the double $\mathbf{D}(G)$ with the conjugacy classes $\C_2, \ldots, \C_r$.  

To interpret the geometric meaning of  the moment map, $\varphi:M_G(\Sigma, b; \C_2, \ldots, \C_r) \to G$, recall that the boundary circle $v_1$ satisfies the homotopy equivalence 
$$
v_1^{-1} \approx [a_1,b_1]\cdots [a_g,b_g] v_2, \cdots v_r;
$$ 
therefore, the moment map $\varphi:G^{2g} \times \C_2 \times \cdots \times \C_r   \to G$,
$$
(a_1, b_1, \ldots, a_g, b_g, c_2, \ldots, c_r) \mapsto \prod[a_i,b_i]c_2\cdots c_r,
$$
 induced by the fusion product sends a based flat $G$-bundle with prescribed holonomies to its holonomy around the boundary component $v_1^{-1}$.

Therefore, given a conjugacy class $\C_1\subset G$, $\varphi^{-1}(\C_1)$ consists of the based flat $G$-bundles whose holonomy along each of the boundary circles $v_1, v_2 \ldots v_r$ lies in $\C_1^-,\C_2 \ldots, \C_r$, respectively, where $\C_1^-$ denotes the image of the conjugacy class $\C_1$ under the inversion map $g\mapsto g^{-1}$ in $G$.

Alternatively, (after re-labelling)
$$ M_G(\Sigma,b;\mathcal{C}_1,\ldots, \mathcal{C}_r) \cong \phi^{-1}(e),$$
where $\phi:G^{2g}\times \mathcal{C}_1 \times \cdots \times \mathcal{C}_r \to G$ is the map 
$$
(a_1, b_1, \ldots, a_g, b_g, c_1, \ldots, c_r) \mapsto \prod[a_i,b_i]c_1\cdots c_r. $$

Let $M_G(\Sigma; \mathcal{C}_1,\ldots, \mathcal{C}_r)$ be the set of equivalence classes of flat $G$-bundles with prescribed holonomies along the boundary components.  Then,
$$M_G(\Sigma; \mathcal{C}_1,\ldots, \mathcal{C}_r) \cong \phi^{-1}(e)/G,$$
a symplectic quotient of a quasi-Hamiltonian $G$-space.  In \cite{AMM}, Alekseev, Malkin, and Meinrenken show that the symplectic structure inherited by the above description coincides with that of Atiyah-Bott in \cite{AB}, where $M_G(\Sigma; \C_1, \ldots, \C_r)$ is described as an ordinary symplectic quotient of an infinite dimensional symplectic space equipped with a Hamiltonian action of an infinite dimensional Lie group. 

\subsection{The moduli space $M_G(\Sigma^g_1,b)\cong G^{2g}$}
\label{sec:compofmoduli}

Of particular importance in this work is the moduli space $M_G(\Sigma^g_1,b)\cong G^{2g}$, which will now be examined further. (In this section, $\Sigma=\Sigma^g_1$.) Let the compact Lie group $G$ be simple, with universal covering homomorphism $\pi:\tilde{G} \to G$.  The kernel $Z:=\ker \pi$ is a central subgroup of $\tilde{G}$, which is also finite (since $G$ is simple).

As in Example \ref{eg:double}, the moment map $\phi:G^{2g}\to G$ lifts canonically to the universal cover $\tilde\phi:G^{2g} \to \tilde{G}$. Since $\phi$ measures the holonomy along the boundary $\partial\Sigma$ of a given based flat $G$-bundle over $\Sigma$, the lift $\tilde\phi$ will be referred to as the \emph{lifted holonomy} along the boundary of $\Sigma$.  It follows that $M_G(\Sigma,b)$ can be viewed as a quasi-Hamiltonian $\tilde{G}$-space.  Furthermore, the symplectic quotient $\phi^{-1}(e)/G$ may be expressed as
\begin{equation}
\label{eq:decomp}
 \phi^{-1}(e)/G =\coprod_{z\in Z} \tilde\phi^{-1}(z)/\tilde{G}.
\end{equation}
The fibers $\tilde\phi^{-1}(z)$ are connected, by Theorem 7.2 in \cite{AMM}; therefore, the above equation expresses the connected components of the symplectic quotient $\phi^{-1}(e)/G\cong M_G(\Sigma; \{e\})$.  

To better understand the decomposition (\ref{eq:decomp}), observe that $M_G(\Sigma; \{e\}) \cong M_G(\hat\Sigma)$, where $\hat\Sigma$ denotes the closed oriented surface obtained from $\Sigma$ by attaching a disc $D$ to $\Sigma$ with a degree 1 attaching map $\partial D \toby{=}\partial \Sigma \subset \Sigma$.  Also, recall that there is a bijective correspondence between principal $G$-bundles over $\hat\Sigma$ and $\pi_1(G)\cong Z$: every principal $G$-bundle over $\hat\Sigma$ can be constructed by gluing together trivial bundles over both $\Sigma$ and $D$ with a transition function $S^1=\Sigma\cap D \to G$. (Note that since $\Sigma$ is homotopy equivalent to a bouquet of circles, every principal $G$-bundle over $\Sigma$ is trivializable.)  Proposition \ref{prop:bundletype} below shows that the decomposition (\ref{eq:decomp}) describes the connected components of $M_G(\hat\Sigma)$ in terms of the bundle types enumerated by $Z$. That is,  $\tilde\phi^{-1}(z)/\tilde{G}$ consists of isomorphism classes of  flat $G$-bundles $(P, \theta)$ over $\hat\Sigma$ where $P\to \hat\Sigma$ is in the isomorphism class of principal $G$-bundles corresponding to  $z\in Z$.

\begin{prop} \label{prop:bundletype}
Let $j:\Sigma \hookrightarrow \hat\Sigma$ denote inclusion, and suppose that $(P,\theta)$ is a flat $G$-bundle over $\hat\Sigma$, where $P\to \hat\Sigma$ is a principal $G$-bundle corresponding to $z\in Z\cong \pi_1(G)$.  Then under the identification (\ref{eqn:holonomy}), $\tilde\phi([(j^*P, \hat{j}^*\theta)])=z$, where $\hat{j}:j^*P \hookrightarrow P$ is the induced inclusion.
\end{prop}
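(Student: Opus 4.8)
First I would make the statement concrete. Fix the base point $x_0\in\partial\Sigma=\partial D\subset\hat\Sigma$ together with a point $p_0$ in the fibre $P_{x_0}$, and let $\rho:\pi_1(\hat\Sigma,x_0)\to G$ be the holonomy homomorphism of $(P,\theta,p_0)$, with $A_i=\rho(a_i)$, $B_i=\rho(b_i)$. Restricting the bundle and connection to $\Sigma$ merely restricts $\rho$ along $\pi_1(\Sigma,x_0)\to\pi_1(\hat\Sigma,x_0)$, so under the correspondence $(\ref{eqn:holonomy})$ the based flat $G$-bundle $(j^*P,\hat{j}^*\theta,p_0)$ corresponds to the tuple $(A_1,B_1,\dots,A_g,B_g)\in G^{2g}$. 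Since the relator $\prod_i[a_i,b_i]$ is trivial in $\pi_1(\hat\Sigma)$, we have $\prod_i[A_i,B_i]=e$, so this tuple lies in $\phi^{-1}(e)$ and $\tilde\phi$ sends it to $\prod_i[\tilde A_i,\tilde B_i]\in Z$ for any lifts $\tilde A_i,\tilde B_i\in\tilde{G}$ (independence of the choice of lifts is the observation recorded in Example $\ref{eg:double}$). Thus the proposition reduces to proving the identity $\prod_i[\tilde A_i,\tilde B_i]=z$ in $Z$.

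Next I would set up trivializations adapted to the decomposition $\hat\Sigma=\Sigma\cup D$. Choose a global section $s_\Sigma$ of $P|_\Sigma$ (possible since $\Sigma$ is homotopy equivalent to a wedge of circles) and a global section $s_D$ of $P|_D$ (possible since $D$ is contractible), normalized so that $s_D(x_0)=s_\Sigma(x_0)$. In these trivializations $\theta$ becomes a flat $\mathfrak{g}$-valued $1$-form $A_\Sigma$ on $\Sigma$ and $A_D$ on $D$, and the two trivializations differ over $S^1=\partial D$ by a transition function $\tau:S^1\to G$ with $\tau(x_0)=e$; by the clutching description of principal $G$-bundles over $\hat\Sigma$ recalled above, $[\tau]=z$ under the identification $\pi_1(G)\cong Z$. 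The holonomy of $A_\Sigma$ computes the $A_i,B_i$, while the holonomy of $A_D$ around $\partial D$ is trivial because $D$ is simply connected.

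The computation is the crux. Since $\pi:\tilde{G}\to G$ is a local isomorphism, I may regard $A_\Sigma$ and $A_D$ as flat connections on the \emph{trivial $\tilde{G}$-bundles} over $\Sigma$ and $D$. The $\tilde{G}$-valued holonomy of $A_\Sigma$ lifts $\rho|_{\pi_1\Sigma}$, so its values on $a_i,b_i$ are particular lifts $\tilde A_i,\tilde B_i$, and its value on the boundary loop is $\prod_i[\tilde A_i,\tilde B_i]\in Z$; the $\tilde{G}$-holonomy of $A_D$ around $\partial D$ is $\tilde e$. Over $S^1$ the two connection forms are related by the gauge transformation $\tau$, so the corresponding lifted parallel-transport ODEs along the boundary loop have solutions differing by the lift $\tilde\tau:[0,1]\to\tilde{G}$ of $\tau$ with $\tilde\tau(0)=\tilde e$; since $\tilde\tau(1)\in Z$ is exactly the element of $Z$ representing $[\tau]=z$, comparing the endpoints yields $\tilde e = z^{-1}\prod_i[\tilde A_i,\tilde B_i]$ (up to orientation and sign conventions), i.e. $\prod_i[\tilde A_i,\tilde B_i]=z$, as needed. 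The main obstacle I anticipate here is bookkeeping rather than substance: one must fix compatible orientation conventions (the boundary orientation of $D$ versus that of $\partial\Sigma\subset\Sigma$) and a sign convention for the gauge transformation law of connection forms, and check that these match the chosen identification $\pi_1(G)\cong Z$ and the normalization of the bundle type $z$, so the final equality holds on the nose and not up to inversion.

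An alternative that sidesteps the explicit parallel-transport ODE is to identify $z$ with the obstruction to lifting the structure group of $P$ along $\tilde{G}\to G$, a class in $H^2(\hat\Sigma;Z)\cong Z$, and to compute it by Mayer--Vietoris for $\hat\Sigma=\Sigma\cup D$. The flat bundle $(P,\theta)$ lifts over $\Sigma$ (its fundamental group is free, so the holonomy lifts) and over $D$ (it is trivial there); the discrepancy of these two lifts over $S^1\simeq\Sigma\cap D$ is precisely the element $\prod_i[\tilde A_i,\tilde B_i]$ of $H^1(S^1;Z)\cong Z$, and the Mayer--Vietoris connecting map $H^1(S^1;Z)\to H^2(\hat\Sigma;Z)$ is an isomorphism because $H^2(\Sigma;Z)=H^1(D;Z)=0$ and $[\partial\Sigma]=0$ in $H_1(\Sigma;\Z)$. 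This again gives $z=\prod_i[\tilde A_i,\tilde B_i]=\tilde\phi([(j^*P,\hat{j}^*\theta)])$.
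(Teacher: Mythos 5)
Your main argument is correct and is essentially the paper's own proof: both decompose $\hat\Sigma=\Sigma\cup D$, trivialize over each piece with transition function representing $z$, use that the connection over $D$ is trivial so the boundary parallel transport in the $\Sigma$-trivialization coincides with the transition function, and conclude that its lift to $\tilde{G}$ ends at $z$ (the paper phrases this with horizontal lifts rather than gauge-transformed connection forms, but the content is identical). Your Mayer--Vietoris alternative via the lifting obstruction in $H^2(\hat\Sigma;Z)$ is a genuinely different, purely topological route, correct modulo the (standard but unchecked) compatibility between the clutching identification of bundle types with $Z$ and the obstruction-class identification.
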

\begin{proof}
Since $P\to \hat\Sigma$ corresponds to $z\in Z$, $P$ is isomorphic to a $G$-bundle constructed by gluing together trivial $G$-bundles over $\Sigma$ and over a disc $D$ by a transition function $f:S^1 = \Sigma \cap D \to G$, where $[f]=z\in Z\cong \pi_1(G)$.  Without loss of generality, assume $j^*P=\Sigma \times G$, and $i^*P=D\times G$, where $i:D\hookrightarrow \hat\Sigma$ denotes inclusion.  Note that since $D$ is simply connected, the correspondence (\ref{eqn:holonomy}) shows that there is only one flat $G$-bundle over $D$ up to isomorphism; therefore, if $\hat{i}:i^*P\hookrightarrow P$ is the induced inclusion, then it may also be assumed that  $\hat{i}^*\theta$ is the trivial connection.  Let $\Hol:\pi_1(\Sigma) \to G$ be the resulting holonomy homomorphism of the flat $G$-bundle $(\Sigma \times G, \hat{j}^*\theta)$.

In order to describe the lifted holonomy map $\tilde\phi$ geometrically, recall how it is defined:  $\tilde\phi(a_1, b_1, \ldots, a_g, b_g)=\prod[\tilde{a}_i, \tilde{b}_i]$, where $\tilde{a}_i$, $\tilde{b}_i$ are pre-images of $a_i$, $b_i$ in $\tilde{G}$.  Therefore, under the identification (\ref{eqn:holonomy}), this corresponds to finding a flat $\tilde{G}$-bundle over $\Sigma$ whose holonomy homomorphism $\widetilde\Hol: \pi_1(\Sigma) \to \tilde{G}$ satisfies $\pi \circ \widetilde\Hol = \Hol$. For that purpose, consider the flat $\tilde{G}$-bundle $\Sigma \times \tilde{G} \to \Sigma$ with flat connection $(1\times \pi)^*j^*\theta$.   To check that $\pi \circ \widetilde\Hol = \Hol$, choose a loop $\alpha$ in $\Sigma$ based at $b$, and let $\tilde\alpha=(\alpha, \tilde\alpha_2)$ be a horizontal lift in $\Sigma \times \tilde{G}$, with $\tilde\alpha_2(0)=\tilde{e}\in \tilde{G}$.  Then $\widetilde\Hol ([\alpha])=\tilde\alpha_2(1)^{-1}$. Since $(1\times \pi)$ sends horizontal vectors to horizontal vectors, then $(1\times \pi)\circ \tilde\alpha$ is a horizontal lift in $\Sigma \times G$.  Therefore, $\Hol([\alpha])=(\pi\tilde\alpha_2(1))^{-1}=\pi(\widetilde\Hol([\alpha]))$.

Having found a suitable flat $\tilde{G}$-bundle, it now suffices to verify that $\widetilde\Hol(\partial\Sigma)=z$.  To that end, consider a horizontal lift $\tilde\alpha$ of $\alpha=\partial \Sigma$ in $\Sigma \times \tilde{G}$.  Then $\tilde\alpha=(\alpha, \tilde\alpha_2)$ is a path in $\Sigma \times \tilde{G}$ that lifts the horizontal loop $(\alpha, \pi\tilde\alpha_2)$ in $\Sigma \times G$ over $\partial \Sigma$.  As a loop in $D\times G$, the horizontal loop over $\alpha$ must be $(\alpha, e)$ because $D\times G$ is equipped with the trivial connection.  Therefore, $\pi\tilde\alpha_2=f$, which identifies $\tilde\alpha_2$ as a lift of $f$ in $\tilde{G}$, and hence $\widetilde\Hol(\partial\Sigma)=[f]\cdot \tilde{e}$, where $[f]$ is the deck transformation corresponding to the loop $f$.  That is, $\widetilde\Hol(\partial\Sigma)=z$, as required.
\end{proof}

\section{Pullbacks of quasi-Hamiltonian $G$-spaces} 
\label{sec:nonsimplyconnectedG}

Handling quasi-Hamiltonian $G$-spaces when $G$ is not simply connected can sometimes require extra care.  A  useful technique is to introduce an intermediate quasi-Hamiltonian $\tilde{G}$-space, where $\tilde{G}$ is the universal covering group of $G$.  This section reviews this elementary construction.

Let $G$ be a semi-simple compact, connected, Lie group, and $\pi:\tilde{G}\to G$ its universal covering homomorphism.  For a quasi-Hamiltonian $G$-space $(M, \omega, \phi)$, let $P(M)$ be the pullback in the following diagram:

\begin{diagram}
P(M) & \rTo^\varphi & \tilde{G} \\
\dTo>p &  & \dTo>\pi \\
M & \rTo^\phi & G
\end{diagram}

\begin{prop} Let $G$ be a semi-simple, compact, connected Lie group.
If $(M,\omega,\phi)$ is a quasi-Hamiltonian $G$-space, and $P(M)$ is the pullback defined above, then $(P(M), p^*\omega, \varphi)$ is a quasi-Hamiltonian $\tilde{G}$-space.
\end{prop}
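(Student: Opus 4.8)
The plan is to equip $P(M)$ with a natural $\tilde{G}$-action covering the $G$-action on $M$, and then verify the axioms $\mathrm{Q}1$--$\mathrm{Q}3$ of Definition \ref{defn:qhspace} by pulling everything back along $p$, using the key fact recorded in Section \ref{sec:coholiegroup} that $\pi:\tilde{G}\to G$ induces the identity on Lie algebras, so that $\pi^*\theta^L=\theta^L$, $\pi^*\theta^R=\theta^R$, and hence $\pi^*\eta=\eta$ and $\pi^*\eta_G=\eta_G$.

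First I would set up the basic structure. As a set, $P(M)=\{(m,\tilde{g})\in M\times\tilde{G}\mid \phi(m)=\pi(\tilde{g})\}$; since $\pi$ is a submersion this is a smooth manifold, and since $\pi$ is in fact a covering map, $p:P(M)\to M$ is a covering map, hence étale. Define the $\tilde{G}$-action by $\tilde{g}\cdot(m,\tilde{h})=(\pi(\tilde{g})\cdot m,\ \tilde{g}\tilde{h}\tilde{g}^{-1})$; equivariance of $\phi$ shows this lands in $P(M)$, by construction $p$ intertwines it with the $G$-action on $M$ (through $\pi$) and $\varphi$ intertwines it with the conjugation action on $\tilde{G}$, so $\varphi$ is $\tilde{G}$-equivariant, and $\tilde{G}$-invariance of $p^*\omega$ is immediate from $G$-invariance of $\omega$. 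I would also record that for $\xi\in\tilde{\mathfrak{g}}=\mathfrak{g}$ the generating vector field $\xi^\sharp$ on $P(M)$ is $p$-related to $\xi^\sharp$ on $M$ and $\varphi$-related to $\xi^\sharp$ on $\tilde{G}$, which follows by differentiating the formula for the action at the identity.

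Next, $\mathrm{Q}1$ and $\mathrm{Q}2$ become formal. For $\mathrm{Q}1$, $d(p^*\omega)=p^*(d\omega)=-p^*\phi^*\eta=-\varphi^*\pi^*\eta=-\varphi^*\eta$. For $\mathrm{Q}2$, using that $\xi^\sharp$ on $P(M)$ is $p$-related to $\xi^\sharp$ on $M$, one gets $\iota_{\xi^\sharp}(p^*\omega)=p^*(\iota_{\xi^\sharp}\omega)=-\half\, p^*\phi^*(\theta^L+\theta^R,\xi)=-\half\,\varphi^*\pi^*(\theta^L+\theta^R,\xi)=-\half\,\varphi^*(\theta^L+\theta^R,\xi)$. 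For $\mathrm{Q}3$ I would argue pointwise at $q=(m,\tilde{h})$: because $\pi$ is étale, the pullback description of $T_qP(M)$ shows that $dp|_q$ is an isomorphism and that $d\varphi|_q=(d\pi|_{\tilde{h}})^{-1}\circ d\phi|_m\circ dp|_q$, so $\ker d\varphi|_q=(dp|_q)^{-1}(\ker d\phi|_m)$; and since $dp|_q$ is surjective, $\ker(p^*\omega)_q=(dp|_q)^{-1}(\ker\omega_m)$. Intersecting these two subspaces and transporting along the isomorphism $dp|_q$ reduces $\mathrm{Q}3$ for $P(M)$ to $\mathrm{Q}3$ for $M$.

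There is no serious obstacle here; the argument is essentially bookkeeping. The only point that deserves care is the compatibility of the various group actions and generating vector fields with the covering map $p$---in particular that $p$ is étale, which is what lets $\mathrm{Q}3$ transport cleanly, and that the identification $d_e\pi=\mathrm{id}$ is precisely what makes the Maurer--Cartan forms, and hence $\eta$ and $\eta_G$, pull back to themselves, i.e. the abuse of notation $\pi^*\eta=\eta$ already in force.
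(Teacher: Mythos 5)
Your proposal is correct and follows essentially the same route as the paper's proof: define the $\tilde{G}$-action via $\pi$, verify Q1 and Q2 by pulling back along the commuting square and using $\pi^*\theta^{L/R}=\theta^{L/R}$, and deduce Q3 from the fact that $p$ is a covering projection, hence a local diffeomorphism transporting the kernels of $\omega$ and $d\phi$. The extra bookkeeping you include (the $p$-relatedness of generating vector fields and the explicit identification of $\ker d\varphi$ and $\ker p^*\omega$ as preimages under $dp$) is a slightly more detailed rendering of the same argument.
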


\begin{proof} First, note that the $\tilde{G}$-action on $P(M):=\{(m,g) \in M\times \tilde{G}\, |\, \phi(m)=\pi(g)\}$ is given by
$$ g\cdot (m,a) = (\pi(g)\cdot m, gag^{-1}),$$
so that the map $\varphi:P(M) \to \tilde{G}$ is obviously $\tilde{G}$-equivariant.  And since the isomorphism $\pi_*$ identifies the Lie algebra $\mathfrak{g}$ of both $\tilde{G}$ and $G$, the map $p:P(M) \to M$ is $\mathfrak{g}$-equivariant.  Checking that $(P(M), p^*\omega, \varphi)$ satisfies Definition \ref{defn:qhspace} is now easy. (Note that $\tilde{G}$ is compact because $G$ is semi-simple. \cite{MT})

Property Q1 of Definition \ref{defn:qhspace} follows trivially since the pullback diagram commutes.  And since $p$ is $\mathfrak{g}$-equivariant, 
$$\iota_{\xi^\sharp} p^*\omega = p^* \iota_{\xi^\sharp} \omega = -\half p^*\phi^*(\theta^L + \theta^R, \xi) = -\half \varphi^* \pi^* (\theta^L+\theta^R,\xi)= -\half \varphi^*  (\theta^L+\theta^R,\xi),$$
and Q2 holds.  Finally, property Q3 follows because $p$ is a covering projection, and hence a local diffeomorphism. Indeed, suppose $v\in T_xP(M)$ lies in $\ker p^*\omega_x \cap \mathrm{d} \varphi |_x$.  Then $\omega_{p(x)}(p_*v, p_*w)=0$ for all $w\in T_xP(M)$. And since $p$ is a covering projection, that means that $p_*v \in \ker \omega_{p(x)}$. Since the diagram commutes, $p_*v$ also lies in $\ker \mathrm{d}\phi|_{p(x)}$, which implies that $p_*v=0$, and hence $v=0$ since $p$ is a covering projection.
\end{proof}

An important reason for considering the pullback quasi-Hamiltonian $\tilde{G}$-space $P(M)$ is the relationship between the symplectic quotients $M/\!/G$ and $P(M)/\!/\tilde{G}$ when $G$ is simple.  Namely, since $P(M)$ is defined as a pullback, the fibers $\varphi^{-1}(\tilde{e})$ and $\phi^{-1}(e)$ can be identified. Moreover, since central elements in $\tilde{G}$ act trivially on $P(M)$, the orbit spaces $\varphi^{-1}(\tilde{e})/\tilde{G}$ and $\phi^{-1}(e)/G$ can be identified as well.  

Note that in the case where the moment map $\phi:M \to G$ lifts to $\tilde\phi:M \to \tilde{G}$, the pullback $P(M)\cong M \times Z$, and the moment map $\varphi:M\times Z \to \tilde{G}$ takes the form $\varphi(m,z)=\tilde\phi(m)z$.  Therefore $\varphi^{-1}(\tilde{e}) = \tilde\phi^{-1}(Z)$, and if $M$ is connected, then (by Theorem 7.2 in \cite{AMM}) the fibers $\tilde\phi^{-1}(z)$ are connected. Consequently,  the connected components of the symplectic quotient
$$
\phi^{-1}(e)/G = \coprod_{z\in Z} \tilde\phi^{-1}(z)/\tilde{G}
$$
are described as symplectic quotients indexed by $Z$. (Compare with (\ref{eq:decomp}) in Section \ref{sec:compofmoduli}.)

\subsection*{Pulling back over fusion products}

Let $(M_1, \omega_1, \phi_1)$ and $(M_2,\omega_2,\phi_2)$ be quasi-Hamiltonian $G$-spaces.  Recall from Theorem \ref{fact:fusion} that the product $M_1\times M_2$ with group-valued moment map $\phi=\mu\circ(\phi_1\times\phi_2)$ is also a quasi-Hamiltonian $G$-space called the fusion product, sometimes denoted $M_1\circledast M_2$. This section relates the pullback quasi-Hamiltonian $\tilde{G}$-spaces $P(M_1)$, $P(M_2)$, and $P(M_1\times M_2)$ defined above.

Let $Z$ be the kernel of the universal covering homomorphism $\pi:\tilde{G}\to G$, which is necessarily a subgroup of the center $Z(\tilde{G})$ (see Section \ref{sec:liegroupsprelim}).  Then $Z$ acts naturally on each $P(M_i)=\{(m,g)\, | \, \phi_i(m)=\pi(g)\}$ by $z\cdot(m,g)=(m,zg)$, and hence on the product $P(M_1) \times P(M_2)$ according to $z\cdot (m_1,g_1,m_2,g_2) = (m_1,zg_1, m_2, z^{-1}g_2)$. Then there is a natural map 
$$
f:P(M_1\times M_2) \to (P(M_1)\times P(M_2) )/Z
$$ defined as follows. Write $P(M_1\times M_2)=\{(m_1,m_2,g) \, | \, \phi_1(m_1)\phi_2(m_2)=\pi(g)  \}$.
  For $m_1\in M_1$, choose $a\in \tilde{G}$ so that $\pi(a)=\phi_1(m_1)$, and let $f(m_1,m_2,g)=[(m_1,a,m_2,a^{-1}g)]$.  Since $\phi_1(m_1)=\pi(a)$, and $\phi_2(m_2)=\phi_1(m_1)^{-1}\pi(g)=\pi(a)^{-1}\pi(g)=\pi(a^{-1}g)$, the map $f$ is well defined, provided it does not depend on the choice of $a$.  But this is clear, since for $a'=za$ (where $z\in Z$), $(m_1,za,m_2, a^{-1}z^{-1}g)=z\cdot(m_1,a,m_2,a^{-1}g)$.
  
It is easy to see that $f$ is actually a diffeomorphism, with inverse induced by the natural map coming from the universal property of a pullback, since $P(M_1)\times P(M_2)$ fits in the commutative square below.
\begin{diagram}
P(M_1)\times P(M_2) & \rTo^{\mu\circ(\varphi_1\times\varphi_2)} & \tilde{G} \\
\dTo &  & \dTo>\pi \\
M_1\times M_2 & \rTo^\phi & G
\end{diagram}

\begin{remark} \label{remark:whenpullbacksplits}
Recall that if the moment map $\phi_1:M_1\to G$ lifts to $\tilde\phi_1:M_1\to \tilde{G}$, then $P(M_1)\cong M_1\times Z$.  Therefore,
\begin{align*}
P(M_1\times M_2) & \cong (P(M_1)\times P(M_2))/Z \\
&\cong (M_1\times Z \times P(M_2))/Z \\
&\cong M_1\times P(M_2)
\end{align*}
since each $Z$-orbit of $(M_1\times Z \times P(M_2))$ contains exactly one point with coordinate of the form $(-,e,-)$. 
\end{remark}

\chapter{Pre-quantization} \label{chapter:prequantization}

Before analyzing the pre-quantization of the moduli space $M(\Sigma)$, this chapter begins with a discussion of pre-quantization in the realm of quasi-Hamiltonian $G$-spaces.  In this chapter, and in the rest of this work, $G$ is assumed to be a compact, simple Lie group.

\section{Pre-quantization of quasi-Hamiltonian $G$-spaces} \label{sec:quasi}

This section addresses pre-quantization in the context of quasi-Hamiltonian $G$-spaces.   To help motivate the definition of pre-quantization (Definition \ref{preq}) in this setting, the analogous definition for classical Hamiltonian $G$-spaces is briefly recalled \cite{GGK}.  

For a Hamiltonian $G$-space $(M, \sigma, \Phi)$, a pre-quantization is geometrically represented by a certain complex line bundle over $M$ called a pre-quantum line bundle.  More precisely, a pre-quantum line bundle (or $G$-equivariant pre-quantum line bundle) for $(M, \sigma, \Phi)$ is a $G$-equivariant complex line bundle $L\to M$ whose equivariant curvature class is $[\sigma_G]\in H^2_G(M;\R)$.  By Theorem 6.7 in \cite{GGK}, such a line bundle exists if and only if the equivariant cohomology class $[\sigma_G]$ has an integral lift (i.e. a pre-image under the coefficient homomorphism $\iota_\R: H^2_G(M;\Z) \to H^2_G(M;\R)$).  A similar discussion holds for symplectic manifolds $(M,\sigma)$, where a pre-quantization is a complex line bundle over $M$ whose curvature class is $[\sigma]$, which exists if and only if $[\sigma]$ has an integral lift.  

Since isomorphism classes of $G$-equivariant complex line bundles over $M$ are in one-to-one correspondence with $H^2_G(M;\Z)$ (Theorem C.48 in \cite{GGK}), a pre-quantization may be viewed as an integral lift of $[\sigma_G]$, provided the choice of line bundle within a given isomorphism class is unimportant.  (Recall that pre-quantization is an ingredient used to determine the quantization of $(M, \sigma, \Phi)$, which does not depend on the choice of line bundle---see Remark 6.56 in \cite{GGK}.)

\begin{defn} \label{preq}
A pre-quantization of a quasi-Hamiltonian $G$-space $(M,\omega,\phi)$ is an integral lift of $[(\omega,\eta_G)]\in H^3_G(\phi;\R)$.  That is, a pre-quantization is a cohomology class $\alpha\in H^3_G(\phi;\Z)$ satisfying $\iota_\R(\alpha)=[(\omega,\eta_G)]$, where $\iota_\R:H^*_G(\quad;\Z)\to H^*_G(\quad;\R)$ is the coefficient homomorphism.
\end{defn}

There is a geometric interpretation of Definition \ref{preq}, in terms of \emph{relative gerbes} and \emph{quasi-line bundles} that is analogous to the situation for Hamiltonian $G$-spaces reviewed above.  The interested reader may wish to consult \cite{Sh} for details.

Proposition \ref{prop:eq-pqispq} below shows that a pre-quantization may be viewed as an ordinary cohomology class when $G$ is  simply connected.  The analogous statement for Hamiltonian $G$-spaces is also true, and follows directly from Lemma \ref{lemma:simplyG1}.

\begin{lemma} \label{lemma:simplyG1} Let $X$ be some $G$-space, where $G$ is simply connected.  Then the canonical map $H^i_G(X;R) \to H^i(X;R)$ is an isomorphism when $i=1$, and $2$, and is injective when $i=3$ with any coefficient ring $R$. 
\end{lemma}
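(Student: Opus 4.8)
The plan is to identify the canonical map $H^i_G(X;R)\to H^i(X;R)$ as the restriction-to-the-fibre homomorphism of the Borel fibration $X\to X_G\toby{\rho}BG$ (see Section \ref{sec:classifyingspaces}), and to extract the conclusion from the Serre spectral sequence of that fibration. The key input is that $BG$ is $3$-connected: since $G$ is connected, $\pi_0(BG)=\pi_1(BG)=0$; since $G$ is simply connected, $\pi_2(BG)\cong\pi_1(G)=0$; and since the second homotopy group of any Lie group vanishes, $\pi_3(BG)\cong\pi_2(G)=0$. By the Hurewicz theorem and the universal coefficient theorem it follows that $H^p(BG;R)=0$ for $1\le p\le 3$, with any coefficient ring $R$.

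With this in hand, consider the Serre spectral sequence of $X\to X_G\to BG$. Because $BG$ is simply connected the local coefficient system is trivial, so $E_2^{p,q}\cong H^p(BG;H^q(X;R))$, converging to $H^{p+q}(X_G;R)$. The vanishing above gives $E_2^{p,q}=0$ whenever $1\le p\le 3$, and of course $E_2^{p,q}=0$ for $q<0$. Hence in total degree $n\le 3$ the only possibly nonzero piece of the associated graded of $H^n(X_G;R)$ is $E_\infty^{0,n}$, so $H^n(X_G;R)\cong E_\infty^{0,n}\subseteq E_2^{0,n}=H^n(X;R)$, and under this identification the edge homomorphism is precisely the canonical map being studied.

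It then remains to compute $E_\infty^{0,n}$ for $n=1,2,3$. All differentials into a fibre-column term $E_r^{0,q}$ come from $E_r^{-r,\,q+r-1}=0$, so $E_\infty^{0,q}$ is the intersection of the kernels of the differentials emanating from $E_r^{0,q}$. For $q=1$ one has $d_2\colon E_2^{0,1}\to E_2^{2,0}=0$ and, for $r\ge 3$, target $E_r^{r,\,2-r}=0$ since $2-r<0$; hence $E_\infty^{0,1}=H^1(X;R)$. For $q=2$ one has $d_2\colon E_2^{0,2}\to E_2^{2,1}=0$, $d_3\colon E_3^{0,2}\to E_3^{3,0}=0$, and target zero for $r\ge 4$; hence $E_\infty^{0,2}=H^2(X;R)$. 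This yields the asserted isomorphism for $i=1,2$. For $q=3$ the differentials $d_2,d_3$ again land in the vanishing columns $p=2,3$, but $d_4\colon E_4^{0,3}\to E_4^{4,0}=H^4(BG;R)$ need not vanish, so in general $E_\infty^{0,3}$ is only a subgroup of $E_2^{0,3}=H^3(X;R)$, and the canonical map $H^3_G(X;R)\to H^3(X;R)$ is merely injective. There is no real obstacle here beyond careful bookkeeping; the one point worth stating precisely is the identification of the edge homomorphism of the Borel fibration with the canonical coefficient-free restriction map, together with the standard fact that $\pi_2$ of a Lie group vanishes, which makes $BG$ $3$-connected.
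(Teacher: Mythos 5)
Your proof is correct and follows essentially the same route as the paper: identify the canonical map with the fibre-restriction (edge) homomorphism of the Borel fibration $X\to X_G\to BG$, use that $BG$ is $3$-connected (via $\pi_1(G)=\pi_2(G)=0$) to kill $H^p(BG;R)$ for $1\le p\le 3$, and read off the conclusion from the Serre spectral sequence, with the only possible obstruction in degree $3$ being the differential into $H^4(BG;R)$. Your version merely spells out the bookkeeping (and states the $E_2$-term in its correct form $H^p(BG;H^q(X;R))$) that the paper's two-line proof leaves implicit.
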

\begin{proof} Recall that the canonical map $H^i_G(X) \to H^i(X)$  is the  induced homomorphism of the map $p: X \to X_G$, the inclusion of the fiber in the Borel construction $\rho:X_G \to BG$ (see Section \ref{sec:classifyingspaces}). Consider the Serre spectral sequence associated to the fibration $\rho$, with $E_2\cong H^*(X)\otimes H^*(BG)$, converging to $H^*(X_G)$. As $G$ is simply connected, $H^i(BG)=0$ for $i=1,2,3$. Therefore, the first non-zero differential is $d:H^3(X)\to H^4(BG)$, and the result follows. \end{proof}

\begin{prop} \label{prop:eq-pqispq} Suppose $G$ is simply connected, and
$(M,\omega,\phi)$ is a quasi-Hamiltonian $G$-space. Then
$(M,\omega,\phi)$ admits a pre-quantization if and only if
the cohomology class $[(\omega,\eta)]\in H^3(\phi;\R)$ is integral.
\end{prop}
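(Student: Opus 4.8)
The plan is to reduce the equivariant statement to a non-equivariant one by comparing the long exact sequences of the relative cohomology groups $H^*_G(\phi;-)$ and $H^*(\phi;-)$ with real and integer coefficients, and then feed in Lemma \ref{lemma:simplyG1} together with Proposition \ref{lemma:simplyG2}. Recall that the relative cohomology $H^*(\phi;R)$ of the map $\phi:M\to G$ sits in a long exact sequence
\[
\cdots \to H^{p-1}(M;R) \to H^p(\phi;R) \to H^p(G;R) \to H^p(M;R) \to \cdots,
\]
and similarly $H^*_G(\phi;R)$ sits in the sequence built from $H^*_G(M;R)$ and $H^*_G(G;R)$ (where $G$ acts on itself by conjugation). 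The map $\phi:M\to G$ is $G$-equivariant, so there is a map of exact sequences from the equivariant one to the non-equivariant one induced by $X\to X_G$, and in particular a commuting square relating $H^3_G(\phi;-)\to H^3(\phi;-)$ to the corresponding maps on $H^3_G(M)\to H^3(M)$, $H^2_G(M)\to H^2(M)$, $H^3_G(G)\to H^3(G)$.

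First I would show that the forgetful map $H^3_G(\phi;R)\to H^3(\phi;R)$ is an isomorphism for $R=\R$ and injective for $R=\Z$, by a five-lemma (or direct diagram chase) argument applied to the map of long exact sequences above. The input is: $H^i_G(M;R)\to H^i(M;R)$ is an isomorphism for $i=1,2$ and injective for $i=3$ (Lemma \ref{lemma:simplyG1}), and $H^3_G(G;R)\to H^3(G;R)$ is an isomorphism (Proposition \ref{lemma:simplyG2}). A short diagram chase then yields that $H^3_G(\phi;R)\to H^3(\phi;R)$ is injective (with any coefficients) and an isomorphism with real coefficients. Next I would use naturality of the coefficient homomorphism $\iota_\R$ with respect to the forgetful map: the square
\[
\begin{array}{ccc}
H^3_G(\phi;\Z) & \to & H^3_G(\phi;\R) \\
\downarrow & & \downarrow \\
H^3(\phi;\Z) & \to & H^3(\phi;\R)
\end{array}
\]
commutes, the right vertical arrow is an isomorphism sending $[(\omega,\eta_G)]$ to $[(\omega,\eta)]$ (this is just $ev_0$ applied to the relative cocycle, since $ev_0(\eta_G)=\eta$), and the left vertical arrow is injective.

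From here the equivalence is formal. If $(M,\omega,\phi)$ admits a pre-quantization $\alpha\in H^3_G(\phi;\Z)$ with $\iota_\R(\alpha)=[(\omega,\eta_G)]$, push $\alpha$ down to $\bar\alpha\in H^3(\phi;\Z)$; commutativity of the square gives $\iota_\R(\bar\alpha)=[(\omega,\eta)]$, so $[(\omega,\eta)]$ is integral. Conversely, if $[(\omega,\eta)]=\iota_\R(\beta)$ for some $\beta\in H^3(\phi;\Z)$, I need to produce an integral lift in the equivariant group; since $H^3_G(\phi;\R)\to H^3(\phi;\R)$ is an isomorphism, $[(\omega,\eta_G)]$ is the unique preimage of $[(\omega,\eta)]$, and I would use the injectivity/commutativity together with a diagram chase on the integral sequences to lift $\beta$ to some $\alpha\in H^3_G(\phi;\Z)$ mapping to $[(\omega,\eta_G)]$ under $\iota_\R$. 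The one point requiring care — the main obstacle — is this last lifting step: injectivity of $H^3_G(\phi;\Z)\to H^3(\phi;\Z)$ alone does not hand me a preimage of $\beta$, so I must instead chase the integral long exact sequences directly, using that $H^i_G(M;\Z)\to H^i(M;\Z)$ is iso for $i\le 2$ and $H^3_G(G;\Z)\to H^3(G;\Z)$ is iso, to show the relevant connecting maps line up and the class $\beta$ indeed comes from the equivariant group; combined with uniqueness of the real lift, the lifted class is automatically a pre-quantization.
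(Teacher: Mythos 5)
Your proposal is correct and is essentially the paper's argument: the paper applies Lemma \ref{lemma:simplyG1} (with $X=M$ and $X=G$), Proposition \ref{lemma:simplyG2}, and the five-lemma to conclude that $H^3_G(\phi;R)\to H^3(\phi;R)$ is an isomorphism with \emph{any} coefficient ring, after which the equivalence is immediate from naturality of $\iota_\R$. The only difference is that you under-claim with integer coefficients (injectivity only) and then flag the lifting of $\beta$ as a delicate extra chase; in fact that chase is exactly the surjectivity half of the five-lemma, whose hypotheses ($H^2_G(M;\Z)\to H^2(M;\Z)$ and $H^3_G(G;\Z)\to H^3(G;\Z)$ surjective, $H^3_G(M;\Z)\to H^3(M;\Z)$ injective) you already have, so no additional work is needed.
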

\begin{proof} Easy applications of Lemma
\ref{lemma:simplyG1} with $X=M$ and $X=G$, Proposition
\ref{lemma:simplyG2}, and the five-lemma show that the canonical map
$H^3_G(\phi) \to H^3(\phi)$ is an isomorphism with any coefficient ring. \end{proof}  

The next proposition relates the pre-quantization of a quasi-Hamiltonian $G$-space $(M,\omega,\phi)$ with its symplectic quotient $M/\!/G$.  If it is known that $G$ acts freely on the level set $\phi^{-1}(e)$, then the isomorphism $H^2_G(\phi^{-1}(e)) \cong H^2(M/\!/G)$ guarantees that $[\omega_{red}]$ is integral if and only if $[j^*\omega]$ is integral.  In this case, the next proposition shows that a pre-quantization of a quasi-Hamiltonian $G$-space descends to a pre-quantization of its symplectic quotient.

\begin{prop} \label{prop:qeqpreq=reducedpreq} Let $(M,\omega,\phi)$ be a quasi-Hamiltonian
$G$-space, and suppose the identity $e\in G$ is a regular value for the moment map $\phi$.  If $(M,\omega,\phi)$ admits a pre-quantization, then the cohomology class $[j^*\omega] \in H^2_G(\phi^{-1}(e);\R)$ is integral.
\end{prop}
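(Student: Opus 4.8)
The plan is to relate the distinguished relative class $[(\omega,\eta_G)] \in H^3_G(\phi;\R)$ to the reduced class $[j^*\omega] \in H^2_G(\phi^{-1}(e);\R)$ via the long exact sequence of the mapping cone of $\phi^*$. Recall from Section~\ref{sec:spectralsequences} (the algebraic mapping cone discussion) that the relative cohomology $H^*_G(\phi)$ sits in a long exact sequence
\[
\cdots \to H^{p}_G(M) \to H^{p}_G(G) \to H^{p+1}_G(\phi) \toby{\delta} H^{p+1}_G(M) \to \cdots,
\]
where the map $H^{p+1}_G(\phi)\to H^{p+1}_G(M)$ sends a relative cocycle $(\alpha,\beta)$ (with $\alpha$ on $M$ and $\beta$ on $G$) to $[\alpha]$. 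So the image of $[(\omega,\eta_G)]$ under this map is simply $[\omega_G]\in H^3_G(M;\R)$; I will also want its image under the map to $H^3_G(G)$, which recovers $[\eta_G]$. The key geometric observation is that on the level set $\phi^{-1}(e)$ the moment map is constant, so $\phi\circ j$ is the constant map $M\to\{e\}\hookrightarrow G$; hence $j^*\phi^*\eta_G = 0$, and the pair $(j^*\omega, 0)$ is a relative cocycle for the constant map, i.e. one gets a commutative diagram relating $H^*_G(\phi)$ and $H^*_G(\phi\circ j)$ through the inclusion $j$.

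First I would make this precise: the inclusion $j:\phi^{-1}(e)\hookrightarrow M$ together with the identity on $G$ induces a chain map of algebraic mapping cones $C^*(\phi^*)\to C^*((\phi\circ j)^*)$, hence a homomorphism $j^*:H^*_G(\phi)\to H^*_G(\phi\circ j)$. Since $\phi\circ j$ is (homotopic to) a constant map, $H^3_G(\phi\circ j)$ fits in an exact sequence with $H^2_G(\phi^{-1}(e))$ and $H^2_G(G)$ and $H^3_G(G)$; more concretely, because $(\phi j)^*$ is null on positive-degree classes, the cone splits and $H^3_G(\phi\circ j)\cong H^2_G(\phi^{-1}(e)) \oplus (\text{a term from }H^3_G(G))$, with the class $[(j^*\omega,\,0)]$ mapping to $[j^*\omega]$ in the $H^2_G(\phi^{-1}(e))$ summand (here one uses that $j^*\phi^*\eta_G=0$ so the $\eta_G$-component genuinely vanishes). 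Chasing $[(\omega,\eta_G)]$ through the square
\[
\begin{array}{ccc}
H^3_G(\phi;\Z) & \rTo & H^3_G(\phi\circ j;\Z) \\
\dTo>{\iota_\R} & & \dTo>{\iota_\R} \\
H^3_G(\phi;\R) & \rTo & H^3_G(\phi\circ j;\R)
\end{array}
\]
shows: if $\alpha\in H^3_G(\phi;\Z)$ is a pre-quantization, then $j^*\alpha\in H^3_G(\phi\circ j;\Z)$ is an integral lift of $j^*[(\omega,\eta_G)] = [(j^*\omega, 0)]$; projecting to the $H^2_G(\phi^{-1}(e))$ summand (a projection that is compatible with the coefficient homomorphisms) produces an integral lift of $[j^*\omega]$, which is exactly the claim.

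The main obstacle I expect is verifying cleanly that $H^3_G(\phi\circ j)$ decomposes so that the $H^2_G(\phi^{-1}(e))$-component of $j^*[(\omega,\eta_G)]$ really is $[j^*\omega]$ and that this decomposition is natural with respect to $\iota_\R$. This amounts to understanding the long exact sequence of the cone of a constant (equivalently, null-homotopic) map $\phi^{-1}(e)\to G$: the connecting map $H^2_G(\phi^{-1}(e))\to H^3_G(\phi\circ j)$ must be shown to be a split injection, with complementary summand coming from $H^3_G(\text{pt}\hookrightarrow G)$. Once that splitting is in hand — it follows formally from the fact that a constant map induces the zero map on reduced cohomology, so the cone sequence breaks into short exact sequences that split — the rest is a diagram chase using functoriality of $\iota_\R$. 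An alternative, perhaps cleaner route avoiding the relative formalism: since $e$ is a regular value, restrict the relative cocycle $(\omega,\eta_G)$ to a neighborhood of $\phi^{-1}(e)$, use that $\eta_G$ is exact near $e$ (indeed $\eta_G$ represents $0$ in $H^3_G(U)$ for $U$ a contractible-enough invariant neighborhood of $e$, as $H^3_G(\text{pt})$ contributes nothing in the relevant range by Lemma~\ref{lemma:simplyG1}-type reasoning), write $\eta_G = d_G\beta$ there, and observe $\omega + \phi^*\beta$ is an equivariantly closed $2$-form on $\phi^{-1}(e)$ representing $[j^*\omega]$ whose integrality is controlled by the integral lift $\alpha$; but the mapping-cone argument is more robust and is the approach I would write up.
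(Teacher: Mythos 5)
There is a genuine gap at the heart of your mapping-cone argument: the assertion that $j^*[(\omega,\eta_G)]=[(j^*\omega,0)]$ in $H^3_G(\phi\circ j;\R)$. Because you keep the target of the restricted map equal to $G$, the induced chain map of algebraic mapping cones sends $(\omega,\eta_G)$ to $(j^*\omega,\eta_G)$: the second component is a cochain on $G$ and is untouched by restricting the source. The vanishing $(\phi\circ j)^*\eta_G=0$ only tells you that $(j^*\omega,0)$ and $(0,\eta_G)$ are separately cocycles; it does not make "the $\eta_G$-component vanish." The difference $[(0,\eta_G)]$ maps to $[\eta_G]\neq 0$ under $H^3_G(\phi\circ j;\R)\to H^3_G(G;\R)$ (this is exactly the class recording the level), so $j^*[(\omega,\eta_G)]$ does not even lie in the image of $H^2_G(\phi^{-1}(e);\R)$, and no projection onto that summand can return $[j^*\omega]$. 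A second, related problem is your claim that $(\phi\circ j)^*$ annihilates positive-degree classes: equivariantly the constant map factors through $H^*_G(\{e\})=H^*(BG)$, which is nonzero in positive degrees (for instance $H^4(BG;\Z)\neq 0$ always, and $H^3(BG;\Z)\cong\pi_1(G)$ for non-simply connected $G$), so the splitting of $H^3_G(\phi\circ j)$ you invoke is not established. (A symptom of the same confusion appears in your setup, where you assign $[(\omega,\eta_G)]$ an image "$[\omega_G]\in H^3_G(M;\R)$": the first component is a $2$-form and is not equivariantly closed, so no such class exists; the long exact sequence runs $H^2_G(M)\to H^3_G(\phi)\to H^3_G(G)$.)

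The repair is what the paper does: restrict the target as well as the source, i.e.\ map the pair $(\phi^{-1}(e),\{e\})$ into $(M,G)$ and land in $H^3_G\bigl(\phi^{-1}(e)\to\{e\}\bigr)$. On the point, $\eta_G$ restricts to $0\in\Omega^3_G(\mathrm{pt})$ on the nose (the odd part of $(S\mathfrak{g}^*)^G$ is zero), so the image of $[(\omega,\eta_G)]$ genuinely is $[(j^*\omega,0)]$; since $H^2_G(\mathrm{pt};\R)=H^3_G(\mathrm{pt};\R)=0$, this class is identified with $[j^*\omega]\in H^2_G(\phi^{-1}(e);\R)$, and the integral lift is carried along by projecting onto the $H^2_G(\phi^{-1}(e);\Z)$ summand of $H^3_G(\phi^{-1}(e)\to\{e\};\Z)$. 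Your closing "alternative route" (trivializing $\eta_G$ equivariantly near $e$ and correcting $\omega$ by $\phi^*\beta$) is actually closer in spirit to a correct argument than the mapping-cone write-up you say you would submit, but as sketched it does not yet address integrality.
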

\begin{proof} There's a canonical map $H^3_G(\phi) \to H^2_G(\phi^{-1}(e))$ (with any coefficients) given by the composition of the map induced by restriction $H_G^3(\phi) \to H_G^3(\phi|_{\phi^{-1}(e)})$ and the projection onto the first summand 
$$H_G^3(\phi|_{\phi^{-1}(e)}) \cong H^2_G(\phi^{-1}(e))\oplus H_G^3(*) \to H^2_G(\phi^{-1}(e)).
$$ In other words, there is a diagram:
\begin{diagram}
H^2_G(M) 	& \rTo & H_G^3(\phi) 	& \rTo & H_G^3(G) \\
	\dTo>{ j^*}		&	   &\dTo	 	&	&\dTo \\
H^2_G(\phi^{-1}(e))	& \rTo & H_G^3(\phi|_{\phi^{-1}(e)})& \rTo & H^3_G(*)\\
\end{diagram}
\noindent in which the bottom row is (canonically) split exact with any coefficients. Since $H^3_G(*;\R)=0$ it suffices to check that (with real coefficients) the middle map sends the relative cohomology class $[(\omega,\eta_G)]$ to $[(j^*\omega,0)]$, which is clear.
\end{proof}

\begin{remark} \label{remark:reduction}  In light of the classification of $G$-equivariant line bundles by $H^2_G(-;\Z)$, the previous proposition has the following geometric interpretation.  If $(M,\omega,\phi)$ is a pre-quantized quasi-Hamiltonian $G$-space  and $e\in G$ is a regular value, then  there is  a $G$-equivariant pre-quantum line bundle over the level set $\phi^{-1}(e)$. And under the additional hypothesis that $G$ acts freely on $\phi^{-1}(e)$,  this $G$-equivariant line bundle descends to a pre-quantum line bundle over the symplectic quotient.  Of course, if $G$ only acts locally-freely on $\phi^{-1}(e)$, one obtains instead an \emph{orbi-bundle} over the symplectic quotient. (The reader may wish to consult \cite{MS} for a more thorough account of pre-quantization of singular spaces.)
 \end{remark}

Proposition \ref{prop:fusion}  shows that the fusion product $M_1\circledast M_2$ of two pre-quantizable quasi-Hamiltonian $G$-spaces is pre-quantizable.  In fact, the proof of the proposition shows that the pre-quantization of $M_1\circledast M_2$ is canonically obtained from the pre-quantizations of $M_1$ and $M_2$.

\begin{prop} \label{prop:fusion} Let $G$ be simply connected. If $(M_1,\omega_1,\phi_1)$ and $(M_2,\omega_2,\phi_2)$
are pre-quantized quasi-Hamiltonian $G$-spaces, then their fusion product $(M_1 \circledast M_2,\omega,\phi)$ inherits a pre-quantization.
\end{prop}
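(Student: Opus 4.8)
Since $G$ is simply connected, Proposition \ref{prop:eq-pqispq} lets me rephrase everything: I am given integral lifts $\alpha_i\in H^3(\phi_i;\Z)$ of the relative classes $[(\omega_i,\eta)]\in H^3(\phi_i;\R)$, and I must produce an integral lift of $[(\omega,\eta)]\in H^3(\phi;\R)$, where $\phi=\mu\circ\Phi$ with $\Phi=\phi_1\times\phi_2\colon M_1\times M_2\to G\times G$ and $\mu\colon G\times G\to G$ the multiplication. The device I would use is to compare $H^*(\phi)$ with the relative cohomology $H^*(\Phi)$ of the auxiliary map $\Phi$, and to do all the work in $H^*(\Phi)$, where the fusion $2$-form splits.

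\textbf{Locating $[(\omega,\eta)]$ in $H^3(\Phi;\R)$.} The square with $\mathrm{id}$ on one edge and $\mu$ on the other is a morphism of maps and induces $H^*(\phi)\to H^*(\Phi)$, $[(\sigma,\tau)]\mapsto[(\sigma,\mu^*\tau)]$, so $[(\omega,\eta)]$ maps to $[(\omega,\mu^*\eta)]$. Writing $\varpi=\half(\pr_1^*\theta^L,\pr_2^*\theta^R)\in\Omega^2(G\times G)$, the fusion $2$-form of Theorem \ref{fact:fusion} reads $\omega=\pr_1^*\omega_1+\pr_2^*\omega_2+\Phi^*\varpi$, and the identity $\mu^*\eta=\pr_1^*\eta+\pr_2^*\eta-d\varpi$ underlying the fusion construction (\cite{AMM}) gives, in the algebraic mapping cone $C^*(\Phi)$,
\[
(\omega,\mu^*\eta)-\big[(\pr_1^*\omega_1,\pr_1^*\eta)+(\pr_2^*\omega_2,\pr_2^*\eta)\big]=(\Phi^*\varpi,-d\varpi)=d_\Phi(0,\varpi).
\]
Hence the image of $[(\omega,\eta)]$ in $H^3(\Phi;\R)$ is the sum of the images of $[(\omega_1,\eta)]$ and $[(\omega_2,\eta)]$ under the two maps $H^3(\phi_i;\R)\to H^3(\Phi;\R)$ coming from the projection squares ($\pr_i$ on the $M$'s, $\pr_i$ on the $G$'s). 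These maps are defined over $\Z$ and are natural in the coefficients, so $\beta:=(\text{image of }\alpha_1)+(\text{image of }\alpha_2)\in H^3(\Phi;\Z)$ is an integral lift of that image.

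\textbf{Transferring integrality down to $H^3(\phi)$.} Comparing the long exact sequences of $\phi$ and $\Phi$, and using $H^1(G)=H^2(G)=0$ together with the injectivity of $\mu^*$ on $H^3$ (primitivity of $[\eta]$, as $H^1(G)=H^2(G)=0$), the (short) five lemma shows that $H^3(\phi)\to H^3(\Phi)$ is injective with both $\Z$- and $\R$-coefficients. It therefore suffices to check that the integral class $\beta$ lies in the image of $H^3(\phi;\Z)\to H^3(\Phi;\Z)$: granting this, a lift $\alpha\in H^3(\phi;\Z)$ of $\beta$ has $\iota_\R(\alpha)$ and $[(\omega,\eta)]$ equal after the injection $H^3(\phi;\R)\hookrightarrow H^3(\Phi;\R)$, hence $\iota_\R(\alpha)=[(\omega,\eta)]$ and $\alpha$ is the desired pre-quantization. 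To lift $\beta$, write $H^3(G;\Z)=\Z\langle t\rangle$ with $[\eta]=\ell t$ (the level $\ell$ is a positive integer since the factors are pre-quantizable); existence of $\alpha_i$ forces $\phi_i^*(\ell t)=0$ in $H^3(M_i;\Z)$, so $\phi^*(\ell t)=\pr_1^*\phi_1^*(\ell t)+\pr_2^*\phi_2^*(\ell t)=0$ in $H^3(M_1\times M_2;\Z)$, and exactness yields $\hat\alpha\in H^3(\phi;\Z)$ mapping to $\ell t$; its image in $H^3(\Phi;\Z)$ differs from $\beta$ by a class pulled back from $H^2(M_1\times M_2;\Z)$, which itself lifts to $H^3(\phi;\Z)$, correcting $\hat\alpha$ to a lift of $\beta$.

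\textbf{Main obstacle.} The delicate point is exactly this last transfer: knowing that $[(\omega,\eta)]$ becomes integral only after mapping to $H^3(\Phi;\R)$ does not by itself make it integral downstairs, so one genuinely needs the injectivity of $H^3(\phi)\to H^3(\Phi)$ over both $\Z$ and $\R$ \emph{and} the verification that the explicit integral class $\beta$ descends --- and it is here that the hypothesis that $M_1$ and $M_2$ are \emph{pre-quantized} (so that $\phi_i^*(\ell t)$ vanishes integrally, not merely rationally) is used. The other ingredients --- the primitivity identity, the low-degree cohomology of $G$ and $G\times G$, and the coboundary computation in $C^*(\Phi)$ --- are routine, and the construction exhibits the pre-quantization of $M_1\circledast M_2$ canonically in terms of those of $M_1$ and $M_2$, as the statement promises.
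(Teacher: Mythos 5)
Your proof is correct and follows essentially the same route as the paper's second (cohomology-level) proof: both map $[(\omega,\eta)]$ into $H^3(\phi_1\times\phi_2;\R)$, identify its image there (modulo the explicit coboundary $d(0,\varpi)$) as the sum of the images of the two given integral classes $\alpha_i$, and then descend the resulting integral lift using the injectivity of $H^3(\phi)\to H^3(\phi_1\times\phi_2)$, which in both cases rests on the injectivity of $\mu^*$ on $H^3(G)$. The only divergence is in the descent step: the paper invokes the exact sequence $H^3(\phi;\Z)\to H^3(\phi_1\times\phi_2;\Z)\toby{\delta}H^4(\mu;\Z)$ and the torsion-freeness of $H^4(\mu;\Z)$, whereas you construct a preimage by hand, first producing $\hat\alpha$ over $\ell t\in H^3(G;\Z)$ and then correcting by a class from $H^2(M_1\times M_2;\Z)$ --- both arguments are valid.
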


Two proofs of this proposition will be given. The first is carried out at the chain level, and hence has the advantage of explicitly showing how the fusion product inherits the pre-quantization from its factors. The main disadvantage, however, is that it may appear like a lucky guess. The second proof, carried out at the cohomology level, is a little more methodical despite being less transparent.

\begin{proof}[Proof $1$] First, recall that for a manifold $X$, the sub-complex of smooth singular cochains $S_{sm}^*(X;\Z)$ is chain homotopy equivalent to the singular cochain complex $S^*(X;\Z)$. Both $S^*_{sm}(X;\Z)$ and the de Rham complex of differential forms on $X$ may be viewed  as a sub-complexes of $S^*_{sm}(X;\R)$. \cite{Ma}

 By Proposition \ref{prop:eq-pqispq} it suffices to construct a cohomology class $[\alpha]\in S^3_{sm}(\phi;\Z)$ such that $\iota_\R([\alpha])=[(\omega,\eta)]$, where  $\omega=\pr_1^*\omega_1 + \pr_2^*\omega_2 + \half(\pr_1^*\phi_1^*\theta^L,\pr_2^*\phi_2^*\theta^R)$, and $\iota_\R$ is the coefficient homomorphism.

 To begin, choose a cochain representative $\bar{\eta} \in S^3_{sm}(G;\Z)$ (unique up to coboundary) that satisfies $\iota_\R([\bar{\eta}])=[\eta]$. For concreteness, let $\varepsilon \in S^2_{sm}(G;\R)$ be a smooth cochain that satisfies $\bar{\eta} -\eta = d\varepsilon$.
Since the cohomology class $[\bar{\eta}]\in H^3(G;\Z)$ is primitive (as $G$ is simply connected), there exists a smooth cochain $\tau \in S^2_{sm}(G\times G;\Z)$ (unique up to coboundary, given the choice of $\bar\eta$) that satisfies $d\tau = \mu^*\bar{\eta} - \pr_1^*\bar{\eta} - \pr_2^*\bar{\eta}$.

Since (for $i=1$ and $2$) $(M_i,\omega_i,\phi_i)$  is pre-quantized, there exists a
cochain representative $\bar\omega_i \in S^2_{sm}(M_i;\Z)$ (unique up to coboundary, given the choice of $\bar\eta$) such that $\bar\omega_i -\omega_i + \phi_i^*\varepsilon$ is exact in $S^2_{sm}(M_i;\R)$, so that $[(\bar\omega_i, \bar\eta)]$ is the given pre-quantization of $(M_i,\omega_i, \phi_i)$. 

Define the smooth relative cochain
 $\alpha=(\pr_1^*\bar\omega_1 + \pr_2^*\bar\omega_2 - (\phi_1\times \phi_2)^*\tau,\bar{\eta})$ in $S^3_{sm}(\phi;\Z)$.   It must be verified that (a) $\alpha$ is  a relative cocycle that is cohomologous to $(\omega,\eta)$ with real coefficients, and (b) $[\alpha]$ is independent of the choice of $\bar\eta$, and the subsequent choices of $\tau$ and $\bar\omega_i$. It will follow that $[\alpha]$  is the desired pre-quantization. 

It is clear that $d\alpha = 0$ since
\begin{align*}
d(\pr_1^*\bar\omega_1 + \pr_2\bar\omega_2 -(\phi_1\times\phi_2)^*\tau)
	& =\pr_1^*(-\phi_1^*\bar{\eta}) + \pr_2^*(-\phi_2^*\bar{\eta}) 
	  - (\phi_1\times\phi_2)^*(\mu^*\bar{\eta} - \pr_1^*\bar{\eta} - \pr_2^*\bar{\eta}) \\
	&=\pr_1^*(-\phi_1^*\bar{\eta}) + \pr_2^*(-\phi_2^*\bar{\eta}) 
	 -\phi^*\bar{\eta} + \pr_1^*\phi_1^*\bar{\eta} + \pr_2^*\phi_2^*\bar{\eta} \\
	& = -\phi^*\bar{\eta}.
\end{align*}
\noindent And $\alpha$ is cohomologous to $(\omega,\eta)$ with real coefficients, since 
\begin{align*}
\alpha - (\omega,\eta) & = (\pr_1^*\bar\omega_1 + \pr_2^*\bar\omega_2 - (\phi_1\times \phi_2)^*\tau 
  - (\pr_1^*\omega_1 + \pr_2^*\omega_2 + \half(\pr_1^*\phi_1^*\theta^L,\pr_2^*\phi_2^*\theta^R)) ,\bar{\eta} -\eta) \\
& = (  \pr_1^*(\bar\omega_1-\omega_1) + \pr_2^*(\bar\omega_2 -\omega_2) - (\phi_1\times \phi_2)^*\tau 
  -\half (\pr_1^* \phi_1^* \theta^L, \pr_2^*\phi_2^*\theta^R)    , d\varepsilon) \\
& = ( \pr_1^*(du_1 -\phi_1^*\varepsilon) + \pr_2^*(du_2 - \phi_2^*\varepsilon) 
 - (\phi_1\times \phi_2)^*(\tau + \half(\pr_1^*\theta^L, \pr_2^*\theta^R) ) , d\varepsilon) 
\end{align*}

\noindent for some primitives $u_i$ of $\bar\omega_i -\omega_i +\phi_i^*\varepsilon$. And since $\mu^*\eta=\pr_1^*\eta + \pr_2^* \eta -\half d(\pr_1^*\theta^L, \pr_2^*\theta^R)$, it follows that $d(\tau + \half (\pr_1^*\theta^L, \pr_2^*\theta^R)) = d(\mu^*\varepsilon - \pr_1^*\varepsilon - \pr_2^* \varepsilon)$. Therefore $\tau + \half  (\pr_1^*\theta^L, \pr_2^*\theta^R) -\mu^*\varepsilon + \pr_1^*\varepsilon + \pr_2^* \varepsilon$ is exact since $H^2(G\times G;\R)=0$. Continuing, 
\begin{align*}
\alpha - (\omega,\eta) & = ( \pr_1^*(du_1 -\phi_1^*\varepsilon) + \pr_2^*(du_2 - \phi_2^*\varepsilon) 
 - (\phi_1\times \phi_2)^*(\mu^*\varepsilon - \pr_1^*\varepsilon - \pr_2^* \varepsilon + dv) , d\varepsilon) \\
& = d(\pr_1^*u_1 + \pr_2^*u_2 - (\phi_1\times \phi_2)^*v, -\varepsilon)
\end{align*} 
\noindent where $v$ is some primitive of  $\tau + \half (\pr_1^*\theta^L,\pr_2^*\theta^R)- \mu^*\varepsilon + \pr_1^*\varepsilon + \pr_2^*\varepsilon$.

Lastly, it is straightforward to check that the cohomology class $[\alpha]$  does not depend on the choices made. Given a choice of $\bar\eta$, changing  $\bar\omega_i$ to $\bar\omega_i + d\beta_i$ changes $\alpha$ by $d(\beta_i,0)$. Changing $\tau$ to $\tau +d\gamma$ changes $\alpha$ by $d(-(\phi_1\times \phi_2)^*\gamma,0)$. Finally, if $\bar\eta$ is changed to $\bar\eta'=\bar\eta + d\rho$, then one may choose cochains $\bar\omega_i'=\bar\omega_i-\phi_i^*\rho$ and $\tau'=\tau +\mu^*\rho - \pr_1^*\rho - \pr_2^*\rho$ that satisfy the appropriate properties, producing the relative cocycle $\alpha'$. It is easy to see  that $\alpha'-\alpha=d(0,-\rho)$.  \end{proof}

\begin{proof}[Proof $2$] Recall that $\phi=\mu \circ (\phi_1\times\phi_2)$ where $\mu:G\times G\to G$ is group multiplication. Therefore there is a diagram  with exact rows:
\begin{diagram}
 H^3(\mu;\Z)&\rTo & H^3(\phi;\Z)&\rTo^\rho & H^3(\phi_1\times\phi_2;\Z)& \rTo^\delta& H^4(\mu;\Z)\\
 \dTo & & \dTo & & \dTo & & \dTo  \\
H^3(\mu;\R)&\rTo & H^3(\phi;\R)&\rTo^\rho & H^3(\phi_1\times\phi_2;\R) &\rTo& H^4(\mu;\R)
\end{diagram}

Since $\mu^*:H^3(G)\to H^3(G\times G)$ is injective, $H^3(\mu)=0$ and the maps labelled $\rho$ are injective. Therefore to show that the cohomology class $k[(\omega,\eta)]\in H^3(\phi;\R)$ is integral, it suffices to show that
\begin{enumerate}
\item[(a)]  $\rho(k[(\omega,\eta)])$ is integral, and 
\item[(b)] any integral pre-image of $\rho(k[(\omega,\eta)])$ maps to zero under $\delta$.
\end{enumerate}
Computing at the chain level, 
\begin{align*}
\rho(\omega,\eta)  & =(\pr_1^*\omega_1 + \pr_2^*\omega_2 +
 \half(\pr_1^*\phi_1^*\theta^L,\pr_2^*\phi_2^*\theta^R), 
  \pr_1^*\eta + \pr_2^*\eta -\half
d(\pr_1^*\theta^L,\pr_2^*\theta^R))\\
 & =(\pr_1,\pr_1)^*(\omega_1,\eta) +
 (\pr_2,\pr_2)^*(\omega_2,\eta) + \half d(0,(\pr_1^*\theta^L,\pr_2^*\theta^R))
\end{align*}
That is, $\rho(k[(\omega,\eta)])=k\alpha_1 + k\alpha_2$ for cohomology classes $\alpha_i=(\pr_i,\pr_i)^*[(\omega_i,\eta)]$ in $H^3(\phi_1\times\phi_2;\R)$. Since $k[(\omega_i,\eta)]$ are integral, then so are the classes $k\alpha_i$ and hence so is  $\rho(k[(\omega,\eta)])$, which verifies (a).

To verify (b), it suffices to check that $H^4(\mu;\Z)\to H^4(\mu;\R)$ is injective, or equivalently that $H^4(\mu;\Z)$ is torsion free, which is clear from the exact sequence
$$ H^3(G;\Z) \to H^3(G\times G;\Z) \to H^4(\mu;\Z) \to H^4(G;\Z),
$$ 
where the last group is trivial.
\end{proof}  

\section{Pre-quantization of the moduli space $M_G(\Sigma_1^g,b)$} \label{subsection:restate}

In Section \ref{eg:modulispace}, the incarnation of the moduli space of based flat $G$-bundles over $\Sigma_1^g$ as a quasi-Hamiltonian $\tilde{G}$-space was reviewed.  In particular, the proof of Proposition \ref{prop:bundletype} showed that under the identification $M_G(\Sigma_1^g,b)\cong G^{2g}$, the moment map $M_G(\Sigma^1_g, b) \to \tilde{G}$,
$$[(P,\theta, p)] \mapsto \widetilde\Hol(\partial\Sigma)$$
is simply $\tilde\phi:G^{2g}\to \tilde{G}$, the canonical lift of $\phi:G^{2g} \to G$,
$$ \phi(a_1, b_1, \cdots, a_g, b_g) = \prod [a_i,b_i].$$

This section formulates the obstruction to the existence of a pre-quantization of $M_G(\Sigma^1_g,b)$ as a quasi-Hamiltonian $\tilde{G}$-space.  In particular, the obstruction is a certain cohomology class in $H^3(G^{2g};\Z)$, and Theorem \ref{thm:coho-calc-equiv} shows that this obstruction vanishes at certain levels (recall Definition \ref{defn:qHlevels}).

\begin{lemma}Let $(M,\omega,\phi)$ be a level $l$ quasi-Hamiltonian
$G$-space. If $G$ is simply connected, and $(M,\omega,\phi)$ admits a pre-quantization, then $l \in \N$.
\end{lemma}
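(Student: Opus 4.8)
The plan is to use Proposition \ref{prop:eq-pqispq}, which for simply connected $G$ reduces the existence of a pre-quantization to the integrality of the relative class $[(\omega,\eta)] \in H^3(\phi;\R)$. So the task becomes: show that if $[(\omega,\eta)]$ lies in the image of the coefficient homomorphism $H^3(\phi;\Z) \to H^3(\phi;\R)$, then the level $l$ must be a positive integer. First I would examine the long exact sequence of the mapping cone of $\phi^*$,
\[
\cdots \to H^2(M;\R) \to H^3(\phi;\R) \toby{\rho} H^3(G;\R) \to H^3(M;\R) \to \cdots,
\]
which is natural with respect to the coefficient homomorphism, so it fits into a commuting ladder with the corresponding integral sequence.

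The key observation is that the map $\rho:H^3(\phi;\R) \to H^3(G;\R)$ sends the class $[(\omega,\eta)]$ to $[\eta]$ (this is immediate from the definition of the relative cohomology via the algebraic mapping cone — projection onto the second summand of the cochain $C^3(\phi^*) = \Omega^2(M)\oplus \Omega^3(G)$). Now by the discussion in Section \ref{sec:coholiegroup} and Definition \ref{defn:qHlevels}, for a level $l$ quasi-Hamiltonian $G$-space we have $\eta = l\,\eta_1$, where $[\eta_1]$ is precisely $\iota_\R(1)$ for a generator $1 \in H^3(\tilde G;\Z) = H^3(G;\Z) \cong \Z$ (here $G = \tilde G$ since $G$ is simply connected). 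Hence $[\eta] = l \cdot \iota_\R(1)$ in $H^3(G;\R) \cong \R$, i.e. under the identification that sends $\iota_\R(1)$ to $1 \in \R$, the class $[\eta]$ corresponds to the real number $l$.

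Putting these together: if $\alpha \in H^3(\phi;\Z)$ is a pre-quantization, then by naturality of $\rho$ the integral class $\rho(\alpha) \in H^3(G;\Z) \cong \Z$ maps under $\iota_\R$ to $\rho(\iota_\R(\alpha)) = \rho([(\omega,\eta)]) = [\eta]$. But $H^3(G;\Z) \cong \Z$ and the coefficient homomorphism $\Z \to \R$ is the standard inclusion, so $[\eta]$ must be the image of an integer — and that integer is exactly $l$ under the identification above. Since the level satisfies $l > 0$ by hypothesis in Definition \ref{defn:qHlevels}, we conclude $l \in \N$.

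The only point requiring any care — and the main thing to get right rather than a genuine obstacle — is the identification of $\rho([(\omega,\eta)])$ with $[\eta]$ at the cohomology level, together with checking that the ladder relating integral and real mapping-cone sequences genuinely commutes on the nose; both follow formally from the construction of $H^*(\phi)$ in Section \ref{sec:fibcofib}, so there is no hard technical step. One should also note that $\rho(\alpha)$ being integral only pins down $l$ as an integer, not as a positive one; positivity comes for free from the standing assumption $l > 0$ built into the notion of level.
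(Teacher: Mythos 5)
Your proposal is correct and follows essentially the same route as the paper: both pass to the long exact sequence of the mapping cone, observe that $[(\omega,\eta)]$ maps to $[\eta]=l[\eta_1]$ under $H^3(\phi;\R)\to H^3(G;\R)$, and use that the integral generator of $H^3(G;\Z)\cong\Z$ maps to $[\eta_1]$ to conclude $l\in\Z$, with positivity built into the definition of level. The extra care you take in invoking Proposition \ref{prop:eq-pqispq} and the naturality of the integral/real ladder is correct but only makes explicit what the paper's argument leaves implicit.
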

\begin{proof} Indeed, in the long exact sequence:
\[\cdots \to H^2(M;\R)\to H^3(\phi\, ;\R) \to H^3(G;\R)\to\cdots \]
the class $[(\omega,\eta)]$ maps to $[\eta]$. Therefore an
integral pre-image of $[(\omega,\eta)]$ gives an integral
pre-image of $[\eta]$.
It is well known that a generator of $H^3(G;\Z)\cong \Z$ maps to
$[\eta_1]$ under the coefficient homomorphism $\iota_\R$. (See
\cite{PS}, for example.) Since $\eta=l \cdot\eta_1$, $l \in \N$.
\end{proof}

\begin{prop} \label{prop:restateprob} Let $(M,\omega,\phi)$ be a
level $l$ quasi-Hamiltonian $G$-space. Assume $H^2(M;\R)=0$, and $G$
is simply connected.  Then $(M,\omega,\phi)$ admits a pre-quantization if and only if $l\in \mathrm{ker}\,\phi^*\subset
\Z=H^3(G;\Z)$.
\end{prop}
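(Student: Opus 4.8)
The plan is to reduce to Proposition~\ref{prop:eq-pqispq} and then run a diagram chase in the long exact sequences of the relative cohomology $H^*(\phi)$. Since $G$ is simply connected, Proposition~\ref{prop:eq-pqispq} tells us $(M,\omega,\phi)$ admits a pre-quantization if and only if $[(\omega,\eta)]\in H^3(\phi;\R)$ is integral, i.e.\ lies in the image of $\iota_\R\colon H^3(\phi;\Z)\to H^3(\phi;\R)$; so it suffices to show this is equivalent to $l\in\ker\phi^*\subset\Z=H^3(G;\Z)$. (If $l\notin\Z$ both conditions fail --- the right-hand one because $l\notin H^3(G;\Z)$, the left-hand one by the preceding Lemma --- so we may assume $l\in\Z$.)

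First I would set up the commuting ladder formed by the two long exact sequences
\[ \cdots \to H^2(M;R)\to H^3(\phi;R)\xrightarrow{\,j_R\,} H^3(G;R)\xrightarrow{\,\phi^*\,} H^3(M;R)\to\cdots \]
for $R=\Z$ and $R=\R$, joined vertically by the coefficient homomorphism $\iota_\R$. This is where the hypothesis $H^2(M;\R)=0$ enters: it makes the real row begin $0\to H^3(\phi;\R)\xrightarrow{j_\R}H^3(G;\R)$, so $j_\R$ is injective. I also record two standard facts: $j_\R$ carries $[(\omega,\eta)]$ to $[\eta]=l[\eta_1]$ (this is exactly the computation in the preceding Lemma's proof), which under the identification $H^3(G;\R)\cong\R$, $[\eta_1]\mapsto 1$, is the number $l$; and $H^3(G;\Z)=\Z$ sits inside $H^3(G;\R)=\R$ as the standard inclusion.

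For the forward implication, suppose $\alpha\in H^3(\phi;\Z)$ satisfies $\iota_\R(\alpha)=[(\omega,\eta)]$. By commutativity of the ladder, $\iota_\R\bigl(j_\Z(\alpha)\bigr)=j_\R\bigl(\iota_\R(\alpha)\bigr)=[\eta]=l$; since $\iota_\R$ on $H^3(G)$ is the inclusion $\Z\hookrightarrow\R$, this forces $j_\Z(\alpha)=l$. But $j_\Z(\alpha)\in\mathrm{im}\,j_\Z=\ker\bigl(\phi^*\colon H^3(G;\Z)\to H^3(M;\Z)\bigr)$ by exactness of the integral row, so $l\in\ker\phi^*$. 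Conversely, if $l\in\ker\phi^*$, exactness gives $\beta\in H^3(\phi;\Z)$ with $j_\Z(\beta)=l$; then $\iota_\R(\beta)$ and $[(\omega,\eta)]$ both lie in $H^3(\phi;\R)$ and both map to $l=[\eta]$ under the injective map $j_\R$, hence they are equal, and $\beta$ is the desired pre-quantization.

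The only step that is not pure bookkeeping is this last one, and it is precisely where the hypothesis $H^2(M;\R)=0$ is indispensable: without the injectivity of $j_\R$ one could only conclude that $\iota_\R(\beta)$ and $[(\omega,\eta)]$ differ by a class coming from $H^2(M;\R)$, which need not be integral, so $\beta$ would not obviously witness a pre-quantization. Everything else is routine exactness and naturality of the coefficient sequence.
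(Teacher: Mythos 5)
Your proof is correct and follows essentially the same route as the paper's: both directions are the diagram chase in the ladder of long exact sequences for $H^*(\phi)$, with $H^2(M;\R)=0$ supplying the injectivity of $H^3(\phi;\R)\to H^3(G;\R)$ needed for the converse. The only difference is presentational — you spell out the reduction via Proposition~\ref{prop:eq-pqispq} and the commutativity checks that the paper leaves implicit.
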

\begin{proof} The previous lemma shows that when $(M,\omega,\phi)$ admits a pre-quantization, $l$ is in the image of
$H^3(\phi\,;\Z)\to H^3(G;\Z)$ and therefore in the kernel of
$\phi^*$.

Conversely, suppose $l$ is in the kernel of $\phi^*$, or equivalently
in the image of $H^3(\phi\,;\Z)\toby{q}H^3(G;\Z)$. As in the
previous lemma, $l$ yields an integral lift of $[\eta]$. Choose
$\alpha\in H^3(\phi\,;\Z)$ with $q(\alpha)=l$, or with real coefficients,
$q(\iota_\R\alpha)=[\eta]$. Since $H^2(M;\R)=0$, the map $q$ is an injection
with real coefficients. Therefore $\iota_\R\alpha = [(\omega,\eta)]$.
 \end{proof}  

 Therefore, to decide which levels $l$ admit a pre-quantization of $M_G(\Sigma_1^g,b)\cong G^{2g}$, it suffices to determine which $l\in \N$ are in the kernel of $\tilde\phi^*$.  The kernel is a subgroup of $\Z$, and is therefore generated by some least positive integer $l_0(G)$, which will be computed in Section \ref{section:calculation}.
 
 \begin{remark} \label{remark:rationalcoeffs}
 Note that when $G$ is simple, a quasi-Hamiltonian $G$-space necessarily satisfies $\phi^*=0$ in degree 3 cohomology with real coefficients (by condition (1) of Definition \ref{defn:qhspace}).  Therefore with integer coefficients, the image of $\phi^*$ is necessarily torsion.
 \end{remark}

\begin{cor}Let $(M,\omega,\phi)$ be a level $l$ quasi-Hamiltonian $G$-space, where $G$ is simply connected. If $H_2(M;\Z)$ is $r$-torsion, and $r$ divides $l$, then $(M,\omega,\phi)$ admits a pre-quantization.
\end{cor}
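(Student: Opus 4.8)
The plan is to deduce the corollary from Proposition \ref{prop:restateprob}: first verify that the hypothesis $H^2(M;\R)=0$ of that proposition is automatic under the assumption that $H_2(M;\Z)$ is $r$-torsion, and then check the arithmetic condition $l\in\ker\phi^*$ by bounding the torsion in $H^3(M;\Z)$.

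First I would observe that if $H_2(M;\Z)$ is $r$-torsion, then $H_2(M;\R)=H_2(M;\Z)\otimes_\Z\R=0$, and since $\R$ is a field, $H^2(M;\R)\cong\mathrm{Hom}_\R\bigl(H_2(M;\R),\R\bigr)=0$. Thus the standing hypothesis of Proposition \ref{prop:restateprob} holds, and it suffices to prove that $l\in\ker\phi^*\subset\Z=H^3(G;\Z)$. Let $1\in H^3(G;\Z)\cong\Z$ be a generator, so that $\phi^*(l\cdot 1)=l\cdot\phi^*(1)$. By Remark \ref{remark:rationalcoeffs}, the image of $\phi^*$ is torsion, hence $\phi^*(1)$ lies in the torsion subgroup of $H^3(M;\Z)$. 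The universal coefficient theorem provides a short exact sequence
\[
0\longrightarrow \mathrm{Ext}^1_\Z\bigl(H_2(M;\Z),\Z\bigr)\longrightarrow H^3(M;\Z)\longrightarrow \mathrm{Hom}_\Z\bigl(H_3(M;\Z),\Z\bigr)\longrightarrow 0
\]
whose right-hand term is torsion-free; therefore the torsion subgroup of $H^3(M;\Z)$ is contained in $\mathrm{Ext}^1_\Z(H_2(M;\Z),\Z)$. Since $H_2(M;\Z)$ is $r$-torsion, multiplication by $r$ is the zero endomorphism of $H_2(M;\Z)$ and so induces the zero map on $\mathrm{Ext}^1_\Z(H_2(M;\Z),\Z)$; in particular $r\cdot\phi^*(1)=0$. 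Because $r$ divides $l$, we then get $\phi^*(l\cdot 1)=(l/r)\cdot r\cdot\phi^*(1)=0$, i.e.\ $l\in\ker\phi^*$. Proposition \ref{prop:restateprob} now yields a pre-quantization, completing the argument.

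The only point needing a little care—the ``hard part''—is the claim that the torsion subgroup of $H^3(M;\Z)$ is annihilated by $r$, since one should avoid tacitly assuming that the homology of $M$ is finitely generated. The argument above sidesteps this: the universal coefficient sequence isolates the torsion inside an $\mathrm{Ext}^1$ term, and the elementary functoriality fact that a zero endomorphism of $A$ induces the zero map on $\mathrm{Ext}^1_\Z(A,\Z)$ applies with no finiteness hypothesis whatsoever. Everything else is a direct appeal to Proposition \ref{prop:restateprob} and Remark \ref{remark:rationalcoeffs}.
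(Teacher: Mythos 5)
Your proof is correct and follows essentially the same route as the paper's: establish $H^2(M;\R)=0$ from the universal coefficient theorem, reduce via Proposition \ref{prop:restateprob} to showing $l\in\ker\phi^*$, and then use Remark \ref{remark:rationalcoeffs} together with the universal coefficient sequence to see that the torsion of $H^3(M;\Z)$ is annihilated by $r$. Your only refinement is to locate that torsion inside $\mathrm{Ext}^1_\Z(H_2(M;\Z),\Z)$ rather than asserting it is isomorphic to the torsion of $H_2(M;\Z)$, which cleanly removes the implicit finite-generation hypothesis in the paper's one-line justification.
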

\begin{proof} By the Universal Coefficient Theorem, $H^2(M;\R)=0$. Therefore it suffices to show $l\in \ker \phi^*$ by the previous proposition. But this is clear, since the torsion subgroup of $H^3(M;\Z)$ is isomorphic to (the torsion subgroup of) $H_2(M;\Z)$, which is $r$-torsion. 
\end{proof}  

The problem of determining $l_0(G)$ is independent of $g$. To see this,
let $\phi: G^2 \to G$ denote the commutator map, and
$\tilde\phi:G^2\to \tilde{G}$ its natural lift. Then  the moment
map $\phi_g:G^{2g}\to G$ lifts naturally as

\[ \tilde\phi_g=\mu_g\circ(
\underbrace{ \tilde\phi\times\cdots \times \tilde\phi}_g) \] where
$\mu_g:\tilde{G}^g\to\tilde{G}$ denotes successive multiplication
$\mu_g(x_1,\ldots,x_g)=x_1\cdots x_g$.

This gives $\mathrm{ker}\,\tilde\phi\subset
\mathrm{ker}\,\tilde\phi_g$. Indeed, the generator $z_3\in
H^3(\tilde{G};\Z)$ satisfies $\mu^*(z_3)=z_3\otimes 1 + 1 \otimes
z_3$. (Note that by the K\"unneth formula,
$H^3(\tilde{G}^2;\Z)=H^3(\tilde{G};\Z)\otimes H^0(\tilde{G}\;\Z)
\oplus H^0(\tilde{G};\Z)\otimes H^3(\tilde{G}\;\Z)$.) And
successive multiplication $\mu_g$ will therefore send $z_3$ to a
sum of tensors, where each tensor contains exactly one $z_3$ and
$(g-1)$ $1$'s. Therefore applying  $(\tilde\phi\times\cdots \times
\tilde\phi)^*$ to the resulting sum of tensors gives zero if
$\tilde\phi^*(z_3)=0.$

Furthermore, if $i:G^2\to G^{2g}$ denotes the inclusion of $G^2$ as the first pair
of factors, then$\tilde\phi=\tilde\phi_g\circ i$. Therefore $\mathrm{ker}\,\tilde\phi_g \subset \mathrm{ker}\,\tilde\phi$.  By Proposition
\ref{prop:restateprob} and the calculations in the following chapter, this yields the main result of the thesis:

\begin{thm}\label{thm:coho-calc-equiv} Let $G$ be a non-simply connected compact simple
Lie group with universal covering group $\tilde{G}$.  The quasi-Hamiltonian $\tilde{G}$-space $M_G(\Sigma_1^g,b)$ admits a pre-quantization if and only if the underlying level $l=ml_0(G)$ for some $m\in \N$, where $l_0(G)$ is given in Table \ref{table:main}.
\end{thm}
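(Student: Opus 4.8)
The strategy is to reduce Theorem \ref{thm:coho-calc-equiv} to a purely cohomological assertion and then invoke the case-by-case computation of Chapter \ref{chapter:calc}. By Proposition \ref{prop:restateprob}, applied to the level-$l$ quasi-Hamiltonian $\tilde{G}$-space $M_G(\Sigma_1^g,b) = G^{2g}$ with moment map $\tilde\phi_g : G^{2g} \to \tilde{G}$, a pre-quantization exists if and only if $l \in \ker(\tilde\phi_g^*) \subset \Z = H^3(\tilde{G};\Z)$, \emph{provided} one first checks the hypothesis $H^2(G^{2g};\R) = 0$. The latter is immediate from the K\"unneth formula together with $H^1(G;\R) = H^2(G;\R) = 0$ for $G$ simple. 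So the first step is to record that $\ker(\tilde\phi_g^*)$ is a subgroup of $\Z$, hence of the form $l_0(G)\Z$ for a unique $l_0(G) \in \N$, and that $l$ admits a pre-quantization iff $l_0(G) \mid l$, i.e. $l = m\, l_0(G)$ for some $m \in \N$.

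The second step is the $g$-independence already sketched in the excerpt: using $\tilde\phi_g = \mu_g \circ (\tilde\phi \times \cdots \times \tilde\phi)$ and the primitivity of the degree-3 generator $z_3 \in H^3(\tilde{G};\Z)$ (so that $\mu_g^* z_3 = \sum_{k} 1 \otimes \cdots \otimes z_3 \otimes \cdots \otimes 1$), one gets $\ker\tilde\phi^* \subseteq \ker\tilde\phi_g^*$; and restricting along the inclusion $i : G^2 \hookrightarrow G^{2g}$ of the first pair of factors, for which $\tilde\phi = \tilde\phi_g \circ i$, gives the reverse inclusion. Hence $\ker\tilde\phi_g^* = \ker\tilde\phi^*$ for all $g$, so $l_0(G)$ is computed from the genus-one case $\tilde\phi : G \times G \to \tilde{G}$ alone. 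This reduces everything to determining the least positive integer $l_0(G)$ generating $\ker\big(\tilde\phi^* : H^3(\tilde{G};\Z) \to H^3(G\times G;\Z)\big)$.

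The third step is to invoke the calculations of Chapter \ref{chapter:calc}, which compute this kernel for each non-simply connected compact simple $G$ using the Cartan--Killing classification (Table \ref{table:list}) and the known cohomology Hopf algebras of the groups in question (Appendix \ref{chapter:app}). Since $\tilde\phi^*$ vanishes rationally (Remark \ref{remark:rationalcoeffs}), the image of $\tilde\phi^*$ is torsion, and the relevant information lives in $H^3(G\times G;\Z)$, accessible via the mod-$p$ Bockstein spectral sequences (Remark \ref{remark:bockstein}) together with the covering $\pi : \tilde{G} \to G$ and the fibration $G \to BZ$ with fibre $\tilde{G}$ (as in the analysis of $H^3(G;\Z)$ in Section \ref{sec:coholiegroup}). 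Assembling the per-group values of $l_0(G)$ produces exactly Table \ref{table:main}; combined with Steps 1--2 this gives the theorem.

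The main obstacle is the content of Step 3, and within it the case $G = PU(2) \cong SO(3)$, where, as flagged in Sections \ref{sec:fortopologists} and \ref{chapter:calc}, the algebraic (Bockstein / Serre spectral sequence) methods that settle the other groups leave an ambiguity between two candidate values of $l_0$. Resolving it requires genuinely homotopy-theoretic input: identifying the relevant obstruction inside the suspension $\Sigma\RP^2$ (using $SO(3) \cong \RP^3$), and appealing to the structure of $\pi_3(\Sigma\RP^2)$ together with the computation $[\iota,\iota] = 2\eta$ in $\pi_3(S^2)$ (Example \ref{eg:whiteheadistwicehopf}) and results of Wu \cite{W}. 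The remaining cases are, by comparison, routine Hopf-algebra and Bockstein bookkeeping, though the $SU(n)/\Z_k$ family (yielding $l_0 = \mathrm{ord}_k(n/k)$) requires careful tracking of the induced map on generators.
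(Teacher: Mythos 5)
Your proposal is correct and follows essentially the same route as the paper: reduce to the kernel of $\tilde\phi_g^*$ on $H^3(\tilde{G};\Z)$ via Proposition \ref{prop:restateprob} (with the K\"unneth check that $H^2(G^{2g};\R)=0$, which the paper leaves implicit), establish genus-independence by primitivity of $z_3$ and restriction to the first two factors, and then quote the case-by-case computations of Chapter \ref{chapter:calc}, with the $PU(2)$ case requiring the homotopy-theoretic input from $\pi_3(\Sigma\RP^2)$. No gaps.
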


\begin{table}[h]
\centering

\begin{tabular}{|c||c|c|c|c|c|c|c|c|}
\hline
\multirow{2}{*}{$G$} & $PU(n)$ & $SU(n)/\mathbb{Z}_k$ & $PSp(n)$ & $SO(n)$ & $PO(2n)$ & $Ss(4n)$ & $PE_6$ & $PE_7$ \\
  & $n\geq 2$  &$n\geq 2$ &$n\geq 1$ &$n\geq 7$ &$n\geq 4$ &$n\geq 2$ & &\\
\hline
\multirow{2}{*}{$l_0(G)$} & \multirow{2}{*}{$n$} & \multirow{2}{*}{$\mathrm{ord}_k(\frac{n}{k})$} & 1, $n$ even & \multirow{2}{*}{1} & 2, $n$ even & 1, $n$ even & \multirow{2}{*}{3} & \multirow{2}{*}{2} \\
 & & & 2, $n$ odd & & 4, $n$ odd & 2, $n$ odd &  &    \\
\hline
\end{tabular}

\caption{\small{The integer $l_0(G)$.  \underbar{Notation}: $\mathrm{ord}_k(x)$ denotes the order of $x$ mod $k$ in $\Z_k$.}}
\label{table:main}
\end{table}

\begin{remark} \label{remark:simplyconnectedG} If $G$ is simply connected, $l_0(G)=1$ since $H^3(G\times G;\Z)$ is torsion free.
\end{remark} 
 
\section{Pre-quantization of $M_{PU(n)}(\Sigma^g_r;\mathcal{C}_1, \ldots, \mathcal{C}_r)$} \label{sec:punconj}

Let $G=PU(n)$.
This section addresses the pre-quantization of the moduli space $M_G(\Sigma^g_r,b;\mathcal{C}_2, \ldots, \mathcal{C}_r)$ of based flat $G$-bundles over $\Sigma$ with holonomies around the boundary components $v_2, \ldots, v_r $ in $ \Sigma$ prescribed in the conjugacy classes $\mathcal{C}_2,\ldots,\mathcal{C}_r\subset G$, respectively.  Recall that Section \ref{eg:modulispace} recognizes the moduli space as the fusion product 
$$
M_G(\Sigma_r^g,b;\mathcal{C}_2,\ldots,\mathcal{C}_r) \cong G^{2g} \times \mathcal{C}_2 \times \cdots \times \mathcal{C}_r,
$$ 
with group-valued moment map $\phi:G^{2g}\times \mathcal{C}_2, \times \cdots \times \mathcal{C}_r \to G$ given by
$$
(a_1, b_1, \ldots, a_g, b_g, c_2, \ldots, c_r) \mapsto \prod [a_i,b_i] c_2 \cdots c_r.
$$

To be more precise, this section considers the pre-quantization of the pullback quasi-Hamiltonian $\tilde{G}$-space $P(X)$ (see Section \ref{sec:nonsimplyconnectedG}) over $X=G^{2g}\times \mathcal{C}_1\times \cdots \times \mathcal{C}_r$, where $\tilde{G}=SU(n)$ is the universal covering group of $G$. Since the moment map $\phi:G^{2g} \to G$ lifts to $\tilde{G}$, by Remark \ref{remark:whenpullbacksplits}, $P(X)=G^{2g} \times P(M)$, where $P(M)$ is the pullback quasi-Hamiltonian $\tilde{G}$-space over $M=\mathcal{C}_1\times \cdots \times \mathcal{C}_r$. 
Appealing to Proposition \ref{prop:fusion}, and Theorem \ref{thm:coho-calc-equiv}, it therefore suffices to consider the pre-quantization of $P(M)$.

Note that the  pre-quantization of conjugacy classes $\C$ in simply connected Lie groups $H$ is already well understood  by the following theorem from \cite{M}.  (The theorem makes use of the correspondence mentioned in Remark \ref{remark:alcovecorrespondence} between a chosen alcove $\Delta\subset \mathfrak{t}$ and the set of conjugacy classes in $H$.  Also, $I$ denotes the integral lattice, and $B$ denotes the basic inner product on $\mathfrak{t}$---see Remark \ref{remark:innerproducts}---with induced isomorphism  $B^\flat:\mathfrak{t} \to \mathfrak{t}^*$.)   The pre-quantization of $P(M)$ will make use of the theorem for $H=\tilde{G}=SU(n)$.

\begin{thm} \label{thm:conj} \cite{M} Let $\mathcal{C}$ be the conjgacy class of $\exp \zeta$ in the simply connected compact simple Lie group $H$, where $\zeta \in \Delta$.  Then $\mathcal{C}$ admits a pre-quantization at level $k$ if and only if $B^\flat(k\zeta) \in I^*=\mathrm{Hom}(I,\Z)$, the lattice of integral forms.
\end{thm}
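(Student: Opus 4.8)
The plan is to recast the statement as an integrality question in relative cohomology and then match the resulting obstruction with Kostant's classical prequantization condition for a coadjoint orbit.

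\emph{Reduction to cohomology.} By Example~\ref{eg:conjclass}, $\mathcal{C}$ is a quasi-Hamiltonian $H$-space whose group-valued moment map is the inclusion $\iota\colon\mathcal{C}\hookrightarrow H$ and whose $2$-form $\omega$ is the form written there for the inner product $kB$ (so $\omega$ is $k$ times the basic-level form). Since $H$ is simply connected, Proposition~\ref{prop:eq-pqispq} shows that $\mathcal{C}$ admits a pre-quantization at level $k$ if and only if the relative class $[(\omega,\eta)]\in H^3(\iota;\R)$ is integral, where $\eta=k\eta_1$.

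\emph{Computing $H^3(\iota)$.} Next I would run the long exact sequence of the pair, $\cdots\to H^2(H)\to H^2(\mathcal{C})\to H^3(\iota)\to H^3(H)\to H^3(\mathcal{C})\to\cdots$. Because $H$ is simple and simply connected, $H^2(H;\Z)=0$ and $H^3(H;\Z)\cong\Z$, with $[\eta_1]$ the image of a generator (\cite{PS}); because $\mathcal{C}\cong H/Z_{\exp\zeta}$ is a generalized flag manifold (the centralizer $Z_{\exp\zeta}$ is connected, $H$ being simply connected, and is the centralizer of a subtorus), its cohomology is torsion-free and vanishes in odd degrees, so $H^3(\mathcal{C};\Z)=0$ and $H^2(\mathcal{C};\Z)$ is free. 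Hence, over both $\Z$ and $\R$ and compatibly with the coefficient homomorphism, the sequence collapses to a split short exact sequence, yielding $H^3(\iota;\R)\cong H^2(\mathcal{C};\R)\oplus H^3(H;\R)$. The component of $[(\omega,\eta)]$ in $H^3(H;\R)$ is $[\eta]=k$, an integer, and one checks (using $H^2(H;\cdot)=0$ and the injectivity of $H^3(H;\Z)\to H^3(H;\R)$) that a class in the image of $H^2(\mathcal{C};\R)\to H^3(\iota;\R)$ is integral in $H^3(\iota)$ exactly when it is integral in $H^2(\mathcal{C})$. Thus $[(\omega,\eta)]$ is integral if and only if its $H^2(\mathcal{C};\R)$-component $o(\mathcal{C})$ is integral.

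\emph{Identifying $o(\mathcal{C})$ for $\zeta$ in the interior of $\Delta$.} Here $Z_{\exp\zeta}=T$ and $\exp$ restricts to an $H$-equivariant diffeomorphism of the coadjoint orbit $\mathcal{O}_\lambda$, $\lambda:=B^\flat(k\zeta)$, onto $\mathcal{C}$; by Example~\ref{eg:exp}, $\omega=\sigma_\lambda+\Phi^*\varpi$ where $\sigma_\lambda$ is the Kirillov--Kostant--Souriau form, $\Phi$ is the inclusion of $\mathcal{O}_\lambda$, and $d(\Phi^*\varpi)+\iota^*\eta=0$. So $(\omega,\eta)=(\sigma_\lambda,0)+(\Phi^*\varpi,\eta)$ as relative cocycles; the second summand is the pullback of $[(\varpi,\eta)]\in H^3(\exp;\R)$ along the map of pairs $(\Phi,\mathrm{id})$, and $[(\varpi,\eta)]$ is integral since $H^3(\exp;\cdot)\cong H^3(H;\cdot)$ and it maps to $k\eta_1$. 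Hence $o(\mathcal{C})=[\sigma_\lambda]$ up to an integral class, and by Kostant's criterion $[\sigma_\lambda]$ is integral precisely when $\lambda$ lies in the weight lattice, which for simply connected $H$ is exactly $\mathrm{Hom}(I,\Z)=I^*$ ($I=\ker\exp_T$ being the coroot lattice). This is the asserted condition $B^\flat(k\zeta)\in I^*$.

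\emph{The walls, and the main obstacle.} When $\zeta$ lies on a wall of $\Delta$, $Z_{\exp\zeta}$ is strictly larger than $Z_\zeta$, $\exp$ no longer identifies $\mathcal{O}_\lambda$ with $\mathcal{C}$, and (for $\zeta$ on the affine wall) the weight $B^\flat(k\zeta)$ need not even extend to a character of $Z_{\exp\zeta}$, so no genuine equivariant prequantum line bundle exists; this case is where the real work lies. I would handle it by computing $o(\mathcal{C})$ directly on $H/Z_{\exp\zeta}$, using $H^2(\mathcal{C};\R)\cong\mathrm{Hom}(\pi_1(Z_{\exp\zeta}),\R)$ and showing that $o(\mathcal{C})$ corresponds to the functional determined by $k\zeta$ via $B^\flat$ and $\exp_T$, so that its integrality is again equivalent to $B^\flat(k\zeta)\in I^*$; alternatively, for the ``if'' direction one constructs the pre-quantization as the quasi-line bundle (relative gerbe trivialization) over $\mathcal{C}$ attached to the weight $B^\flat(k\zeta)$ in Shahbazi's framework \cite{Sh}, which does not require an honest line bundle. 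I expect this wall analysis, together with the precise comparison of $o(\mathcal{C})$ with $k\zeta$, to be the only genuinely delicate point; everything upstream of it is formal.
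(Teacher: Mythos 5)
First, note that the paper does not prove this statement: Theorem \ref{thm:conj} is quoted from Meinrenken's paper \cite{M} on the basic gerbe, so there is no in-paper argument to compare against. Your reduction is nonetheless the natural one and its first two steps are sound: Proposition \ref{prop:eq-pqispq} does convert the problem into integrality of $[(\omega,\eta)]\in H^3(\iota;\R)$, and since $H^2(H;\Z)=0$, $H^3(H;\Z)\cong\Z$ is free, and $H^*(\mathcal{C};\Z)$ is torsion-free and concentrated in even degrees, the relative group does split compatibly over $\Z$ and $\R$, so everything comes down to integrality of the $H^2(\mathcal{C};\R)$-component. (One small correction: $Z_{\exp\zeta}$ is \emph{not} in general the centralizer of a subtorus --- for $\zeta$ on the affine wall it is a maximal-rank subgroup that is not a Levi; the torsion-freeness you need is Bott's theorem on conjugacy classes in simply connected groups, not the flag-manifold statement.) Your treatment of the case where $\exp$ identifies $\mathcal{C}$ with a coadjoint orbit (interior points, and in fact all $\zeta$ with $\alpha_0(\zeta)<1$, since there $\alpha(\zeta)\in\Z$ forces $\alpha(\zeta)=0$ and $Z_{\exp\zeta}=H_\zeta$) correctly reduces to Kostant's criterion, with the level-$k$ bookkeeping handled properly.

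The genuine gap is the affine wall $\alpha_0(\zeta)=1$, which you explicitly defer. This is not a boundary technicality: it is precisely the case where the theorem says something beyond Kostant, because $Z_{\exp\zeta}$ strictly contains $H_{\zeta}$, the conjugacy class is a proper quotient of the coadjoint orbit, and the identification of the obstruction class in $H^2(\mathcal{C};\R)\cong\mathrm{Hom}(\pi_1(Z_{\exp\zeta}),\R)$ with the functional induced by $B^\flat(k\zeta)$ is exactly the computation that carries the content of the theorem. Without it you have not shown, for instance, that for $\zeta$ a nonzero vertex of $\Delta$ the class $\mathcal{C}$ fails to be pre-quantizable at the levels the theorem excludes. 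Your proposed route (pairing $o(\mathcal{C})$ against $\pi_1(Z_{\exp\zeta})$, or invoking the relative-gerbe construction of \cite{Sh} for the ``if'' direction) is a plausible way to finish, and is essentially what \cite{M} does via the basic gerbe, but as written the argument establishes the theorem only off the affine wall.
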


\subsection*{Pre-quantization of $P(M)$}

To begin, recall from Section \ref{sec:liethy} that the set of conjugacy classes $Con(SU(n))$ in $SU(n)$ is in bijective correspondence with $\Delta$, a chosen alcove in $\mathfrak{t}$, the Lie algebra of a chosen maximal torus $T$. The center $Z(SU(n))\cong \Z_n$ acts on the set of conjugacy classes by translation, and it was found in Section \ref{sec:liethy} that, under the correspondence $\Delta \longleftrightarrow Con(SU(n))$, the center acts on the $(n-1)$-simplex $\Delta$ by shifting the vertices
$$
v_0\mapsto v_1, \quad v_1\mapsto v_2, \quad \cdots, \quad v_n \mapsto v_0.
$$
Consequently, as observed in Remark \ref{remark:specialconjclass}, the only fixed point of the action is the barycenter $\zeta_0=\tfrac{1}{n} \Sigma v_i$.  Equivalently, the center $Z(SU(n))$ acts on the conjugacy class of $\exp \zeta_0$, and the $Z(SU(n))$-orbit of any other conjugacy class is a disjoint union of $n$ conjugacy classes in $SU(n)$. This proves the followng Lemma.

\begin{lemma} \label{lemma:specialconj}
Let  $\C$ be a conjugacy class in $PU(n)$, and consider $\tilde\C=\pi^{-1}(\C)$.  If $\C$ is the conjugacy class of $c_0=\exp \zeta_0$, where $\zeta_0$ is the barycenter of the alcove $\Delta$, then $\tilde\C$ is a single conjugacy class in $SU(n)$.  Otherwise, $\tilde\C$  is  a disjoint union of $n$ conjugacy classes in $SU(n)$.
\end{lemma}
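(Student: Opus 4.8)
The statement is essentially a direct consequence of the description, established in Section \ref{sec:liethy}, of the $Z(SU(n))$-action on the alcove $\Delta$ by cyclic permutation of vertices, combined with Remark \ref{remark:alcovecorrespondence} relating $\Delta$ to $Con(SU(n))$. The plan is to analyze the fibres of the covering projection $\pi : SU(n) \to PU(n)$ over a conjugacy class, using the fact that $\pi$ identifies $Con(PU(n))$ with $Con(SU(n))/Z$, where $Z = Z(SU(n)) \cong \Z_n$.

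First I would record that, since $Z$ is central in $SU(n)$, the conjugation action and the translation action commute, so $Z$ acts on $Con(SU(n))$; and since $PU(n) = SU(n)/Z$, two elements of $SU(n)$ are conjugate in $PU(n)$ if and only if their $SU(n)$-conjugacy classes lie in the same $Z$-orbit. Hence for a conjugacy class $\C \subset PU(n)$, the preimage $\tilde\C = \pi^{-1}(\C)$ is precisely the union of the conjugacy classes in $SU(n)$ forming one $Z$-orbit in $Con(SU(n))$, and the number of distinct $SU(n)$-conjugacy classes in $\tilde\C$ equals the cardinality of that $Z$-orbit, namely $n/|\mathrm{Stab}_Z|$. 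So the whole statement reduces to computing the stabilizer of a point of $\Delta$ under the $Z$-action.

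Next I would invoke the computation of Section \ref{sec:liethy}: under $\Delta \cong Con(SU(n))$, the generator of $Z \cong \Z_n$ cyclically permutes the vertices $v_0 \mapsto v_1 \mapsto \cdots \mapsto v_{n-1} \mapsto v_0$ of the $(n-1)$-simplex $\Delta$. Writing a point $\zeta \in \Delta$ in barycentric coordinates $\zeta = \sum t_i v_i$ with $\sum t_i = 1$ and all $t_i \geq 0$, the generator acts by cyclically shifting the coordinates $(t_i)$. As noted in Remark \ref{remark:specialconjclass}, such a shift fixes $\zeta$ if and only if all $t_i$ are equal, i.e. $t_i = 1/n$ for all $i$, which is the barycenter $\zeta_0$; and more generally, since a cyclic shift of order $n$ on a tuple of length $n$ either fixes it or has all its shifts distinct (because any nontrivial stabilizer would force a shorter period $d \mid n$, $d < n$, hence a period $1$ by minimality considerations — but for the full group $\Z_n$ acting by a single $n$-cycle on $n$ coordinates the only periodic tuples are the constant one), the $Z$-orbit of any $\zeta \neq \zeta_0$ consists of exactly $n$ distinct points of $\Delta$. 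Translating back through the correspondence $\Delta \cong Con(SU(n))$, this says $\tilde\C$ is a single conjugacy class when $\C$ is the class of $c_0 = \exp\zeta_0$, and a disjoint union of $n$ conjugacy classes otherwise, which is the claim.

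The only mild subtlety — the step I'd expect to need the most care — is the bookkeeping about whether a proper nontrivial subgroup of $\Z_n$ can stabilize a non-barycentric point of $\Delta$; one must check that the cyclic shift acting on barycentric coordinates has no intermediate-period fixed tuples other than the constant one. This follows because if $(t_0,\dots,t_{n-1})$ is invariant under the shift by $n/d$ for some proper divisor relation, then $t_i$ depends only on $i \bmod (n/d)$ — but since the vertex-shift generating $Z$ is a single $n$-cycle on $\{v_0,\dots,v_{n-1}\}$, invariance under its subgroup of index $d$ forces $t_i = t_j$ whenever $i \equiv j \pmod{n/d}$, and iterating (or directly, invariance under the full generator when $d=1$ is the only case giving a fixed point in the interior sense used here) pins down the constant tuple as the unique totally invariant one; the remaining orbits, having trivial stabilizer in $\Z_n$, have size $n$. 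With that dichotomy in hand, the Lemma is immediate, and no further computation is needed.
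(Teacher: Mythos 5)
Your overall approach is the same as the paper's: identify $\pi^{-1}(\C)$ with the $Z(SU(n))$-orbit of a single $SU(n)$-class, transport the problem to the alcove via $Con(SU(n))\cong\Delta$, and use the vertex-shift description of the central action from Example \ref{eg:conj}. To your credit, you explicitly isolate the one step that genuinely needs care --- whether a \emph{proper} nontrivial subgroup of $\Z_n$ can stabilize a non-barycentric point of $\Delta$ --- a point the paper's own argument (and Remark \ref{remark:specialconjclass}, which only computes the fixed points of a generator) passes over in silence.

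Unfortunately, your resolution of that step is wrong, and the gap is real. It is false that the only tuples $(t_0,\dots,t_{n-1})$ invariant under some nontrivial power of the $n$-cycle are the constant ones: for composite $n$ there are non-constant tuples of every period $d\mid n$. Take $n=4$ and barycentric coordinates $(a,b,a,b)$ with $a\neq b$ and $2a+2b=1$: this point is fixed by $z^2$ but not by $z$, so its $\Z_4$-orbit has size $2$, and the corresponding class $\C\subset PU(4)$ has $\pi^{-1}(\C)$ a disjoint union of $2$, not $4$, conjugacy classes of $SU(4)$. Concretely, for $g=\mathrm{diag}(1,1,-1,-1)\in SU(4)$ the matrix $-g$ is conjugate to $g$ (by an even permutation matrix), so the $Z$-orbit of $[g]$ is just $\{[g],[ig]\}$. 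Thus the asserted dichotomy (one class or $n$ classes) fails for composite $n$; the correct count is $n/|\mathrm{Stab}_Z|$, where the stabilizer is the subgroup of $\Z_n$ preserving the barycentric tuple, and any subgroup can occur. Your argument (and the Lemma as stated) is correct exactly when $n$ is prime, since $\Z_p$ has no proper nontrivial subgroups. The paper's proof commits the same slide from ``not fixed by the generator'' to ``orbit of size $n$,'' so the flaw is inherited from the source; but having raised the issue, you cannot close it as written --- only a corrected statement of the Lemma (or a restriction on $\C$, e.g.\ to classes with trivial stabilizer) would do.
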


 If $\C$ is the conjugacy class of $c_0$, then $\tilde\C$ is the conjugacy class of $\exp\zeta_0$, and $\C=\tilde\C/\Z_n$.  Moreover, the covering projection $\pi:SU(n) \to PU(n)$ restricts to a covering projection $\pi:\tilde\C \to \C$, which is the universal covering projection.  To see this, recall that since $SU(n)$ is simply connected, the centralizer $Z(g)$ of any element $g$ is connected (by Corollaire 5.3.1 in \cite{Bourbaki}).  From the identification $\tilde\C=SU(n)/Z(\exp \zeta_0)$, and the exact sequence of homotopy groups associated to the fibration sequence $SU(n) \to SU(n)/Z(\exp\zeta_0) \to BZ(\exp\zeta_0)$, it follows that $\tilde\C$ is simply connected. 

Let $M=\mathcal{C}_1\times \cdots\times  \mathcal{C}_m$, where each $\C_j$ is a conjugacy class in $PU(n)$.  The next Lemma shows that it will suffice to consider the case where each $\C_j=\C$, the conjugacy class of $c_0$.  

\begin{lemma} Let $M=\C_1 \times \cdots \times \C_m$ be a product of conjugacy classes in $PU(n)$, and let $\C$ be the conjugacy class of $c_0=\exp \zeta_0$, where $\zeta_0$ is the barycenter of the alcove $\Delta$.  If $\C_1\neq \C$, then the pullback quasi-Hamiltonian $SU(n)$-space $P(M)$ is the fusion product $P(\C_1) \times P(M')$, where $M'=\C_2 \times \cdots \times \C_m$.
\end{lemma}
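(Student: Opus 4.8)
The plan is to reduce everything to the fusion and pullback machinery already set up. Recall that as a fusion product, $M=\C_1\times M'$ carries the group-valued moment map $\phi=\mu\circ(\iota\times\phi')$, where $\iota:\C_1\hookrightarrow PU(n)$ is the inclusion (the moment map of the conjugacy class $\C_1$) and $\phi':M'\to PU(n)$ is the moment map of the fusion product $M'=\C_2\times\cdots\times\C_m$. The first step is to observe that the hypothesis $\C_1\neq\C$ means precisely that $\iota$ lifts through $\pi:SU(n)\to PU(n)$. Indeed, by Lemma \ref{lemma:specialconj} the preimage $\tilde\C_1=\pi^{-1}(\C_1)$ is a disjoint union of conjugacy classes of $SU(n)$, no one of which is all of $\tilde\C_1$; since each of these accounts for a single sheet, the covering $\pi:\tilde\C_1\to\C_1$ is trivial, and choosing one component furnishes a section $\tilde\iota:\C_1\to SU(n)$ of $\pi$. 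A routine verification — using that $\pi^*$ carries the canonical $3$-form and the Maurer--Cartan forms of $PU(n)$ to those of $SU(n)$ at the same level, and that $\pi$ is a local diffeomorphism so that Q3 is unaffected — shows that $(\C_1,\omega_1,\tilde\iota)$ is a quasi-Hamiltonian $SU(n)$-space, namely the conjugacy class $\tilde\iota(\C_1)\subset SU(n)$; this is the $SU(n)$-space playing the role of $P(\C_1)$ in the statement.

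With the lift in hand, the second step is to write $P(M)=P(\C_1\times M')=\{(c_1,m',g)\in\C_1\times M'\times SU(n)\mid \iota(c_1)\phi'(m')=\pi(g)\}$ and to define $F:P(M)\to P(\C_1)\circledast P(M')$ by $F(c_1,m',g)=(c_1,m',\tilde\iota(c_1)^{-1}g)$. This is well defined, since $\pi(\tilde\iota(c_1)^{-1}g)=\iota(c_1)^{-1}\pi(g)=\phi'(m')$, so $(m',\tilde\iota(c_1)^{-1}g)\in P(M')$, and it is a diffeomorphism with inverse $(c_1,m',h)\mapsto(c_1,m',\tilde\iota(c_1)h)$. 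What remains is to check that $F$ intertwines the three pieces of structure of Definition \ref{defn:qhspace}: $SU(n)$-equivariance is immediate from the formula for the $SU(n)$-action on pullbacks given in Section \ref{sec:nonsimplyconnectedG}; the moment maps match because the moment map of the fusion product $P(\C_1)\circledast P(M')$ evaluated at $F(c_1,m',g)$ is $\tilde\iota(c_1)\cdot(\tilde\iota(c_1)^{-1}g)=g$, which is the moment map of $P(M)$; and the $2$-forms match by combining $p^*\omega$ with the fusion correction term of Theorem \ref{fact:fusion}. (Alternatively, one may bypass the explicit $F$ by invoking the general identification $P(M_1\times M_2)\cong(P(M_1)\circledast P(M_2))/Z$ from Section \ref{sec:nonsimplyconnectedG} with $M_1=\C_1$, $M_2=M'$: since $\iota$ lifts, Remark \ref{remark:whenpullbacksplits} gives $P(\C_1)\cong\C_1\times Z$, and every $Z$-orbit on $P(\C_1)\circledast P(M')$ then has a unique representative with $Z$-coordinate $e$, so the quotient collapses to $P(\C_1)\circledast P(M')$.)

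The one substantive point is the first step: that $\C_1\neq\C$ forces $\iota$ to lift to $SU(n)$, equivalently that the restricted covering $\pi^{-1}(\C_1)\to\C_1$ is trivial. This is exactly where the distinguished position of the barycenter in the alcove is used — by Remark \ref{remark:specialconjclass} the barycenter is the unique point of $\Delta$ whose conjugacy class is invariant under all of $Z(SU(n))$, so for every other conjugacy class the $Z(SU(n))$-orbit is nontrivial and $\pi^{-1}(\C_1)$ genuinely splits into more than one $SU(n)$-conjugacy class. Once that splitting is established, everything else is bookkeeping with the fusion-product and pullback constructions already in place.
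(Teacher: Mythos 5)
Your proposal is correct and follows essentially the same route as the paper: the whole argument rests on Lemma \ref{lemma:specialconj} forcing $\pi^{-1}(\C_1)\to\C_1$ to split into $n$ single-sheet conjugacy classes, so that the moment map $\iota:\C_1\hookrightarrow PU(n)$ lifts to $SU(n)$, after which Remark \ref{remark:whenpullbacksplits} (your parenthetical) collapses $P(\C_1\times M')$ to $\tilde\C_1^1\times P(M')$ --- which is exactly the paper's two-line proof. Your explicit diffeomorphism $F(c_1,m',g)=(c_1,m',\tilde\iota(c_1)^{-1}g)$ is just the concrete unwinding of the map $f$ underlying that remark, so it adds detail rather than a genuinely different idea.
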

\begin{proof}
Suppose that $\C_1\neq \C$, the conjugacy class of $c_0$.  Then, by Lemma \ref{lemma:specialconj}, $\tilde\C_1=\pi^{-1}(\C_1)$ is a disjoint union of conjugacy classes $\tilde\C_1^1 \cup \cdots \cup \tilde\C_1^n$.  Equivalently, $\tilde\C_1\cong \tilde\C_1^1 \times \Z_n$.  Write $M'=\C_2 \times \cdots \times \C_m$, so that $M=\C_1\times M'$.  Then, by Remark \ref{remark:whenpullbacksplits}, $P(M)=\tilde\C_1^1 \times P(M')$.  
\end{proof}

Therefore, the pre-quantization of $P(M)$ may be studied using Proposition \ref{prop:fusion} and Theorem \ref{thm:conj}, and it will suffice to consider the case where each $\C_j=\C$, the conjugacy class of $c_0=\exp \zeta_0$, where $\zeta_0$ is the barycenter of the alcove $\Delta$. From now on, let $M=\C \times \cdots \times \C$ (with $m$ factors), where $\C$ is the conjugacy class of $c_0$.  

By Lemma \ref{lemma:htwoistorsion} below, $H^2(P(M);\R)=0$.  Therefore, by Proposition \ref{prop:restateprob}, $P(M)$ admits a pre-quantization at level $l$ if and only if $\varphi^*(l)=0$ in $H^3(P(M);\Z)$, where $\varphi:P(M)\to SU(n)$ is the group-valued moment map of the pullback quasi-Hamiltonian $SU(n)$-space over $M$.  This is the statement of Proposition \ref{prop:preqofPM} below.

In order to arrive at Lemma \ref{lemma:htwoistorsion}, it will be necessary  to understand the induced action of $\Z_n$ on $H^*(\tilde\C;\Z)$ (\emph{cf}. Proposition \ref{prop:cohomconj}).  A nice description of $H^*(\tilde\C;\Z)$ arises from the identification of $\tilde\C$ as a homogeneous space.  Recall that the centralizer of $\exp \zeta_0$ is $T$, the maximal torus; therefore, that the map $SU(n)/T\to  \tilde\C$ given by $gT\mapsto g\exp \zeta_0 g^{-1}$ is a homeomorphism.    The $\Z_n$-action on $\tilde\C$ corresponds to a $\Z_n$-action on $SU(n)/T$, which will be described in the next paragraph,  yielding the  $\Z_n$-action on $H^*(SU(n)/T;\Z)\cong H^*(\tilde\C;\Z)$ (see Corollary \ref{cor:Wcohoaction} below) that is used to prove Proposition \ref{prop:cohomconj}. 

Suppose $z\in \Z_n$.  Since $z\exp \zeta_0 \in T$ is conjugate to $\exp \zeta_0$, Theorem 3.18 in Chapter V of \cite{MT} guarantees that there exists an element $h\in N(T)$, the normalizer of $T$, such that $z\exp \zeta_0=h^{-1}\exp\zeta_0 h$.  Such an $h$ is only well defined up to translation by an element in $T$, and hence gives a well defined element $hT$ in $N(T)/T\cong W$, the Weyl group.  Recall that $N(T)/T$ acts on $SU(n)/T$ by $hT\cdot gT=gh^{-1}T$.  
Therefore, for $z$ and the corresponding $hT$ as above,  the equality
$$
zg\exp\zeta_0g^{-1}=gz\exp\zeta_0g^{-1}=gh^{-1}\exp\zeta_0 h g^{-1}
$$
shows that acting by $z$ in $\tilde\C$ corresponds to acting by $hT$ in $SU(n)/T$.  It is an easy exercise to check that the  map $\Z_n \to N(T)/T$ defined above is an injective homomorphism, and that the $\Z_n$-action on $\tilde\C$ corresponds to the restriction of the $N(T)/T$-action on $SU(n)/T$.

\begin{prop}
Define $\lambda:\Z_n \to N(T)/T$ as follows. For $z\in \Z_n$, write $z\exp \zeta_0=h^{-1} \exp \zeta_0 h$ for some $h\in N(T)$, and let $\lambda(z)=hT$.  Then $\lambda$ is an injective homomorphism.
\end{prop}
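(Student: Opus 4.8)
The plan is to verify directly that $\lambda$ is well defined, that it is a homomorphism, and that it is injective. The first task is to check that $\lambda(z)$ does not depend on the choice of $h$. By the argument in the paragraph preceding the statement, Theorem 3.18 in Chapter V of \cite{MT} guarantees the existence of an $h \in N(T)$ with $z\exp\zeta_0 = h^{-1}\exp\zeta_0 h$; if $h'$ is another such element, then $h'h^{-1}$ centralizes $\exp\zeta_0$, hence lies in $Z(\exp\zeta_0) = T$ (since the centralizer of $\exp\zeta_0$ is exactly the maximal torus, as $\zeta_0$ is the barycenter and therefore an interior point of the alcove), so $h'T = hT$ in $N(T)/T$. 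This shows $\lambda$ is well defined as a map of sets.

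Next I would check the homomorphism property. Given $z_1, z_2 \in \Z_n$ with $z_i \exp\zeta_0 = h_i^{-1}\exp\zeta_0 h_i$, compute
\[
(z_1 z_2)\exp\zeta_0 = z_1 (z_2 \exp\zeta_0) = z_1 h_2^{-1}\exp\zeta_0 h_2 = h_2^{-1}(z_1 \exp\zeta_0) h_2 = h_2^{-1} h_1^{-1}\exp\zeta_0 h_1 h_2 = (h_1 h_2)^{-1}\exp\zeta_0 (h_1 h_2),
\]
where the third equality uses that $z_1 \in Z(SU(n))$ is central and so commutes with $h_2$. Hence $\lambda(z_1 z_2) = h_1 h_2 T = (h_1 T)(h_2 T) = \lambda(z_1)\lambda(z_2)$. (One should also note $\lambda(e) = eT$, taking $h = e$.) Finally, for injectivity, suppose $\lambda(z) = eT$, i.e. $h \in T$. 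Then $z\exp\zeta_0 = h^{-1}\exp\zeta_0 h = \exp\zeta_0$ since $T$ is abelian, forcing $z = e$ in $Z(SU(n)) \cong \Z_n$.

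I expect none of these steps to present real difficulty; the content is entirely formal once the key input—that the centralizer of $\exp\zeta_0$ equals $T$—is in hand, and this was already established in the discussion leading to Lemma \ref{lemma:specialconj} (the barycenter is the unique alcove point fixed by the whole center, and more to the point it is a regular element, so its centralizer is the maximal torus). The only mild subtlety is making sure the bookkeeping of left versus right translates in $N(T)/T$ is consistent with the convention $hT\cdot gT = gh^{-1}T$ used for the action on $SU(n)/T$, but this is precisely why $\lambda(z)$ is recorded as $hT$ rather than $h^{-1}T$, and the computation above confirms the choice is the correct one for $\lambda$ to be a homomorphism rather than an anti-homomorphism.
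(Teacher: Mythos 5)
Your proof is correct, and it is exactly the direct verification the paper has in mind: the paper itself dispenses with the argument by remarking that "it is an easy exercise to check that the map $\Z_n \to N(T)/T$ defined above is an injective homomorphism," so your write-up simply supplies the omitted details. The three key points — well-definedness via $Z(\exp\zeta_0)=T$ (with the tiny implicit step that $h'=th$ gives $h'T=hT$ because $h$ normalizes $T$), the homomorphism property via centrality of $z_1$, and injectivity from $T$ being abelian — are all handled correctly.
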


En route to describing the $\Z_n$-action on $H^*(SU(n)/T;\Z)$, recall that $BT$ carries an action of the Weyl group $W\cong N(T)/T$, which may be described using the universal bundle $ESU(n)\to ESU(n)/T=BT$. Indeed,
$$
hT\cdot [e] =[e\cdot h^{-1}],
$$
where $e\cdot h^{-1}$ denotes the action of $h^{-1}\in N(T)\subset SU(n)$ on $e\in ESU(n)$, and $[x]$ denotes the $T$-orbit of $x\in ESU(n)$.  The action is well defined, since if $hT=h'T$, then $h'=ht$ for some $t\in T$, and 
$$
h'T\cdot [e]=[e\cdot (t^{-1}h^{-1})] =[(e\cdot (h^{-1}ht^{-1}h^{-1})) 
]=[(e\cdot h^{-1})\cdot (ht^{-1}h^{-1})]=[e\cdot h^{-1}].
$$

It is easy to see that the classifying map $\rho:SU(n)/T \to BT$ is $W$-equivariant. Indeed, consider the $T$-equivariant map $SU(n) \to ESU(n)$ that includes $SU(n)$ as the $SU(n)$-orbit of the base point, $g\mapsto e_0\cdot g$.  The classifying map $\rho:SU(n)/T \to BT$ is therefore the induced map of $T$-orbits. 
Hence, $\rho(hT\cdot gT)=[e_0\cdot (gh^{-1})] = [(e_0\cdot g)\cdot h^{-1}]=hT \cdot \rho(gT)$.

\begin{thm} \cite{MT} \label{thm:cohomGT} The induced map $\rho^*:H^*(BT;\Z) \to H^*(SU(n)/T;\Z)$ is onto, inducing the isomorphism 
$$
H^*(SU(n)/T;\Z)\cong \Z[t_1,\ldots, t_n]/(\sigma_1, \ldots \sigma_n),
$$
where $\sigma_i=\sigma_i(t_1,\ldots t_n)$ is the $i^{th}$ elementary symmetric polynomial, and by abuse of notation, $\rho^*(t_i)=t_i=t_i\, \mathrm{mod} \sigma_1$, where $t_i\in H^2(BT;\Z)$ are generators. Furthermore, $W\cong \Sigma_n$ acts on $H^*(BT;\Z)$ by permuting the $t_i$'s, and hence the same is true for $H^*(SU(n)/T;\Z)$.
\end{thm}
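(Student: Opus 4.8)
The plan is to compute $H^*(SU(n)/T;\Z)$ by realising $SU(n)/T$ as the complex flag manifold and building it up as an iterated projective bundle; this delivers the surjectivity of $\rho^*$ and the defining relations at the same time, after which the Weyl group statement is essentially formal.

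First I would identify $SU(n)/T$ with the manifold $Fl_n$ of complete flags $0=V_0\subset V_1\subset\cdots\subset V_n=\mathbb{C}^n$, and use the fact that $Fl_n$ is an iterated projective bundle over a point: sending a flag to its line $V_1$ exhibits $Fl_n$ as a fibre bundle over $\mathbb{C}P^{n-1}=\mathbb{P}(\mathbb{C}^n)$ whose fibre over $L$ is the flag manifold of $\mathbb{C}^n/L$, and one iterates by projectivizing the tautological quotient bundles. Writing $\gamma_i=V_i/V_{i-1}$ for the tautological line bundles on $Fl_n$ and $t_i=c_1(\gamma_i)\in H^2(Fl_n;\Z)$, the Leray--Hirsch theorem (equivalently, the projective bundle formula) applied inductively along the tower shows that $H^*(Fl_n;\Z)$ is a free $\Z$-module with basis the monomials $t_1^{a_1}\cdots t_n^{a_n}$, $0\le a_i\le n-i$; in particular it is generated as a ring by $t_1,\dots,t_n$. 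Since $\gamma_1\oplus\cdots\oplus\gamma_n$ has trivial determinant, the tuple $(\gamma_1,\dots,\gamma_n)$ is classified by a map $Fl_n\to BT$ that is a model for the classifying map $\rho$ of the $T$-bundle $SU(n)\to SU(n)/T$, with $\rho^*(t_i)=t_i$; hence $\rho^*$ is surjective.

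For the relations, the total Chern class of the trivial bundle $\underline{\mathbb{C}}^n$ on $Fl_n$ factors along the filtration $V_\bullet$, so $\prod_{i=1}^n(1+t_i)=1$, i.e.\ $\sigma_k(t_1,\dots,t_n)=0$ for $k=1,\dots,n$. This produces a surjective graded ring homomorphism $\Z[t_1,\dots,t_n]/(\sigma_1,\dots,\sigma_n)\twoheadrightarrow H^*(SU(n)/T;\Z)$. To see it is an isomorphism I would compare graded ranks: $\sigma_1,\dots,\sigma_n$ form a regular sequence in $\Z[t_1,\dots,t_n]$ and the quotient is $\Z$-free with the Schubert-monomial basis $\{t_1^{a_1}\cdots t_n^{a_n}:0\le a_i\le n-i\}$, hence has Poincar\'e series $\prod_{k=1}^n(1+q^2+\cdots+q^{2(k-1)})$; the target has the same graded ranks by the Leray--Hirsch computation above (or, equivalently, by the Bruhat cell decomposition of the flag manifold, whose cells are indexed by $w\in W$ with real dimension $2\ell(w)$). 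A surjection of finitely generated free $\Z$-modules with equal ranks in each degree is an isomorphism, so $\rho^*$ induces the stated isomorphism. Finally, $W=N(T)/T$ acts on the diagonal maximal torus of $SU(n)$ by permuting its $n$ characters, so $W\cong\Sigma_n$ permutes the generators $t_i$ of $H^*(BT;\Z)$; by the $W$-equivariance of $\rho$ established above, this descends through $\rho^*$ to the permutation action of $\Sigma_n$ on $H^*(SU(n)/T;\Z)$.

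The step I expect to be the main obstacle is the rank count: verifying that $\sigma_1,\dots,\sigma_n$ is a regular sequence in $\Z[t_1,\dots,t_n]$ and that the quotient is $\Z$-free with the Schubert-monomial basis in the correct degrees. Everything else---the iterated projective bundle structure, the Chern class identity $\prod(1+t_i)=1$, and the $W$-equivariance---is formal once the geometric picture is set up. One can also bypass the commutative algebra entirely by pushing the Leray--Hirsch induction a little further so that it produces both the $\Z$-basis and the relations simultaneously, or simply invoke Borel's theorem $H^*(G/T;\Z)\cong H^*(BT;\Z)\otimes_{H^*(BG;\Z)}\Z$ for compact connected $G$ with torsion-free integral cohomology, which is exactly this statement.
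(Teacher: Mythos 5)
Your proof is correct, but note that the thesis does not prove this statement at all---it is quoted verbatim from Mimura--Toda \cite{MT} and used as a black box, so there is no ``paper's proof'' to match. What you have written is the standard classical argument: identify $SU(n)/T$ with the complete flag manifold, run Leray--Hirsch up the tower of projective bundles to get the free $\Z$-basis of monomials $t_1^{a_1}\cdots t_n^{a_n}$ with $0\le a_i\le n-i$, extract the relations $\sigma_k=0$ from $c(\underline{\mathbb{C}}^n)=\prod(1+t_i)=1$, and close the argument by comparing graded ranks against the coinvariant algebra $\Z[t_1,\dots,t_n]/(\sigma_1,\dots,\sigma_n)$. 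This is essentially Borel's theorem specialized to $SU(n)$, and is in the same spirit as (if not literally identical to) the spectral-sequence proof in \cite{MT}. The two points worth being slightly more careful about are exactly the ones you flag or touch on: (i) that the coinvariant algebra is $\Z$-free on the stated monomial basis (equivalently, that $\Z[t_1,\dots,t_n]$ is free over $\Z[\sigma_1,\dots,\sigma_n]$ on those monomials---standard, but it is the load-bearing step, since a graded surjection of free $\Z$-modules of equal finite rank in each degree is then forced to be an isomorphism); and (ii) the identification of your classifying map for $(\gamma_1,\dots,\gamma_n)$ with the map $\rho$ classifying the $T$-bundle $SU(n)\to SU(n)/T$, including the lift from $BT'$ (the maximal torus of $U(n)$) to $BT$ using triviality of $\det$, which is needed to match the paper's convention $\rho^*(t_i)=t_i \bmod \sigma_1$ and to make the $W$-equivariance argument at the end go through. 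Both are routine, and your argument is complete as a proof of the cited theorem.
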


\begin{cor}  \label{cor:Wcohoaction} There is a generator $z\in \Z_n$ such that the $\Z_n$-action on $H^*(SU(n)/T;\Z)$ is determined by $z\cdot t_1=t_2$, $z\cdot t_2=t_3$, \ldots, $z\cdot t_{n-1}=t_n$, $z\cdot t_n=t_1$.
\end{cor}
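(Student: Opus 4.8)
The plan is to reduce the statement to identifying the image of a generator of $\Z_n$ under the homomorphism $\lambda\colon\Z_n\to N(T)/T\cong W$ as an $n$-cycle in $W\cong\Sigma_n$. By Theorem \ref{thm:cohomGT}, $W$ acts on $H^*(SU(n)/T;\Z)$ by permuting $t_1,\dots,t_n$, compatibly with its permutation action on $t_1,\dots,t_n\in H^2(BT;\Z)$ through the surjection $\rho^*$, because $\rho$ is $W$-equivariant. Since the $\Z_n$-action on $H^*(SU(n)/T;\Z)$ is by definition the restriction of the $W$-action along $\lambda$, once I know $\lambda$ sends some generator $z$ to an $n$-cycle, the induced action on cohomology is the corresponding cyclic shift of the $t_i$; replacing $z$ by $z^{-1}$ if necessary orients the cycle so that $z\cdot t_1=t_2,\dots,z\cdot t_n=t_1$, which is legitimate because the statement only asserts the existence of a suitable generator.

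To compute $\lambda$, I would use the explicit model from Example \ref{eg:conj}: in the identification of $\mathfrak t$ with the traceless subspace of $\R^n$, the barycenter $\zeta_0=\tfrac1n\sum_k v_k$ of the alcove has $j$-th coordinate $\tfrac{n+1-2j}{2n}$, so $\exp\zeta_0=\mathrm{diag}\big(e^{\pi i(n+1-2j)/n}\big)_{j=1}^n$ is a diagonal matrix whose $n$ eigenvalues are equally spaced on the unit circle (and distinct, consistent with the fact already noted that the centralizer of $\exp\zeta_0$ is $T$). Taking the generator $z=e^{2\pi i/n}\mathbf 1$ of $Z(SU(n))\cong\Z_n$, multiplication by $z$ rotates the eigenvalue pattern by one step: a direct check gives $(z\exp\zeta_0)_{jj}=(\exp\zeta_0)_{(j-1)(j-1)}$ with indices read mod $n$. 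Hence $z\exp\zeta_0=h^{-1}(\exp\zeta_0)h$ for a permutation matrix $h\in N(T)$ whose image in $W\cong\Sigma_n$ is an $n$-cycle, so by definition $\lambda(z)=hT$ is that $n$-cycle.

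Putting the pieces together, the chosen generator $z$ acts on $H^*(SU(n)/T;\Z)\cong\Z[t_1,\dots,t_n]/(\sigma_1,\dots,\sigma_n)$ the way an $n$-cycle permutes $t_1,\dots,t_n$, i.e. by a cyclic shift, which after fixing the orientation of the cycle is exactly the asserted formula. The only genuinely delicate point is the bookkeeping in the middle step — passing correctly between the translation action on the alcove of Example \ref{eg:conj}, the conjugation description of the $\Z_n$-action on $\tilde\C\cong SU(n)/T$, and the right $N(T)/T$-action used to define $\lambda$, so as to keep straight on which side elements multiply and therefore whether one lands on the $n$-cycle or its inverse. Note that injectivity of $\lambda$ (established just before the corollary) already forces $\lambda(z)$ to have order $n$; the content of the eigenvalue computation is precisely that this order-$n$ element is a single $n$-cycle and not a product of shorter cycles of coprime lengths.
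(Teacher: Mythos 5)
Your proof is correct and is essentially the paper's own argument in multiplicative rather than additive form: the paper computes $\zeta_0+v_1$ in $\mathfrak{t}$ and compares coordinates modulo the integer lattice, while you compute the diagonal entries of $e^{2\pi i/n}\exp\zeta_0$ directly in $T$ — the same calculation, since your generator $e^{2\pi i/n}\mathbf{1}$ is precisely the paper's $\exp(-v_1)$, and both proofs finish by invoking the $W$-equivariance of $\rho$ and Theorem \ref{thm:cohomGT} and adjusting the generator to orient the cycle. One caution about your closing remark: knowing only that $\lambda(z)$ is \emph{some} $n$-cycle would not suffice for the statement as written, because for $n\geq 4$ not every $n$-cycle generates the same cyclic subgroup of $\Sigma_n$ as the standard cycle $(1\,2\,\cdots\,n)$ (e.g.\ $(1\,3\,2\,4)$ in $\Sigma_4$), so no power of it need act as the asserted shift; the real content of your eigenvalue computation is that the identity $(z\exp\zeta_0)_{jj}=(\exp\zeta_0)_{(j-1)(j-1)}$ pins down the \emph{specific} shift $j\mapsto j-1$, which it does, so the proof itself has no gap.
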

\begin{proof} 
The maximal torus $T=(S^1)^{n-1}$ in $SU(n)$ may be viewed as lying inside the maximal torus $T'=(S^1)^n$ of $U(n)$ as the kernel of the multiplication map $T' \to S^1$.  By Theorem I-3.13 in \cite{MT}, the Weyl group $W=N(T)/T$ acts on $T$ by switching factors in $T'$.  Using the obvious identification of $H^1(T;\Z)\cong H^2(BT;\Z)$, it is clear that $W$ acts by permuting the $t_i$'s, as stated in the previous Theorem.  It is therefore necessary to check that $\lambda(z)$ is the permutation in $W$ that sends $t_i\mapsto t_{i+1}$ for $i=1$ to $n-1$, and $t_n\mapsto t_1$, where $z\in \Z_n$ is a generator.

Recall from Example \ref{eg:conj} that $z=\exp v_1$ generates $\Z_n$, where 
$$
v_1=\frac{1}{n}((n-1)e_1 - e_2 - \cdots -e_n)
$$
is a vertex of the alcove $\Delta\subset \mathfrak{t}\cong \{x=\sum x_ie_i \in \R^n \, | \, \sum x_i=0\}$. An easy calculation shows that the barycenter $\zeta_0=\tfrac{1}{n}\sum v_i$ of the alcove is 
$$
\zeta_0 = \frac{1}{2n}((n-1)e_1 + (n-3)e_2 + (n-5)e_3 + \cdots + (-n+1) e_n),
$$
and hence,
$$
\zeta_0+v_1 = \frac{1}{2n}(3(n-1) e_1 + (n-5)e_2  + (n-7)e_3 + \cdots + (-n-1)e_n).
$$
Observe that the effect on $\zeta_0$ of  translation by $v_1$ moves the $j^{th}$ coordinate  to the $(j-1)^{st}$ coordinate for $j=3$ to $n$.  Furthermore, since $(n-3)-3(n-1)=-n$, and $(n-1)-(-n-1)=n$, it follows that translation by $\exp v_1$ sends the $j^{th}$ factor of $T\subset T'$ to the $(j-1)^{st}$, for $j=2$ to $n$, and that it sends the first factor to the $n^{th}$. In other words, $\lambda(\exp v_1)$ sends $t_i\mapsto t_{i-1}$. Therefore $z=\exp (-v_1)$ satisfies the requirements of the Corollary.
\end{proof}

\begin{prop} \label{prop:cohomconj} The subgroup of invariant elements $H^2(\tilde\C)^{\Z_n}$ with respect to the $\Z_n$-action on $H^2(\tilde{\C};\Z)$ is trivial.
\end{prop}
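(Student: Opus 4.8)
The plan is to argue entirely inside the explicit presentation of $H^*(\tilde\C;\Z)\cong H^*(SU(n)/T;\Z)$ furnished by Theorem \ref{thm:cohomGT}, using the description of the $\Zn$-action from Corollary \ref{cor:Wcohoaction}. In degree $2$ that presentation says $H^2(SU(n)/T;\Z)$ is the free abelian group on $t_1,\dots,t_n$ modulo the single relation $\sigma_1=t_1+\cdots+t_n=0$; so it is free of rank $n-1$, a class is represented by $\sum_{i=1}^n a_it_i$ with $a_i\in\Z$, and two such representatives give the same class exactly when their coefficient vectors differ by a constant vector $(c,\dots,c)$ (the kernel of $\Z^n\to H^2(SU(n)/T;\Z)$ is generated by $(1,\dots,1)$, since $\sigma_1=\sum_i t_i$ is the only relation).

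First I would record the effect of a generator $z\in\Zn$: by Corollary \ref{cor:Wcohoaction} we have $z\cdot t_i=t_{i+1}$ (indices mod $n$), hence $z\cdot\big(\sum_i a_it_i\big)=\sum_i a_{i-1}t_i$, i.e. $z$ cyclically shifts the coefficient vector. Therefore the class of $\sum_i a_it_i$ is $\Zn$-invariant precisely when $(a_{i-1}-a_i)_{i=1,\dots,n}=(c,\dots,c)$ for some $c\in\Z$. The final step is the elementary observation that summing the equations $a_{i-1}-a_i=c$ over $i=1,\dots,n$ telescopes cyclically to $0=nc$, so $c=0$; hence all $a_i$ are equal, say to $a$, and then $\sum_i a_it_i=a\,\sigma_1=0$ in $H^2(SU(n)/T;\Z)$. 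Thus $H^2(\tilde\C)^{\Zn}=0$.

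There is no genuine obstacle in this argument; the only subtlety worth flagging in the write-up is bookkeeping with the relation $\sigma_1=0$, namely that ``$\Zn$-invariant'' means fixed in the quotient group, not fixed at the level of coefficient vectors. That weaker condition is exactly what introduces the free integer parameter $c$, and it is precisely the cyclic telescoping $nc=0$ that kills it — so the proof hinges on the order of $\Zn$ being $n$ and on there being no relations in degree $2$ other than $\sigma_1$.
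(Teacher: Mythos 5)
Your argument is correct and is essentially the same as the paper's: both express a degree-$2$ class as $\sum a_it_i$ modulo $\sigma_1$, observe that invariance under the cyclic shift forces the successive differences of coefficients to equal a common constant $c$, telescope to get $nc=0$, and conclude all $a_i$ coincide so the class is a multiple of $\sigma_1$, hence zero. Your remark about invariance being taken in the quotient (introducing the parameter $c$) is exactly the point the paper's ``$\mathrm{mod}\ \sigma_1$'' handles.
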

\begin{proof}
If $x=\sum a_i t_i$ in $H^2(\tilde\C;\Z)$ is invariant under the action of the generator in  Corollary \ref{cor:Wcohoaction}, then
$$
a_1t_1 + a_2t_2 + \cdots a_nt_n = a_nt_1 + a_1t_2 + \cdots + a_{n-1}t_n \quad\mathrm{mod}\, \sigma_1.
$$
Therefore, $a_1-a_n=a_2-a_1=\cdots a_n-a_{n-1}=c$ for some $c$. Adding these equations together gives $0=nc$, and hence $a_1=a_2=\cdots =a_n$.  That is, $\sigma_1=t_1+t_2+\cdots t_n$ divides $\sum a_it_i$, and $x=0$.
\end{proof}

\begin{lemma}  \label{lemma:htwoistorsion}
Let $M=\C^m$, where $\C$ is the conjugacy class of $c_0$ in $PU(n)$.  Then $H^2(P(M);\Z)\cong (\Z_n)^{\oplus(m-1)}$.
\end{lemma}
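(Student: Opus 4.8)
The plan is to identify $P(M)$ as the quotient of a simply connected manifold by a free action of a finite abelian group, and then run the Serre exact sequence of a covering (Proposition~\ref{prop:serreexactsequence}). Throughout I assume $m\ge 2$, which is the range relevant to the moduli space; for $m=1$ one has $P(M)\cong\tilde\C\cong SU(n)/T$, whose degree-$2$ integral cohomology is already given by Theorem~\ref{thm:cohomGT}.

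\textbf{Step 1: the topology of $P(M)$.} The moment map of the conjugacy class $\C$ is the inclusion $\C\hookrightarrow PU(n)$, so its pullback along $\pi$ is $\pi^{-1}(\C)=\tilde\C$; that is, $P(\C)\cong\tilde\C$. Iterating the fusion-product-pullback formula of Section~\ref{sec:nonsimplyconnectedG} (or checking directly, using that the moment map of the $m$-fold fusion product $\C\circledast\cdots\circledast\C$ is the product map), I would produce a surjection
\[ q\colon\tilde\C^m\longrightarrow P(M),\qquad (\tilde c_1,\dots,\tilde c_m)\longmapsto\bigl(\pi(\tilde c_1),\dots,\pi(\tilde c_m),\,\tilde c_1\cdots\tilde c_m\bigr), \]
whose fibres are the orbits of $Z''=\{(z_1,\dots,z_m)\in Z^m:z_1\cdots z_m=e\}\cong(\Z_n)^{m-1}$ (with $Z=\ker\pi\cong\Z_n$) acting on $\tilde\C^m$ componentwise by left translations $\tilde c_i\mapsto z_i\tilde c_i$. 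Left translation by a nontrivial central element is fixed-point free on $\tilde\C$, so this $Z''$-action is free; and $\tilde\C$ is simply connected (established after Lemma~\ref{lemma:specialconj}). Hence $q$ is the universal covering of $P(M)$, and $\pi_1(P(M))\cong Z''\cong(\Z_n)^{m-1}$.

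\textbf{Step 2: the exact sequence.} Since $H^2(\tilde\C;\Z)\cong\Z^{\,n-1}\ne 0$ by Theorem~\ref{thm:cohomGT}, $\tilde\C^m$ is not $2$-connected, so the best vanishing range in Proposition~\ref{prop:serreexactsequence} is $N=2$, which gives
\[ 0\to H^2_{alg}(Z'';\Z)\to H^2(P(M);\Z)\to H^2(\tilde\C^m;\Z)^{Z''}\to H^3_{alg}(Z'';\Z). \]
For the left-hand group, the universal coefficient theorem for group cohomology gives $H^2_{alg}(Z'';\Z)\cong\mathrm{Ext}\bigl(H_1(Z'';\Z),\Z\bigr)\oplus\mathrm{Hom}\bigl(H_2(Z'';\Z),\Z\bigr)\cong(\Z_n)^{m-1}$, since $H_1(Z'';\Z)=(\Z_n)^{m-1}$ and $H_2$ of a finite group is finite. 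For the right-hand group, Künneth applies in degree $2$ (as $H^1(\tilde\C;\Z)=0$ and $H^2(\tilde\C;\Z)$ is free), giving $H^2(\tilde\C^m;\Z)\cong\bigoplus_{i=1}^m H^2(\tilde\C;\Z)$ with $Z''$ acting on the $i$-th summand through its $i$-th coordinate; and left translation by $z\in Z$ on $\tilde\C$, transported through the homeomorphism $\tilde\C\cong SU(n)/T$, $gT\mapsto g\,c_0\,g^{-1}$, is exactly the Weyl-group action by $\lambda(z)$ of Corollary~\ref{cor:Wcohoaction}. When $m\ge 2$, every $z\in Z$ occurs as some coordinate of an element of $Z''$, so a $Z''$-invariant tuple must have every entry fixed by all of $\Z_n$, i.e.\ lying in $H^2(\tilde\C;\Z)^{\Z_n}$, which vanishes by Proposition~\ref{prop:cohomconj}. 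Thus $H^2(\tilde\C^m;\Z)^{Z''}=0$, and the exact sequence collapses to $H^2(P(M);\Z)\cong H^2_{alg}(Z'';\Z)\cong(\Z_n)^{\oplus(m-1)}$.

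\textbf{Main obstacle.} The covering-space identification in Step~1 and the group-cohomology computations are routine; the delicate part is pinning down the $Z''$-action on $H^2(\tilde\C^m;\Z)$. One must be careful to use the ``extra'' translation action of $Z$ from the fusion-product-pullback picture (not the $\tilde G$-action on $P(M)$, under which the centre acts trivially), and then match the translation action on a single factor $\tilde\C$, through the homeomorphism $\tilde\C\cong SU(n)/T$, with the concrete permutation action on $H^2(SU(n)/T;\Z)$ analysed in Proposition~\ref{prop:cohomconj} — it is precisely the vanishing of its invariants that drives the conclusion.
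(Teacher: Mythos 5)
Your proof is correct and follows essentially the same route as the paper's: identify $\tilde\C^m \to P(M)$ as the universal covering with deck group the kernel $Z''$ of the multiplication map $(\Z_n)^m \to \Z_n$, apply Proposition \ref{prop:serreexactsequence}, and kill the invariants $H^2(\tilde\C^m;\Z)^{Z''}$ componentwise via Proposition \ref{prop:cohomconj}. Your explicit restriction to $m\ge 2$ is a caveat the paper leaves implicit (for $m=1$ the group $Z''$ is trivial and $H^2(P(M);\Z)\cong H^2(\tilde\C;\Z)\cong\Z^{n-1}$, not $0$), but this does not affect the application, which only uses $m\ge 2$.
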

\begin{proof}
Let  $\tilde{M}=\tilde{\C} \times \cdots \times \tilde{\C}$ ($m$ factors), where $\tilde\C$ is the conjugacy class of $\exp \zeta_0$, and $\zeta_0$ is the barycenter of the alcove $\Delta$. Then the map $\tilde{M}\to M$ is the universal covering projection, with fiber $\Z_n \times \cdots \times \Z_n$ ($m$ factors).  Therefore, the induced covering $\tilde{M} \to P(M)$ is also universal, and has fiber $K\cong (\Z_n)^{m-1}$, the kernel of the multiplication map $\Z_n \times \cdots \times \Z_n \toby{\mathrm{mult}} \Z_n$.

By the K\"unneth Theorem, $H^2(\tilde{M};\Z)\cong\bigoplus H^2(\tilde\C;\Z)$. 
Since the $K$-action on $H^2(\tilde{M};\Z)\cong\bigoplus H^2(\tilde\C;\Z)$ factors through $(\Z_n)^m$, 
$$
 H^2(\tilde{M};\Z)^K=\bigoplus H^2(\tilde\C;\Z)^{\Z_n},
$$
which is zero by Proposition \ref{prop:cohomconj}. Therefore, $H^2(P;\Z)\cong H^2(BK;\Z)$, by Proposition \ref{prop:serreexactsequence}. 
\end{proof}

As mentioned earlier, this yields the following Proposition.
\begin{prop} \label{prop:preqofPM}
Let $M=\C^m$, where $\C$ is the conjugacy class of $c_0$ in $PU(n)$, and let $P(M)$ be the pullback quasi-Hamiltonian $SU(n)$-space with group-valued moment map $\varphi:P(M)\to SU(n)$.  Then $P(M)$ admits a pre-quantization at level $k$ if and only if $\varphi^*(k)=0$.
\end{prop}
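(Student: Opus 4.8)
The statement is an immediate consequence of Proposition \ref{prop:restateprob} once the necessary hypotheses are verified for the quasi-Hamiltonian $SU(n)$-space $P(M)$. Recall that $SU(n)$ is simply connected and simple, so Proposition \ref{prop:restateprob} applies verbatim provided one checks that $P(M)$ has a well-defined level $k$ and that $H^2(P(M);\R)=0$. The plan, then, is as follows: first identify the level, then invoke the preceding cohomological lemma to kill $H^2(P(M);\R)$, and finally quote Proposition \ref{prop:restateprob}.

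\textbf{Step 1: The level.} The conjugacy class $\C$ in $PU(n)$ inherits a level from the chosen $Ad$-invariant inner product $kB$ on $\mathfrak{g}$, and this level passes to the fusion product $M=\C^m$ by the Fusion Theorem (Theorem \ref{fact:fusion}), and thence to the pullback $P(M)$ by the Proposition on pullbacks in Section \ref{sec:nonsimplyconnectedG} (the covering $\pi:SU(n)\to PU(n)$ is an isomorphism of Lie algebras, so the inner product, hence the level, is unchanged). Thus $P(M)$ is a level $k$ quasi-Hamiltonian $SU(n)$-space.

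\textbf{Step 2: Vanishing of $H^2$ with real coefficients.} By Lemma \ref{lemma:htwoistorsion}, $H^2(P(M);\Z)\cong(\Z_n)^{\oplus(m-1)}$, which is a torsion group. By the Universal Coefficient Theorem, $H^2(P(M);\R)\cong H^2(P(M);\Z)\otimes\R=0$ (the $\mathrm{Tor}$ term also vanishes since $\R$ is torsion-free). So the hypothesis $H^2(M;\R)=0$ of Proposition \ref{prop:restateprob} is satisfied with $M$ replaced by $P(M)$.

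\textbf{Step 3: Conclude.} Apply Proposition \ref{prop:restateprob} directly: $P(M)$ admits a pre-quantization if and only if $k\in\ker\varphi^*\subset\Z=H^3(SU(n);\Z)$, i.e.\ if and only if $\varphi^*(k)=0$ in $H^3(P(M);\Z)$, where $\varphi:P(M)\to SU(n)$ is the group-valued moment map. This is exactly the assertion. There is no real obstacle here—the content of the Proposition lies entirely in Lemma \ref{lemma:htwoistorsion} (whose proof uses Proposition \ref{prop:serreexactsequence} and the computation of the $\Z_n$-action on $H^2(SU(n)/T;\Z)$), and in the groundwork of Section \ref{sec:nonsimplyconnectedG}; the present statement is just the assembly of those pieces. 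If anything warrants a remark, it is that $\varphi$ really does have $SU(n)$ (not $PU(n)$) as target, so that $H^3$ of the target is $\Z$ and the condition "$\varphi^*(k)=0$" makes sense as stated—this follows from the pullback construction, where $\varphi$ is the lift of $\phi$ through $\pi$.
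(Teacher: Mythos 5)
Your proposal is correct and follows exactly the route the paper takes: the content is Lemma \ref{lemma:htwoistorsion}, which shows $H^2(P(M);\Z)$ is torsion and hence $H^2(P(M);\R)=0$, after which Proposition \ref{prop:restateprob} (applicable since $SU(n)$ is simply connected) gives the statement verbatim. The remarks about the level and the target of $\varphi$ are fine but not needed beyond what the paper already sets up in Section \ref{sec:nonsimplyconnectedG}.
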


Notice that if $P(M)$ admits a pre-quantization, then $\tilde{M}=\tilde\C^m$ does as well.  Indeed,  the commutative diagram
\begin{diagram}
\tilde{M} & \rTo^{\tilde\phi} & \tilde{G} \\
\dTo & & \dEquals \\
P & \rTo^\varphi & \tilde{G} \\
\end{diagram}
provides a natural map $H^3(\varphi;R)\to H^3(\tilde\phi;R)$ with any coefficient ring $R$.  Therefore, integral lifts in $H^3(\varphi;\Z)$ map to integral lifts in $H^3(\tilde\phi;\Z)$.  It follows that a necessary condition for the pre-quantization of $P(M)$ is that the conjugacy class $\tilde\C$ admit a pre-quantization.  

\begin{prop} Let $\tilde\C$ be the conjugacy class of $\exp \zeta_0\in SU(n)$, where $\zeta_0$ is the barycenter in the alcove $\Delta$.  If $\tilde\C$ admits a pre-quantization at level $k$, then $n$ divides $k$.
\end{prop}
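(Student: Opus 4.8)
The plan is to apply Theorem \ref{thm:conj} with $H = SU(n)$ and $\zeta = \zeta_0$, which reduces the claim to a short lattice computation. By that theorem, $\tilde\C$ admits a pre-quantization at level $k$ if and only if $B^\flat(k\zeta_0) \in I^* = \mathrm{Hom}(I,\Z)$, the lattice of integral forms. Using the basic inner product $B$ to identify $\mathfrak{t}^*$ with $\mathfrak{t}$ (recall from Example \ref{eg:conj} that $B$ is the restriction to $\mathfrak{t} = \{\sum x_i e_i : \sum x_i = 0\}$ of the standard inner product on $\R^n$), this condition becomes the requirement that $(k\zeta_0, x) \in \Z$ for every $x$ in the integer lattice $I = \ker\exp_T$.

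First I would pin down $I$. Since $SU(n)$ is simply connected, $I$ coincides with the co-root lattice $\Gamma$ (see Section \ref{sec:liethy}), and for $SU(n)$ the co-roots equal the roots $e_i - e_j$ (Example \ref{eg:conj}); hence $I$ is the $A_{n-1}$ root lattice, generated by $e_i - e_{i+1}$ for $i = 1,\ldots,n-1$. So to test whether $B^\flat(k\zeta_0) \in I^*$ it suffices to pair $k\zeta_0$ against these $n-1$ generators.

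Next I would feed in the explicit barycenter computed in the proof of Corollary \ref{cor:Wcohoaction},
$$\zeta_0 = \frac{1}{2n}\big((n-1)e_1 + (n-3)e_2 + \cdots + (-n+1)e_n\big),$$
whose $j$-th coordinate is $(n+1-2j)/(2n)$. A one-line computation then gives $(k\zeta_0,\, e_i - e_{i+1}) = k/n$ for every $i$. Therefore $B^\flat(k\zeta_0) \in I^*$ forces $k/n \in \Z$, i.e. $n \mid k$, which is the assertion. (The converse is immediate from the same computation, recovering that $\tilde\C$ is pre-quantizable precisely at levels divisible by $n$, though only the stated direction is needed.)

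I do not anticipate any real obstacle: the only points requiring care are the identification of the integer lattice of $SU(n)$ with the $A_{n-1}$ root lattice and the fact that the ambient inner product is the basic one, both of which are already recorded earlier in the text; everything else is the elementary pairing calculation above.
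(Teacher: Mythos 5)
Your proposal is correct and follows essentially the same route as the paper: invoke Theorem \ref{thm:conj}, identify the integral lattice of $SU(n)$ with the lattice spanned by the $e_i - e_{i+1}$, and compute $B(k\zeta_0, e_i - e_{i+1}) = k/n$. The only cosmetic difference is that you describe $I$ as the co-root ($A_{n-1}$ root) lattice while the paper describes it as the restriction of $\Z^n$ to $\mathfrak{t}$; these are the same lattice, so the arguments coincide.
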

\begin{proof}
By Theorem \ref{thm:conj}, $\tilde\C$ admits a level $k$ pre-quantization if and only if $B^\flat(k\zeta_0)$ is in  $I^*=\mathrm{Hom}(I,\Z)$, the lattice of integral forms. As in Example \ref{eg:conj} of Section \ref{sec:liethy},  the integral lattice $I=\ker \exp_T\subset \mathfrak{t}$ is the restriction of the standard integer lattice $\Z^n\subset \R^n$ to $\mathfrak{t}=\{x=\sum x_ie_i \in \R^n| \sum x_i=0\}$. It follows that for any $v\in \mathfrak{t}$, $B^\flat (v)$ belongs to $I^*$ if and only if $B(v,e_i-e_{i+1})$ is an integer.  Recall from the proof of Corollary \ref{cor:Wcohoaction} that 
$$
\zeta_0=\frac{1}{2n}((n-1)e_1 + (n-3)e_2 + \cdots + (-n +1)e_n).
$$
Therefore, 
$B(\zeta_0,e_i-e_{i+1})=\frac{1}{n}$, and $B^\flat(k\zeta_0) \in I^*$ if and only if $n$ divides $k$.
\end{proof}

\begin{cor} \label{cor:necessaryPM} If $P(M)$ admits a level $k$ pre-quantization, then $n$ divides $k$.
\end{cor}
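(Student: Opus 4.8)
The corollary follows immediately from the two results that precede it, so the plan is simply to make the chain explicit. Suppose $P(M)$ is pre-quantizable at level $k$. Since $SU(n)$ is simply connected, this amounts (Definition \ref{preq} and Proposition \ref{prop:eq-pqispq}) to the existence of an integral lift in $H^3(\varphi\,;\Z)$ of the level-$k$ relative class $[(\omega,\eta)]\in H^3(\varphi\,;\R)$. First I would push this lift through the covering $\tilde{M}=\tilde{\C}^{m}\to P(M)$: the commutative square relating $\tilde\phi$ and $\varphi$ displayed above induces a map $H^3(\varphi\,;R)\to H^3(\tilde\phi\,;R)$ natural in $R$, and because the vertical map is a covering it carries the distinguished $2$- and $3$-forms of $P(M)$ to those of $\tilde{\C}^{m}$; hence $\tilde{\C}^{m}$ is pre-quantizable at level $k$ as well.

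Next I would descend from the fusion product $\tilde{\C}^{m}$ to a single factor $\tilde{\C}$. For this I would use the computation from the second proof of Proposition \ref{prop:fusion} (in its iterated form), namely that under the injection $\rho:H^3(\phi\,;R)\hookrightarrow H^3(\phi_1\times\cdots\times\phi_m\,;R)$ one has $\rho[(\omega,\eta)]=\sum_{j=1}^{m}(\pr_j,\pr_j)^{*}[(\omega_j,\eta)]$, where each $(\pr_j,\pr_j)^{*}$ is a split injection. A Künneth-type decomposition of $H^3(\phi_1\times\cdots\times\phi_m\,;R)$ — especially clean here, since the flag manifold $\tilde{\C}\cong SU(n)/T$ has no odd cohomology (Theorem \ref{thm:cohomGT}) — exhibits the images of the $(\pr_j,\pr_j)^{*}$ as natural direct summands; projecting onto the $j$-th summand then shows that integrality of $k\,\rho[(\omega,\eta)]$ forces integrality of each $k[(\omega_j,\eta)]$. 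In particular $\tilde{\C}$ itself is pre-quantizable at level $k$, and the Proposition immediately preceding this corollary — which, via Theorem \ref{thm:conj} and the identity $B(\zeta_0,e_i-e_{i+1})=\tfrac{1}{n}$, asserts that $B^{\flat}(k\zeta_0)\in I^{*}$ exactly when $n\mid k$ — yields $n\mid k$.

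Both outer links in this chain are already established (the covering square, and Theorem \ref{thm:conj}), so the one place needing genuine care is the middle step, the passage from pre-quantizability of $\tilde{\C}^{m}$ to pre-quantizability of a single factor. I expect the main obstacle to be setting up the Künneth-type splitting of $H^3(\phi_1\times\cdots\times\phi_m\,;R)$ for the product map $\phi_1\times\cdots\times\phi_m$ and, especially, checking that the projection onto the summand $(\pr_j,\pr_j)^{*}H^3(\phi_j)$ commutes with the coefficient homomorphism $\iota_\R$; granting that naturality, the projection argument is routine and everything else is citation.
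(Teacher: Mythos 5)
Your proof follows the same route as the paper: the covering square $\tilde{M}\to P(M)$ gives pre-quantizability of $\tilde\C^m$, passing to a single fusion factor gives pre-quantizability of $\tilde\C$, and the Proposition immediately preceding the corollary (via Theorem \ref{thm:conj} and $B(\zeta_0,e_i-e_{i+1})=\tfrac{1}{n}$) then forces $n\mid k$. The only difference is that you spell out the fusion-product-to-single-factor step with an explicit K\"unneth splitting of $H^3(\phi_1\times\cdots\times\phi_m)$ — a step the paper leaves implicit (``It follows that\ldots'') — and your justification of it is sound, since $H^*(\tilde\C;\Z)$ is free and concentrated in even degrees, so the splitting is natural in the coefficient ring.
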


Theorem \ref{thm:preqofPM}  is the main result on the pre-quantization of $P(M)$.

\begin{lemma} \label{lemma:atmost} $H^1(\C;\Z)=0$, $H^2(\C;\Z)\cong \Z_n$, and $H^3(\C;\Z)$ is a  cyclic group of order at most $n$.
\end{lemma}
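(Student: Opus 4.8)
The plan is to use the homogeneous-space description $\C \cong PU(n)/\bar T$, where $\bar T$ is the image of the maximal torus $T$ of $SU(n)$ under the covering $\pi\colon SU(n)\to PU(n)$, together with the fact (established in the preceding discussion) that $\tilde\C \cong SU(n)/T$ is the universal cover of $\C$ with deck group $\Z_n$. Since $SU(n)/T$ is simply connected with no odd cohomology (Theorem \ref{thm:cohomGT}), in particular $H^1(\tilde\C;\Z)=0$ and $H^2(\tilde\C;\Z)$ is free abelian, while $H^3(\tilde\C;\Z)=0$. Feeding this into Proposition \ref{prop:serreexactsequence} (the low-degree Cartan--Leray exact sequence) for the covering $\tilde\C\to\C$ with $\pi_1(\C)\cong\Z_n$ will compute the cohomology of $\C$ through degree $3$.

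First I would handle $H^1$ and $H^2$. Applying Proposition \ref{prop:serreexactsequence} with $N=2$ gives the exact sequence $0\to H^2_{alg}(\Z_n;\Z)\to H^2(\C;\Z)\to H^2(\tilde\C;\Z)^{\Z_n}\to H^3_{alg}(\Z_n;\Z)$; since $H^2_{alg}(\Z_n;\Z)\cong\Z_n$ and $H^2(\tilde\C;\Z)^{\Z_n}=0$ by Proposition \ref{prop:cohomconj}, this yields $H^2(\C;\Z)\cong\Z_n$. For $H^1$, the spectral sequence gives $0\to H^1_{alg}(\Z_n;\Z)\to H^1(\C;\Z)\to H^1(\tilde\C;\Z)=0$, and $H^1_{alg}(\Z_n;\Z)=\mathrm{Hom}(\Z_n,\Z)=0$, so $H^1(\C;\Z)=0$.

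The main work is bounding $H^3(\C;\Z)$. Here I would use the full Cartan--Leray spectral sequence for $\tilde\C\to\C$ with $E_2^{p,q}=H^p_{alg}(\Z_n;H^q(\tilde\C;\Z))$, noting that $H^*_{alg}(\Z_n;\Z)$ is $\Z$ in degree $0$, $\Z_n$ in each positive even degree, and $0$ in each positive odd degree. The contributions to total degree $3$ come from $E_\infty^{0,3}$, $E_\infty^{1,2}$, $E_\infty^{2,1}$, and $E_\infty^{3,0}$. We have $E_2^{0,3}=H^0_{alg}(\Z_n;H^3(\tilde\C;\Z))$; since $H^3(\tilde\C;\Z)=0$ (as $SU(n)/T$ has cohomology concentrated in even degrees), this vanishes. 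Also $E_2^{2,1}=H^2_{alg}(\Z_n;H^1(\tilde\C;\Z))=0$ and $E_2^{1,2}=H^1_{alg}(\Z_n;H^2(\tilde\C;\Z))=0$ because the group cohomology is odd degree. So the only surviving contribution is a subquotient of $E_2^{3,0}=H^3_{alg}(\Z_n;\Z)\cong\Z_n$, whence $H^3(\C;\Z)$ is a cyclic group of order dividing $n$, i.e. of order at most $n$. (Cyclicity follows since $H^3(\C;\Z)$ is a quotient of the cyclic group $\Z_n$ via the edge map, with the other graded pieces zero.)

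The step I expect to be the real obstacle — or at least the one needing care — is justifying that $H^q(\tilde\C;\Z)$ vanishes in odd degrees $q$, so that $E_2^{p,q}=0$ whenever $p+q=3$ with $q$ odd; this is exactly the content of Theorem \ref{thm:cohomGT}, which exhibits $H^*(SU(n)/T;\Z)$ as a quotient of a polynomial ring on degree-$2$ generators, hence concentrated in even degrees. Once that is in hand the spectral sequence argument is purely formal. A minor subtlety is ensuring the $\Z_n$-action used in Proposition \ref{prop:cohomconj} is the one induced by deck transformations on $\tilde\C$, which is precisely the identification established just before that Proposition via the homomorphism $\lambda\colon\Z_n\to N(T)/T$.
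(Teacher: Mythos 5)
Your treatment of $H^1$ and $H^2$ is fine and matches the paper in substance, but the $H^3$ step contains a genuine error. You assert that $E_2^{1,2}=H^1_{alg}(\Z_n;H^2(\tilde\C;\Z))=0$ ``because the group cohomology is odd degree.'' The vanishing of odd-degree cohomology of a cyclic group holds only for \emph{trivial} coefficient modules; here the deck group $\Z_n$ acts nontrivially on $H^2(\tilde\C;\Z)$ (this is exactly the content of Corollary \ref{cor:Wcohoaction} and Proposition \ref{prop:cohomconj}), and for a nontrivial module $M$ one has $H^1(\Z_n;M)=\ker N/(\sigma-1)M$, which need not vanish. Indeed, the paper computes this group explicitly: since $H^2(\tilde\C;\Z)^{\Z_n}=0$ forces $N$ to act as zero, one gets $E_2^{1,2}=\mathrm{coker}(\sigma-1)$, and a direct calculation with the basis $\{t_2-t_1,\ldots,t_{n-1}-t_{n-2},t_{n-1}\}$ shows this cokernel is $\Z_n$, not $0$. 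This $E_2^{1,2}\cong\Z_n$ is precisely the term from which the bound on $H^3(\C;\Z)$ arises: $H^3(\C;\Z)\cong E_\infty^{1,2}$ is the kernel of the differential out of $E_3^{1,2}$, hence a subgroup of $\Z_n$.

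Your fallback, that the answer is a subquotient of $E_2^{3,0}=H^3_{alg}(\Z_n;\Z)\cong\Z_n$, contradicts your own (correct) earlier statement that $H^*_{alg}(\Z_n;\Z)$ vanishes in positive odd degrees: in fact $E_2^{3,0}=0$. So with your claimed $E_2$-page you would conclude $H^3(\C;\Z)=0$ outright, which is not what the spectral sequence gives. The two mistakes happen to compensate and land you on the right final statement, but the argument as written does not establish it. To repair the proof you must compute $H^1_{alg}(\Z_n;H^2(\tilde\C;\Z))$ honestly, using the periodic resolution of $\Z$ over $\Z\Z_n$ and the explicit $\Z_n$-action on $H^2(SU(n)/T;\Z)$, as in the paper.
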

\begin{proof} Consider the  Cartan-Leray spectral sequence of the covering $\tilde\C \to \C$, with $E_2^{p,q}=H^p(\Z_n; H^q(\tilde\C;\Z))$.  It will suffice to compute the first three rows of the $E_2$-term, $E_2^{p,q}$ for $0\leq q \leq 2$.

Since $E_2^{p,0}=H^p(\Z_n;\Z)$, $E_2^{p,0}=0$ for $p$ odd, and $\Z_n$ for $p>0$ even.  Also, since $H^q(\tilde\C;\Z)=0$ for $q$ odd (see Theorem \ref{thm:cohomGT}), then $E_2^{p,1}=0$ for all $p$.  To compute $E_2^{p,2}$, begin with the standard (periodic) projective resolution of the trivial $\Z\Z_n$ module $\Z$,
$$
\cdots \to \Z\Z_n \toby{N} \Z\Z_n \toby{\sigma-1} \Z\Z_n \toby{N} \Z\Z_n \toby{\sigma-1}  \Z \Z_n \toby{\varepsilon}\Z
$$
where $\Z_n$ is generated by $\sigma$, and $N=1+\sigma + \cdots + \sigma^{n-1}$.  Applying $\mathrm{Hom}(-,H^2(\tilde\C;\Z))$, and taking homology gives
$$
E_2^{\mathrm{odd},2} = \frac{\{x\in H^2(\tilde\C;\Z) \, | \, Nx=0\}}{(\sigma-1)H^2(\tilde\C;\Z)}, \quad \text{and} \quad E_2^{\mathrm{even},2}=\frac{H^2(\tilde\C;\Z)^{\Z_n}}{NH^2(\tilde\C;\Z)}.
$$
Recall from Proposition \ref{prop:cohomconj} that $H^2(\tilde\C;\Z)^{\Z_n}=0$,   and hence $E_2^{\mathrm{even},2}=0$.  In other words, the induced map $(\sigma-1):H^2(\tilde\C;\Z) \to H^2(\tilde\C;\Z)$ is injective; therefore, the induced map $N:H^2(\tilde\C;\Z) \to H^2(\tilde\C;\Z)$ must be trivial, and $E_2^{\mathrm{odd},2} = \mathrm{coker} \, (\sigma-1)$.  

To compute the cokernel of $(\sigma-1):H^2(\tilde\C;\Z) \to H^2(\tilde\C;\Z)$, choose the basis 
$$\{t_2-t_1, t_3-t_2, \ldots, t_{n-1}-t_{n-2}, t_{n-1} \},
$$ 
and observe that 
$$
(\sigma -1)(t_1)=t_2-t_1, \quad (\sigma -1) (t_2)=t_3-t_2, \quad \ldots \quad, (\sigma-1)(t_{n-1})=t_{n-2}-t_{n-1},
$$ 
and $(\sigma-1)(t_1+2t_2+ \cdots + (n-2)t_{n-2} - t_{n-1})=nt_{n-1}$.  Therefore, $\mathrm{coker} \, (\sigma-1) \cong \Z_n$.

Having computed $E_2^{p,q}$ for $p+q\leq 3$, it is easy to see that $H^1(\C;\Z)=0$, $H^2(\C;\Z)\cong \Z_n$, and that $H^3(\C;\Z)$ is the kernel of the differential $d:E_2^{1,2} \to E_2^{0,4}$.  That is, $H^3(\C;\Z)$ is a cyclic subgroup of order at most $n$.
\end{proof}

\begin{lemma} The covering projection $\pi:SU(n) \to PU(n)$ induces an isomorphism $\pi^*:H^3(PU(n);\Z) \to H^3(SU(n);\Z)$ when $n$ is odd, and is multiplication by $2$ when $n$ is even.
\end{lemma}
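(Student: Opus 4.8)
The plan is to read off $\pi^{*}$ from the Serre exact sequence of Proposition \ref{prop:serreexactsequence} applied to the universal cover $SU(n)\to PU(n)$, whose deck group is $\pi_{1}(PU(n))\cong\Z_{n}$. Since $SU(n)$ is $2$-connected, the hypothesis $H^{q}(SU(n);\Z)=0$ for $1\le q<3$ holds, and since $\Z_{n}$ acts on $SU(n)$ by translations (hence trivially on cohomology), the sequence with $N=3$ reads
$$0\to H^{3}(PU(n);\Z)\xrightarrow{\pi^{*}}H^{3}(SU(n);\Z)\xrightarrow{d}H^{4}_{alg}(\Z_{n};\Z),$$
the middle map being induced by the covering. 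Here $H^{3}(SU(n);\Z)\cong\Z$ and $H^{4}_{alg}(\Z_{n};\Z)\cong H^{4}(B\Z_{n};\Z)\cong\Z_{n}$, so $\pi^{*}$ is injective, $H^{3}(PU(n);\Z)\cong\Z$, and $\mathrm{coker}(\pi^{*})=\mathrm{im}(d)$ is a cyclic group whose order divides $n$. Thus everything reduces to identifying, up to sign, the image in $\Z_{n}$ of the integer $d(z_{3})$, where $z_{3}$ generates $H^{3}(SU(n);\Z)$; this $d$ is the transgression in the Serre spectral sequence of the fibration $SU(n)\to PU(n)\xrightarrow{\rho}B\Z_{n}$ classifying the covering.

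To compute the transgression I would use the fibration sequence produced by the central extension $1\to\Z_{n}\to SU(n)\xrightarrow{\pi}PU(n)\to1$, namely
$$SU(n)\xrightarrow{\pi}PU(n)\xrightarrow{\rho}B\Z_{n}\xrightarrow{B\iota}BSU(n),$$
where $\iota\colon\Z_{n}=Z(SU(n))\hookrightarrow SU(n)$ is the inclusion of the centre, so that $PU(n)\simeq\mathrm{hofib}(B\iota)$ and the fibre of $\rho$ is $\Omega BSU(n)\simeq SU(n)$ with fibre inclusion $\pi$. Since the homotopy fibre of $B\iota$ is by definition the pullback of the path–loop fibration $SU(n)\simeq\Omega(BSU(n))\to P(BSU(n))\to BSU(n)$ along $B\iota$, naturality of the transgression gives $d(z_{3})=(B\iota)^{*}\bigl(\sigma^{-1}z_{3}\bigr)$, where $\sigma\colon H^{4}(BSU(n);\Z)\to H^{3}(SU(n);\Z)$ is the cohomology suspension. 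As $BSU(n)$ is $3$-connected with $H^{4}(BSU(n);\Z)\cong\Z$ generated by $c_{2}$, the suspension is an isomorphism in this degree, so $\sigma^{-1}z_{3}=\pm c_{2}$ and $\mathrm{im}(d)$ is the subgroup of $H^{4}(B\Z_{n};\Z)\cong\Z_{n}$ generated by $(B\iota)^{*}(c_{2})$.

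It remains to evaluate $(B\iota)^{*}(c_{2})$. Composing $\iota$ with $SU(n)\hookrightarrow U(n)$ and identifying $\Z_{n}\cong\mu_{n}$, the map factors as $\Z_{n}\hookrightarrow U(1)\xrightarrow{\Delta}U(1)^{n}\hookrightarrow U(n)$ with $\Delta$ the diagonal; restricting $c_{2}$ to $BU(1)^{n}$ yields $\sum_{i<j}x_{i}x_{j}$, which $(B\Delta)^{*}$ carries to $\binom{n}{2}x^{2}\in H^{4}(BU(1);\Z)$, and the standard inclusion $\Z_{n}\hookrightarrow U(1)$ sends a generator $x\in H^{2}(BU(1);\Z)$ to a generator $g\in H^{2}(B\Z_{n};\Z)\cong\Z_{n}$, with $g^{2}$ generating $H^{4}(B\Z_{n};\Z)\cong\Z_{n}$ (as $H^{*}(B\Z_{n};\Z)=\Z[g]/(ng)$). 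Hence $(B\iota)^{*}(c_{2})=\binom{n}{2}g^{2}$, so $\mathrm{im}(d)=\langle\binom{n}{2}\rangle\subseteq\Z_{n}$. When $n$ is odd, $\binom{n}{2}=n\cdot\frac{n-1}{2}\equiv0\pmod n$, so $d=0$ and $\pi^{*}$ is an isomorphism; when $n$ is even, $\binom{n}{2}=\frac{n}{2}(n-1)\equiv\frac{n}{2}\pmod n$, an element of order $2$, so $\mathrm{coker}(\pi^{*})\cong\Z_{2}$ and $\pi^{*}$ is multiplication by $2$ under the evident identifications. The main obstacle I anticipate is the second paragraph: carefully matching the Cartan–Leray differential of the covering with the Serre-spectral-sequence transgression of $PU(n)\to B\Z_{n}$ and justifying $d(z_{3})=(B\iota)^{*}(c_{2})$ via comparison with the path–loop fibration over $BSU(n)$.
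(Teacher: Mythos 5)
Your proposal is correct and takes essentially the same route as the paper: in both cases the key point is that the degree-$3$ generator transgresses to the restriction of $c_2$ along the map from the classifying space of the centre, evaluated via the diagonal inclusion into the maximal torus as $\binom{n}{2}$ times the square of a degree-$2$ generator, with $\binom{n}{2}\equiv 0$ or $\tfrac{n}{2}\pmod n$ according to the parity of $n$. The only difference is which fibration carries the computation --- the paper uses $U(n)\to PU(n)\to \mathbb{C}P^\infty$ (noting $SU(n)\hookrightarrow U(n)$ is an isomorphism on $H^3$), whereas you use $SU(n)\to PU(n)\to B\Z_n$ and package the conclusion with Proposition \ref{prop:serreexactsequence} --- and this changes nothing essential.
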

\begin{proof}
Recall that the inclusion $SU(n)\to U(n)$ induces an isomorphism on $H^3(-;\Z)$, and consider the Serre spectral sequence for the fibration $U(n) \toby{\pi} PU(n) \to \mathbb{C} P^\infty$, with $E_2$-term $\Lambda(x_1, x_3, \ldots, x_{2n-1}) \otimes \Z[\alpha]$.  Since $H^2(PU(n)) \cong \pi_1(PU(n)) = \Z_n$, the differential $d:E_2^{0,1} \to E_2^{2,0}$ must be multiplication by $n$. Therefore,
$$
E_3\cong \Lambda(x_3, \ldots, x_{2n-1}) \otimes \Z_n[\alpha].
$$
To determine the differential $d:E_3^{0,3} \to E_3^{4,0}$, note the following diagram of fibrations, where the map $f:\mathbb{C} P^\infty \to BU(n)$ classifies the $U(n)$-bundle $PU(n) \to \mathbb{C} P^\infty$.  
\begin{diagram}
U(n) & \rTo^\pi & PU(n) & \rTo  & \mathbb{C} P^\infty \\
\dEquals & & \dTo &  & \dTo>f \\
U(n) & \rTo & EU(n) & \rTo & BU(n)
\end{diagram}
Recall that $H^*(BU(n)) = \Z[c_1, \ldots, c_n]$, and that the Chern classes $c_i$ are transgressive.  Therefore, $d(x_3)=f^*(c_2)$. The classifying map $f$ factors as $f:\mathbb{C} P^\infty =BS^1 \toby{B\Delta} BT^n \toby{j} BU(n)$, where $\Delta:S^1 \to T^n$ is the diagonal inclusion of the center into the maximal torus of $U(n)$. 
Since  $H^*(BT^n)=\Z [t_1, \ldots, t_n]$, and  
$$j^*(c_2)= \sum_{r<s} t_r t_s$$
it follows that $f^*(c_2)=\frac{n(n-1)}{2} \alpha^2$.  Hence, $d(x_3)=0$ when $n$ is odd, and $d(x_3) =\frac{n}{2}\alpha^2$ when $n$ is even.  
Therefore, the induced map $\pi^*:H^3(PU(n);\Z) \to H^3(U(n);\Z)$ is an isomorphism when $n$ is odd, and is multiplication by $2$ when $n$ is even.
\end{proof}

\begin{thm} \label{thm:preqofPM}
Let $M=\C^m$, where $\C$ is the conjugacy class of $c_0=\pi(\exp \zeta_0)$, and $\zeta_0$ is the barycenter of the alcove $\Delta$.  If $n$ is odd, then the pullback quasi-Hamiltonian $SU(n)$-space $P(M)$ admits a level $k$ pre-quantization if and only if $n$ divides $k$.  If $n$ is even, then $P(M)$ admits a level $k$ pre-quantization if $2n$ divides $k$.
\end{thm}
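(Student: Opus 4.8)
The plan is to combine Proposition~\ref{prop:preqofPM} with the commutativity of the moment maps under the covering $\pi\colon SU(n)\to PU(n)$, reducing the whole question to the order of a single torsion class in $H^3(P(M);\Z)$. By Lemma~\ref{lemma:htwoistorsion} we have $H^2(P(M);\R)=0$, so Proposition~\ref{prop:preqofPM} tells us that $P(M)$ admits a level $k$ pre-quantization exactly when $\varphi^*(k)=0$, where $\varphi^*\colon\Z=H^3(SU(n);\Z)\to H^3(P(M);\Z)$; since $\varphi^*(k)=k\cdot\varphi^*(1)$, this holds if and only if the order of $\varphi^*(1)$ divides $k$. Corollary~\ref{cor:necessaryPM} already gives that $n$ divides this order, so the substance of the theorem is the upper bound: $\varphi^*(n)=0$ when $n$ is odd, and $\varphi^*(2n)=0$ when $n$ is even.

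For the upper bound I would exploit the pullback square that defines $P(M)$, which yields $\pi\circ\varphi=\phi_M\circ p$, where $\phi_M\colon M=\mathcal{C}^m\to PU(n)$ is the group-valued moment map $(c_1,\ldots,c_m)\mapsto c_1\cdots c_m$ and $p\colon P(M)\to M$ is the covering. Thus $\varphi^*\circ\pi^*=p^*\circ\phi_M^*$ in degree-$3$ integral cohomology. The key point is that $H^3(M;\Z)=H^3(\mathcal{C}^m;\Z)$ is annihilated by $n$: Lemma~\ref{lemma:atmost} gives $H^1(\mathcal{C})=0$, $H^2(\mathcal{C})\cong\Z_n$, and $H^3(\mathcal{C})$ cyclic of order dividing $n$, so the K\"unneth theorem assembles $H^3(\mathcal{C}^m;\Z)$ from tensor products and $\mathrm{Tor}$ terms of these groups (for instance $\mathrm{Tor}(\Z_n,\Z_n)\cong\Z_n$), each of which is killed by $n$. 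Hence, for a generator $w$ of $H^3(PU(n);\Z)\cong\Z$, the class $\varphi^*(\pi^*(w))=p^*(\phi_M^*(w))$ has order dividing $n$ in $H^3(P(M);\Z)$.

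To conclude I would invoke the Lemma immediately preceding the theorem, which identifies $\pi^*\colon H^3(PU(n);\Z)\to H^3(SU(n);\Z)$ with an isomorphism for $n$ odd and with multiplication by $2$ for $n$ even. If $n$ is odd, $\pi^*(w)=\pm 1$, so $\varphi^*(1)=\pm p^*\phi_M^*(w)$ has order dividing $n$; then $\varphi^*(n)=0$, and together with Corollary~\ref{cor:necessaryPM} this gives the stated equivalence: a level $k$ pre-quantization exists if and only if $n\mid k$. If $n$ is even, $\pi^*(w)=\pm 2$, so $2\varphi^*(1)=\pm p^*\phi_M^*(w)$ has order dividing $n$, hence $2n\cdot\varphi^*(1)=0$ and $\varphi^*(2n)=0$; thus $P(M)$ admits a level $k$ pre-quantization whenever $2n\mid k$. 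The one genuinely computational step is the K\"unneth bookkeeping in the middle paragraph; everything else is a diagram chase around the pullback square combined with the cited lemmas.
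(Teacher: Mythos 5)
Your proposal is correct and follows essentially the same route as the paper's proof: reduce to $\varphi^*(k)=0$ via Proposition \ref{prop:preqofPM}, use the pullback relation $\varphi^*\pi^*=p^*\phi^*$ together with the lemma computing $\pi^*$ on $H^3$, and kill $\phi^*$ by the fact that $H^3(M;\Z)$ is $n$-torsion via Lemma \ref{lemma:atmost} and K\"unneth. The only difference is cosmetic phrasing in terms of the order of $\varphi^*(1)$ rather than directly evaluating $\varphi^*(n)$ and $\varphi^*(2n)$.
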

\begin{proof} By Proposition \ref{prop:preqofPM}, it will suffice to show that $\varphi^*(n)=0$ when $n$ is odd, and $\varphi^*(2n)=0$ when $n$ is even.  
By the pullback diagram that defines $P(M)$,
\begin{diagram}
P(M) &\rTo^\varphi &    SU(n) \\
\dTo>p    &  & \dTo>\pi \\
M & \rTo^\phi & PU(n) \\
\end{diagram}
and the previous Lemma, $\varphi^*(n)=\varphi^*\pi^*(n)=p^*\phi^*(n)$ when $n$ is odd, and $\varphi^*(n)=\varphi^*\pi^*(\frac{n}{2})=p^*\phi^*(\frac{n}{2})$ when $n$ is even.  By Lemma \ref{lemma:atmost} and the K\"unneth Theorem, $H^3(M;\Z)$ is a torsion group with elements of order at most $n$.  Therefore, $\phi^*(n)=0$, which proves the Theorem.
\end{proof}

Observe that if $n$ is odd, Theorem \ref{thm:preqofPM} says that the necessary condition of Corollary \ref{cor:necessaryPM} (for a level $k$ pre-quantization) is actually sufficient.  If $n$ is even, however, Theorem \ref{thm:preqofPM} only states that a sufficient condition for a level $k$ pre-quantization is that $2n$ divides $k$.  In particular, Theorem \ref{thm:preqofPM} does not state whether or not the necessary condition of Corollary \ref{cor:necessaryPM} is sufficient. In fact, the following example shows that Theorem \ref{thm:preqofPM} is not best possible.

\begin{eg} \label{eg:Stwo} $\tilde\C \subset SU(2)$, $M=\C\times \C$
\end{eg}

Let $\tilde\C\subset SU(2)$ be the conjugacy class of 
$$
\exp \zeta_0=\exp (\tfrac{1}{4}e_1-\tfrac{1}{4}e_2) = 
\begin{pmatrix} 
i & 0 \\
0 & -i \\
\end{pmatrix}.
$$
It is easy to check that the elements in $\tilde\C$ that are conjugate to $\exp\zeta_0$ are matrices of the form 
$\left(\begin{smallmatrix}
yi & u+iv \\
-u+iv & -yi \\
\end{smallmatrix} \right)$
in $SU(2)$.  That is, $\tilde\C$ is a sphere $S^2$. Furthermore, the $\Z_2$-action of the center, generated by translation by 
$\bigl( \begin{smallmatrix} 
-1 & 0 \\
0 & -1 \\
\end{smallmatrix} \bigr)$,
is clearly the antipodal action. Hence $\C=\R P^2$.

In this case, the cohomology of $P(M)$, where $M=\R P^2 \times \R P^2$ is easy to calculate.  In particular, $H_1(P(M);\Z)\cong\Z_2$; therefore, by Poincar\'e duality,
 $H^3(P(M);\Z)\cong \Z_2$.  By Proposition \ref{prop:preqofPM}, it follows that $P(M)$ admits a level $k$ pre-quantization if and only if $k$ is even, as $\varphi^*(2)=0$. \hfill $\qed$

\chapter{Computations} \label{chapter:calc}

This Chapter contains the calculations that support the main result of this thesis.   The main result, which is the determination of $l_0(G)$, is summarized in Table \ref{table:main} of Theorem \ref{thm:coho-calc-equiv}, and is spread over various  Theorems  in this Chapter.

\section{General remarks about computing $l_0(G)$} \label{section:calculation}

The computation of $l_0(G)$ (see Theorem \ref{thm:coho-calc-equiv}) uses the classification of compact simple Lie groups, and known results about their cohomology (see Section \ref{sec:coholiegroup} and Appendix \ref{chapter:app}). The strategy for determining the image of $\tilde\phi^*$ with integer coefficients is to work with coefficients in
$\Zp$ for every prime $p$. This information is assembled using the Bockstein spectral sequence (see Section \ref{sec:spectralsequences}) to deduce the image of  $\tilde\phi^*$  with integer coefficients.  

Throughout this section, $\pi: \tilde{G}\to G$ denotes the universal covering homomorphism, and $\tilde\phi: G\times G \to \tilde{G}$ denote the canonical lift of the commutator map $\phi:G\times G \to G$ (as in Example \ref{eg:double}). To remove ambiguities that may arise, the commutator map $\phi$ and its lift $\tilde\phi$ may also be denoted $\phi_G$ and $\tilde\phi_G$, respectively.  Similarly, group multiplication will be denoted $\mu_G=\mu:G\times G\to G$.

Some of the calculations that follow make use of the fact that  the commutator map $\phi$  factors as the composition:
\begin{diagram}
\phi:G^2& \rTo^{\Delta\times \Delta}& G^4&\rTo^{\mathrm{id}\times T\times \mathrm{id}}&  G^4& \rTo^{\mathrm{id} \times \mathrm{id} \times c\times c}& G^4 & \rTo^{\mu\times \mu}& G^2 &\rTo^{\mu} &G
\end{diagram}
where $\Delta:G\to G\times G$ denotes the diagonal map, and $T:G\times G\to G\times G$ switches factors $T(g,h)=(h,g)$.  Therefore, in order to compute the induced map $\phi^*$ it is necessary to know the induced maps $\Delta^*$, $\mu^*$, and $c^*$, which were discussed in  Section \ref{sec:coholiegroup}, and that $T^*(u\otimes v)=(-1)^{|u||v|} v\otimes u$.

Finally, notice that the only relevant primes $p$ that require investigation are primes $p$ dividing the order of the central subgroup $Z=\ker \pi$.  Indeed, as discussed in Section \ref{sec:coholiegroup}, for primes $p$ not dividing the order of $Z$, the covering $\pi:\tilde{G}\to G$ induces an isomorphism on $H^*(-;\Z_p)$.  It is easy to see that $\tilde\phi^*$ is the zero map in this case.

Indeed, consider the following commutative diagram. 
\begin{diagram}
\tilde{G} \times \tilde{G} & \rTo^{\phi_{\tilde{G}}} & \tilde{G} \\
\dTo<{\pi \times \pi} & \ruTo^{\tilde\phi} & \\
G\times G & & \\
\end{diagram}
Let $z_3 \in H^3(\tilde{G};\Z_p)$ be the reduction mod $p$ of an integral generator of $H^3(\tilde{G};\Z)$, which is necessarily primitive as it is of minimal degree.  Then the calculation, 
\begin{align*}
\phi_{\tilde{G}}^*(z_3) & = (\Delta\times \Delta)^*(\id \times T \times \id)^*(\id \times \id \times c\times c)^*(\mu\times \mu)^* \mu^*(z_3) \\
&=   (\Delta\times \Delta)^*(\id \times T \times \id)^*(\id \times \id \times c\times c)^*(\mu\times \mu)^* (z_3\otimes 1 + 1 \otimes z_3) \\
&=   (\Delta\times \Delta)^*(\id \times T \times \id)^*(\id \times \id \times c\times c)^*(z_3\otimes 1 \otimes 1 \otimes 1 + 1\otimes z_3 \otimes 1\otimes 1 \\
 & \phantom{=} + 1\otimes 1 \otimes z_3\otimes 1 + 1\otimes 1 \otimes 1\otimes z_3) \\
&= (\Delta\times \Delta)^*(\id \times T \times \id)^*( z_3\otimes 1 \otimes 1 \otimes 1 + 1\otimes z_3 \otimes 1\otimes 1 
  - 1\otimes 1 \otimes z_3\otimes 1  \\
  &\phantom{=} -1\otimes 1 \otimes 1\otimes z_3)  \\
&=(\Delta\times \Delta)^*( z_3\otimes 1 \otimes 1 \otimes 1 + 1\otimes 1 \otimes z_3\otimes 1 - 1\otimes z_3 \otimes 1\otimes 1 - 1\otimes 1 \otimes 1\otimes z_3) \\
&=z_3\otimes 1 + 1\otimes z_3 - z_3\otimes 1 - 1 \otimes z_3 \\
&=0  
\end{align*}
shows that  $\tilde\phi^*(z_3)$ lies in the kernel of the isomorphism $(\pi \times \pi)^*$.  Notice that the above calculation shows that $\phi^*(x)=0$ for any primitive cohomology class $x$.  

The difficulty in general---for primes $p$ that divide the order of $Z$---is that the covering projection $\pi\times \pi$ does not induce an injection on $H^3(-;\Z_p)$; therefore, to compute $\tilde\phi^*(z_3)$ it is necessary to compute in $G$ rather than $\tilde{G}$.   

The rest of the chapter will adopt the above notation for $z_3\in H^3(\tilde{G};\Z_p)$, the reduction mod $p$ of the integral generator of $H^3(\tilde{G};\Z)$, which by abuse of notation will also be denoted $z_3$.

Finally, the proofs of the Theorems in this Chapter share similarities.  
The proof of Theorem \ref{thm:An} is discussed in the most  detail, and serves as a model for the calculations that appear in the remaining Theorems.  To avoid repetition, details for parts of the proofs of these Theorems may be omitted.

\section{$G=PU(n)$, and $G=SU(n)/\Z_k$} \label{sec:interestingcase}

Consider $\tilde{G}=SU(n)$, with center $\Zn$. Then the central
subgroups of $\tilde{G}$ are cyclic groups $\Z_k$ where $k$
divides $n$. 

\begin{thm} \label{thm:An}  $l_0(PU(n))=n$, and more generally, $l_0(SU(n)/\Z_k)=\mathrm{ord}_k(\frac{n}{k})$, where $\mathrm{ord}_k(x)$ denotes the order of $x \mod k$ in $\Z_k$.
\end{thm}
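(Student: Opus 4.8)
The plan is to identify $\ker\tilde\phi^*\subset\Z=H^3(\tilde G;\Z)$, where $\tilde G=SU(n)$, $G=SU(n)/\Z_k$, and $\tilde\phi\colon G\times G\to\tilde G$ is the lifted commutator map. By Proposition \ref{prop:restateprob} this reduces to computing the order of $\tilde\phi^*(z_3^{\Z})\in H^3(G\times G;\Z)$, where $z_3^{\Z}$ generates $H^3(\tilde G;\Z)\cong\Z$. First I would record the integral cohomology of $G$ in low degrees: $H^1(G;\Z)=0$, $H^2(G;\Z)=\Z_k$, $H^3(G;\Z)=\Z$ (from the Serre exact sequence of $\tilde G\to G$), so by the K\"unneth theorem $H^3(G\times G;\Z)\cong\Z^2\oplus\mathrm{Tor}(H^2(G;\Z),H^2(G;\Z))\cong\Z^2\oplus\Z_k$, and in particular $H^2(G^{2g};\R)=0$ so that Proposition \ref{prop:restateprob} applies. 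Since $\phi_G^*=0$ rationally, $\tilde\phi^*(z_3^{\Z})$ lies in the torsion subgroup $\mathrm{Tor}(H^2(G;\Z),H^2(G;\Z))\cong\Z_k$; writing $g$ for a generator of this summand, the whole theorem comes down to showing $\tilde\phi^*(z_3^{\Z})=\pm\tfrac nk\,g$, whose order is $k/\gcd(k,n/k)=\mathrm{ord}_k(n/k)$.

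To pin down the coefficient I would work one prime at a time and reassemble with the Bockstein spectral sequence, as in Remark \ref{remark:bockstein}. Primes $p\nmid k$ contribute nothing: there $\pi$ is a mod-$p$ equivalence and $\tilde\phi^*$ vanishes mod $p$ (Section \ref{section:calculation}). For $p\mid k$ I would use the known mod-$p$ cohomology Hopf algebra of $SU(n)/\Z_k$ (see Section \ref{sec:coholiegroup} and the Appendix), which in low degrees has generators $x_1\in H^1$ and $x_2=\beta^{(v_p(k))}x_1\in H^2$ (the reduction of the order-$k$ generator of $H^2(G;\Z)$) together with a degree-$3$ class $g_3$. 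One first checks that $\rho_p(g)=\pm(x_2\otimes x_1-x_1\otimes x_2)$ in $H^3(G\times G;\Z_p)$, because $\beta^{(v_p(k))}(x_1\otimes x_1)=x_2\otimes x_1-x_1\otimes x_2$ and $x_1\otimes x_1$ detects the $p$-primary torsion of $H^3(G\times G;\Z)$. Then, using $\tilde\phi^*\circ\pi^*=\phi_G^*$ and $(\pi\times\pi)^*\circ\tilde\phi^*=\phi_{\tilde G}^*$ (with $\phi_{\tilde G}^*(z_3)=0$ since $z_3$ is primitive), one computes $\tilde\phi^*(z_3)$ mod $p$: when $\pi^*$ is onto in degree $3$ mod $p$ one has $\tilde\phi^*(z_3)=\phi_G^*(g_3)$, and $\phi_G^*(g_3)$ is read off from the factorization of $\phi_G$ through $\Delta,\mu,c,T$ and the reduced coproduct $\bar\mu_G^*(g_3)$. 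The key point is that $g_3$ is not primitive: its reduced coproduct is $\pm\tfrac nk\,x_1\otimes x_2$, the factor $\tfrac nk$ arising because $z_3$ transgresses to $c_2\in H^4(BSU(n);\Z)$ and the generator of $\Z_k$ is the element $\tfrac nk$ of $Z(SU(n))=\Z_n$ (the relevant restriction of $c_2$ along $B\Z_k\to B\Z_n$ being of the same flavour as the computation $f^*(c_2)=\tfrac{n(n-1)}2\alpha^2$ used elsewhere in this chapter for $\pi^*$ on $H^3(PU(n))$). This yields $\rho_p(\tilde\phi^*(z_3^{\Z}))=\pm\tfrac nk(x_2\otimes x_1-x_1\otimes x_2)$; when $\pi^*$ is multiplication by $2$ in degree $3$ (e.g. $p=2$, $n$ even) the same conclusion follows by a divisibility argument from $2\,\tilde\phi^*(z_3)=\phi_G^*(g_3)$.

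It then remains to compute orders. Set $a=v_p(k)$ and $c=v_p(n/k)$. Since $\rho_p(\tilde\phi^*(z_3^{\Z}))=\pm p^{c}\cdot(\text{unit})\cdot(x_2\otimes x_1-x_1\otimes x_2)$, and the integral class detected by $x_1\otimes x_1$ has order $p^{a}$, the $p$-primary part of the order of $\tilde\phi^*(z_3^{\Z})$ is $p^{\max(0,a-c)}$, which is exactly $p^{v_p(\mathrm{ord}_k(n/k))}$. Multiplying over all $p\mid k$ gives $l_0(SU(n)/\Z_k)=\mathrm{ord}_k(n/k)$, and the case $k=n$ (so $n/k=1$) gives $l_0(PU(n))=\mathrm{ord}_n(1)=n$.

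Two points will require the most care. First, extracting the exact scalar $\tfrac nk$ in $\bar\mu_G^*(g_3)$: the relations coming from $\pi$ and from primitivity only locate $\tilde\phi^*(z_3)$ on the line $\Z_k\cdot g$, and obtaining the precise multiple forces one to work inside the Hopf algebra $H^*(G;\Z_p)$ and with transgressions into $BSU(n)$. Second, and genuinely outside the algebraic method, is the case $G=PU(2)\cong SO(3)\cong\R P^3$: here $H^*(\R P^3;\Z_2)=\Z_2[x]/(x^4)$, the map $(\pi\times\pi)^*$ kills all of $H^3(\R P^3\times\R P^3;\Z_2)$, and symmetry under $T$ together with the fact that $\tilde\phi$ is constant on $\{e\}\times G$ and on $G\times\{e\}$ only shows $\tilde\phi^*(z_3)=a\,(x^2\otimes x+x\otimes x^2)$ with $a\in\Z_2$ undetermined, leaving the dichotomy $l_0(PU(2))\in\{1,2\}$. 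For this case I would pass to suspensions, use $SO(3)\cong\R P^3$ to realize the obstruction as an element of $\pi_3(\Sigma\R P^2)$, and invoke Wu's computation of that homotopy group \cite{W} to conclude $a=1$, i.e. $l_0(PU(2))=2$.
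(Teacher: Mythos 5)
Your treatment of $PU(n)$ itself is essentially the paper's: the mod-$p$ Hopf-algebra computation of $\phi^*(x_3)$ via the factorization of the commutator through $\Delta$, $T$, $c$, $\mu$; the identification of $x_1\otimes y-y\otimes x_1$ as the reduction of a generator of the $\Z_{p^r}$-summand via $\beta^{(r)}(x_1\otimes x_1)$; and the resolution of the residual $p=2$ ambiguity through $\pi_3(\Sigma\RP^2)$ and Wu's results. One omission there: for $n\equiv 2\bmod 4$ your ``divisibility argument'' $2\,\tilde\phi^*(z_3)=\phi^*(x_3)$ determines nothing about the $2$-primary part, since the $2$-torsion of $H^3(PU(n)^2;\Z)$ is $\Z_2$ and multiplying it by $2$ kills everything; the paper instead reduces every such $n$ to $PU(2)$ via the diagonal inclusion $PU(2)\to PU(n)$, and you need some such reduction for all $n\equiv2\bmod4$, not only $n=2$.

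The genuine gap is in the passage to $SU(n)/\Z_k$. You want to read off the integral identity $\tilde\phi^*(z_3^{\Z})=\pm\tfrac nk\,g$ from mod-$p$ computations, but for any prime $p$ dividing both $k$ and $n/k$ (i.e.\ $a=v_p(k)\ge1$ and $c=v_p(n/k)\ge1$) the Baum--Browder description (Theorem \ref{thm:BB}) gives $\bar\mu^*(x_3)=\delta_{rs}\,x_1\otimes y_2$ with $r>s$, so $\phi_G^*(x_3)=0$ in $H^3(G\times G;\Z_p)$ --- consistent with your coefficient $p^c\cdot(\text{unit})$ being $0$ mod $p$. Vanishing of the mod-$p$ reduction only says that $\tilde\phi^*(z_3^{\Z})$ is divisible by $p$; it cannot distinguish $p^{c}g$ from $p^{c+1}g$ or from $0$, so the claimed $p$-primary order $p^{\max(0,a-c)}$ does not follow from what you compute. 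Concretely, for $SU(8)/\Z_4$ the torsion summand is $\Z_4$, the mod-$2$ reduction of $\tilde\phi^*(z_3^{\Z})$ vanishes, and your method cannot decide between $l_0=2$ (the correct value) and $l_0=1$. The paper closes exactly this gap by an integral argument: it factors $\tilde\phi_{SU(n)/\Z_k}$ through $PU(n)$ via the covering $f:SU(n)/\Z_k\to PU(n)$, and computes $\mathrm{Tor}(f^*,f^*):\mathrm{Tor}(\Z_n,\Z_n)\to\mathrm{Tor}(\Z_k,\Z_k)$ from explicit free resolutions, showing a generator maps to $\tfrac nk$ times a generator. Some such integral (or $\Z_{p^j}$-coefficient, or higher-Bockstein-page) input is unavoidable; the $E_1$-level data you rely on is insufficient.
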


\begin{proof} To begin, assume $G=PU(n)=\tilde{G}/\Z_n$. It will be shown that $\tilde\phi^*(z_3)$ generates a $\Z_n$-summand in $H^3(G\times G;\Z)$, and therefore $l_0(G)=n$.
\\

\begin{case} $n\neq 2\,\mathrm{mod}\,4$ 
\end{case}

In this case,  the computation for primes $p$ dividing $n$ may be carried out
simultaneously.  The reader may wish to refer to Theorem \ref{thm:BB} in the Appendix, regarding the cohomology of  $PU(n)$ as a Hopf algebra.

Before proceeding with the computation, it will be verified that there exist cohomology classes $x_1,y_2,x_3\in H^*(PU(n);\Z_p)$ such  that $\pi^*(x_3)=z_3$, the reduction mod $p$ of an integral generator in $H^3(SU(n);\Z)$, and $\mu^*(x_3)=x_3\otimes 1 + x_1\otimes y_2 + 1 \otimes x_3$. (Theorem \ref{thm:BB} does not explicitly guarantee the first of these equalities.) 

 In the Serre spectral sequence for the fibration sequence $SU(n)\to PU(n) \to B\Z_n$, the coefficient system is trivial (see Section \ref{sec:coholiegroup}), and
\begin{align*}
E_2 & \cong H^*(SU(n);\Z_p)\otimes H^*(B\Z_n;\Z_p) \\
&\cong \Lambda(z_3, \ldots, z_{2n-1})\otimes \Lambda(x) \otimes \Z_p[y]
\end{align*}
The first non-trivial differential is the transgression $d:E_4^{0,3} \to E_4^{4,0}$.
Working backwards from Theorem \ref{thm:BB}, $y^2\in E_4^{4,0}$ survives to $E_\infty$,  since $n\neq 2$ mod $4$; therefore, $z_3$ is in the image of $\pi^*$. That is, $\pi^*(\lambda x_3)=z_3$ for some $\lambda$, 
where $x_3$ is as in Theorem \ref{thm:BB}. Since $\bar\mu^*(\lambda x_3)=\lambda x_1\otimes y_2$, replacing $x_3$ with $\lambda x_3$, and $x_1$ with $\lambda x_1$ will suffice.

  Since $\pi\circ \tilde\phi=\phi$, it will  suffice to calculate $\phi^*(x_3)$, which is done next. Observe that $c^*(x_3)=-x_3+x_1y$.  Indeed,
\begin{align*}
0 & =  \Delta^*(1\times c)^*(x_3\otimes 1 + x_1\otimes y + 1\otimes x_3) \\
& =  \Delta^*(x_3\otimes 1 - x_1\otimes y + 1\otimes c^*(x_3)) \\
& =  x_3 - x_1 y +  c^*(x_3) \\
\end{align*}

To compute $\phi^*(x_3)$:
\begin{align*}
\phi^*(x_3) &= (\Delta\times\Delta)^*(1\times T\times
1)^*(1\times 1\times c\times c)^*(\mu\times \mu)^*\mu^*(x_3) \\
& = (\Delta\times\Delta)^*(1\times T\times 1)^*(1\times 1\times
c\times c)^*(x_3\otimes 1\otimes 1\otimes 1  \\
& \quad + x_1\otimes y \otimes 1\otimes 1 + 1\otimes x_3\otimes 1\otimes 1 + x_1\otimes 1\otimes
y\otimes 1 \\
& \quad + x_1\otimes 1\otimes 1\otimes y+1\otimes x_1 \otimes
y\otimes 1 + 1\otimes x_1 \otimes 1\otimes y \\
& \quad + 1\otimes 1\otimes x_3\otimes 1 + 1\otimes 1\otimes x_1\otimes y + 1\otimes 1\otimes1\otimes x_3)\\
\end{align*}
\begin{align*}
&= (\Delta\times\Delta)^*(x_3\otimes 1\otimes 1\otimes 1  +
x_1\otimes 1 \otimes y\otimes 1 + 1\otimes 1\otimes x_3\otimes 1 + \\
&\quad - x_1\otimes y\otimes 1\otimes 1 - x_1\otimes 1\otimes
1\otimes y - 1\otimes y \otimes x_1\otimes 1 \\
& \quad - 1\otimes 1 \otimes x_1\otimes y - 1\otimes x_3\otimes 1\otimes1 + 1\otimes x_1y\otimes 1
\otimes 1 \\
& \quad + 1\otimes x_1\otimes 1\otimes y
- 1\otimes 1\otimes1\otimes x_3 +  1\otimes 1\otimes 1\otimes x_1y)\\
&=x_1\otimes y - y\otimes x_1
\end{align*}

By using the Bockstein spectral sequence, it will be shown next that
$x_1\otimes y - y\otimes x_1$ in $H^3(G\times G;\Zp)$ is the reduction
$\mathrm{mod}\,p$ of an integral class that generates a $\Z_n$-summand. This will show that $\phi^*(z_3)$ has order $n$ in $H^3(G\times G;\Z)$, as required.

Write $n=p^rn'$ where $p$ does not divide $n'$.  As
$H_1(G;\Z)=\pi_1(G)=\Zn$, by the Universal Coefficient Theorem,
$H^2(G;\Z)$ contains a $\Zn\cong \Z_{p^r} \times \Z_{n'}$ summand.
This implies that $\beta^{(r)}(x_1)=y$ where $x_1$, and $y$ are
the cohomology classes in Theorem \ref{thm:BB}, and $\beta^{(r)}$
is the $r$-th Bockstein operator (see Section \ref{sec:spectralsequences}).
Therefore $\beta^{(r)}(x_1\otimes x_1)=y\otimes x_1 - x_1\otimes y $
in $H^*(G;\Zp)\otimes H^*(G;\Zp)\cong H^*(G\times G;\Zp)$.
This implies that $y\otimes x_1 - x_1\otimes y$ is the reduction
$\mathrm{mod}\,p$ of a generator of the $\Z_{p^r}$ summand in
$H^3(G\times G;\Z_{(p)})$ (i.e. ignoring torsion prime to $p$). Since this is true for all primes $p$ dividing $n$, $x_1\otimes y - y\otimes x_1 \in H^3(G\times G;\Zp)$ is the reduction $\mathrm{mod}\,p$ of an integral class that generates a $\Z_n$-summand. (See Remark \ref{remark:bockstein} in Section \ref{sec:spectralsequences}.)
\\

\begin{case} $n=2\,\mathrm{mod}\,4$
\end{case}

First observe that the calculations and conclusions from the previous case remain valid for all primes $p\neq 2$ dividing $n$. That is, for such primes, $\tilde\phi^*(z_3)=\phi^*(x_3)$ is the reduction mod $p$ of a generator of the $\Zn$ summand in $H^3(G\times G;\Z)$. The same is true for $p=2$, as shown next.

In other words, it will be shown that $\tilde\phi^*(z_3)=x_1\otimes x_1^2 + x_1^2\otimes x_1$, as above, although by a different method.  (The method in the previous case fails for $p=2$, since $z_3$ is not in the image of $\pi^*$.) In fact, as will be seen shortly, it will suffice to consider the case $n=2$.

To see this, note that there is a homomorphism $j:PU(2)\to PU(n)$ that induces isomorphisms on $H^q(\quad;\Ztwo)$ when $q\leq 4$. The homomorphism $j$ is induced by the diagonal inclusion $\iota: SU(2)\to SU(n)$,
$$
\iota(A)= \left(
\begin{array}{ccc}
A & &0  \\
 &  \rotatebox[origin=c]{-15}{$\ddots$}  & \\
 0 & & A\\
\end{array}\right),
$$
which includes the center $Z(SU(2)) \hookrightarrow Z(SU(n))$. Therefore,  $j_\sharp:\pi_1(PU(2)) \to \pi_1(PU(n))$ is the non-trivial homomorphism $\Z_2\hookrightarrow \Z_n$. Since $H_1(G;\Z_2)\cong \pi_1(G)\otimes \Z_2$, it follows that the induced map $j_*:H_1(PU(2);\Z_2)  \to H_1(PU(n);\Z_2)$ is an isomorphism, and hence the same is true for $j^*:H^1(PU(n);\Z_2)\to H^1(PU(2);\Z_2)$.  Consulting Theorem \ref{thm:BB}, it follows that the ring homomorphism $j^*$ is an isomorphism in degrees 2, 3, and 4 as well.

Also, it will be verified shortly that $\iota^*:H^3(SU(n);\Z_2) \to H^3(SU(2);\Z_2)$ is an isomorphism.  Therefore, from the commutative diagram of homomorphisms
\begin{diagram}
SU(2) & \rTo^\iota & SU(n) \\
\dTo & & \dTo \\
PU(2) & \rTo^j & PU(n) \\
\end{diagram}
it follows that $\iota \circ \tilde\phi_{PU(2)} = \tilde\phi_{PU(n)} \circ (j\times j)$, and hence $\tilde\phi^*_{PU(n)}(z_3)=x_1\otimes x_1^2 + x_1^2\otimes x_1$ if and only if the same is true for $\tilde\phi^*_{PU(2)}$. Therefore, it suffices to consider the case $G=PU(2)$. 

To check that  $\iota^*:H^3(SU(n);\Z_2) \to H^3(SU(2);\Z_2)$ is an isomorphism, it will suffice to check  that with integer coefficients, $\iota^*:H^3(SU(n);\Z) \to H^3(SU(2);\Z)$ is multiplication by $m$, where $n=2m$ and $m$ is odd.  To that end, recall that the various inclusions $\iota_k:SU(2)\to SU(n)$ for $k=1, \ldots, m$
$$
\iota_1(A)= \left(
\begin{array}{cccc}
A & & & 0  \\
    & I & & \\
  & &  \rotatebox[origin=c]{-15}{$\ddots$}  & \\
 0 & & & I\\
\end{array}\right), \quad 
\iota_2(A)= \left(
\begin{array}{cccc}
I & & & 0  \\
    & A & & \\
  & &  \rotatebox[origin=c]{-15}{$\ddots$}  & \\
 0 & & & I\\
\end{array}\right), \quad \text{etc.}
$$
each induce an isomorphism on $H^3(-;\Z)$.  Indeed, each of the $\iota_k$ may be written as a composition of the standard inclusions $SU(d)\to SU(d+1)$ that appear in the fibration sequences
$$ 
SU(d) \to SU(d+1) \to SU(d+1)/SU(d)=S^{2d+1}.
$$
The maps $SU(d)\to SU(d+1)$ induce isomorphisms on $H^3(-;\Z)$ for connectivity reasons (using the Serre spectral sequence, for example), hence the $\iota_k$ do as well.  Finally, $\iota$ may be written as the composition
$$
\iota = \mu^m \circ (\iota_1 \times \cdots \times \iota_m) \circ \Delta^m,
$$
where $\mu^m:SU(n)\times \cdots \times SU(n)\to SU(n)$ denotes repeated multiplication $\mu^m(X_1, \ldots, X_m)=X_1\cdots X_m$, and $\Delta^m:SU(2) \to SU(2) \times \cdots \times SU(2)$ denotes the diagonal $\Delta^m(A)=(A, \cdots, A)$. Since $z_3\in H^3(SU(n);\Z)$ is primitive, it follows easily that $\iota$ induces multiplication by $m$.

Recall that  $PU(2)\cong SO(3)$, which is homeomorphic to $\R P^3$, and that $SU(2)$ is homeomorphic to the sphere $S^3$, so that the covering $\pi:SU(2) \to PU(2)$ is simply the covering $S^3 \to \R P^3$. Since much of what follows depends very heavily on the topological identifications $SU(2)=S^3$, and $PU(2)=\R P^3$, the notation $\pi:S^3 \to \R P^3$ shall be adopted to denote the universal covering homomorphism $SU(2)\to PU(2)$.

Since its restriction to $G\vee G$ is null homotopic,  the commutator map $\phi:G\times G\to G$ for a connected topological group $G$ induces a map $\varphi:G\wedge G \to G$. (As in Section \ref{sec:wheadprod},  $\varphi=\langle \mathrm{id},\mathrm{id} \rangle$ where $\id:G\to G$ is the identity map.)  Since $G\wedge G$ is simply connected, there is a unique lift $\tilde\varphi: G\wedge G \to \tilde{G}$ to the universal covering group $\tilde{G}$. In other words, there is a commutative diagram
\begin{diagram}
G \times G & \rTo^{\tilde\phi} & \tilde{G} \\
\dTo & \ruTo^{\tilde\varphi} & \dTo \\
G\wedge G & \rTo^\varphi & G \\
\end{diagram}
Since  the induced map $H^*(G\wedge G;R) \to H^*(G\times G;R)$ is injective for any coefficient ring $R$, $\tilde\phi^*(z_3)=0$ if and only if $\tilde\varphi^*(z_3)=0$.

 Consider now the case at hand: namely, $G=\RP^3$.  To begin, recall the cohomology of $\RP^3\wedge \RP^3$ with $\Z_2$ coefficients.  In particular, consider the following description of the vector spaces $H^q(\RP^3\wedge \RP^3;\Z_2)$ for $q\leq 4$, with basis listed in brackets $\langle \quad \rangle$:
$$
 H^q(\RP^3\wedge\RP^3;\Z_2)=\left\{ \begin{array}{cl}
0                                           & \text{if } q=1 \\
 \langle x_1\otimes x_1 \rangle                        & \text{if } q=2 \\
 \langle x_1\otimes x_1^2, x_1^2\otimes x_1 \rangle   &\text{if } q=3 \\
 \langle x_1\otimes x_1^3, x_1^2\otimes x_1^2, x_1^3\otimes x_1 \rangle & \text{if } q=4\\
\end{array} \right.  
$$
Here, the injection $H^*(\RP^3\wedge \RP^3;\Z_2) \to H^*(\RP^3\times \RP^3;\Z_2)$ permits the borrowing of notation from Theorem \ref{thm:BB}.  Write $\tilde\varphi^*(z_3)=a(x_1\otimes x_1^2 + x_1^2\otimes x_1) + b(x_1\otimes x_1^2)$ for some $a$ and $b$ in $\Z_2$.  The next paragraph shows that $b=0$.

Since $\beta x_1=x_1^2$ in $H^*(\RP^3;\Z_2)$, where $\beta$ is the mod 2 Bockstein operator (see Section \ref{sec:spectralsequences}), it follows that $\beta(x_1\otimes x_1)=x_1^2\otimes x_1 + x_1\otimes x_1^2$, and that $\beta(x_1\otimes x_1^2)=\beta(x_1^2\otimes x_1)=x_1^2\otimes x_1^2$ in $H^*(\RP^3\wedge \RP^3;\Z_2)$.  Since $\beta(z_3)=0$ in $H^3(S^3;\Z_2)$, it follows that $\beta \tilde\varphi^*(z_3)=\tilde\varphi^*\beta(z_3)=0$ by naturality. 
But since
\begin{align*}
\beta(a(x_1\otimes x_1^2 + x_1^2\otimes x_1) + b(x_1\otimes x_1^2))&=a\beta(\beta(x_1\otimes x_1)) + b \beta(x_1\otimes x_1^2) \\
&=b(x_1^2\otimes x_1^2),
\end{align*}
$b$ must be zero.
 Therefore, either $\tilde\varphi^*(z_3)=x_1\otimes x_1^2 + x_1^2\otimes x_1$ or $\tilde\varphi^*(z_3)=0$. 

The strategy for showing that $\tilde\varphi^*(z_3)=x_1\otimes x_1^2 + x_1^2\otimes x_1$ will be to first show that there is a map $f:\Sigma \RP^2 \to \RP^3\wedge \RP^3$  that has the property that  $f^*(x_1\otimes x_1^2 + x_1^2\otimes x_1)$ in $H^3(\Sigma \RP^2; \Z_2)$ is non zero.  In order to arrive at the desired conclusion about $\tilde\varphi^*(z_3)$, it will then suffice to check that the composition $\tilde\varphi\circ f:\Sigma \RP^2\to S^3$ induces a non-zero homomorphism on $H^3(-;\Z_2)$.  As will be verified later, there are only two homotopy classes of maps $\Sigma \RP^2 \to S^3$ (see Fact \ref{fact1}), and the essential maps induce isomorphisms on $H^3(-;\Z_2)$ (see Fact \ref{fact2}).  Therefore, it will be necessary to check that $\tilde\varphi\circ f$ is essential (see Proposition \ref{prop:essential}), which will provide the desired conclusion.

In order to realize the map $f$, consider the inclusion $\RP^2\wedge \RP^2 \subset \RP^3 \wedge \RP^3$, which induces an isomorphism in cohomology $H^q(-;\Z_2)$ for $q\leq 3$.  Since $H_2(\RP^2 \wedge \RP^2;\Z)\cong \Z_2$ (using the Kunn\"eth Theorem, for example), the first non-vanishing homotopy group of $\RP^2\wedge \RP^2$ is $\pi_2(\RP^2 \wedge \RP^2)\cong \Z_2$.  If $a:S^2 \to \RP^2\wedge \RP^2$ represents a generator of $\pi_1(\RP^2 \wedge \RP^2)$, then the composition $S^2 \toby{2} S^2 \toby{a} \RP^2 \wedge \RP^2$ is null homotopic. Therefore, by Proposition \ref{prop:extend}, there is an extension  $\Sigma \RP^2 \to \RP^2 \wedge \RP^2$, which may be composed with the inclusion $\RP^2\wedge \RP^2 \subset \RP^3\wedge \RP^3$ to define $f:\Sigma \RP^2 \to \RP^3 \wedge \RP^3$.  

It remains to check that $f^*$ is non-zero on $H^3(-;\Z_2)$.  Since $a_*:H_2(S^2;\Z) \to H_2(\RP^2 \wedge \RP^2;\Z)$ is onto, then by the Universal Coefficient Theorem, $a_*:H_2(S^2;\Z_2) \to H_2(\RP^ \wedge \RP^2;\Z_2)$ is an isomorphism, and hence $a^*:H^2(\RP^2 \wedge \RP^2;\Z_2)\to H^2(S^2;\Z_2)$ is an isomorphism as well.  Using the suspension isomorphism $\sigma:H^*(X;R) \to H^{*+1}(\Sigma X;R)$, let $\sigma x\in H^2(\Sigma \RP^2;\Z_2)$ be a generator, so that $\beta \sigma x=\sigma \beta x \in H^3(\Sigma \RP^2;\Z_2)$ is also a generator.  Then $f^*(x_1\otimes x_1) = \sigma x$, and hence 
\begin{align*}
f^*(x_1\otimes x_1^2 + x_1^2 \otimes x_1) &= f^*(\beta(x_1\otimes x_1) \\
&=\beta(f^*(x_1\otimes x_1)) \\
&=\beta \sigma x
\end{align*}
is non-zero, as required.

\begin{fact} \label{fact1}
The set of homotopy classes of maps $[\Sigma\RP^2,S^3]=\Ztwo$.
\end{fact}
\begin{proof} From the cofibration sequence
\[ S^2\to\Sigma\RP^2 \to S^3 \to S^3 \]
 there is a long exact sequence
\[ [S^3,S^3] \toby{-\times 2} [S^3,S^3] \to [\Sigma\RP^2,S^3] \to [S^2,S^3] \]
which gives the result. \end{proof}

\begin{fact} \label{fact2}
The essential maps $\Sigma\RP^2\to S^3$ induce isomorphisms on
$H^3(-;\Z_2)$.
\end{fact}
\begin{proof} Again, consider the cofibration sequence
$S^2\to\Sigma\RP^2 \to S^3$ and the corresponding long exact
sequence in cohomology. \end{proof}

\begin{prop} \label{prop:essential}
The composition $\Sigma \RP^2 \toby{f} \RP^3 \wedge \RP^3 \toby{\tilde\varphi} S^3$ is essential.
\end{prop}
\begin{proof}
It is equivalent to show that the composition $\Sigma\RP^2
\toby{\tilde\varphi f} S^3\toby{\pi} \RP^3$ is essential, since  $\Sigma\RP^2$ is
simply connected, and hence $[\Sigma\RP^2,\RP^3]=[\Sigma\RP^2,S^3]$.
It will be shown next that the map $\pi\tilde\varphi f$ is (homotopic to) either composition in the following  diagram:
\begin{diagram}
\Sigma\RP^2 & \rTo^i & \RP^2\wedge\RP^2 & \rTo & \RP^3\wedge\RP^3
\\
  &   & \dTo^{\langle\alpha, \alpha\rangle} &   & \dTo_{\varphi} \\
  &   & \Omega\Sigma\RP^2 & \rTo^{\Omega(\rho)} & \RP^3 \\
\end{diagram}

The map labeled  $\alpha:\RP^2\to
\Omega\Sigma\RP^2$ is the adjoint of the identity map on
$\Sigma\RP^2$, and $\langle\alpha, \alpha\rangle$ is the
Samelson Product of $\alpha$ with itself (see Section \ref{sec:wheadprod}).
The map $\Omega\Sigma\RP^2\to \RP^3$ along the bottom of the diagram is described next.  The adjoint of the inclusion $\RP^2\hookrightarrow \RP^3 \approx \Omega B\RP^3$ gives a map $\rho:\Sigma\RP^2\to B\RP^3$, and hence a map $\Omega(\rho):\Omega\Sigma\RP^2\to \Omega B\RP^3\approx \RP^3$.  An important point is that this map is an H-map, so that (up to homotopy) commutators map to commutators.  This shows that the diagram homotopy commutes, as claimed.

Now suppose for contradiction that $\pi\circ \tilde\varphi\circ f$, or equivalently, the composition 
\begin{diagram}
\Sigma\RP^2 & \rTo & \RP^2\wedge \RP^2 & \rTo^{\langle \alpha,\alpha \rangle}& \Omega\Sigma \RP^2 & \rTo^{\Omega(\rho)} &\RP^3
\end{diagram}
is null homotopic. Then there exists a lift $g:\Sigma\RP^2 \to \Omega Z$ where $Z$ is the homotopy fiber of the map $\rho:\Sigma \RP^2 \to B\RP^3$ (and hence $\Omega Z$ is the homotopy fiber of the map $\Omega (\rho)$).
Consider the restriction of $g$ to the $2$-skeleton of $\Sigma\RP^2$, namely $S^2 \to\Sigma \RP^2 \to \Omega Z$.  Since $\pi_2(\Sigma RP^2)=\Ztwo$ is  torsion, then according to the following fact from Wu \cite{W}, the map $S^2 \to\Sigma \RP^2 \to \Omega Z$ is null homotopic.

\begin{fact} (Proposition 6.6 in \cite{W}) $\pi_2(\Omega Z) \cong \Z$
\end{fact}

But then the composition $h:S^2\to \Sigma\RP^2 \to \Omega Z \to \Omega \Sigma
\RP^2$ is null homotopic, which will turn out to be a contradiction by the following fact, also from \cite{W}.

\begin{fact} (Proposition 6.5 in \cite{W}) $\pi_3(\Sigma\RP^2) = \Z_4$ is generated by the
composition $S^3\toby{\eta}S^2\to \Sigma\RP^2$, where $\eta$ is
the Hopf map.
\end{fact}

It is a contradiction because the map $h$ is actually the adjoint of twice the generator of
$\pi_3(\Sigma\RP^2)$. To see this, recall that $h$ is the composition of $S^2 \to \Sigma\RP^2 \to \RP^2\wedge \RP^2$, which is the inclusion of the 2-skeleton, and $\RP^2 \wedge \RP^2  \to \Omega\Sigma \RP^2$, which is the Samelson product $\langle \alpha, \alpha \rangle$. Therefore $h$ is either composition in the diagram below:
\begin{diagram}
S^2 & \rEquals & S^1\wedge S^1 & \rTo^{j\wedge j} & \RP^2\wedge\RP^2 \\
 & & \dTo<{\langle\iota, \iota\rangle} &   & \dTo>{\langle\alpha, \alpha\rangle}\\
 & & \Omega S^2  &\rTo^{\Omega\Sigma j}   & \Omega\Sigma\RP^2 \\
\end{diagram}
where $j:S^1\to\RP^2$ is the inclusion of the 1-skeleton, and
$\iota$ is the adjoint of the identity on $S^2$. (The square commutes, since  $\Omega\Sigma j\circ \iota= \alpha\circ j$, and Samelson product is functorial.)

Recall that the adjoint of $h$ is the composition $\mathrm{ad}^{-1}h: S^3 \toby{\Sigma h} \Sigma \Omega \Sigma \RP^2 \toby\beta \Sigma \RP^2$, where $\beta$ denotes the adjoint of the identity map $\Omega \Sigma \RP^2 \to \Omega \Sigma \RP^2$ (see Section \ref{sec:loopsetc}).  By naturality, it is easy to see that  $\mathrm{ad}^{-1}h$ is either composition in the diagram below.
\begin{diagram}
S^3 & \rTo^{\Sigma\langle \iota,\iota \rangle} & \Sigma \Omega S^2 & \rTo^{\Sigma \Omega \Sigma j} & \Sigma \Omega \Sigma \RP^2 \\
   & \rdTo^{[\iota,\iota]} & \dTo>\beta & & \dTo>\beta \\
    & & S^2 & \rTo^{\Sigma j} & \Sigma \RP^2 \\
\end{diagram}
where $[\iota,\iota]$ denotes the Whitehead product, which is homotopic to $2\eta$ where $\eta$ is the Hopf map (see Example \ref{eg:whiteheadistwicehopf}). Therefore the composition $h$ is indeed the adjoint of twice the generator of $\pi_3(\Sigma\RP^2)$. 
\end{proof}

This completes the proof of the case $G=PU(n)$.
\\

The case $G=SU(n)/\Z_k$, where $k|n$,  will now be considered.  It will be shown that $l_0(G)$ is the order of $q=\frac{n}{k} \mod k$ in $\Z_k$.

Consider the covering homomorphism $f:G\to PU(n)$ with kernel $\Z_n/\Z_k \cong \Z_{n/k}$, and the commutative diagram of homomorphisms:
\begin{diagram}
SU(n)  &  \rEquals& SU(n) \\
\dTo &        & \dTo \\
G & \rTo^{f} & PU(n) \\
\end{diagram}
This shows $\tilde{\phi}_G=\tilde{\phi}_{PU(n)}\circ(f\times f)$.  Therefore, $\tilde\phi_G^*(z_3) = (f\times f)^*\tilde\phi_{PU(n)}^*(z_3)$ in $H^3(G\times G;\Z)$.   Since $\tilde\phi_{PU(n)}^*(z_3)$ generates the $\Z_n$-summand in $H^3(PU(n)\times PU(n);\Z)$, it will suffice to check that the induced map
 $$
 H^3(PU(n)\times PU(n);\Z) \to H^3(G\times G;\Z)
 $$ 
 sends a generator of the $\Z_n$-summand to $\frac{n}{k}$ times a generator of the $\Z_k$-summand in $H^3(G\times G;\Z)$.
 
 To that end, recall that the K\"unneth Theorem says that the cross product defines an injective homomorphism 
 $$
 \bigoplus_{r+s=3} H^r(PU(n);\Z) \otimes H^s(PU(n);\Z) \toby{\times} H^3(PU(n)\times PU(n);\Z)
 $$
with cokernel naturally isomorphic to $\displaystyle \bigoplus_{r+s=4}\mathrm{Tor}(H^r(PU(n);\Z),H^s(PU(n);\Z))$, and similarly for $G$. In other words, the torsion summands in  $H^3(PU(n)\times PU(n);\Z)$, and $H^3(G\times G;\Z)$ are naturally isomorphic to the corresponding $\mathrm{Tor}$-terms, the cokernels of the cross product homomorphisms.  Therefore, it will be necessary to compute the induced map $\mathrm{Tor}(f^*,f^*)$ in the following diagram (with integer coefficients):
 \begin{diagram}
 H^3(PU(n) \times PU(n)) & \rTo & \mathrm{Tor} (H^2(PU(n)),H^2(PU(n))) & \rTo 0 \\
 \dTo<{(f\times f)^*} & & \dTo>{\mathrm{Tor}(f^*,f^*)} & &\\
 H^3(G \times G) & \rTo & \mathrm{Tor} (H^2(G),H^2(G) )& \rTo 0 \\
 \end{diagram}
 
 To compute $f^*:H^2(PU(n);\Z) \to H^2(G;\Z)$,  use the Universal Coefficient Theorem.  Specifically,  $H^2(PU(n);\Z)$ is naturally isomorphic to $\mathrm{Ext}(H_1(PU(n);\Z),\Z)$, and similarly for $G$, so that the following diagram commutes.
 \begin{diagram}
 \mathrm{Ext}(H_1(PU(n);\Z),\Z) &\rTo^\cong & H^2(PU(n);\Z) \\
 \dTo<{\mathrm{Ext}(f_*,1)} & &\dTo>{f^*} \\
 \mathrm{Ext}(H_1(G);\Z),\Z) & \rTo^\cong & H^2(G;\Z) \\
 \end{diagram}
 Recall that the inclusion $f_*:H_1(G;\Z) \cong \Z_k \to H_1(PU(n);\Z) \cong \Z_n$ sends the generator in $H_1(G;\Z)$ to $\frac{n}{k}$ times the generator of $H_1(PU(n);\Z)$.  Therefore, from the following commutative diagram of free resolutions,
 \begin{diagram}
 0 &\rTo & \Z & \rTo^k & \Z &\rTo & \Z_k & \rTo & 0 \\
    &        &  \dEquals &   & \dTo>{\frac{n}{k}} & & \dTo>{f_*} & & \\
 0 &\rTo & \Z & \rTo^n & \Z &\rTo & \Z_n  &\rTo & 0\\
 \end{diagram}
 it follows that the induced map $\mathrm{Ext}(f_*,1)$ sends a generator of $\mathrm{Ext}(H_1(PU(n);\Z),\Z) \cong \Z_n$ to a generator of $\mathrm{Ext}(H_1(G;\Z),\Z) \cong \Z_k$.  Therefore, under the identifications $H^2(PU(n);\Z)\cong \Z_n$, and $H^2(G;\Z)\cong \Z_k$, $f^*:\Z_n\to \Z_k$ is simply $f^*(1)=1$. 
 
 The induced map $\mathrm{Tor}(f^*,f^*):\mathrm{Tor}(\Z_n,\Z_n) \to \mathrm{Tor}(\Z_k,\Z_k)$ may be computed as the composition
\begin{diagram}
 \mathrm{Tor}(\Z_n,\Z_n)& \rTo^{\mathrm{Tor}(f^*,1)}& \mathrm{Tor}(\Z_k,\Z_n) &\rTo^{\mathrm{Tor}(1,f^*)}& \mathrm{Tor}(\Z_k,\Z_k).
\end{diagram}

Applying $-\otimes \Z_n$ to the diagram of free resolutions,
\begin{diagram}
 0 &\rTo & \Z & \rTo^n & \Z &\rTo & \Z_n & \rTo & 0 \\
    &        &  \dTo>{\frac{n}{k}}&   & \dEquals & & \dTo>{f^*} & & \\
 0 &\rTo & \Z & \rTo^k & \Z &\rTo & \Z_k  &\rTo & 0\\
 \end{diagram}
and taking homology shows that $\mathrm{Tor}(f^*,1)$ sends a generator of $\mathrm{Tor}(\Z_n,\Z_n) \cong \Z_n$ to a generator of $\mathrm{Tor}(\Z_k,\Z_n)=\ker\{\Z_n\toby{k} \Z_n\}$.  Similarly, applying $\Z_k\otimes -$ to the same diagram and taking homology shows that $\mathrm{Tor}(1,f^*)$ sends a generator of $\mathrm{Tor}(\Z_k,\Z_n) \cong \Z_k$ to $\frac{n}{k}$ times a generator of $\mathrm{Tor}(\Z_k,\Z_k)\cong \Z_k $.  Therefore, $\mathrm{Tor}(f^*,f^*)$ sends a generator of $\mathrm{Tor}(\Z_n,\Z_n)$ to $\frac{n}{k}$ times a generator of $\mathrm{Tor}(\Z_k,\Z_k)$, as required.

This completes the proof of Theorem \ref{thm:An}. \end{proof}


\section{$G=PSp(n)$}

\begin{thm}  If $n$ is even, then $l_0(PSp(n))=1$. If $n$ is odd, then $l_0(PSp(n))=2$.
\end{thm}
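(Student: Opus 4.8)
This result is the $C_n$ analogue of Theorem~\ref{thm:An}, and the plan is to follow the same general strategy: determine the image of $\tilde\phi^* : H^3(Sp(n);\Z) \to H^3(PSp(n) \times PSp(n);\Z)$ by computing mod~$p$ for the only relevant prime, namely $p=2$ (since $Z(Sp(n)) \cong \Z_2$). For odd primes $p$, the covering $\pi : Sp(n) \to PSp(n)$ induces an isomorphism on $H^*(-;\Z_p)$, so $\tilde\phi^*$ vanishes mod~$p$ by the general computation in Section~\ref{section:calculation}. Thus everything comes down to the mod~$2$ calculation, and the answer should be that $\tilde\phi^*(z_3)$ is the reduction of a class of order $1$ (i.e.\ zero) when $n$ is even, and of order $2$ when $n$ is odd.

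First I would record the mod~$2$ cohomology Hopf algebra of $PSp(n)$ from the Appendix. The key structural input is the coproduct on the degree-$1$ generator $x_1 \in H^1(PSp(n);\Z_2)$ (which exists since $\pi_1(PSp(n)) = \Z_2$) and on a degree-$3$ class $x_3$ with $\pi^*(x_3) = z_3$, together with the mod~$2$ Bockstein behaviour of $x_1$. The divisibility distinction between $n$ even and $n$ odd must enter through either (a) whether $z_3$ lies in the image of $\pi^*$, or (b) the precise form of $\bar\mu^*(x_3)$ — exactly as the factor $\tfrac{n(n-1)}{2}$ entered in the $PU(n)$ and $SU(n)/\Z_k$ computations. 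I expect that for $n$ even the relevant structure constant (an analogue of $\tfrac{n(n-1)}{2}$ governing the transgression or the coproduct) is even, forcing $\tilde\phi^*(z_3)$ to be $2$-divisible and hence, combined with the $H^2$ being only $2$-torsion, equal to zero; while for $n$ odd it is odd, so the same Samelson-product computation as in Theorem~\ref{thm:An} (Case $n \equiv 2 \bmod 4$, $p=2$) yields $\tilde\phi^*(z_3) = x_1 \otimes x_1^2 + x_1^2 \otimes x_1 \neq 0$, which is the reduction of a class generating a $\Z_2$-summand detected by $\beta^{(1)}$ via Remark~\ref{remark:bockstein}.

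Concretely, for $n$ odd I would run the argument exactly as in the proof of Theorem~\ref{thm:An}: compute $c^*(x_3)$ from $\mu^*(x_3)$ using the relation $\mu \circ (\mathrm{id}\times c)\circ \Delta \approx *$, then expand $\phi^*(x_3) = (\Delta\times\Delta)^*(\mathrm{id}\times T\times\mathrm{id})^*(\mathrm{id}\times\mathrm{id}\times c\times c)^*(\mu\times\mu)^*\mu^*(x_3)$ to obtain $x_1 \otimes x_1^2 - x_1^2 \otimes x_1$ (signs collapse mod~$2$), and finally invoke $\beta x_1 = x_1^2$ in $H^*(PSp(n);\Z_2)$ to see $\beta(x_1 \otimes x_1) = x_1^2\otimes x_1 + x_1 \otimes x_1^2$, so this class survives to the right Bockstein page and witnesses a $\Z_2$-summand. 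For $n$ even I would instead argue that $\phi^*(x_3) = 0$ mod~$2$ — most cleanly by showing $\phi^*(x)=0$ for $x$ primitive (already established in Section~\ref{section:calculation}) applies if $x_3$ can be chosen primitive, or else that the only possibly-nonzero term in the expansion has an even coefficient — and then conclude via Lemma-type reasoning that since $H^2(PSp(n)\times PSp(n);\Z)$ contributes only $2$-torsion to $H^3$ and the mod-$p$ reductions vanish for all $p$, $\tilde\phi^*(z_3) = 0$, whence $l_0 = 1$.

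The main obstacle I anticipate is pinning down the exact mod~$2$ coproduct structure of $H^*(PSp(n);\Z_2)$ — in particular whether $z_3$ lifts along $\pi^*$ and what the reduced coproduct of the degree-$3$ class is — since this is where the parity of $n$ must be extracted, and the cohomology of $PSp(n)$ is somewhat more intricate than that of $PU(n)$ (there is $2$-torsion coming from the Stiefel--Whitney-type classes). A secondary subtlety, as in the $PU(2)$ case, is that if the purely algebraic Bockstein argument leaves an ambiguity when $n$ is odd (i.e.\ cannot distinguish $\tilde\phi^*(z_3)$ from $0$), one may need to pass to a subgroup — e.g.\ the inclusion $Sp(1) \hookrightarrow Sp(n)$ inducing $PSp(1) \cong SO(3) \to PSp(n)$ — and reuse the homotopy-theoretic input about $\pi_3(\Sigma\mathbb{R}P^2)$ from Theorem~\ref{thm:An}; I would check first whether the diagonal $Sp(1) \hookrightarrow Sp(n)$ induces an iso on $H^{\leq 4}(-;\Z_2)$ and an appropriate multiple on $H^3$ of the covers, so that the $SO(3)$ computation transfers.
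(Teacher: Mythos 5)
Your proposal is correct in substance, and the two halves compare differently with the paper. For $n$ odd, your ``plan A'' (a direct Hopf-algebra expansion of $\phi^*(x_3)$ with $\pi^*(x_3)=z_3$) cannot be executed: by Theorem \ref{thm:PSp} with $r=0$ the exterior generator $b_{2^{r+2}-1}=b_3$ is \emph{omitted}, $H^3(PSp(n);\Ztwo)=\langle v_1^3\rangle$, and in the Serre spectral sequence for $Sp(n)\to PSp(n)\to B\Ztwo$ the class $z_3$ must transgress to $v_1^4$ (which vanishes in $H^*(PSp(n);\Ztwo)$ precisely when $n$ is odd), so $z_3$ is not in the image of $\pi^*$ --- exactly the failure mode you flagged. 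Your fallback is therefore mandatory, and it is precisely the paper's argument: the diagonal $Sp(1)\hookrightarrow Sp(n)$ induces multiplication by $n$ (odd, hence an iso mod $2$) on $H^3$ of the covers and an isomorphism on $H^{\leq 3}(-;\Ztwo)$ of the quotients, reducing everything to $PSp(1)=PU(2)=SO(3)$ and the $\pi_3(\Sigma\RP^2)$ computation of Theorem \ref{thm:An}. For $n$ even your route is genuinely different from the paper's: you work backwards through the spectral sequence to see that $z_3=\pi^*(b_3)$ (possible since $v_1^4\neq 0$ when $r\geq 1$) and then kill $\phi^*(b_3)$ by primitivity of $b_3$, whereas the paper instead pushes forward along $Sp(n)\to SU(2n)$ and quotes $l_0(SU(2n)/\Ztwo)=\mathrm{ord}_2(n)=1$ from Theorem \ref{thm:An}. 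Both are valid; yours is self-contained modulo the appendix's coproduct data, while the paper's avoids any spectral-sequence work for $PSp(n)$ at the cost of invoking the (nontrivial) fact that $Sp(n)\to SU(2n)$ is surjective on cohomology. Your closing step for $n$ even --- that vanishing mod every prime plus the fact that the image of $\tilde\phi^*$ is torsion forces $\tilde\phi^*(z_3)=0$ integrally --- is also sound.
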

\begin{proof} First suppose $n$ is even. From the map
$g:Sp(n)\to SU(2n)$, which is known to induce an epimorphism on
cohomology (see \cite{BB}), there is a diagram:
\begin{diagram}
     Sp(n)     &\rTo^{g} & SU(2n)  \\
\dTo  &     & \dTo  \\
 PSp(n)     &\rTo^{\bar{g}} & SU(2n)/\Ztwo
\end{diagram}
\noindent which shows $\tilde\phi_{Sp(n)}^*(z'_3)=\tilde\phi_{Sp(n)}^*(g^*(z_3))=
(\bar{g}\times\bar{g})^*(\tilde\phi^*_{SU(2n)}(z_3))=0$, by Theorem \ref{thm:An}, where $z'_3=g^*(z_3)$ denotes  the generator of $H^3(Sp(n);\Z)$.  

Suppose $n$ is odd. The diagonal inclusion $\iota:Sp(1)\to Sp(n)$ induces a map $j:PSp(1)\to PSp(n)$.  And as in the proof of Theorem \ref{thm:An}, $\iota$ and $j$ induce isomorphisms on $H^q(\quad;\Ztwo)$ in dimensions $q\leq 3$. But $Sp(1)=SU(2)$, and this was covered in the previous section.  Hence $\tilde\phi^*(z_3)$ is the reduction mod $2$ of the generator of
the torsion summand. 
\end{proof}


\section{$G=SO(n)$, $n\geq 7$}



Recall that $G=SO(n)$ has  universal covering group $\tilde{G}=Spin(n)$. The covering projection $\pi:Spin(n)\to SO(n)$ induces an epimorphism on $H^3(\quad;\Ztwo)$, and according
to ~\cite{MT}, the generator of $H^3(Spin(n);\Ztwo)$ is $\pi^*(x_3)$, where $x_3\in H^3(SO(n);\Ztwo)$ is primitive.
It therefore suffices to calculate $\phi^*(x_3)$.  But since $x_3$ is
primitive,  $\phi^*(x_3)=0$. This proves:

\begin{thm} $l_0(SO(n))=1$.
\end{thm}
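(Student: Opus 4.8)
The plan is to follow the general strategy of Section~\ref{section:calculation}: compute $\tilde\phi^*$ on $H^3(-;\Z_p)$ for every prime $p$ and then reassemble the integral statement. Since $\ker\pi\cong\Z_2$, only $p=2$ requires genuine work: for every odd prime $p$ the covering $\pi:Spin(n)\to SO(n)$ induces an isomorphism on $H^*(-;\Z_p)$, and the displayed computation at the end of Section~\ref{section:calculation} then forces $\tilde\phi^*$ to vanish mod $p$.

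For $p=2$ I would appeal to the known mod~$2$ cohomology of $Spin(n)$ and $SO(n)$ (see \cite{MT} and the Appendix): $\pi^*:H^3(SO(n);\Z_2)\to H^3(Spin(n);\Z_2)$ is onto, and the degree-$3$ generator $z_3\in H^3(Spin(n);\Z_2)$ is $\pi^*(x_3)$ for a \emph{primitive} class $x_3\in H^3(SO(n);\Z_2)$ --- a primitive degree-$3$ class is available because the simple-system generator and $x_1^3$ have equal reduced coproduct, while $\pi^*(x_1)=0$. Since $\tilde\phi$ is the canonical lift of the commutator map $\phi_{SO(n)}$, we have $\pi\circ\tilde\phi=\phi_{SO(n)}$, hence $\tilde\phi^*(z_3)=\tilde\phi^*\pi^*(x_3)=\phi_{SO(n)}^*(x_3)$. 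The general computation of Section~\ref{section:calculation} --- factoring $\phi=\mu\circ(\mu\times\mu)\circ(\mathrm{id}\times\mathrm{id}\times c\times c)\circ(\mathrm{id}\times T\times\mathrm{id})\circ(\Delta\times\Delta)$, expanding $\mu^*(x_3)=x_3\otimes1+1\otimes x_3$, and applying $c^*(x_3)=-x_3$, $T^*$ and finally $\Delta^*$ --- shows that $\phi^*$ kills every primitive class. Hence $\tilde\phi^*(z_3)=0$ in $H^3(SO(n)\times SO(n);\Z_2)$.

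Finally I would pass to integer coefficients. Writing $1$ for the generator of $H^3(Spin(n);\Z)\cong\Z$, Remark~\ref{remark:rationalcoeffs} says $\tilde\phi^*(1)$ is torsion, and the previous step says it reduces to $0$ modulo every prime; since $H^*(SO(n);\Z)$ carries only order-$2$ torsion, so does $H^3(SO(n)\times SO(n);\Z)$ by the K\"unneth theorem, and such a class is detected by its mod~$2$ reduction. Therefore $\tilde\phi^*(1)=0$, i.e. $1\in\ker\tilde\phi^*\subset\Z=H^3(Spin(n);\Z)$, so $l_0(SO(n))=1$; by Proposition~\ref{prop:restateprob} this says $M_{SO(n)}(\Sigma_1^g,b)$ admits a pre-quantization at every integer level.

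The only input that is not purely formal is the identification of $z_3$ with $\pi^*$ of a primitive class --- i.e. the Hopf-algebra structure of $H^3(SO(n);\Z_2)$ quoted from \cite{MT}. Granting that, vanishing of $\phi^*$ on primitives is exactly the lemma already established in Section~\ref{section:calculation}, and the mod~$2$-to-$\Z$ passage is routine given the torsion structure of $H^*(SO(n);\Z)$. I would not expect any $PU(2)$-style complication: for $n\geq7$ the map $\pi^*$ is already surjective on $H^3(-;\Z_2)$, so $z_3$ is visible from below and no homotopy-theoretic detour (suspensions of projective spaces, Wu's calculations, etc.) is needed.
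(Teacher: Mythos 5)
Your proposal is correct and follows essentially the same route as the paper: quote from \cite{MT} (equivalently Theorem \ref{thm:SO}) that the generator of $H^3(Spin(n);\Ztwo)$ is $\pi^*(x_3)$ with $x_3$ primitive, and then use the general computation of Section \ref{section:calculation} showing $\phi^*$ annihilates primitive classes. The paper's proof records only this mod~$2$ step, leaving the odd-prime vanishing and the passage back to integer coefficients implicit in the general remarks of that section, which you have simply spelled out.
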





\section{$G=PO(2n)$, $n\geq 4$}

\begin{thm} If $n$ is odd, then $l_0(PO(2n))=4$. If $n$ is even, then $l_0(PO(2n))=2$. \label{lateruse}
\end{thm}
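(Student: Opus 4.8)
The plan is to compute $\tilde\phi_G^*(z_3)\in H^3(PO(2n)\times PO(2n);\Z)$ for $G=PO(2n)$, where $\tilde G=Spin(2n)$ and $z_3$ is the mod $p$ reduction of the generator of $H^3(Spin(2n);\Z)$, reducing as in Theorem \ref{thm:An} to a prime-by-prime analysis. Since $Z=\pi_1(PO(2n))=Z(Spin(2n))$ has order $4$ (it is $\Z_4$ when $n$ is odd and $\Z_2\oplus\Z_2$ when $n$ is even), the only relevant prime is $p=2$, so everything happens in $\Ztwo$-cohomology, and I will need the Hopf algebra structure of $H^*(PO(2n);\Ztwo)$ from the Appendix, together with the behaviour of the Bockstein spectral sequence to pass back to integral coefficients (via Remark \ref{remark:bockstein}).

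For the even $n$ case I would first locate a degree-one class $x_1\in H^1(PO(2n);\Ztwo)$ and the primitive $x_3$ with $\pi^*(x_3)=z_3$, and compute the reduced coproduct $\bar\mu^*(x_3)$. As in the $PU(n)$ computation, the commutator map factors as $\mu\circ(\mu\times\mu)\circ(\id\times\id\times c\times c)\circ(\id\times T\times\id)\circ(\Delta\times\Delta)$, and I would use $T^*(u\otimes v)=(-1)^{|u||v|}v\otimes u$, the known $\Delta^*,\mu^*$, and $c^*x=-x$ on primitives (recursively otherwise) to reduce $\tilde\phi_G^*(z_3)=\phi_G^*(x_3)$ to an explicit element of $H^1\otimes H^2 \oplus H^2\otimes H^1$. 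I expect the outcome for $n$ even to be (up to sign) $x_1\otimes y - y\otimes x_1$ for an appropriate degree-two class $y$, and then I would verify via the mod $2$ Bockstein (using $\beta x_1 = $ that degree-two class, forced by $H_1(PO(2n);\Z)$ containing a $\Ztwo$ coming from the relevant cyclic factor) that this class generates a $\Ztwo$-summand integrally, giving $l_0=2$. When $n$ is even, $Z\cong\Ztwo\oplus\Ztwo$, so I also must check that the other $\Ztwo$ factor of $Z$ contributes nothing extra to the image of $\tilde\phi^*$ in degree $3$; this should follow because the corresponding class is again primitive or because the relevant piece of $H^3$ is already accounted for.

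For $n$ odd the center is cyclic $\Z_4$, so the image of $\tilde\phi^*(z_3)$ in the torsion of $H^3$ could a priori be of order $1$, $2$, or $4$; showing it is exactly $4$ is the crux. Here I would exploit the covering tower $Spin(2n)\to SO(2n)\to PO(2n)$, or better a map from a smaller group whose $\Ztwo$-cohomology agrees through degree $4$ — the natural candidate being $Spin(6)\cong SU(4)$, $SO(6)$, $PO(6)\cong SU(4)/\Ztwo$ (though $PO(6)$ sits just below the stated range $n\ge4$, the diagonal-inclusion trick from Theorem \ref{thm:An} should let me import the $SU(4)/\Ztwo$ computation, where by Theorem \ref{thm:An} $l_0(SU(4)/\Ztwo)=\mathrm{ord}_2(2)=$, hmm, that gives $2$ not $4$, so this shortcut needs care). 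So the likely route for $n$ odd is instead a direct Bockstein computation: show that $\tilde\phi^*(z_3)=x_1\otimes x_1^{\,3}+x_1^{\,3}\otimes x_1$ (or a similar cube-involving class) in $H^3(PO(2n)\times PO(2n);\Ztwo)$ — note $x_1^{\,3}\ne0$ since $\R P$-type summands contribute a truncated polynomial algebra on $x_1$ — and then track it through $\beta^{(1)},\beta^{(2)}$: if it survives $\beta^{(1)}$ but is hit by $\beta^{(2)}$, Remark \ref{remark:bockstein} gives a $\Z_4$-summand. The main obstacle I anticipate is precisely this last point: extracting the exact $\Z_4$ (rather than $\Z_2$) order when $n$ is odd, which requires knowing not just $H^*(PO(2n);\Ztwo)$ as an algebra but enough of the $\Z_4$-coproduct/higher Bockstein structure to see that $x_1$ (or the relevant class) supports $\beta^{(2)}$ nontrivially; I would handle this by pinning down the $\Z_4$ in $H_1(PO(2n);\Z)$ and using naturality of the Bockstein spectral sequence under $\tilde\phi$ together with the fact that $\beta^{(j)}z_3=0$ in $H^*(Spin(2n);\Ztwo)$.
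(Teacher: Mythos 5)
Your overall strategy (work only at $p=2$, compute $\phi^*$ of a mod-$2$ lift of $z_3$ using the Hopf-algebra structure of Theorem \ref{thm:PO}, then use the Bockstein spectral sequence and Remark \ref{remark:bockstein} to recover the integral order) is the paper's strategy, and for $n$ odd your plan of detecting the $\Z_4$-summand by exhibiting $\tilde\phi^*(z_3)$ as a value of $\beta^{(2)}$ --- forced by the $\Z_4$ in $H^2(PO(2n);\Z)$ and naturality --- is exactly what the paper does: there $\phi^*(u_3)=v_1\otimes u_2+u_2\otimes v_1=\beta^{(2)}(v_1\otimes v_1)$. One correction: for $n$ odd we have $2n=2^1\cdot n$, so Theorem \ref{thm:PO} gives $v_1^2=0$; the class you propose, $x_1\otimes x_1^3+x_1^3\otimes x_1$, does not exist (there is no truncated polynomial algebra of height $4$ on the degree-one class here), and the correct answer involves the independent degree-two generator $u_2$ rather than a power of $v_1$.

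The genuine gap is the case $n\equiv 2\pmod 4$, which your ``$n$ even'' computation silently assumes away. Writing $2n=2^rn'$ with $n'$ odd, when $r=2$ the generator $u_{2^r-1}=u_3$ is omitted from $H^*(PO(2n);\Ztwo)$; equivalently, $z_3$ is not in the image of $\pi^*$ (the same phenomenon you saw for $PU(n)$ with $n\equiv 2\pmod 4$), so there is no class on which to run your coproduct computation. The paper handles this subcase by a separate argument: since $H^3(PO(4k)\times PO(4k);\Z)$ is all $2$-torsion for $k$ odd, it suffices to show $\tilde\phi^*\neq 0$, and this follows by naturality from the other quotient $f\colon Ss(4k)\to PO(4k)$ of $Spin(4k)$ together with Theorem \ref{mapcon}, whose proof in turn reduces to the homotopy-theoretic $\RP^3$ computation from Theorem \ref{thm:An}. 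Without some such detour your argument does not cover $n\equiv 2\pmod 4$. (By contrast, your worry about the extra $\Ztwo$ factor of the center when $n\equiv 0\pmod 4$ is harmless: the only question is whether the single class $\tilde\phi^*(z_3)$ vanishes, and Theorem \ref{thm:PO} already describes the full $\Ztwo\oplus\Ztwo$ quotient.)
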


\begin{proof} The proof is very similar to the proof of Theorem \ref{thm:An}, and will be outlined next.  
 
Suppose first that $n\neq 2 $ mod 4.  By Theorem \ref{thm:PO}, it is easy to see that the universal covering projection $\pi:Spin(2n) \to PO(2n)$ induces an epimorphism on $H^3(\quad;\Ztwo)$.  (This can be seen by using the Serre spectral sequence for the fibrations $Spin (2n) \to PO(2n) \to B\Z_4$, for $n$ odd, and $Spin (2n) \to PO(2n) \to B\Z_2 \times B\Z_2$, for $n$ even, and working backwards, as in the proof of Theorem \ref{thm:An}.)  

In particular,  $\pi^*(u_3)=z_3\in H^3(Spin(2n);\Z_2)$, and $\tilde\phi^*(z_3)=\phi^*(u_3)$.  When $n$ is odd, using Theorem \ref{thm:PO}, it is easy to verify that  $\phi^*(u_3)=v_1\otimes u_2 + u_2\otimes v_1$.  Since $H^2(PO(2n;\Z)\cong \Z_4$, it follows that $\beta^{(2)}(v_1)=u_2$; therefore,  $= \beta^{(2)}(v_1\otimes v_1)=\phi^*(u_3)$. That is, $\phi^*(u_3)$ is the reduction mod 2 of a generator of the $\Z_4$ summand in $H^3(PO(2n)\times PO(2n);\Z)$, and hence $l_0(PO(2n))=4$.

When $n$ is even, it is easy to check that $\phi^*(u_3)=u_1\otimes v_1^2 + u_2\otimes v_1 + v_1^2\otimes u_1 + v_1\otimes u_2$.  This time, $\beta(v_1)=v_1^2$, and $\beta(u_1)=u_2$, which yields $\phi^*(u_3) = \beta(u_1\otimes u_1) + \beta(v_1\otimes u_1)$, showing that $l_0(PO(2n))=2$.

Suppose next that  $n= 2 $ mod 4, and write $2n=4k$, where $k$ is odd. Since $H^3(PO(4k)\times PO(4k);\Z)$ contains only 2-torsion, it will suffice to show that $\tilde\phi^*\neq 0$ on $H^3(\quad;\Z)$. To that end, recall that the center of the spinor group $Spin(4k)$ has two central subgroups of order 2. If $a$ denotes the generator of the kernel of the double cover $Spin(4k)\to SO(4k)$, and $b$ denotes another generator of the center $Z(Spin(4k))\cong\Ztwo \oplus \Ztwo$, then the  \emph{semi-spinor group} $Ss(4n)$ is the quotient $Spin(4k)/\langle b \rangle$ \cite{IKT}.  In particular, there is a commutative diagram
\begin{diagram}
Spin(4k) & \rEquals & Spin(4k) \\
\dTo & & \dTo \\
Ss(4k) & \rTo^{f} & PO(4k)
\end{diagram}
Therefore, $(f\times f)^*\tilde\phi_{PO(4k)}^*=\tilde\phi^*_{Ss(4k)}$. Since $\tilde\phi^*_{Ss(4k)}\neq 0$ (by Theorem \ref{mapcon}), the map $\tilde\phi^*_{PO(4k)}$ is non-zero. \end{proof}


\section{$G=Ss(4n)$}

\begin{thm}  If $n$ is odd, then $l_0(Ss(4n))=2$. If $n$ is
even, then $l_0(Ss(4n))=1$. \label{mapcon}
\end{thm}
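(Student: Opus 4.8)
The plan is to compute $\tilde\phi^*(z_3)$ for $G=Ss(4n)$ by the same Bockstein-spectral-sequence strategy used in Theorem \ref{thm:An}, since $H^3(Ss(4n)\times Ss(4n);\Z)$ is a $2$-torsion group (the center $Z=\langle a\rangle$ is cyclic of order $2$, and $\pi_1(Ss(4n))\cong\Z_2$), so it suffices to determine whether $\tilde\phi^*$ is nonzero on $H^3(\,\cdot\,;\Z_2)$ and, if so, to identify the class. First I would record the relevant low-degree structure of $H^*(Ss(4n);\Z_2)$ as a Hopf algebra from the Appendix (the analogue of Theorems \ref{thm:BB}, \ref{thm:PO}): in particular I need generators $u_1$ (degree $1$, dual to $\pi_1$), the squares/Bocksteins relating them, and a degree-$3$ class whose image under the covering $\pi:Spin(4n)\to Ss(4n)$ is the primitive generator $z_3\in H^3(Spin(4n);\Z_2)$. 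As in Theorem \ref{lateruse}, the Serre spectral sequence of $Spin(4n)\to Ss(4n)\to B\Z_2$, read backwards, should show $\pi^*$ is onto in degree $3$, so $\pi^*(u_3)=z_3$ for some $u_3\in H^3(Ss(4n);\Z_2)$ and hence $\tilde\phi^*(z_3)=\phi^*(u_3)$, where $\phi$ is the commutator map on $Ss(4n)$.

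Next I would compute $\phi^*(u_3)$ using the factorization of the commutator map through $\Delta\times\Delta$, $\mathrm{id}\times T\times\mathrm{id}$, $\mathrm{id}\times\mathrm{id}\times c\times c$, $\mu\times\mu$, $\mu$, exactly as in the proof of Theorem \ref{thm:An}: feed $\mu^*(u_3)=u_3\otimes 1+\bar\mu^*u_3+1\otimes u_3$ through, using $\Delta^*$, $c^*$ (computed recursively from the primitivity relations), and $T^*(a\otimes b)=(-1)^{|a||b|}b\otimes a$. The reduced coproduct $\bar\mu^*u_3$ controls everything, so the key input is its precise form in the Appendix's Hopf algebra; the primitive part of $u_3$ contributes nothing (the general computation at the start of Chapter \ref{chapter:calc} shows $\phi^*$ kills primitives). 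I expect the outcome to depend on the parity of $n$ through the structure of $H^*(Ss(4n);\Z_2)$ — the two cases $n$ odd / $n$ even in the statement must come precisely from whether the relevant degree-$2$ class is a Bockstein of a degree-$1$ class or is divisible differently, which is exactly the kind of bookkeeping that distinguishes $l_0=2$ from $l_0=1$.

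Finally, having an explicit $\phi^*(u_3)\in H^3(Ss(4n)\times Ss(4n);\Z_2)$, I would run the mod $2$ Bockstein spectral sequence (Remark \ref{remark:bockstein}): when $n$ is odd, $H^2(Ss(4n);\Z)$ should have a $\Z_2$-summand forcing $\beta(u_1)=u_1^2$ (or the analogous relation), so $\phi^*(u_3)=\beta(u_1\otimes u_1)$ is the reduction of an integral generator of a $\Z_2$-summand, giving $l_0(Ss(4n))=2$; when $n$ is even, the corresponding class should survive / be a permanent cycle so that $\phi^*(u_3)=0$ over $\Z_2$ hence over $\Z$, giving $l_0(Ss(4n))=1$. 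The main obstacle will be pinning down the exact Hopf-algebra structure of $H^*(Ss(4n);\Z_2)$ in degrees $\le 4$ from the literature — the semi-spinor groups are the most awkward entry in Table \ref{table:list}, and getting $\bar\mu^*u_3$ and the Bockstein relations right (with the $n$-parity dependence) is where all the real work lies; once those are correct, the commutator computation and the Bockstein argument are routine adaptations of Theorem \ref{thm:An}.
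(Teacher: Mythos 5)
Your plan works for $n$ even and is exactly the paper's argument there: for $4n=2^rn'$ with $r\geq 3$, Theorem \ref{thm:Ss} gives a degree-$3$ generator $x_3$ (since $3\notin N$), it is primitive (because $x_1$ and $x_2$ are omitted, the coproduct formula gives $\bar\mu^*x_3=0$), $\pi^*(x_3)=z_3$, and hence $\tilde\phi^*(z_3)=\phi^*(x_3)=0$, so $l_0=1$.

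For $n$ odd, however, there is a genuine gap at the very first step: the assumption that $\pi^*:H^3(Ss(4n);\Z_2)\to H^3(Spin(4n);\Z_2)$ is onto is false. When $n$ is odd, $4n=2^2n$, so $r=2$ in Theorem \ref{thm:Ss}, the set $N$ contains $1,2,3,4$, and $H^q(Ss(4n);\Z_2)$ for $q\leq 3$ is generated entirely by powers of $y_1$ (indeed the low-dimensional cohomology looks like that of $\RP^3$). Since $y_1$ is pulled back from $B\Z_2$, $\pi^*(y_1^3)=0$; equivalently, in the Serre spectral sequence for $Spin(4n)\to Ss(4n)\to B\Z_2$ the class $z_3$ transgresses to $y_1^4$ (which must die, as $y_1^4=0$ in $H^4(Ss(4n);\Z_2)$ when $r=2$). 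So there is no class $u_3$ with $\pi^*(u_3)=z_3$, and the Hopf-algebra-plus-Bockstein computation cannot get started. This is precisely the same obstruction that forced the Case $n\equiv 2\bmod 4$ analysis for $PU(2)$ in Theorem \ref{thm:An}: one can pin down $\tilde\varphi^*(z_3)\in H^3(Ss(4n)\wedge Ss(4n);\Z_2)$ only up to the ambiguity $y_1\otimes y_1^2+y_1^2\otimes y_1$ versus $0$, and no Bockstein argument distinguishes these. The paper resolves this by building a map $g:\RP^3\to Ss(4n)$ (extending a generator of $\pi_1$ cell by cell, using $\pi_2(Ss(4n))=0$) that is an isomorphism on $H^q(\;;\Z)$ for $q\leq 3$, covered by $\tilde g:S^3\to Spin(4n)$, thereby reducing the $n$ odd case to the already-established $PU(2)$ computation --- which itself required the Samelson/Whitehead product argument and Wu's determination of $\pi_3(\Sigma\RP^2)$. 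Without that reduction (or an equivalent homotopy-theoretic input), your argument cannot conclude $l_0(Ss(4n))=2$ for $n$ odd.
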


\begin{proof}  For $n$ even,  the proof is similar to Theorems  \ref{thm:An} and\ref{lateruse}.  In this case, the covering projection $Spin(4n)\to Ss(4n)$ induces an epimorphism on $H^3(\quad;\Ztwo)$ (by looking at Theorem \ref{thm:Ss}, and working backwards from the Serre spectral sequence), and since the generator in $H^3(Ss(4n);\Ztwo)$ is primitive, $\phi^*=0$.

If $n$ is odd,  there is a map $g:\RP^3 \to Ss(4n)$ (constructed in the next paragraph) that induces a weak homotopy equivalence in degrees  $\leq 3$.  Therefore $g$ is covered by a map $\tilde{g}:S^3 \to Spin(4n)$ that induces an isomorphism on $H^3(\quad;\Z)$. And since $\mu_{\RP^3}^*g^* = (g\times g)^*\mu_{Ss(4n)}^*$ on $H^3(\quad;\Z)$, it follows that $\tilde\phi_{\RP^3}^*\tilde{g}^*=(g\times g)^*\tilde\phi_{Ss(4n)}^*$. Therefore this case reduces to Theorem \ref{thm:An}, and the image of $\tilde\phi^*$ is $2$-torsion.

The map $g$ may be constructed as follows. Recall that $\RP^3$ is obtained (as a CW complex) from $\RP^2$ by attaching a 3-cell with attaching map $S^2\to \RP^2$ the quotient map. Similarly, $\RP^2$ is obtained from $S^1$ by attaching a 2-cell with attaching map $2:S^1\to S^1$, the degree $2$ map.  Let $f:S^1 \to Ss(4n)$ represent a generator of $\pi_1(Ss(4n))=\Ztwo$. The composition $f\circ 2$  is null homotopic, and therefore extends to a map $F: \RP^2 \to Ss(4n)$. And since the composition $S^2 \to \RP^2 \to Ss(4n)$ is null homotopic  (as $\pi_2(Ss(4n))=0$), $F$ extends to a map $g:\RP^3 \to Ss(4n)$. By the structure of the cohomology rings  of $\RP^3$ and $Ss(4n)$ (see \cite{IKT}),  $g$ induces an isomorphism in $H^q(\quad;\Z)$ for $q\leq3$, since $g$ induces an isomorphism on $\pi_1(\quad)$. \end{proof}


\section{Exceptional Lie groups}

\begin{thm}$l_0(PE_6)=3$, and $l_0(PE_7)=2$.
\end{thm}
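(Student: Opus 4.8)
The plan is to handle the two exceptional groups $PE_6$ and $PE_7$ by the same strategy used for $PU(n)$ in Theorem~\ref{thm:An}, Case~1, since in both cases the relevant torsion is ``odd'' (3-torsion for $E_6$, 2-torsion for $E_7$) and the algebraic method should go through cleanly without the homotopy-theoretic complications encountered for $PU(2)$. First I would record the relevant cohomology Hopf algebras of $PE_6$ with $\Z_3$ coefficients and of $PE_7$ with $\Z_2$ coefficients from the Appendix; in each case I expect a primitive one-dimensional generator $x_1$ (dual to $\pi_1(G)\cong\Z_3$, resp. $\Z_2$), a two-dimensional class $y_2=\beta^{(r)}(x_1)$ generating the $H^2$ torsion, and a three-dimensional class $x_3$ with reduced coproduct $\bar\mu^*(x_3)=x_1\otimes y_2$, together with $\pi^*(x_3)=z_3$ (after possibly rescaling $x_3$ and $x_1$ as in the proof of Theorem~\ref{thm:An}). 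The key point to verify from the Serre spectral sequence for $\tilde{G}\to G\to BZ$ is that $z_3$ lies in the image of $\pi^*$, i.e. that the transgression does not kill the relevant power of the polynomial generator of $H^*(BZ)$ — this is the analogue of the ``$n\neq 2\bmod 4$'' hypothesis and should hold automatically since $|Z|=3$ or $2$ is prime.

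Next I would run exactly the computation from Case~1 of Theorem~\ref{thm:An}: using the factorization of the commutator map $\phi$ through $\Delta\times\Delta$, $\id\times T\times\id$, $\id\times\id\times c\times c$, $\mu\times\mu$, and $\mu$, together with $c^*(x_3)=-x_3+x_1y_2$ (derived from the coproduct as in the model proof) and $T^*(u\otimes v)=(-1)^{|u||v|}v\otimes u$, I expect to get
\[
\tilde\phi^*(z_3)=\phi^*(x_3)=x_1\otimes y_2 - y_2\otimes x_1
\]
in $H^3(G\times G;\Z_p)$, where $p=3$ for $PE_6$ and $p=2$ for $PE_7$. Then the Bockstein argument: since $H_1(G;\Z)=\pi_1(G)$ is $\Z_3$ (resp. $\Z_2$), the Universal Coefficient Theorem gives $\beta^{(r)}(x_1)=y_2$ with $r=1$ in both cases (as $3$ and $2$ appear to the first power), hence $\beta^{(1)}(x_1\otimes x_1)=y_2\otimes x_1 - x_1\otimes y_2$, which by the proposition on the Bockstein spectral sequence (Remark~\ref{remark:bockstein}) shows that $\tilde\phi^*(z_3)$ is the reduction mod $p$ of an integral class generating a $\Z_3$-summand (resp. $\Z_2$-summand) of $H^3(G\times G;\Z)$. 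Combined with the fact that the image of $\tilde\phi^*$ over $\R$ is zero (Remark~\ref{remark:rationalcoeffs}), and that primes not dividing $|Z|$ contribute nothing (the general remarks of Section~\ref{section:calculation}), this gives $l_0(PE_6)=3$ and $l_0(PE_7)=2$ via Proposition~\ref{prop:restateprob}.

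The main obstacle I anticipate is purely bookkeeping: confirming from the Appendix tables that the low-degree structure of $H^*(PE_6;\Z_3)$ and $H^*(PE_7;\Z_2)$ really is as simple as the $PU(n)$ case — in particular that there is a genuine three-dimensional generator $x_3$ with the stated coproduct and that it pulls back from $\tilde{G}$, rather than $z_3$ failing to be in the image of $\pi^*$ (which would force the $\Sigma\RP^2$-type detour). If for $PE_7$ at $p=2$ the class $z_3$ were not in the image of $\pi^*$, one would need an auxiliary map from a low-dimensional test space (analogous to $PU(2)=\RP^3$) detecting the obstruction; however, since $E_7$ has no small simply connected quotient playing the role $SU(2)$ played for $PU(n)$, I expect instead that the spectral-sequence check succeeds directly and no such detour is needed. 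I would also double-check that $3$ and $2$ occur only to the first power in the relevant torsion, so that a single Bockstein operator $\beta^{(1)}$ suffices and no higher $\beta^{(r)}$ analysis is required.
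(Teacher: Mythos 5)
Your treatment of $PE_6$ is correct and is essentially the paper's own argument: by Theorem \ref{thm:PEsix} the covering $E_6\to PE_6$ is onto $H^3(-;\Z_3)$, the class $x_3$ satisfies $\bar\mu^*x_3=y_2\otimes x_1$ with $x_1$ primitive, the commutator computation gives $\phi^*(x_3)=\pm(y_2\otimes x_1-x_1\otimes y_2)$, and the Bockstein identifies this as the reduction mod $3$ of a generator of a $\Z_3$-summand.

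The $PE_7$ half has a genuine gap. Your key expectation --- that $z_3$ lies in the image of $\pi^*:H^3(PE_7;\Ztwo)\to H^3(E_7;\Ztwo)$ because $|Z|=2$ is prime --- is false. By Theorem \ref{thm:PEseven}, $H^*(PE_7;\Ztwo)$ in degrees $\le 3$ is spanned by $1$, $x_1$, $x_1^2$, $x_1^3$: there is no independent generator $y_2$ or $x_3$, only powers of the one-dimensional class, and $H^4(PE_7;\Ztwo)=0$ since $x_1^4=0$. As $E_7$ is $2$-connected, $\pi^*(x_1)=0$, so $\pi^*$ vanishes on $H^3(-;\Ztwo)$; equivalently, in the Serre spectral sequence for $E_7\to PE_7\to B\Ztwo$ the class $z_3$ must transgress onto $H^4(B\Ztwo;\Ztwo)$ to kill it. This is precisely the $PU(2)$ situation ($p=2$, $s=1$, $y_2=x_1^2$ in Theorem \ref{thm:BB}), not the ``$n\not\equiv 2 \bmod 4$'' one, and the Hopf-algebra method then only constrains $\tilde\phi^*(z_3)$ to lie in $\{0,\; x_1\otimes x_1^2+x_1^2\otimes x_1\}$ (the Bockstein kills the asymmetric term but cannot exclude $0$). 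The detour you hoped to avoid is therefore unavoidable --- but it is also available: one does not need a small simply connected quotient or a homomorphism, only a map of spaces $g:\RP^3\to PE_7$ built cell by cell from a generator of $\pi_1(PE_7)\cong\Ztwo$ (the degree-$2$ attaching map composes to a null-homotopic loop, and $\pi_2(PE_7)=0$ lets you extend over the $3$-cell). Such a $g$ induces isomorphisms on $H^q(-;\Z)$ for $q\le 3$ and is covered by $\tilde g:S^3\to E_7$, reducing the claim to $l_0(PU(2))=2$, which in turn rests on the Samelson/Whitehead-product argument and Wu's computation of $\pi_3(\Sigma\RP^2)$. This is exactly how the paper proceeds, following the odd case of Theorem \ref{mapcon}.
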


\begin{proof} Similar to the proof of Theorems \ref{thm:An}, \ref{lateruse}, and \ref{mapcon}, the covering projection $E_6\to PE_6$ induces an epimorphism on $H^3(\quad;\Z_3)$; therefore, $\tilde\phi^*(z_3)=\phi^*(x_3)$. Using Theorem \ref{thm:PEsix},  $\phi^*(x_3)=y_2\otimes x_1 - x_1\otimes y_2$, which is the reduction mod 3 of the generator of the $\Z_3$ summand, and hence $l_0(PE_6)=3$.

For $PE_7$, there is a map $g:\RP^3\to PE_7$ that is a weak homotopy 
equivalence in dimensions $\leq 3$, as in the proof of Theorem \ref{mapcon}. 
Therefore, as in Theorem \ref{mapcon}, $l_0(PE_7)=l_0(PU(2))=2$.
 \end{proof}


\chapter{Hamiltonian Loop Group Actions} \label{chapter:loopgroups}

The theory of quasi-Hamiltonian $G$-spaces is known to be equivalent to the theory of Hamiltonian loop group actions with proper moment map \cite{AMM}.  The latter is an infinite dimensional analogue of  Hamiltonian actions of compact Lie groups, and hence naturally affords a study of quantization.  Having studied pre-quantization in the framework of quasi-Hamiltonian $G$-spaces in Chapter \ref{chapter:prequantization}, this Chapter will address  pre-quantization in the framework of Hamiltonian loop group actions, based on the unpublished work of A. Alekseev and E. Meinrenken \cite{AM}.  Moreover, it will be shown that via the aforementioned equivalence of theories, the pre-quantization of Hamiltonian loop group actions corresponds to the pre-quantization of the associated quasi-Hamiltonian $G$-space.

The  technical issues that arise when dealing with infinite dimensional manifolds (regarding smoothness conditions, etc.)  will be ignored in this work.  The reader may consult \cite{MW}, where these details are considered.  Throughout this Chapter, $G$ will denote a compact, connected, simply connected Lie group.

\section{Loop groups}

This section surveys the necessary background concerning loop groups, and their central extensions.  The reader interested in a  more detailed treatment should consult  \cite{PS}, or \cite{KW}.  

Let $G$ be a compact, connected, simply connected Lie group, with Lie algebra $\mathfrak{g}$, and choose an  $\mathrm{Ad}$-invariant inner product $(-,-)$  on $\mathfrak{g}$.

The \emph{loop group} of $G$ is  the free loop space $LG=\mathrm{Map}(S^1,G)$, viewed as an infinite dimensional manifold, equipped with the pointwise group multiplication.  A vector $\xi$ in the tangent space at the identity element, the constant map $z\mapsto 1$, is of the form 
$$
\xi=\frac{\partial}{\partial t}\Big|_{t=0} \gamma(t,z),
$$
where $\gamma:(-\epsilon,\epsilon)\times S^1 \to G$ with $\gamma(0,z)=1$.  Therefore, $\xi=\xi(z)\in \mathfrak{g}$, defines a loop  in  $\Lg=\mathrm{Map}(S^1,\mathfrak{g})$ as $z\in S^1$ varies. The Lie algebra of $LG$, sometimes called the loop Lie algebra, is therefore $\Lg$, with pointwise Lie bracket.  The exponential map $\exp:\Lg \to LG$ is simply the pointwise exponential map

The  fibration $LG\to G$ given by evaluation at the base point $1\in S^1$, with fibre $\Omega G$, has a section $s:G\to LG$ that sends $g$ to the constant loop $s(g)(z)=g$.  Therefore, $LG$ is topologically the product $\Omega G\times G$.  However, the isomorphism $LG\to \Omega G\rtimes G$ given by $\omega(z)\mapsto (\omega(z)\omega(1)^{-1}, \omega(1))$ shows that $LG$ is isomorphic to the semidirect product $\Omega G \rtimes G$, with multiplication $(\omega_1(z), g_1)\cdot(\omega_2(z),g_2)=(\omega_1(z)g_1\omega_2(z) g_1^{-1},g_1g_2)$.

\subsection*{Central extensions}

A particular central extension $1\to S^1 \to \LGhat \to LG \to 1$ of the loop group $LG$ by $S^1\cong U(1)$  plays an important role in the theory of Hamiltonian loop group actions.   Studying this central extension, and its co-adjoint representation helps strengthen the analogy between Hamlitonian loop group actions and classical Hamiltonian $G$-actions of a compact Lie group $G$.  

To begin, consider the central extension of the loop Lie algebra $\Lghat:=\Lg\oplus \R$ with Lie bracket
$$
 [(\xi_1,t_1),(\xi_2,t_2)]:= ([\xi_1,\xi_2], \int_{S^1}(\xi_1,d\xi_2)).
$$
(Recall that a function $\xi:S^1\to \mathfrak{g}$ is a $0$-form in $\Omega^0(S^1;\mathfrak{g})$; therefore, $d\xi \in \Omega^1(S^1;\mathfrak{g})$, and $(\xi,d\xi)\in \Omega^1(S^1)$.)  Note that the central extension $\Lghat$ depends on the choice of invariant inner product on $\mathfrak{g}$.  And recall that if $G$ is simple (so that $\mathfrak{g}$ is simple), then all such inner products are multiples of the basic inner product (see Remark \ref{remark:innerproducts}). 

The central extension $\LGhat$ mentioned above is to have $\Lghat$ as Lie algebra, so that the following diagram commutes.
\begin{diagram}
0 & \rTo & \R & \rTo & \Lghat & \rTo & \Lg & \rTo& 0 \\
    &        &  \dTo>\exp   &   & \dTo>\exp   &        &   \dTo>\exp   & &\\
1 & \rTo & S^1 & \rTo & \LGhat & \rTo & LG & \rTo &1 \\
\end{diagram}
Such an extension defines a principal $S^1$-bundle over $LG$, which is classified by its Chern class $c$ in $H^2(LG;\Z)$.  Therefore, there may be topological obstructions to the existence of such a central extension. 

Since $LG=\Omega G \times G$ (topologically), then $H^2(LG;\Z)\cong H^2(\Omega G;\Z)$, as $G$ is simply connected.  Furthermore, if $G$ is simple, then $H^2(\Omega G;\Z)\cong H^3(G;\Z)\cong \Z$, where the isomorphism $H^2(\Omega G;\Z)\cong H^3(G;\Z)$ is the \emph{transgression}---the first non-trivial differential in the Serre spectral sequence for the path space fibration $PG \to G$ with fibre $\Omega G$.  

\begin{prop} \label{prop:simplyconnectedloopgroup} Let $G$ be a simple, compact, simply connected Lie group.  If the Chern class $c(\LGhat) \in H^2(LG;\Z)\cong \Z$ of the principal $S^1$-bundle $\LGhat \to LG$ is  $k$ times a generator, then  $\pi_1(\LGhat)\cong \Z_k$.  In particular, if $k=1$, then $\LGhat$ is simply connected.
\end{prop}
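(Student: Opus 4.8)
The plan is to analyze the long exact sequence of homotopy groups associated to the fibration $S^1 \to \LGhat \to LG$, together with the homotopy type of $LG$ itself. First I would record that since $G$ is compact, connected, and simply connected, $LG \cong \Omega G \times G$ as spaces, so $\pi_1(LG) \cong \pi_1(\Omega G) \times \pi_1(G) \cong \pi_2(G) \times \pi_1(G)$. For a compact simple Lie group, $\pi_2(G) = 0$ (indeed $G$ is $2$-connected when simply connected) and $\pi_1(G) = 0$ by hypothesis, so $\pi_1(LG) = 0$; moreover $\pi_2(LG) \cong \pi_2(\Omega G) \times \pi_2(G) \cong \pi_3(G) \cong \Z$, with the identification $\pi_2(LG) \cong H_2(LG;\Z) \cong H^2(LG;\Z)$ (all free of rank one, with the Hurewicz and Universal Coefficient maps isomorphisms here).

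The core of the argument is then the segment of the long exact sequence
\[
\pi_2(\LGhat) \to \pi_2(LG) \toby{\partial} \pi_1(S^1) \to \pi_1(\LGhat) \to \pi_1(LG) = 0.
\]
Surjectivity onto $\pi_1(\LGhat)$ with $\pi_1(LG) = 0$ gives $\pi_1(\LGhat) \cong \pi_1(S^1)/\operatorname{im}\partial \cong \Z/\operatorname{im}\partial$. So everything reduces to identifying the connecting homomorphism $\partial : \pi_2(LG) \cong \Z \to \pi_1(S^1) \cong \Z$. The key claim is that, under the standard identification $\pi_2(LG) \cong H^2(LG;\Z)$, this map $\partial$ is precisely "pairing with the Chern class $c(\LGhat)$," i.e. multiplication by $k$. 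This follows from the classical fact that the connecting map in the homotopy sequence of a principal $S^1$-bundle $P \to X$ is, on $\pi_2$, given by evaluation of the Euler/Chern class $c(P) \in H^2(X;\Z) \cong \operatorname{Hom}(\pi_2(X),\Z)$ (using that $X = LG$ has free $\pi_2$ detected by $H_2$); equivalently, $P$ is the pullback of the universal bundle along a classifying map $X \to BS^1 = \mathbb{C}P^\infty = K(\Z,2)$, and $\partial$ is the induced map $\pi_2(X) \to \pi_2(K(\Z,2)) = \Z$, which is exactly $c(P)$ viewed as a homomorphism. Hence $\operatorname{im}\partial = k\Z$ and $\pi_1(\LGhat) \cong \Z_k$; when $k = 1$, $\partial$ is an isomorphism, so $\pi_1(\LGhat) = 0$, and $\LGhat$ is simply connected.

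The main obstacle I anticipate is not conceptual but bookkeeping: making the identification $\partial = (\,\cdot\, , c(\LGhat))$ precise and orientation-consistent, in particular checking that the generator of $H^2(LG;\Z)$ used to say "$c$ is $k$ times a generator" is dual to the generator of $\pi_2(LG)$ coming from the Hurewicz isomorphism, and that signs work out so that $\operatorname{im}\partial = k\Z$ rather than some proper multiple. One clean way to sidestep sign worries is to argue entirely up to isomorphism: $\partial$ is a homomorphism $\Z \to \Z$ whose cokernel is $\pi_1(\LGhat)$ and whose "absolute value" is the divisibility of $c(\LGhat)$, which is $k$; since $|\partial| = k$ forces $\pi_1(\LGhat) \cong \Z/k\Z$, the result follows. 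I would also remark that the input facts about $\pi_i(G)$ ($\pi_1 = \pi_2 = 0$, $\pi_3 \cong \Z$) and about $H^2(\Omega G;\Z) \cong H^3(G;\Z)$ via transgression are exactly those recalled in the preceding paragraphs of this section, so no new machinery is needed beyond the homotopy exact sequence of a fibration.
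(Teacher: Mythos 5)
Your argument is correct, but it follows a genuinely different route from the paper's. You work with the long exact homotopy sequence of the principal bundle $S^1 \to \LGhat \to LG$, using $\pi_1(LG)=0$, $\pi_2(LG)\cong\pi_3(G)\cong\Z$, and the identification of the connecting map $\partial:\pi_2(LG)\to\pi_1(S^1)$ with evaluation of $c(\LGhat)$ (equivalently, with $\rho_*$ on $\pi_2$ for the classifying map $\rho:LG\to BS^1$), so that $\pi_1(\LGhat)\cong \operatorname{coker}\partial\cong\Z_k$. The paper instead runs the cohomology Serre spectral sequence of the fibration $\LGhat\to LG\toby{\rho}\mathbb{C}P^\infty$: it first rules out $H^1(\LGhat;\Z)$, extracts the exact sequence $0\to H^2(\mathbb{C}P^\infty;\Z)\toby{\rho^*} H^2(LG;\Z)\to H^2(\LGhat;\Z)\to 0$ to conclude $H^2(\LGhat;\Z)\cong\Z_k$, and then passes back to $\pi_1$ via the Universal Coefficient theorem, Hurewicz, and the fact that the fundamental group of a topological group is abelian. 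Your route is the more economical one here: it lands directly on $\pi_1$ and needs neither the coefficient-theorem bookkeeping nor the abelianness of $\pi_1(\LGhat)$, and your observation that only the divisibility of $c(\LGhat)$ (not its sign) enters the cokernel cleanly disposes of the orientation issue. What the paper's computation buys in exchange is the intermediate cohomological information $H^1(\LGhat;\Z)=0$ and $H^2(\LGhat;\Z)\cong\Z_k$, which is of independent interest in the gerbe/central-extension context; both proofs rest on the same input facts about $\pi_i(G)$ and the homotopy type of $LG$ recalled earlier in the section.
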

\begin{proof}
Let $\rho:LG\to \mathbb{C} P^\infty$ denote the classifying map of the $S^1$-bundle, so that $c(\LGhat)=\rho^*(y)$, where $y\in H^2(\mathbb{C}P^\infty;\Z)\cong \Z$ is a generator, and consider the Serre spectral sequence for the fibration $LG \to \mathbb{C}P^\infty$, with $E_2=H^*(\mathbb{C}P^\infty;H^*(\LGhat;\Z))$ converging to $H^*(LG;\Z)$

Since $\rho^*(y)$ is torsion-free, there is no non-trivial differential into $E_2^{0,2}\cong H^2(\mathbb{C}P^\infty;\Z)$.  Therefore, if $E_2^{0,1}\cong H^1(\LGhat;\Z)$ is non-zero, then it must survive to $E_\infty$, which contradicts the fact that $LG$ is simply connected.  Also, since $E_2^{0,3}\cong H^3(\mathbb{C}P^\infty;\Z)=0$, the group $E_2^{0,2}$ survives to $E_\infty$, and by the convergence of the spectral sequence, $H^2(LG;\Z)$ fits in the exact sequence
$$
0\to H^2(\mathbb{C}P^\infty;\Z) \toby{\rho^*} H^2(LG;\Z) \to E_2^{0,2} \to 0.
$$
Since $\rho^*$ is multiplication by $k$, it follows that $E_2^{0,2}\cong \Z_k$, which shows that $H^2(\LGhat;\Z)\cong \Z_k$. By the Universal coefficient theorem, $H_1(\LGhat;\Z)\cong \Z_k$, which by the Hurewicz theorem, and the fact that the fundamental group of a topological group is abelian, proves the result.
\end{proof}

The following Theorem classifies central extensions $\LGhat$ with Lie algebra $\Lghat$. (See Theorem 4.4.1 and Proposition 4.5.5 in \cite{PS}.)

\begin{thm} \cite{PS} Let $G$ be a simple, compact, simply connected Lie group.  There is a unique central extension $\LGhat$ with Lie algebra $\Lghat$ if and only if the chosen invariant inner product on $\mathfrak{g}$ is  $kB$, where $k \in \N$, and $B$ denotes the basic inner product. In this case, the Chern class of the corresponding $S^1$-bundle $\LGhat \to LG$ is $c(\LGhat)=k\cdot 1$, where $1\in H^2(LG;\Z)\cong \Z$ is a generator.
\end{thm}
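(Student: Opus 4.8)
The statement to prove is the classification theorem of Pressley--Segal: for $G$ simple, compact, simply connected, a central extension $\widehat{LG}$ integrating $\widehat{L\mathfrak{g}}$ exists uniquely precisely when the invariant inner product is $kB$ for $k\in\N$, and then $c(\widehat{LG})=k\in H^2(LG;\Z)\cong\Z$. Since the excerpt attributes this to \cite{PS} and invokes it as a known result, the honest proof proposal is to reduce it to the cited statements rather than to reprove Pressley--Segal from scratch.

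The plan is as follows. First I would recall the general principle that a central extension of a Lie group $1\to S^1\to\widehat{LG}\to LG\to 1$ with prescribed Lie algebra extension $\widehat{L\mathfrak{g}}$ amounts to (a) a topological datum: a principal $S^1$-bundle over $LG$, classified by $c\in H^2(LG;\Z)$, together with (b) a multiplicative structure on that bundle compatible with the Lie-algebra cocycle $\omega(\xi_1,\xi_2)=\int_{S^1}(\xi_1,d\xi_2)$. The curvature of any invariant connection on such a bundle must represent the image of $c$ in de Rham cohomology and must match $\omega$ at the Lie-algebra level; since $H^2(LG;\Z)\cong H^2(\Omega G;\Z)\cong H^3(G;\Z)\cong\Z$ (using $G$ simply connected and simple, and the transgression in the path-space fibration $\Omega G\to PG\to G$), this forces $\omega$ to be an integral multiple of the generator, i.e. the inner product must be $kB$ with $k\in\Z$; positivity of the inner product gives $k\in\N$. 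This is the necessity direction, and it follows formally from the identification of $H^2(LG;\Z)$ already set up in the excerpt together with the relation between the Chern class and the canonical $3$-form normalized by $B$ (recorded in Section~\ref{sec:coholiegroup}).

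For the existence-and-uniqueness direction at level $kB$, I would simply cite Theorem~4.4.1 and Proposition~4.5.5 of \cite{PS}: when the cocycle is integral, the corresponding $S^1$-bundle admits a (essentially unique) multiplicative structure, yielding the central extension $\widehat{LG}$; uniqueness follows because two such extensions differ by a homomorphism $LG\to S^1$, and $LG$ is connected with $H^1(LG;\Z)=0$ (again because $G$ is simply connected), so there are no nontrivial such homomorphisms. The final assertion $c(\widehat{LG})=k\cdot 1$ is then read off from the normalization: the generator of $H^2(LG;\Z)$ transgresses to the generator of $H^3(G;\Z)$, which corresponds under $\iota_\R$ to $[\eta_1]=\frac{1}{12}B(\theta^L,[\theta^L,\theta^L])$, so scaling the inner product by $k$ scales the cocycle, hence the Chern class, by $k$.

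The main obstacle is not any single hard computation but rather the bookkeeping of the correspondence between the analytic object (the Lie-algebra cocycle $\omega$), the topological object (the Chern class $c$), and the de Rham representative, together with matching normalizations so that ``$k$ times a generator'' on the bundle side corresponds exactly to ``$kB$'' on the inner-product side. I would handle this by pinning down one generator of $H^2(LG;\Z)$ via the transgression isomorphism $H^2(\Omega G;\Z)\xrightarrow{\ \cong\ }H^3(G;\Z)$ used in Proposition~\ref{prop:simplyconnectedloopgroup}, checking that the basic inner product $B$ produces exactly the transgression of this generator (this is the content I would attribute to \cite{PS}), and then the scaling statement is immediate by linearity of $\omega$ in the inner product. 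Given that the excerpt explicitly frames this as a cited theorem, I would keep the argument at the level of assembling the stated facts rather than reconstructing the Pressley--Segal proof.
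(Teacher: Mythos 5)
The paper offers no proof of this theorem at all---it is stated purely as a citation to Theorem 4.4.1 and Proposition 4.5.5 of \cite{PS}---and your proposal takes essentially the same route: it defers the analytic existence/uniqueness to \cite{PS} and correctly assembles the surrounding bookkeeping (the identification $H^2(LG;\Z)\cong H^2(\Omega G;\Z)\cong H^3(G;\Z)\cong\Z$ via transgression, integrality of the invariant curvature form forcing the inner product to be $kB$, and linearity of the cocycle in the inner product giving $c(\LGhat)=k\cdot 1$). The one loose phrase is in the uniqueness step: two extensions integrating the same Lie algebra cocycle differ by an element of $\mathrm{Hom}(\pi_1(LG),S^1)$ rather than by a homomorphism $LG\to S^1$ (the latter classifies automorphisms of a fixed extension), but since $\pi_1(LG)\cong \pi_2(G)\times\pi_1(G)=0$ for $G$ compact and simply connected, both groups vanish and the conclusion is unaffected.
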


Notice that the above Theorem and the preceding Proposition show that the central extension $\LGhat$ corresponding to the basic inner product (i.e. $k=1$) is the only central extension that is simply connected.   From now on, let $\LGhat$ denote the central extension corresponding to  the basic inner product.

\section*{Coadjoint representation of $\LGhat$}

Since the central subgroup $S^1\subset \LGhat$ acts trivially on $\Lghat$, both the adjoint and coadjoint representation of $\LGhat$ factor through $LG$.  As indicated in \cite{PS}, or \cite{KW}, the adjoint $LG$-action on $\Lghat$ is given by
$$
\mathrm{Ad}_g(\xi,t)=(\mathrm{Ad}_g\xi, t-\int_{S^1} (g^*\theta^L,\xi)).
$$ 
To describe the corresponding coadjoint action, it will be necessary to first describe the dual of the Lie algebra $\Lghat$.  

Let $\Lg^*=\Omega^1(S^1;\mathfrak{g})$, sometimes called the \emph{smooth dual} of $\Lg$. The pairing $\Lg\times \Lg^*\to \R$ given by $(\xi,A)\mapsto \int_{S^1}(\xi,A)$, induces an inclusion $\Lg^*\subset (\Lg)^*$.  Additionally, define
$\Lghat^*:=\Lg^*\oplus\R$ and consider the pairing $\Lghat\times \Lghat^*\to\R$ given by 
$$
((\xi,a), (A,t)) = \int_{S^1} (\xi,A)+ at.
$$ 

The corresponding  coadjoint action of $LG$ on $\Lghat^*$ is therefore (see \cite{PS} or \cite{KW}):
$$
g\cdot(A,t)=(\mathrm{Ad}_g(A)-t g^*\theta^R,t).
$$   
Notice that for each real number $\lambda$, the hyperplanes $t=\lambda$ are fixed.  Identifying $\Lg^*$ with $\Lg^* \times \{\lambda \} \subset \Lghat^*$ yields an action of $LG$ on $\Lg^*$, called the (affine) level $\lambda$ action.   

When $\lambda=1$, the $LG$-action on $\Lg^*$ reads,
$$
g\cdot A = \mathrm{Ad}_g(A)- g^*\theta^R,
$$
which may be interpreted as the action of a gauge transformation $g\in \mathrm{Map}(S^1,G)$ on a connection $A\in \Omega^1(S^1;\mathfrak{g})$ on the trivial principal bundle  $S^1\times G \to S^1$.  In this setting, $\Omega G$ is then the group of based gauge transformations, which acts freely on $L\mathfrak{g}^*$.  And since two connections have the same holonomy if and only if they are gauge equivalent by a based gauge transformation, then holonomy $\Hol:\Lg^*\to G$ descends to the identification $G=\Lg^*/\Omega G$.  Moreover, since $\Lg^*$ is contractible, the fibration $\Hol:\Lg^*\to  G$ is a universal $\Omega G$-bundle.

\section{Hamiltonian loop group actions} \label{sec:loopgroup}

As mentioned previously, the theory of quasi-Hamiltonian $G$-spaces is equivalent to the theory of Hamiltonian loop group actions.  Since Hamiltonian loop group actions are infinite dimensional counterparts of Hamiltonian actions by compact Lie groups, this avenue of study leads to another comparison of quasi-Hamiltonian $G$-spaces with their classical Hamiltonian counterparts. 

This section reviews Hamiltonian loop group actions, and recalls the equivalence with quasi-Hamiltonian $G$-spaces (see Theorem \ref{thm:equivalence}).  
\\

\begin{defn} A Hamiltonian loop group manifold at level $\lambda \in \R$ is a triple
$(\mathcal{M},\sigma,\varphi)$ consisting of a (Banach) manifold $\mathcal{M}$ equipped with an $LG$-action, an invariant $2$-form $\sigma$, and an equivariant map $\varphi:\mathcal{M}\to \Lg^*$, where $\Lg^*$ comes equipped with the level $\lambda$ action of $LG$, satisfying the following three properties:
\begin{enumerate}
\item $d\sigma =0$
\item $\iota_{\xi^\sharp}\sigma = -d(\varphi,\xi)$
\item $\sigma$ is weakly non-degenerate (i.e. induces an injection $T_x\mathcal{M} \to T^*_x\mathcal{M}$)
\end{enumerate}
\end{defn}

\begin{eg}
Coadjoint orbits in $\Lg^*$ \label{eg:coadjointorbits}
\end{eg}

Analogous to the classical setting, important examples of  Hamiltonian $LG$-spaces are the coadjoint orbits $\mathcal{O}=LG\cdot (A,\lambda)\subset \Lghat^*$ (see \cite{MW}), which carry the natural Kirillov-Kostant-Souriau symplectic structure.  Restricting to the level $\lambda=1$, a given coadjoint orbit $\mathcal{O}=LG\cdot A\subset \Lg^*$ may be viewed as the space of connections in $\Omega^1(S^1;\mathfrak{g})$ that are gauge equivalent to $A$.  The connections in the $\Omega G$-orbits of $\mathcal{O}$ are characterized by the fact that they have the same holonomy.  Therefore, elements in the orbit space $\mathcal{O}/\Omega G$ are determined by their holonomy.  And since gauge equivalent connections have conjugate holonmy, $\mathcal{O}/\Omega G$ is a conjugacy class in $G$.  In other words, there is a conjugacy class $\C\subset G$ such that following is a pullback diagram.
\begin{diagram}
\mathcal{O} & \rTo & \Lg^* \\
\dTo<{\Hol}      &              &  \dTo>{\Hol} \\
\C           & \rTo  & G \\
\end{diagram}

Recall from Example \ref{eg:conjclass} that conjugacy classes $\C\subset G$ are examples of quasi-Hamiltonian $G$-spaces.  The relationship between the coadjoint orbit $\mathcal{O}$ and the conjugacy class $\mathcal{C}$ is an instance of Theorem \ref{thm:equivalence} below.  \hfill $\qed$
\\

Since the based loop group $\Omega G\subset LG$ acts freely on
$\Lg^*$, by equivariance of $\varphi$, it acts freely on
$\mathcal{M}$; therefore, as in the preceding example, there is a pull-back diagram
\begin{diagram}
\mathcal{M} & \rTo{\varphi} & \Lg^* \\
\dTo<q      &              &  \dTo>{\Hol} \\
M           & \rTo^{\phi}  & G \\
\end{diagram}
inducing a $G=LG/\Omega G$-equivariant map $\phi:M\to G$.
When $\varphi$ is proper, the quotient $M=\mathcal{M}/\Omega G$ is
a finite dimensional manifold, and the following result of \cite{AMM} describes the correspondence between quasi-Hamiltonian $G$-spaces and Hamiltonian loop group actions.

\begin{thm} \cite{AMM}  \label{thm:equivalence} Let
$(\mathcal{M},\sigma,\varphi)$ be a Hamiltonian $LG$-space with
proper moment map $\varphi$, and let $M$ and $\phi$ be as above.
There is a unique two-form $\omega$ on $M$ with $q^*\omega =
\sigma + \varphi^*\varpi$ with the property that $(M,\omega,\phi)$
is a quasi-Hamiltonian $G$-space. Conversely, given a quasi-Hamiltonian $G$-space
$(M,\omega,\phi)$  there is a
unique Hamiltonian $LG$-space $(\mathcal{M},\sigma,\varphi)$ such
that $M=\mathcal{M}/\Omega G$.
\end{thm}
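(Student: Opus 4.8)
The plan is to exploit the pull-back square relating $\mathcal{M}$ and $\Lg^*$ (the one displayed just before the statement), together with the fact that $\Hol:\Lg^*\to G$ is a universal $\Omega G$-bundle and that there is a natural $LG$-invariant $2$-form $\varpi$ on $\Lg^*$ satisfying $d\varpi+\Hol^*\eta=0$ --- the loop-group analogue of the form on $\mathfrak{g}$ appearing in Example \ref{eg:exp}. For the forward direction, since $\varphi$ is proper and equivariant and $\Omega G$ acts freely on $\Lg^*$, the group $\Omega G$ acts freely and properly on $\mathcal{M}$, so $q:\mathcal{M}\to M=\mathcal{M}/\Omega G$ is a principal $\Omega G$-bundle over a finite-dimensional manifold, and the induced map $\phi:M\to G$ is $G=LG/\Omega G$-equivariant for the conjugation action. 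The candidate $2$-form upstairs is $\tilde\omega:=\sigma+\varphi^*\varpi$. First I would check that $\tilde\omega$ is basic for $q$: invariance under $\Omega G$ is immediate from $LG$-invariance of $\sigma$ and $\varpi$ and equivariance of $\varphi$, while horizontality --- $\iota_{\xi^\sharp}\tilde\omega=0$ for $\xi\in\Omega\mathfrak{g}$ --- follows by combining the moment-map identity $\iota_{\xi^\sharp}\sigma=-d(\varphi,\xi)$ with the corresponding contraction formula for $\varpi$ on $\Lg^*$, using that the offending terms vanish precisely on the based subalgebra $\Omega\mathfrak{g}$. Basicness then produces a unique $\omega$ with $q^*\omega=\tilde\omega$, which disposes of the uniqueness clause at once.

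It then remains to verify Q1--Q3 for $(M,\omega,\phi)$. Since $q$ is a surjective submersion, $q^*$ is injective on forms, so Q1 reduces to $d\tilde\omega+\varphi^*\Hol^*\eta=0$, i.e. $\varphi^*(d\varpi+\Hol^*\eta)=0$, which holds by $d\sigma=0$ and the defining property of $\varpi$. For Q2 I would lift the generating vector field of $\xi\in\mathfrak{g}$ (viewed in $\Lg$ via constant loops) to $\mathcal{M}$, compute $\iota_{\xi^\sharp}\tilde\omega$ using the $LG$-moment-map condition, and translate the resulting expression on $\Lg^*$ into $\phi^*(\theta^L+\theta^R,\xi)$ via the relation between the gauge action and $\Hol$; this is the step where I expect the bookkeeping to be heaviest. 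Condition Q3 (minimal degeneracy) would follow from the weak non-degeneracy of $\sigma$: a vector in $\ker\omega_m\cap\ker d\phi|_m$ lifts to a vector lying in $\ker\tilde\omega$ and annihilating $d\varphi$, which weak non-degeneracy of $\sigma$ forces to be tangent to the $\Omega G$-orbit, and then a dimension/kernel count forces it to be zero.

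For the converse, given a quasi-Hamiltonian $(M,\omega,\phi)$ I would set $\mathcal{M}:=M\times_G\Lg^*$, the pull-back of $\Hol$ along $\phi$, which is automatically a principal $\Omega G$-bundle over $M$ carrying an $LG$-action, with projections $q:\mathcal{M}\to M$ and $\varphi:\mathcal{M}\to\Lg^*$. I would then define $\sigma:=q^*\omega-\varphi^*\varpi$ and check properties (1)--(3) of a Hamiltonian $LG$-manifold by pulling Q1--Q3 through the pull-back square: $d\sigma=q^*d\omega-\varphi^*d\varpi=-q^*\phi^*\eta+\varphi^*\Hol^*\eta=0$, and the moment and non-degeneracy conditions similarly. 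Uniqueness is the cleanest part: any Hamiltonian $LG$-space $(\mathcal{M}',\sigma',\varphi')$ with $\mathcal{M}'/\Omega G=M$ comes with an $\Omega G$-equivariant map $\varphi'$ lying over $\phi$, hence a bundle isomorphism $\mathcal{M}'\cong\mathcal{M}$ by the universal property of the pull-back, under which $\sigma'$ is forced to equal $\sigma$ because $q^*\omega=\sigma'+(\varphi')^*\varpi$ is again a consequence of the relevant form being basic. The main obstacle throughout is controlling the $2$-form $\varpi$ on the infinite-dimensional space $\Lg^*$ --- establishing $d\varpi=-\Hol^*\eta$ and the precise formula for $\iota_{\xi^\sharp}\varpi$ --- since it is exactly those identities that make the horizontality argument and the Q2 computation go through.
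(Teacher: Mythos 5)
This theorem is quoted from \cite{AMM} and the thesis supplies no proof of its own, so there is nothing internal to compare against; your outline is, however, essentially the argument of \cite{AMM}: descend $\sigma+\varphi^*\varpi$ along the principal $\Omega G$-bundle $q$ for the forward direction, and form the pullback over $\Hol$ with $\sigma=q^*\omega-\varphi^*\varpi$ for the converse, with uniqueness coming from basicness and the universal property of the pullback. The one load-bearing input you defer --- the identities $d\varpi=-\Hol^*\eta$ and the contraction formula $\iota_{\xi^\sharp}\varpi=-d(\cdot,\xi)+\tfrac{1}{2}\Hol^*(\theta^L+\theta^R,\xi(1))$, whose boundary term vanishes exactly for based loops (giving horizontality of $\sigma+\varphi^*\varpi$) and survives for constant loops (giving Q2) --- is precisely what \cite{AMM} establish via an explicit formula for $\varpi$, so your sketch is sound.
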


\begin{remark} 
Since $\Lg^*$ is contractible, the pullback $\Hol^*\eta$ of the canonical (closed) $3$-form on $G$ is exact.  The $2$-form $\varpi\in \Omega^2(\Lg^*)$ appearing in the above Theorem is a particular choice of primitive for $-\Hol^*\eta$ that is $LG$-invariant. See \cite{AMM} for details.
\end{remark}

\section{Pre-quantization of Hamiltonian $LG$-actions}

Let $\mathcal{M}$ be a Hamiltonian $LG$-space at level $\lambda$.
It is convenient to view the  $LG$-space $\mathcal{M}$ as an $\LGhat$ space
where the central circle acts trivially.  With this viewpoint, the moment map for the $\LGhat$-action is  $(\varphi,\lambda):\mathcal{M} \to \Lghat^*$,  defined by $m\mapsto (\varphi(m),\lambda)$.  Obviously, $\Lg^*$ therefore comes equipped with the level $\lambda$-action, so that $(\varphi,\lambda)$ is $\LGhat$-equivariant.

Pre-quantization in the context of Hamiltonian loop group actions appears in Definition \ref{def:preqloopgroupaction}.  As discussed at the  beginning of Section \ref{sec:quasi}, a pre-quantum line bundle for an ordinary Hamiltonian $G$-space $(M,\sigma,\Phi)$ is a $G$-equivariant complex line bundle over $M$ whose equivariant curvature class is $[\sigma_G]\in H^2_G(M;\R)$.  Considering instead the associated principal $U(1)$-bundle, a pre-quantization may also be defined as a $G$-equivariant principal $U(1)$-bundle over $M$, equipped with a $G$-invariant connection $\theta \in \Omega^1(P)^G$ whose equivariant curvature class is $[\sigma_G]$.  Recall that to say that $\sigma_G(\zeta)=\sigma+(\Phi,\zeta)$ is the equivariant curvature of  $\theta$ is to require that $d_G\theta=-\pi^*\sigma_G$.  Since 
$$
(d_G\theta)(\zeta)=d(\theta(\zeta))-\iota_\zeta \theta(\zeta) = d\theta -\theta(\zeta^\sharp),
$$
and 
$$
(\pi^*(\sigma_G))(\zeta)=\pi^*\sigma+ \pi^*(\Phi,\zeta),
$$
 the invariant connection $\theta$ must satisfy both $d\theta=-\pi^*\sigma$, and $\pi^*(\Phi,\zeta)=\theta(\zeta^\sharp)$.

\begin{defn} \label{def:preqloopgroupaction} Let  $(\mathcal{M},\sigma,\varphi)$ be a Hamiltonian $LG$-space at level $\lambda$.  A pre-quantization of $(\mathcal{M},\sigma,\varphi)$ is an $\LGhat$-equivariant
principal $U(1)$-bundle $\pi:P\to \mathcal{M}$ equipped with an $\LGhat$-invariant connection $\theta$  satisfying  $\pi^*\sigma= -d\theta $, and $\pi^*((\varphi,\lambda),(\xi,a))= \theta((\xi,a)^\sharp)$, for all  $(\xi,a)$ in $\Lghat$.
\end{defn}

Since the central circle $S^1\subset \LGhat$ acts trivially on $\mathcal{M}$, it therefore acts on the fibres of $\pi:P\to \mathcal{M}$. That is, there is homomorphism $S^1 \to U(1)$ from the central circle of $\LGhat$ to the structure group of $\pi:P\to \mathcal{M}$, $z\mapsto z^k$. At the level of Lie algebras, this gives the homomorphism $a\mapsto ka$, where $a\in \R=Lie(S^1)=u(1)$.

 If $(0,a)\in \Lghat$, then a pre-quantization of $(\mathcal{M}, \sigma, \varphi)$ requires that $\pi^*((\varphi,\lambda),(0,a))= \theta((0,a)^\sharp)$, or equivalently, $\lambda a = \theta((0,a)^\sharp)$.  Let $\frac{\partial}{\partial t}=1^\sharp$ denote the generating vector field $1\in u(1)$ of the action of the structure group, so that $\theta(\frac{\partial}{\partial t}) =1$.  Since the central circle acts via the homomorphism $z\mapsto z^k$, it follows that $(0,a)^\sharp=(ka)^\sharp$, where the left side of the equality is the generating vector field for the central circle action, and the right side is the generating vector field for the action of the structure group of $\pi:P\to \mathcal{M}$.  That is, $(0,a)^\sharp =ka\frac{\partial}{\partial t}$, and hence $\theta((0,a)^\sharp) =ka$. This shows that a pre-quantization requires that the level $\lambda$ be  the weight of the central circle action on the fibres of $P$. In particular, in order for a pre-quantization to exist, the underlying level $\lambda$ must be an integer.

A pre-quantization of a Hamiltonian $LG$-space also places a restriction on the symplectic form $\sigma$. Indeed, the condition $d\theta=-\pi^*\sigma$ says that $\sigma$ is the curvature of $\theta$; therefore, analogous to the pre-quantization of symplectic manifolds, the cohomology class $[\sigma]\in H^2(\mathcal{M};\R)$ must have an integral lift, namely the Chern class $c(P) \in H^2(\mathcal{M};\Z)$ of the bundle $ \pi:P\to \mathcal{M}$.

The following Proposition relates the distinguished cohomology classes $[\sigma] \in H^2(\mathcal{M};\R)$, and $[(\omega,\eta)]\in H^3(\phi;\R)$, which play a major role in the pre-quantization of Hamiltonian loop group actions, and their quasi-Hamiltonian counterparts.

\begin{prop} \label{prop:preqloopgroup1}
Let $(M,\omega,\phi)$ be a quasi-Hamiltonian $G$-space, and let $(\mathcal{M},\sigma,\varphi)$ be the corresponding Hamiltonian $LG$-space.  For any coefficient ring $R$, there is a natural isomorphism 
$\alpha:H^3(\phi;R)\to H^2(\mathcal{M};R)$.  When $R=\R$,  $\alpha[(\omega,\eta)]= [\sigma]$.
\end{prop}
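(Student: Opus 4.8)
The plan is to build $\alpha$ from the pull-back square relating the Hamiltonian $LG$-space to the quasi-Hamiltonian $G$-space, and then to prove it is an isomorphism by a five-lemma comparison of short exact sequences. First I would record the geometry: since $\Lg^*$ is contractible, the square with legs $\varphi,\Hol$ (top, right) and $q,\phi$ (left, bottom) exhibits $q:\mathcal{M}\to M$ as the principal $\Omega G$-bundle classified by $\phi:M\to G\simeq B\Omega G$, equivalently $\mathcal{M}$ as the homotopy fibre of $\phi$, so that $\mathcal{M}\to M\toby{\phi}G$ is a fibration sequence with simply connected base. I would then define $\alpha$ as the composite $H^3(\phi;R)\toby{(q,\Hol)^*}H^3(\varphi;R)\toby{\ \cong\ }H^2(\mathcal{M};R)$, where the first arrow is induced on relative cohomology by the square and the second is the isomorphism coming from the long exact sequence of $\varphi$ together with $\tilde H^{*}(\Lg^*)=0$. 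Concretely, on cochains $\alpha$ sends the class of a relative cocycle $(\beta,\gamma)$ — so $d\beta+\phi^*\gamma=0$ and $d\gamma=0$ — to $[\,q^*\beta+\varphi^*\rho\,]$, where $\rho$ is any primitive of $\Hol^*\gamma$ on the acyclic space $\Lg^*$; the usual checks show this is independent of $\rho$ and of the chosen representative, and the whole construction is natural in $R$.

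To prove $\alpha$ is an isomorphism I would compare two short exact sequences. Since $G$ is simply connected, $H^1(G;R)=H^2(G;R)=0$, so the relative cohomology long exact sequence of $\phi$ collapses to
$$0\to H^2(M;R)\toby{\iota}H^3(\phi;R)\toby{\delta}\ker\!\big(\phi^*:H^3(G;R)\to H^3(M;R)\big)\to 0,$$
and in the Serre spectral sequence of $\mathcal{M}\to M\toby{\phi}G$ the coefficients are untwisted and the columns $p=1,2$ vanish, so in the relevant range only the transgression $d_3$ acts; the edge homomorphisms then give $\ker d_3=E_\infty^{0,2}=H^2(M;R)$ (included via $q^*$) and $\mathrm{im}\,d_3=\ker(\phi^*:H^3(G;R)\to H^3(M;R))$, hence
$$0\to H^2(M;R)\toby{q^*}H^2(\mathcal{M};R)\toby{d_3}\ker\!\big(\phi^*:H^3(G;R)\to H^3(M;R)\big)\to 0.$$
By construction $\alpha\circ\iota=q^*$, and the remaining compatibility needed is $d_3\circ\alpha=\delta$; granting this, the two sequences form a commutative ladder with identity maps at both ends, and the five lemma shows $\alpha$ is an isomorphism. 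The identity $d_3\circ\alpha=\delta$ — which says that the connecting homomorphism of the relative complex of $\phi$ coincides with the Serre transgression of the fibration $\mathcal{M}\to M\to G$ — is the technical heart, and I expect it to be the main obstacle. I would establish it either by passing to a cochain model in which the Serre filtration of $S^*(M)$ refines the relative complex $S^*(\phi)$, or, using naturality together with the fact that $\mathcal{M}\to M\to G$ is pulled back from $\Omega G\to\Lg^*\to G$, by reducing to the universal case $M=\mathrm{pt}$, $\mathcal{M}=\Omega G$, in which $\alpha$, $\delta$ and $d_3$ are all the classical loop-space transgression isomorphism $H^3(G;R)\cong H^2(\Omega G;R)$.

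Finally, for $R=\R$ the equality $\alpha[(\omega,\eta)]=[\sigma]$ drops out of the cochain formula. The pair $(\omega,\eta)$ is a relative $3$-cocycle for $\phi$ by condition $\mathrm{Q}1$ ($d\omega+\phi^*\eta=0$), and by Theorem \ref{thm:equivalence} and the remark following it the primitive $\varpi\in\Omega^2(\Lg^*)$ satisfies $d\varpi=-\Hol^*\eta$ together with $q^*\omega=\sigma+\varphi^*\varpi$. Taking $\rho=-\varpi$ as the primitive of $\Hol^*\eta$ gives $\alpha[(\omega,\eta)]=[\,q^*\omega-\varphi^*\varpi\,]=[\sigma]$, as required.
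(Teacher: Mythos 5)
Your proposal is correct and follows essentially the same route as the paper: $\alpha$ is defined as the composite $H^3(\phi;R)\to H^3(\varphi;R)\cong H^2(\mathcal{M};R)$ using contractibility of $\Lg^*$, the isomorphism is obtained by a five-lemma comparison of the relative long exact sequence of $\phi$ with the Serre exact sequence of the fibration sequence $\mathcal{M}\to M\to G$, and the identity $\alpha[(\omega,\eta)]=[\sigma]$ is the same cochain computation with the primitive $\varpi$. The one step you defer --- the compatibility of the connecting map $\delta$ with the transgression $d_3$ --- is precisely where the paper does its real work: it forms the cofibration sequence $M\to C_q\to\Sigma\mathcal{M}$, identifies the transgression via Theorem 11.9.6 of \cite{S}, and uses Ganea's theorem to show $C_q\to G$ is a homotopy equivalence in dimensions $\leq 3$; your filtration-based strategy would also work, but the ``reduce to the universal case by naturality'' variant needs care, since a general class in $H^3(\phi;R)$ is not pulled back from the path--loop fibration.
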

\begin{proof} Recall the following pullback diagram that relates Hamiltonian $LG$-space $\mathcal{M}$ to the quasi-Hamiltonian $G$-space $M=\mathcal{M}/\Omega G$.
\begin{diagram}
\mathcal{M} & \rTo^{\varphi} & \Lg^* \\
\dTo<q      &              &  \dTo>{\Hol} \\
M           & \rTo^{\phi}  & G \\
\end{diagram}
Note that since $\Hol:\Lg^*\to G$ is the universal $\Omega G$-bundle over $G$, $\mathcal{M}$ is the homotopy fibre of $\phi$.  

Since the long exact sequence in cohomology associated to a map of spaces is natural, the above commutative square induces maps $(q,\Hol)^*:H^*(\phi)\to H^*(\varphi)$.  Since $\Lg^*$ is contractible, $H^q(\mathcal{M})\toby\cong H^{q+1}(\varphi)$ in one of those long exact sequences.  Another way of saying this is to observe that the above diagram induces a map of homotopy cofibres $C_\phi \to C_\varphi$, and hence a map $H^*(C_\varphi) \to H^*(C_\phi)$.  And since $\Lg^*$ is contractible, $C_\varphi \approx \Sigma \mathcal{M}$, so that $H^{q+1}(C_\varphi) \cong H^{q+1}(\Sigma \mathcal{M}) \cong H^q(\mathcal{M})$.

Let $\alpha:H^3(\phi) \to H^2(\mathcal{M})$ be the composition $H^3(\phi) \to H^3(\varphi) \cong H^2(\mathcal{M})$.  That  $\alpha$ is an isomorphism will follow from  the following commutative diagram with exact rows (by the five-lemma). 
\begin{equation} \label{diagram:alpha}
\begin{diagram}
0 & \rTo & H^2(M) & \rTo & H^3(\phi) & \rTo & H^3(G) & \rTo & H^3(M) & \rTo &\cdots \\
& & \dEquals & & \dTo>\alpha & & \dEquals & &\dEquals & &\\
0 & \rTo & H^2(M) & \rTo & H^2(\mathcal{M}) & \rTo^\tau & H^3(G) & \rTo & H^3(M) & \rTo &\cdots \\
\end{diagram}
\end{equation}
It will be verified next that the above diagram commutes.

A quick description of the diagram is in order.  The top row of the diagram is the long exact sequence in cohomology associated to $\phi$, and the bottom row is the Serre exact sequence for the fibration sequence $\mathcal{M} \to M\toby\phi G$ (which terminates after degree $3$).  The map labeled $\tau$ is the transgression in that spectral sequence.

Since the composition $\mathcal{M} \toby{q} M \to G$ is null homotopic, by Proposition \ref{prop:extend} there is an induced map from the mapping cone $C_q \to G$ that fits in the following diagram whose rows are cofibration sequences. (The map $\Sigma \mathcal{M} \to C_\phi$ is an induced map of cofibres.)
\begin{diagram}
M & \rTo & C_q & \rTo & \Sigma \mathcal{M} \\
\dEquals & & \dTo & & \dTo \\
M & \rTo^\phi & G & \rTo & C_\phi
\end{diagram}
Rewriting $H^3(\Sigma\mathcal{M})\cong H^2(\mathcal{M})$, this yields the two exact rows of the following commutative diagram.
\begin{diagram}
\cdots & \rTo & H^2(M) & \rTo & H^3(\phi) & \rTo & H^3(G) & \rTo & H^3(M)& \rTo & \cdots \\
& & \dEquals & & \dTo &\ruTo<\tau & \dTo & &\dEquals & &\\
\cdots & \rTo & H^2(M) & \rTo & H^2(\mathcal{M}) & \rTo & H^3(C_q) & \rTo & H^3(M) & \rTo & \cdots\\
\end{diagram}
Theorem 11.9.6 in \cite{S}, says that the transgression $\tau:H^2(\mathcal{M})\to H^3(G)$ makes the lower triangle in the above diagram commute.  
As will be verified shortly, the induced map $H^3(G)\to H^3(C_q)$ is an isomorphism.  Therefore, the upper triangle commutes as well, which shows that the diagram \ref{diagram:alpha} commutes.

The map $C_q \to G$  is actually a homotopy equivalence in dimensions $\leq$ 3.  One way to see this is to use Ganea's Theorem (see \cite{S}), which describes the homotopy type of the homotopy fibre of $C_q\to G$ as $\Sigma(\mathcal{M}\wedge \Omega G)$.  Since $\Omega G$ is simply connected, and $\mathcal{M}$ is connected, $\Sigma (\mathcal{M} \wedge \Omega G)$ is at least $3$-connected.  Therefore, $C_q \to G$ is a homotopy equivalence in dimensions $\leq$ 3, as stated.

Finally, to check that $\alpha[(\omega,\eta)]=[\sigma]$ when $R=\R$, it suffices to show that under the map $H^3(\phi;\R) \to H^3(\varphi;\R)$, $[(\omega,\eta)]\mapsto [(\sigma,0)]$. Indeed,
\begin{align*}
 (q,\Hol)^*[(\omega,\eta)] & = [(q^*\omega,\Hol^*\eta)]\\
		& = [(\sigma +\varphi^*\varpi,d\varpi)] \\
		& = [(\sigma,0)+d(0,\varpi)] \\
		& = [(\sigma,0)] 
\end{align*}
\end{proof}

\begin{remark}
The proof of Proposition \ref{prop:preqloopgroup1} actually shows that $[\sigma]$ transgresses to $[\eta] \in H^3(G;\R)$.  This reaffirms that a necessary condition for pre-quantization is that the underlying level $\lambda$ be an integer. 
\end{remark}

Note that the preceding proposition shows that the cohomology class $[\sigma] \in H^2(\mathcal{M};\R)$ has an integral lift if and only if $[(\omega,\eta)] \in H^3(\phi;\R)$ does as well.  By Proposition \ref{prop:eq-pqispq}, it is therefore expected that $(\mathcal{M},\sigma,\varphi)$ admits a pre-quantization if and only if $[\sigma]$ is integral.  This is indeed the case, as will be shown next.  

\begin{prop} \label{prop:preqloopgroup2}
Let $(\mathcal{M},\sigma,\varphi)$ be a Hamiltonian $LG$-space at level $k\in \N$. If $[\sigma]$ is integral, then $(\mathcal{M},\sigma,\varphi)$ admits a pre-quantization.
\end{prop}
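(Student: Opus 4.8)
The plan is to produce the $\widehat{LG}$-equivariant principal $U(1)$-bundle $P\to\mathcal{M}$ with connection directly from the integrality hypothesis, by first constructing a non-equivariant pre-quantum bundle and then upgrading it to an equivariant one using the central extension. First I would invoke the hypothesis that $[\sigma]\in H^2(\mathcal{M};\R)$ is integral: choose an integral lift $c\in H^2(\mathcal{M};\Z)$ and let $\pi:P\to\mathcal{M}$ be a principal $U(1)$-bundle with $c(P)=c$, equipped with a connection $\theta_0\in\Omega^1(P)$ whose curvature is $-\sigma$ (i.e. $d\theta_0=-\pi^*\sigma$, possible by Weil's theorem since $[\sigma]$ is the real image of an integral class). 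This handles condition $\pi^*\sigma=-d\theta$ of Definition \ref{def:preqloopgroupaction}; what remains is to arrange the $\widehat{LG}$-equivariance and the moment-map condition $\pi^*((\varphi,\lambda),(\xi,a))=\theta((\xi,a)^\sharp)$.

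Next I would set up the $\widehat{LG}$-action on $P$. Since $\mathcal{M}$ is the pullback of the universal $\Omega G$-bundle $\mathrm{Hol}:\Lg^*\to G$ along $\phi:M\to G$, and the central extension $\widehat{LG}\to LG$ restricts over $\Omega G\subset LG$ to a (topologically trivial once we are at level $k$ with the bundle chosen correctly) extension, the idea is to lift the $LG$-action on $\mathcal{M}$ to a $\widehat{LG}$-action on $P$ in which the central $S^1$ acts by weight $k$ on fibres. The obstruction to such a lift lives in a group-cohomology / equivariant-cohomology term; here is where one uses that the level is $k$ and that $[\sigma]$ transgresses to $[\eta]=k[\eta_1]\in H^3(G;\R)$ (Remark after Proposition \ref{prop:preqloopgroup1}), matching the Chern class $c(\widehat{LG})=k\cdot 1$ of the central extension. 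Concretely I would compare $P$ with the pullback to $\mathcal{M}$ of the bundle $\widehat{LG}\to LG$ under $\varphi$ composed with the orbit map, show the two bundles have the same Chern class (both restrict to $k$ times a generator on the $\Omega G$-orbits, by Proposition \ref{prop:preqloopgroup1} and the level-$k$ normalization), and transport the $\widehat{LG}$-action accordingly; averaging $\theta_0$ over the (compact) group $G=LG/\Omega G$ and correcting along the $\Omega G$-directions using $\varphi$ produces an invariant connection $\theta$.

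Finally I would verify the moment-map identity. Having arranged $d\theta=-\pi^*\sigma$ and $\widehat{LG}$-invariance, the form $\mu(\xi,a):=\theta((\xi,a)^\sharp)-\pi^*((\varphi,\lambda),(\xi,a))$ is closed and basic (its exterior derivative vanishes by the invariance of $\theta$ together with condition (2) of the definition of a Hamiltonian $LG$-space, $\iota_{\xi^\sharp}\sigma=-d(\varphi,\xi)$), hence descends to a function on $\mathcal{M}$; one then checks it is constant and can be normalized to zero by adjusting $\theta$ by the pullback of a closed $1$-form, exactly as in the finite-dimensional case (Section \ref{sec:quasi}). For the central directions $(0,a)$ the computation in the paragraph preceding Definition \ref{def:preqloopgroupaction} already shows $\theta((0,a)^\sharp)=ka=\lambda a$ once $\lambda=k$ is the weight, which pins down the normalization and forces $\mu\equiv 0$.

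\textbf{Main obstacle.} The hard part will be the passage from a bare pre-quantum $U(1)$-bundle to an \emph{equivariant} one for the infinite-dimensional group $\widehat{LG}$ — i.e. showing the action lifts with the correct central weight. This is where the level enters essentially: one must identify the relevant obstruction class and see it vanishes precisely because the chosen integral lift of $[\sigma]$ is compatible (via transgression) with the Chern class $k\cdot 1$ of the distinguished central extension. I expect this to require care with the Banach-manifold technicalities that the chapter explicitly sets aside, so in the write-up I would either cite \cite{AM} for this lifting step or carry it out at the level of the pullback square relating $\mathcal{M}$, $\Lg^*$, $M$, and $G$, reducing everything to the already-established finite-dimensional statement (Proposition \ref{prop:eq-pqispq}) together with the contractibility of $\Lg^*$.
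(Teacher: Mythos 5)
Your first step (choosing a principal $U(1)$-bundle with connection whose curvature is $\sigma$, using the integral lift) matches the paper, but the core of your argument --- producing the $\LGhat$-equivariant structure --- is exactly where the gap lies, and you have flagged it as the main obstacle without actually closing it. Comparing Chern classes of $P$ with a pullback of $\LGhat\to LG$ cannot by itself transport the group action: equality of Chern classes yields an isomorphism of bundles, not a canonical or equivariant one, and there is in any case no natural map $\mathcal{M}\to LG$ along which to pull back $\LGhat$ (the moment map lands in $\Lg^*$, and an ``orbit map'' requires base-point choices on each orbit). Likewise, averaging $\theta_0$ over $G=LG/\Omega G$ presupposes the very action on $P$ you are trying to construct. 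Your final normalization step is also delicate: adjusting $\theta$ by a pullback of a closed $1$-form changes $\theta((\xi,a)^\sharp)$ by a $\xi$-dependent function, so one cannot in general kill all the constants $\mu(\xi,a)$ simultaneously this way; they form a Lie algebra cocycle, and with the correct construction this verification is unnecessary because the identity holds on the nose.

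The paper closes the gap infinitesimally rather than obstruction-theoretically. One writes down Kostant's explicit formula
$$(\xi,a)^\sharp = \mathrm{Lift}((\xi,a)^\sharp) + ((\pi^*\varphi,k),(\xi,a))\tfrac{\partial}{\partial t},$$
which defines a $\Lghat$-action on $P$ preserving $\theta$ and satisfying the moment-map identity by construction (the vertical component is precisely the required pairing, and $\theta$ vanishes on horizontal lifts). The two remaining points are then (i) that this Lie algebra action integrates to an action of the group, which holds because $\LGhat$ --- the extension at the basic level --- is connected and \emph{simply connected}; this is where the normalization of the central extension enters, via Proposition \ref{prop:simplyconnectedloopgroup}, not via a vanishing group-cohomology obstruction; and (ii) that the generating vector fields on $P$ are complete, which follows because any finite piece of a maximal integral curve lies in a compact cylinder $\pi^{-1}(c([t_1,t_2]))$ over a piece of an integral curve in $\mathcal{M}$. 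If you want to salvage your outline, replace the Chern-class comparison and averaging by this infinitesimal lift; the transgression statement you invoke (that $[\sigma]$ transgresses to $k[\eta_1]$) is recorded in the remark after Proposition \ref{prop:preqloopgroup1} and is a consequence of the setup, not the mechanism that produces the action.
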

\begin{proof} Since $[\sigma]$ is integral, there is a principal $U(1)$-bundle $P\to \mathcal{M}$ with a connection $\theta$ whose curvature is $\sigma$.  It remains to show that this bundle admits a lifting of the $\LGhat$-action to $P$ that leaves $\theta$ invariant and satisfies
\begin{equation} \label{second}
\pi^*((\varphi,\lambda),(\xi,a))= \theta((\xi,a)^\sharp),
\end{equation}
 for all  $(\xi,a)$ in $\Lghat$.  The formula
$$
(\xi,a)^\sharp = \mathrm{Lift}((\xi,a)^\sharp) + 
				((\pi^*\varphi,k),(\xi,a))\frac{\partial}{\partial t}	 
$$
due to Kostant \cite{Kos} provides a lift of the $\Lghat$-action on $\mathcal{M}$ to an $\Lghat$-action on $P$, which preserves $\theta$. Here, $\mathrm{Lift}(X)$ denotes the horizontal lift of the vector field $X$ on $\mathcal{M}$ determined by the connection $\theta$.  If this $\Lghat$-action on $P$ integrates to a $\LGhat$-action, then condition \ref{second} is automatically satisfied. 

Since $\LGhat$ is connected and simply connected, the $\Lghat$-action on $P$ integrates to an $\LGhat$-action, provided the generating vector fields $(\xi,a)^\sharp$ on $P$ are complete.  To see that this is the case, by Proposition 4.1.19 in \cite{AMR}, it suffices to check that for every maximal integral curve $\gamma(t)$ on $P$ and every finite interval $(a,b)$ on which $\gamma$ is defined, $\gamma((a,b))$ lies in some compact subset of $P$.  Since the projection of $\pi\circ \gamma$ is an integral curve of a complete vector field on $\mathcal{M}$, $\pi(\gamma((a,b))$ lies in some (closed) piece of a maximal integral curve $c(t)$ on $\mathcal{M}$, say $c([t_1,t_2])$. Therefore $\gamma((a,b))$ is contained in the cylinder $\pi^{-1}(c([t_1,t_2]))$, which is compact. 
\end{proof}

Propositions \ref{prop:preqloopgroup1} and \ref{prop:preqloopgroup2} may be summarized as the main result of this chapter.

\begin{thm} \cite {AM} Under the correspondence in Theorem \ref{thm:equivalence}, a pre-quantization of the quasi-Hamiltonian $G$-space $(M,\omega,\phi)$ corresponds to a pre-quantization of the Hamiltonian $LG$-space $(\mathcal{M},\sigma,\varphi)$. 
\end{thm}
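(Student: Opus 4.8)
The statement to be proven is that under the equivalence of Theorem~\ref{thm:equivalence}, a pre-quantization of a quasi-Hamiltonian $G$-space $(M,\omega,\phi)$ corresponds to a pre-quantization of the associated Hamiltonian $LG$-space $(\mathcal{M},\sigma,\varphi)$. Since both notions of pre-quantization live in the excerpt as integral lifts of distinguished cohomology classes (Definition~\ref{preq} for the quasi-Hamiltonian side; for the loop group side, the condition $d\theta=-\pi^*\sigma$ combined with Definition~\ref{def:preqloopgroupaction} forces the Chern class of $P$ to be an integral lift of $[\sigma]$), the plan is essentially to stitch together Propositions~\ref{prop:preqloopgroup1} and~\ref{prop:preqloopgroup2}, which have been set up precisely for this purpose.

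First I would recall that, by Proposition~\ref{prop:eq-pqispq} (using that $G$ is compact, connected, simply connected throughout Chapter~\ref{chapter:loopgroups}), $(M,\omega,\phi)$ admits a pre-quantization if and only if $[(\omega,\eta)]\in H^3(\phi;\R)$ is integral, i.e.\ lies in the image of $\iota_\R:H^3(\phi;\Z)\to H^3(\phi;\R)$. Next, Proposition~\ref{prop:preqloopgroup1} provides a natural isomorphism $\alpha:H^3(\phi;R)\to H^2(\mathcal{M};R)$ for any coefficient ring $R$, compatible with the coefficient homomorphisms (naturality in $R$), and with $\alpha[(\omega,\eta)]=[\sigma]$ when $R=\R$. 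Because $\alpha$ is a natural isomorphism, $[(\omega,\eta)]$ is integral in $H^3(\phi;\R)$ if and only if $[\sigma]$ is integral in $H^2(\mathcal{M};\R)$. Finally, Proposition~\ref{prop:preqloopgroup2} shows that integrality of $[\sigma]$ is \emph{sufficient} for $(\mathcal{M},\sigma,\varphi)$ to admit a pre-quantization, while the discussion preceding Proposition~\ref{prop:preqloopgroup2} (the curvature condition $d\theta=-\pi^*\sigma$ exhibits the Chern class $c(P)$ as an integral lift of $[\sigma]$) shows that it is \emph{necessary}. Chaining these equivalences gives: $(M,\omega,\phi)$ is pre-quantizable $\iff$ $[(\omega,\eta)]$ integral $\iff$ $[\sigma]$ integral $\iff$ $(\mathcal{M},\sigma,\varphi)$ is pre-quantizable.

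To upgrade this from a statement about \emph{existence} of pre-quantizations to a genuine \emph{correspondence}, I would then trace through the constructions explicitly: given a pre-quantization $\alpha\in H^3_G(\phi;\Z)$ of $(M,\omega,\phi)$, one passes (using Lemma~\ref{lemma:simplyG1} and Proposition~\ref{lemma:simplyG2} to identify equivariant with non-equivariant cohomology, as in the proof of Proposition~\ref{prop:eq-pqispq}) to an integral lift of $[(\omega,\eta)]$, then applies the integral isomorphism $\alpha$ of Proposition~\ref{prop:preqloopgroup1} to obtain an integral lift of $[\sigma]$, i.e.\ a class in $H^2(\mathcal{M};\Z)$; this class is realized as the Chern class of a principal $U(1)$-bundle $P\to\mathcal{M}$, which by Proposition~\ref{prop:preqloopgroup2} (via Kostant's lifting formula and the completeness-of-vector-fields argument integrating the $\widehat{\Lg}$-action to an $\widehat{LG}$-action) carries the required invariant connection $\theta$ and moment data. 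Conversely, a pre-quantization of $(\mathcal{M},\sigma,\varphi)$ yields $c(P)\in H^2(\mathcal{M};\Z)$, hence via $\alpha^{-1}$ an integral lift of $[(\omega,\eta)]$, hence a pre-quantization of $(M,\omega,\phi)$.

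The main obstacle, as I see it, is not in any single computation but in making the word ``corresponds'' precise: one should check that the two passages just described are mutually inverse, or at least that the induced maps on the relevant sets of pre-quantizations (torsors under $H^2_G(M;\Z)\cong H^2(M;\Z)$ on one side, and under $H^1(\mathcal{M};\Z)$ or the appropriate flat-bundle group on the other) are bijective and equivariant for these torsor structures. This hinges on the naturality of $\alpha$ with respect to the coefficient sequence $0\to\Z\to\R\to\R/\Z\to0$ and on the classification of $U(1)$-bundles by $H^2(-;\Z)$ together with the fact that the choice of bundle within an isomorphism class is immaterial (as in Remark~6.56 of \cite{GGK}, cited earlier). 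Given that Propositions~\ref{prop:preqloopgroup1} and~\ref{prop:preqloopgroup2} already do the substantive work, I expect this bookkeeping to go through without surprises, and the proof to be short.
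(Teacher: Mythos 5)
Your proposal is correct and follows essentially the same route as the paper: the theorem is presented there simply as a summary of Propositions \ref{prop:preqloopgroup1} and \ref{prop:preqloopgroup2} (together with Proposition \ref{prop:eq-pqispq} and the necessity observation preceding Proposition \ref{prop:preqloopgroup2}), which is exactly the chain of equivalences you assemble. Your closing paragraph on making ``corresponds'' precise as a bijection of torsors goes beyond what the paper records, but it is a refinement of the same argument rather than a different one.
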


\appendix

\chapter{Cohomology of compact simple Lie groups}\label{chapter:app}

This appendix lists the cohomology Hopf algebras for compact simple Lie groups $G$ that are not simply connected with coefficients in $\Z_p$, where $p$ is a prime that divides the order $\pi_1(G)$.  Note that  $\hat{\phantom{x}}$ denotes omission, and that the degree of a cohomology class listed as $x_i$ is given by the subscript $i$.

Also, recall that the \emph{reduced coproduct} $\bar\mu^*$ is defined by the equation $\mu^*(x) = x\otimes 1 + 1 \otimes x + \bar\mu^*(x)$ in $H^*(G)\otimes H^*(G) \cong H^*(G\times G)$.

\begin{appthm} \label{thm:BB} \cite{BB}
Let $p$ be a prime dividing $l$.  Let $n=p^rn'$ and $l=p^sl'$
where $p$ is relatively prime to both $n'$ and $l'$. If $p\neq 2$
or $p=2$ and $s>1$, 
  $$H^*(SU(n)/\Z_l;\Zp)\cong \Zp[y_2]/(y_2^{p^r})  \otimes \Lambda(x_1, \ldots, \hat{x}_{2p^r-1}, \ldots, x_{2n-1})$$ as an algebra,
 and $\displaystyle \bar\mu^*(x_{2i-1})=\delta_{rs} x_1\otimes y_2^{i-1} +
     \sum_{j=2}^{i-1}\binom{i-1}{j-1} x_{2j-1}\otimes y_2^{i-j},\quad i\geq 2$,
and $\bar\mu^*(x_1)=\bar\mu^*(y_2)=0$.
If $p=2$ and $s=1$ then the cohomology with $\Ztwo$ coefficients
is as above, with the additional relation that $y_2=x_1^2$.
\end{appthm}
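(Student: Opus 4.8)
The statement is the Baum--Browder computation \cite{BB}, and the plan is to read off both sides from the Serre spectral sequence (Section~\ref{sec:spectralsequences}) of the fibration
\[
SU(n) \to SU(n)/\Z_l \to B\Z_l ,
\]
in which $B\Z_l$ is the base, $SU(n)/\Z_l$ the total space, and $SU(n)$ the homotopy fibre (this is the fibration classifying the central $\Z_l$-cover $SU(n)\to SU(n)/\Z_l$; compare Proposition~\ref{prop:serreexactsequence}). As in Section~\ref{sec:coholiegroup}, $\Z_l$ acts on $SU(n)$ by translations, hence trivially on cohomology, so the local system is trivial and $E_2\cong H^*(B\Z_l;\Zp)\otimes H^*(SU(n);\Zp)$. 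Here $H^*(SU(n);\Zp)=\Lambda(x_3,x_5,\ldots,x_{2n-1})$ with the $x_{2i-1}$ primitive and transgressive, while $H^*(B\Z_l;\Zp)$ is $\Lambda(x_1)\otimes\Zp[y_2]$ with $y_2$ a degree-$2$ polynomial generator when $p$ is odd, or $p=2$ and $4\mid l$, and is $\Z_2[x_1]$ with $y_2:=x_1^2$ when $p=2$ and $l\equiv 2\pmod 4$; in the last case the relation $y_2=x_1^2$ is already visible on $E_2$.

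First I would pin down the transgression $\tau(x_{2i-1})\in H^{2i}(B\Z_l;\Zp)$. By naturality of the transgression for the evident map of fibrations to $SU(n)\to ESU(n)\to BSU(n)$ covering $B\iota$, where $\iota\colon\Z_l\hookrightarrow SU(n)$ is the inclusion of the centre (such a map exists because the $SU(n)$-bundle associated to the $\Z_l$-cover $SU(n)\to SU(n)/\Z_l$ is trivial, and it is the identity on fibres), one gets $\tau(x_{2i-1})=\pm\, c_i|_{B\Z_l}$. Since $\Z_l$ acts on $\mathbb{C}^n$ as $n$ copies of a generating character, $c_i$ restricts to $\binom{n}{i}y_2^{\,i}$, and every decomposable monomial in the $c_j$ restricts to a product of such binomials. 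Writing $n=p^r n'$, Lucas' congruence gives $\binom{n}{j}\equiv 0\pmod p$ for $0<j<p^r$ and $\binom{n}{p^r}\equiv n'\not\equiv 0$; hence the restriction map $H^{2i}(BSU(n);\Zp)\to H^{2i}(B\Z_l;\Zp)$ vanishes for $i<p^r$, so $\tau(x_{2i-1})=0$ there, while $\tau(x_{2p^r-1})$ equals a unit times $y_2^{\,p^r}$. After the resulting differential $d_{2p^r}(x_{2p^r-1})=y_2^{\,p^r}$ the surviving part of $H^*(B\Z_l;\Zp)$ is $\Zp[y_2]/(y_2^{\,p^r})\otimes\Lambda(x_1)$, concentrated in degrees $\le 2p^r-1$, so every later transgression lands in $0$; a short bookkeeping argument (using that $d_{2p^r}$ is a derivation with image the ideal $(y_2^{\,p^r})$ and that the spectral sequence collapses at $E_{2p^r+1}$) shows no $x_{2i-1}$ with $i\ne p^r$ is hit. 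Thus
\[
E_\infty\cong \Zp[y_2]/(y_2^{\,p^r})\otimes\Lambda(x_1,x_3,\ldots,\hat{x}_{2p^r-1},\ldots,x_{2n-1}).
\]

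It then remains to solve the extension problem and compute the coproduct. The extension problem is trivial on the algebra level: one chooses multiplicative representatives, namely $x_1,y_2$ from $H^*(B\Z_l;\Zp)$ and the $x_{2i-1}$ ($i\ne p^r$) lifting the fibre classes, so $H^*(SU(n)/\Z_l;\Zp)$ is its associated graded as an algebra (with the surviving relation $y_2=x_1^2$ when $p=2$, $l\equiv 2\pmod 4$). For the coproduct, the group multiplication $SU(n)/\Z_l\times SU(n)/\Z_l\to SU(n)/\Z_l$ covers the $H$-space multiplication of $B\Z_l$, so the whole spectral sequence is one of Hopf algebras; $x_1$ and $y_2$ are primitive because $H^*(B\Z_l;\Zp)$ is primitively generated, and $\bar\mu^*(x_{2i-1})=\delta_{rs}\,x_1\otimes y_2^{\,i-1}+\sum_{j=2}^{i-1}\binom{i-1}{j-1}x_{2j-1}\otimes y_2^{\,i-j}$ is extracted by tracking the filtration, the binomials coming from Newton's identities relating the $c_i$ (whose restrictions carry these coefficients) to the primitive generators, and the $\delta_{rs}$ term surviving precisely when $v_p(n/l)=0$ so that the filtration-one correction is nonzero.

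I expect the main obstacle to be the degeneration step: proving that $d_{2p^r}(x_{2p^r-1})$ is a \emph{unit} multiple of $y_2^{\,p^r}$ (the Lucas computation only places it in $\Zp\, y_2^{\,p^r}$; non-vanishing requires that the indecomposable part of the universal transgression of the primitive class $x_{2p^r-1}$ is exactly $\pm c_{p^r}$) together with the verification that no other differential occurs. For $p$ odd this is clean, but the case $p=2$, $l\equiv 2\pmod 4$ needs separate care, since $H^*(B\Z_2;\Z_2)$ has no polynomial generator in degree $2$ and the bookkeeping inside $\Z_2[x_1]$ differs. A convenient cross-check throughout is that $H^*(SU(n)/\Z_l;\Q)\cong H^*(SU(n);\Q)$, which forces the Poincar\'e series of the answer and hence independently confirms that exactly one transgression can occur.
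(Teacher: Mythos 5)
First, a point of order: the thesis does not prove Theorem~\ref{thm:BB} at all --- it is quoted from Baum--Browder \cite{BB} as part of the reference list in Appendix~\ref{chapter:app} --- so there is no in-paper argument to compare yours against. That said, your setup is the standard one and is consistent with how the thesis itself manipulates this fibration (in Section~\ref{sec:interestingcase} the Serre spectral sequence of $SU(n)\to PU(n)\to B\Z_n$ is run ``backwards'' from Theorem~\ref{thm:BB}). Your identification of $E_2$, the comparison with $SU(n)\to ESU(n)\to BSU(n)$ to evaluate transgressions as restrictions of Chern classes, and the Lucas computation showing that $d_{2p^r}(x_{2p^r-1})=n'\,y_2^{p^r}$ is the only nonzero transgression are all correct; the ``unit multiple'' worry you flag is handled by Borel's transgression theorem (the universal transgression of $x_{2i-1}$ is $c_i$ modulo decomposables, and for $i\le p^r$ the decomposable indeterminacy restricts to zero on $B\Z_l$), so the $E_\infty$-term and the $y_2=x_1^2$ relation for $p=2$, $s=1$ come out as you say. (Your rational ``cross-check'' does not do what you claim, though: $H^*(SU(n)/\Z_l;\Q)$ only bounds the mod-$p$ Poincar\'e series from below; it is finite-dimensionality that forces at least one transgression.)

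Two genuine gaps remain. (i) The multiplicative extension problem is not ``trivial'': $y_2^{p^r}=0$ does follow because the relevant filtration quotients vanish, but for $p=2$ the relation $x_{2i-1}^2=0$ is not forced by $E_\infty$ --- these classes sit in filtration $0$, so $x_{2i-1}^2$ could a priori be any class of degree $4i-2$ --- and establishing the exterior-algebra structure at $p=2$ requires a separate argument (Steenrod operations, or comparison with $U(n)/\Z_l$); this is a substantial part of \cite{BB}. (ii) The reduced-coproduct formula is the actual content that Chapter~\ref{chapter:calc} consumes, and your sketch does not derive it: the $E_\infty$-page determines $\bar\mu^*(x_{2i-1})$ only modulo filtration, and the precise coefficients $\binom{i-1}{j-1}$, and above all the factor $\delta_{rs}$ in front of $x_1\otimes y_2^{i-1}$ --- which is exactly what distinguishes $PU(n)$ from $SU(n)/\Z_k$ for a proper divisor $k$ and hence drives Theorem~\ref{thm:An} --- are asserted via ``Newton's identities'' and ``tracking the filtration'' rather than computed. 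Pinning them down requires a compatible choice of the lifts $x_{2i-1}$ and a genuine calculation of the filtration-one correction terms, which is where the real work in \cite{BB} lies; as it stands your argument would not detect whether the $x_1\otimes y_2^{i-1}$ term is present or absent.
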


\begin{appthm} \cite{BB} \label{thm:PSp} Write $n=2^r n'$ where $n'$ is odd. 
 $$H^*(PSp(n);\Ztwo) \cong\Ztwo[v_1]/(v_1^{2^{r+2}} ) \otimes \Lambda(b_3,\ldots,\hat{b}_{2^{r+2}-1}, \ldots , b_{4n-1})  $$ as an algebra, and
$\displaystyle \bar\mu^*b_{4k+3}=\sum_{i=1}^{k-1} \binom{k}{i} b_{4i+3} \otimes v_1^{4k-4i}
$ for $k\geq 2$, while $\bar\mu^* b_3=0$, and $\bar\mu^* b_7=b_3\otimes v_1^4$.
\end{appthm}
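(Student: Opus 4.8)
The plan is to run the mod~$2$ Serre spectral sequence of the fibration $Sp(n)\toby{\pi}PSp(n)\toby{\rho}B\Ztwo$, an instance of the Borel-type construction of Section~\ref{sec:classifyingspaces}: since the central subgroup $\Ztwo=\ker\pi$ acts on $Sp(n)$ by translation it acts trivially on $H^*(Sp(n))$, so the coefficient system is untwisted and
$$E_2\cong H^*(B\Ztwo;\Ztwo)\otimes H^*(Sp(n);\Ztwo)\cong\Ztwo[v_1]\otimes\Lambda(b_3,b_7,\ldots,b_{4n-1}),$$
with $|v_1|=1$, converging to $H^*(PSp(n);\Ztwo)$. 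Because $E_2$ is generated as an algebra by the class $v_1$ (a permanent cycle until it is truncated) together with the exterior generators $b_{4j-1}\in E_2^{0,*}$, the Leibniz rule reduces the whole computation to the transgressions $\tau(b_{4j-1})\in H^{4j}(B\Ztwo;\Ztwo)=\Ztwo\langle v_1^{4j}\rangle$; a downward induction on degree shows the intermediate differentials $d_s(b_{4j-1})$ with $s<4j$ must vanish (their only possible targets lie in the subalgebra generated by $v_1$ and the lower $b$'s). So everything hinges on deciding for which $j$ one has $\tau(b_{4j-1})=v_1^{4j}$.

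First I would pin this down by comparison with $SU(2n)/\Ztwo$. The inclusion $Sp(n)\hookrightarrow SU(2n)$ carries the center $\{\pm I\}$ of $Sp(n)$ onto the order-$2$ subgroup of $Z(SU(2n))$ that one divides by, so it descends to a map $PSp(n)\to SU(2n)/\Ztwo$ lying over $B\Ztwo$; this map is an isomorphism on $\pi_1$ (hence sends the generator $x_1\in H^1(SU(2n)/\Ztwo;\Ztwo)$ to $v_1$), and on fibres it is the classical surjection $H^*(SU(2n);\Ztwo)\to H^*(Sp(n);\Ztwo)$ carrying the degree-$(4j-1)$ generator to $b_{4j-1}$ and the odd-index generators (degrees $\equiv 1\bmod 4$) to zero. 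Writing $n=2^rn'$ with $n'$ odd, so $2n=2^{r+1}n'$, Theorem~\ref{thm:BB} with $l=2$ gives $H^*(SU(2n)/\Ztwo;\Ztwo)\cong\Ztwo[x_1]/(x_1^{2^{r+2}})\otimes\Lambda(x_3,x_5,\ldots,\widehat{x}_{2^{r+2}-1},\ldots,x_{4n-1})$, i.e.\ in its spectral sequence the only transgressing generator is $x_{2^{r+2}-1}$, with $\tau(x_{2^{r+2}-1})=v_1^{2^{r+2}}$. Transporting along the induced (surjective) map of spectral sequences --- permanent cycles pull back to permanent cycles, and the map commutes with differentials --- then forces $d_{2^{r+2}}(b_{2^{r+2}-1})=v_1^{2^{r+2}}$ and $d_s(b_{4j-1})=0$ for every other generator. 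In particular $v_1^{2^{r+2}}=0$ while $v_1^k\neq 0$ for $k<2^{r+2}$. (For $n$ odd this is the case $r=0$: $\tau(b_3)=v_1^4$, consistent with $PSp(1)=SO(3)=\RP^3$.)

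With the differentials known, $E_\infty$ is the associated graded of $\Ztwo[v_1]/(v_1^{2^{r+2}})\otimes\Lambda(b_3,\ldots,\widehat{b}_{2^{r+2}-1},\ldots,b_{4n-1})$, and I would finish the ring statement by ruling out multiplicative extensions (again transporting from $SU(2n)/\Ztwo$, where the surviving odd generators square to zero, or using that the $Sq^i$ are constrained by Theorem~\ref{thm:BB}). For the Hopf-algebra structure: $v_1$ is primitive, being the only nonzero class in degree $1$ of a connected $H$-space. The formula for $\bar\mu^*(b_{4k+3})$ I would obtain by applying the comparison map to the coproduct of the corresponding generator $x_{2(2k+2)-1}$ of Theorem~\ref{thm:BB}, using $x_1^2\mapsto v_1^2$, $x_{4i-1}\mapsto b_{4i-1}$, $x_{4i+1}\mapsto 0$ together with the Lucas congruence $\binom{2k+1}{2i+1}\equiv\binom{k}{i}\pmod 2$; the extra $\delta$-term in Theorem~\ref{thm:BB} occurs only for $r=0$ (i.e.\ $n$ odd), where $b_3$ is omitted anyway, so it does not affect the stated formula.

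I expect the main obstacle to be the precise determination of the differentials --- establishing that for $n$ even it is \emph{exactly} the single generator $b_{2^{r+2}-1}$ that transgresses, and that its transgression is \emph{exactly} $v_1^{2^{r+2}}$; once this is secured the rest (the $E_\infty$ read-off, the absence of extensions, the coproduct bookkeeping) is routine. The comparison with $SU(2n)/\Ztwo$ and Theorem~\ref{thm:BB} is what makes that step manageable; a self-contained alternative would feed Kudo's transgression theorem $\tau\circ Sq^i=Sq^i\circ\tau$ and the known $Sq^{4i}$-action on $H^*(Sp(n);\Ztwo)$ into the spectral sequence, bootstrapping from the transgression $\tau(b_3)=v_1^4$ in the spectral sequence of $S^3\to\RP^3\to B\Ztwo$, which is supplied by the diagonal $Sp(1)\hookrightarrow Sp(n)$ (it carries $-I$ to $-I$, hence descends to $SO(3)=PSp(1)\to PSp(n)$).
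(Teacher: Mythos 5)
First, a point of order: the paper gives no proof of Theorem \ref{thm:PSp} at all --- like every statement in the appendix it is quoted verbatim from Baum--Browder \cite{BB} --- so there is no internal argument to measure yours against; you are reconstructing the proof from \cite{BB}. Your skeleton is the right one, and it is the same device the paper uses for its own computations (e.g.\ the lemma computing $\pi^*:H^3(PU(n);\Z)\to H^3(U(n);\Z)$): run the mod $2$ Serre spectral sequence of $Sp(n)\to PSp(n)\to B\Ztwo$ with untwisted coefficients and reduce everything to transgressions. But one step you treat as routine is not: your reason for the vanishing of the pre-transgression differentials on $b_{4j-1}$ (``their only possible targets lie in the subalgebra generated by $v_1$ and the lower $b$'s'') is not a closing degree count, since $d_s(b_{4j-1})$ could a priori hit classes of the form $v_1^s\otimes(\text{a product of several }b\text{'s})$. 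The clean fix, which you only gesture at in your final sentence, is that the fibration maps to the universal fibration $Sp(n)\to ESp(n)\to BSp(n)$ along $B\iota:B\Ztwo\to BSp(n)$, where $\iota$ is the central inclusion; the $b_{4j-1}$ are universally transgressive onto the symplectic Pontryagin classes $q_j$, so $\tau(b_{4j-1})=(B\iota)^*q_j=\binom{4n}{4j}v_1^{4j}$, and Lucas' theorem shows the least $j$ with $\binom{4n}{4j}$ odd is $j=2^r$. This yields the single transgression $\tau(b_{2^{r+2}-1})=v_1^{2^{r+2}}$ directly and makes the comparison with $SU(2n)/\Ztwo$ unnecessary for the ring structure (though it remains the easiest way to kill the extension problem $b_{4j-1}^2=0$).

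The genuine gap is in the coproduct bookkeeping. The comparison map $f^*:H^*(SU(2n)/\Ztwo;\Ztwo)\to H^*(PSp(n);\Ztwo)$ is controlled by the fibre map only modulo the filtration: you know $f^*(x_{4i-1})=b_{4i-1}$ and $f^*(x_{4i+1})=0$ only up to elements of the ideal $(v_1)$, so you cannot simply substitute into the coproduct of Theorem \ref{thm:BB}. Moreover the naive substitution does not reproduce the stated formula: the $j=2$ term of $\bar\mu^*(x_{4k+3})$ in Theorem \ref{thm:BB} is $\binom{2k+1}{1}x_3\otimes x_1^{4k}=x_3\otimes x_1^{4k}$, which would contribute $b_3\otimes v_1^{4k}$ to $\bar\mu^*(b_{4k+3})$; that class is nonzero whenever $2\le k<2^r$ (for instance $\bar\mu^*(b_{11})$ when $n\equiv 4\bmod 8$, which the theorem asserts is $0$) yet it is absent from the sum $\sum_{i=1}^{k-1}$ in the statement. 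So either the generators $b_{4k+3}$ must be corrected by decomposables, or the images $f^*(x_{4i+1})$ must actually be computed and shown to cancel this term. Either way, this is a real computation rather than a Lucas-congruence formality, and as written your argument lands on a formula different from the one you set out to prove.
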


Let $V(x_{i_1}, \ldots x_{i_k})$ be the commutative, associative algebra over $\Z_2$ such that the monomials $x_{i_1}^{\epsilon_1} \cdots x_{i_k}^{\epsilon_k}$ ($\epsilon_j=0$ or $1$)  form an additive basis, and $x_{i_j}^2=x_{2i_j}$ if $2i_j\in \{i_1, \ldots, i_k\}$ and zero otherwise.

\begin{appthm} \cite{BB} \label{thm:SO} $H^*(SO(n);\Z_2) \cong V(x_1, \ldots, x_{n-1})$ as an algebra, and each $x_j$ is primitive. 
\end{appthm}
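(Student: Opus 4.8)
The statement is classical (due to Borel; see also \cite{MT}), and I would organize the proof in three stages: the additive structure by a sphere-bundle induction, the algebra relations by Steenrod operations, and primitivity by a Hopf-algebra argument.

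For the additive structure I would use the bundle $SO(k)\hookrightarrow SO(k+1)\toby{p_k}S^k$, where $p_k$ records the last column, together with a cell decomposition of $SO(n)$ (realize it as an iterated sphere bundle, or use Schubert-type cells) giving $\dim_{\Z_2}H^*(SO(n);\Z_2)=2^{n-1}$. In the mod $2$ Serre spectral sequence of $p_{n-1}$ one has $E_2\cong H^*(S^{n-1};\Z_2)\otimes H^*(SO(n-1);\Z_2)$, of total dimension $2\cdot 2^{n-2}=2^{n-1}$ by induction; since the abutment has the same total dimension, all differentials vanish, the spectral sequence collapses, and $H^*(SO(n);\Z_2)\to H^*(SO(n-1);\Z_2)$ is onto. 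Descending the tower $SO(2)=S^1\subset SO(3)\subset\cdots\subset SO(n)$ and applying Leray--Hirsch at each step produces classes $x_i\in H^i(SO(n);\Z_2)$, $1\le i\le n-1$, for which the square-free monomials $x_1^{\epsilon_1}\cdots x_{n-1}^{\epsilon_{n-1}}$ ($\epsilon_i\in\{0,1\}$) form an additive basis; this identifies $H^*(SO(n);\Z_2)$ with $V(x_1,\dots,x_{n-1})$ as graded vector spaces.

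The $x_i$ are so far determined only modulo decomposables, so pinning down the products means choosing them compatibly with the Steenrod action. I would prove the Wu-type formula $\mathrm{Sq}^j x_i=\binom{i}{j}x_{i+j}$, with the convention $x_m:=0$ for $m\ge n$. To anchor it, use the map $\bar\rho\colon\RP^{n-1}\to SO(n)$, $[v]\mapsto H_vH_{e_n}$, where $H_w(x)=x-2(x\cdot w)w$ is a Householder reflection: this is a product of two reflections, hence lies in $SO(n)$, is well defined on $\RP^{n-1}$, and induces an isomorphism on $\pi_1$, so $\bar\rho^*(x_1)=t$, the generator of $H^1(\RP^{n-1};\Z_2)$. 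Since $\mathrm{Sq}^j t^i=\binom{i}{j}t^{i+j}$ in $\Z_2[t]/(t^n)$, naturality of the squares forces the Wu formula in the range detected by $\bar\rho^*$, and Borel's transgression theorem applied along the tower of sphere bundles propagates it to the remaining (odd-degree) generators after a suitable re-choice. Taking $j=i$ gives $x_i^2=\mathrm{Sq}^i x_i=x_{2i}$, which is $x_{2i}$ when $2i\le n-1$ and $0$ when $2i\ge n$ --- precisely the defining relation of $V(x_1,\dots,x_{n-1})$, so the algebra isomorphism follows.

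For primitivity I would induct on degree. Degree $1$ is automatic; if $x_i$ is primitive then so is $x_{2i}=x_i^2$, since $\mu^*$ is multiplicative and the cross terms of $(x_i\otimes 1+1\otimes x_i)^2$ vanish in characteristic $2$, giving $\bar\mu^*(x_{2i})=(\bar\mu^*x_i)^2=0$. For an odd-degree generator $x_i$ the obstruction $\bar\mu^*x_i$ lies in $\bigoplus_{0<a<i}H^a(SO(n);\Z_2)\otimes H^{i-a}(SO(n);\Z_2)$; using that $H^{<i}$ is (inductively) generated by primitives and that this diagonal is cocommutative, the standard Hopf-algebra lemma lets one write $\bar\mu^*x_i=\bar\mu^*(d)$ for a decomposable $d$ of degree $i$, so that $x_i-d$ is primitive and still extends the simple system. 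Making this last step fully rigorous --- in effect Borel's structure theorem exhibiting $H^*(SO(n);\Z_2)$ as a primitively generated Hopf algebra, and in particular the cocommutativity of its diagonal --- is where I expect the main care to be needed; the spectral-sequence collapse and the naturality computations in the first two stages are routine.
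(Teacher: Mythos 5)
The paper does not prove this statement; it is quoted from Baum--Browder \cite{BB} as background, so there is no internal argument to compare against and your sketch must stand on its own. Stages 1 and 2 follow the standard route and are essentially sound, with two caveats you partly acknowledge: the collapse of the Serre spectral sequence is not a consequence of the sphere-bundle tower alone but requires the independent input that the mod $2$ cellular boundary maps of the Miller/Schubert cell structure vanish (otherwise the dimension count $2^{n-1}$ is circular), and $\bar\rho^*$ sees $x_i$ only modulo decomposables, so the Wu formula on the nose needs the re-choice you mention.

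The genuine gap is in stage 3, for the odd-index generators. The ``standard Hopf-algebra lemma'' you invoke --- that cocommutativity plus primitive generation in lower degrees lets one correct $x_i$ by a decomposable --- is not a quotable lemma, and the cocommutativity hypothesis is itself unproved; indeed it is exactly what fails for almost every other group in this appendix (Theorems \ref{thm:BB}, \ref{thm:PO}, \ref{thm:Ss}, \ref{thm:PEseven} all exhibit asymmetric reduced coproducts, e.g.\ $\bar\mu^* x_{15}=x_5^2\otimes x_5$ in $H^*(PE_7;\Z_2)$), so no general Hopf-algebra principle can be doing the work. By Milnor--Moore \cite{MM}, primitive generation of a connected commutative Hopf algebra over $\Z_2$ is equivalent to the vanishing of the Frobenius on the augmentation ideal of the dual, i.e.\ to the Pontryagin ring $H_*(SO(n);\Z_2)$ being an exterior algebra --- which is precisely the nontrivial fact being assumed. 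Nor can Steenrod operations rescue the odd generators: $\mathrm{Sq}^j x_i=\binom{i}{j}x_{i+j}$ never produces $x_3$ from below, since $\binom{2}{1}=0$ mod $2$. The standard repair is to dualize: the multiplication map $\RP^{n-1}\times\cdots\times\RP^1\to SO(n)$ built from $\bar\rho$ is surjective on mod $2$ homology, and one computes $H_*(SO(n);\Z_2)\cong \Lambda(e_1,\ldots,e_{n-1})$ with $e_i=\bar\rho_*$ of the generator of $H_i(\RP^{n-1};\Z_2)$, the monomials $e_{i_1}\cdots e_{i_k}$ with $i_1>\cdots>i_k$ forming a basis. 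Taking $x_i$ to be the corresponding dual basis element, the identity $\langle\bar\mu^*x_i,\,a\otimes b\rangle=\langle x_i,\,ab\rangle$ gives $\bar\mu^*x_i=0$, since any product of two positive-degree classes is a sum of monomials of length at least two; this yields primitivity and cocommutativity at once, and dualizing $\Delta e_i=\sum_j e_j\otimes e_{i-j}$ recovers $x_i^2=x_{2i}$ as well.
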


\begin{appthm} \cite{BB} \label{thm:PO} Write $n=2^rn'$  where $n'$ is odd. Then
 $$H^*(PO(n);\Z_2)\cong \Z_2[v_1]/(v^{2^r}) \otimes V(u_1, \ldots, \hat{u}_{2^{r}-1} \ldots u_{n-1}) $$ as an algebra, and  $\displaystyle \bar\mu^*u_k=\sum_{i=1}^{k-1} \binom{k}{i} u_i\otimes v_1^{k-i}$, $k\geq 2$.
\end{appthm}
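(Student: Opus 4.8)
The plan is to run the Serre spectral sequence of the fibration $SO(n)\to PO(n)\to B\Z_2$, where $\Z_2=Z(SO(n))=\{\pm I\}$. First dispose of $n$ odd: there $O(n)=SO(n)\times\{\pm I\}$, so $PO(n)\cong SO(n)$, $r=0$, and the claim is exactly Theorem~\ref{thm:SO}. So assume $n$ even. Since $\Z_2$ acts on $SO(n)$ by translation it acts trivially on $H^*(SO(n);\Z_2)$ (as in Section~\ref{sec:coholiegroup}), the local system is trivial, and
\[ E_2^{p,q}\cong H^p(B\Z_2;\Z_2)\otimes H^q(SO(n);\Z_2)=\Z_2[w]\otimes V(x_1,\dots,x_{n-1}),\qquad |w|=(1,0), \]
converging to $H^*(PO(n);\Z_2)$, where $V(x_1,\dots,x_{n-1})$ is the algebra of Theorem~\ref{thm:SO} (each $x_i$ primitive, $x_i^2=x_{2i}$ or $0$).

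The heart of the matter is the differential pattern. The key point is that $SO(n)\to PO(n)\to B\Z_2$ is the pullback of the universal fibration $SO(n)\to ESO(n)\to BSO(n)$ along $Bf\colon B\Z_2\to BSO(n)$, where $f\colon\{\pm I\}\hookrightarrow SO(n)$: indeed $PO(n)=SO(n)/\{\pm I\}$ is the homotopy fibre of $Bf$. Hence our differentials are the $(Bf)^*\otimes\mathrm{id}$-images of those in the universal one, where every class is transgressive (the total space is contractible); by Borel's theorem $\{x_1,\dots,x_{n-1}\}$ is a simple system of transgressive generators with $\tau(x_k)\equiv w_{k+1}$ modulo decomposables in $H^*(BSO(n);\Z_2)=\Z_2[w_2,\dots,w_n]$. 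Since $Bf$ classifies $\mathcal L^{\oplus n}$ ($\mathcal L$ the universal real line bundle over $B\Z_2$), $(Bf)^*w_i=\binom ni w^i$; writing $n=2^rn'$ with $n'$ odd, Lucas' theorem gives $\binom ni\equiv0\pmod2$ for $0<i<2^r$ and $\binom n{2^r}\equiv1$. Therefore $(Bf)^*$ kills every polynomial in $w_{\ge2}$ of degree $<2^r$ and sends $\tau(x_{2^r-1})$ to $w^{2^r}$, so the first non-trivial differential is $d_{2^r}(x_{2^r-1})=w^{2^r}$ (reducing to $d_2(x_1)=w^2$ exactly when $r=1$, i.e.\ $n\equiv2\bmod4$, consistent with $H^1(PO(n);\Z_2)=\mathrm{Hom}(\pi_1(PO(n)),\Z_2)=\mathrm{Hom}(Z(Spin(n)),\Z_2)$). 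A short homology computation with respect to $d_{2^r}$ (it splits off a factor $\Z_2[w]/(w^{2^r})$ and replaces the $x_{2^r-1}$-tower by its square) yields $E_{2^r+1}=\Z_2[w]/(w^{2^r})\otimes V(x_1,\dots,\hat x_{2^r-1},\dots,x_{n-1})$, which vanishes in filtration degree $\ge 2^r$; hence all higher differentials vanish and $E_\infty=E_{2^r+1}$.

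It remains to resolve the extension problem and compute the coproduct. Set $v_1:=\rho^*w$ for $\rho\colon PO(n)\to B\Z_2$ the classifying map; then $v_1^{2^r}=\rho^*(w^{2^r})=0$ since $w^{2^r}$ is a boundary (equivalently, $\mathcal L^{\oplus n}$ over $PO(n)$ is trivial, so $(1+v_1)^n=1$, and Lucas forces $v_1^{2^r}=0$). Choosing lifts $u_k\in H^k(PO(n);\Z_2)$ of the surviving fibre classes $x_k$ (so $i^*u_k=x_k$, where $i\colon SO(n)\to PO(n)$ is the projection) and comparing $H^*(PO(n);\Z_2)$ with its associated graded $E_\infty$ by a dimension count gives the stated algebra isomorphism. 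For the coproduct, $\rho$ is the classifying map of a central extension, hence an $H$-map, so $v_1$ is primitive (as $w$ is primitive in $H^*(B\Z_2;\Z_2)$), giving $\bar\mu^*(v_1^k)=\sum_{i=1}^{k-1}\binom ki v_1^i\otimes v_1^{k-i}$; and since $i$ is a homomorphism, $(i\otimes i)^*\bar\mu^*u_k=\bar\mu^*x_k=0$, so $\bar\mu^*u_k$ lies in the ideal generated by $v_1\otimes1$ and $1\otimes v_1$, whence for degree reasons $\bar\mu^*u_k=\sum_{i=1}^{k-1}c_i\,u_i\otimes v_1^{k-i}$. The coefficients are pinned down to $c_i=\binom ki$ by naturality of $Sq^1$ together with $Sq^j x_k=\binom kj x_{k+j}$ in $H^*(SO(n);\Z_2)$, the omitted generator $u_{2^r-1}$ causing no trouble since $\binom k{2^r-1}\ne0$ forces the accompanying $v_1$-power to be $\ge2^r$, hence zero.

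I expect the main obstacle to be the second paragraph — establishing the exact differentials. This rests on quoting Borel's transgression theorem for $SO(n)$ with $\Z_2$ coefficients carefully (in particular, verifying that decomposable correction terms in $\tau(x_k)$ do not affect the conclusion, since $(Bf)^*$ already annihilates decomposables of degree $<2^{r+1}$ in the relevant range) and on the clean bookkeeping of $\binom ni\bmod 2$ via Lucas. A secondary difficulty is nailing the coproduct coefficients exactly, rather than merely locating $\bar\mu^*u_k$ in the correct ideal.
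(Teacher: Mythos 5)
This statement is quoted in the appendix directly from Baum--Browder \cite{BB}; the thesis gives no proof of it, so your argument can only be judged on its own terms. The spectral-sequence half of your proposal is correct and is indeed the standard route: identifying $SO(n)\to PO(n)\to B\Z_2$ as the pullback of the universal fibration along $Bf$, computing $(Bf)^*w_{i+1}=\binom{n}{i+1}w^{i+1}$, invoking Lucas to see that $d_{2^r}(x_{2^r-1})=w^{2^r}$ is the only nonzero differential (decomposables and the remaining transgressions being annihilated either by $(Bf)^*$ or by the collapse of the base row past degree $2^r$), and obtaining $E_\infty=\Z_2[w]/(w^{2^r})\otimes V(x_1,\ldots,\hat{x}_{2^r-1},\ldots,x_{n-1})$. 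The relation $v_1^{2^r}=0$ also follows at once from $E_\infty^{2^r,0}=0$, or from your $(1+v_1)^n=1$ argument.

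The genuine gaps are in your final paragraph, and they are precisely the points where \cite{BB} has to work. First, passing from $E_\infty$ to the stated algebra isomorphism is not a ``dimension count'': equal Poincar\'e series do not give an algebra isomorphism. You must exhibit lifts $u_k$ of the $x_k$ generating a subalgebra of $H^*(PO(n);\Z_2)$ actually isomorphic to $V(u_1,\ldots,\hat{u}_{2^r-1},\ldots,u_{n-1})$; the delicate relation is $u_j^{2^a}=0$ when $2^aj>n-1$, since a priori $u_j^{2^a}$ only lies in $\ker i^*=(v_1)$ and could be a nonzero multiple of a power of $v_1$. That such extensions are genuinely nontrivial in this family is shown by Theorem \ref{thm:BB} itself, where for $n\equiv 2\bmod 4$ one has the nontrivial extension $y_2=x_1^2$. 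Second, the coproduct argument contains a concrete error: from $(i\times i)^*\bar\mu^*u_k=0$ you may conclude $\bar\mu^*u_k\in (v_1)\otimes H^*+H^*\otimes(v_1)$, but ``degree reasons'' do not then force the shape $\sum_i c_i\,u_i\otimes v_1^{k-i}$ --- that ideal also contains terms such as $v_1^{j}\otimes u_{k-j}$, $u_au_b\otimes v_1^{c}$, and $u_av_1^{b}\otimes u_cv_1^{d}$. The complete absence of terms with $v_1$ on the left factor (note the asserted formula is not symmetric, and $H^*(G)$ need not be cocommutative) is a substantive fact; your appeal to $Sq^1$ and the Wu formula $Sq^jx_k=\binom{k}{j}x_{k+j}$ neither excludes these extra terms nor pins down the coefficients, not least because the coproduct of $u_k$ changes when $u_k$ is altered by an element of $(v_1)$. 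Closing these two gaps requires the inductive Hopf-algebra analysis of \cite{BB} (or an equivalent), not just the Serre spectral sequence.
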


\begin{appthm} \cite{IKT} \label{thm:Ss} Write $n=2^rn'$ where $n'$ is odd, and $r\geq 2$.  Let $s$ be defined by the inequalities $2^{s-1}<n\leq 2^s$, and let $N=\{2^r-1, 1, 2, 2^2, 2^3, \ldots, \}$. Then 
$$H^*(Ss(n);\Z_2)\cong \Z_2[y_1]/(y_1^{2^r}) \otimes V(x_i, 0<i<n, i\notin N, z_{2^{s-1}}) 
$$ as an algebra, and 
$$ \bar\mu^*x_{2j}=\sum_{k=1}^{j-1} \binom{j}{k} y_1^{2k}\otimes x_{2j-2k}, \quad \bar\mu^*x_{2j-1}=x_{2j-2}\otimes y_1 + \sum_{k=1}^{j-1} \binom{j-1}{k}y_1^{2k} \otimes x_{2j-2k-1}.
$$
\end{appthm}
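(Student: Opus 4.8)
The plan is to compute $H^*(Ss(n);\Ztwo)$ as a Hopf algebra over the Steenrod algebra by playing off the double covering $\Ztwo\to Spin(n)\xrightarrow{\pi}Ss(n)$ against the known mod-$2$ cohomology of $\SO(n)$ (Theorem~\ref{thm:SO}) and of $Spin(n)$. First I would set up the geometric data: $b\in Z(Spin(n))\cong\Ztwo\oplus\Ztwo$ is the chosen generator with $b\notin\langle a\rangle$, so $Ss(n)=Spin(n)/\langle b\rangle$ is covered by $Spin(n)$ with deck group $\Ztwo$; let $f\colon Ss(n)\to B\Ztwo=\RP^\infty$ classify this covering and put $y_1=f^*(t)$ with $t\in H^1(\RP^\infty;\Ztwo)$ the generator. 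Since $\pi_1(Ss(n))\cong\Ztwo$, the class $y_1\neq0$. I would also fix a maximal torus $\bar T\subset Ss(n)$ covered by $T\subset Spin(n)$ as a faithful test object, and note the commuting square of double covers $Spin(n)\to SO(n)$, $Spin(n)\to Ss(n)$, $SO(n)\to PO(n)$, $Ss(n)\to PO(n)$, whose known entry $H^*(PO(n);\Ztwo)$ (Theorem~\ref{thm:PO}) will serve as a cross-check.

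Step 1 (algebra structure). Run the Cartan--Leray spectral sequence of the covering, i.e. the Serre spectral sequence of the Borel fibration $Spin(n)\to Ss(n)\xrightarrow{f}B\Ztwo$, with $E_2^{p,q}=H^p(\RP^\infty;\Ztwo)\otimes H^q(Spin(n);\Ztwo)=\Ztwo[y_1]\otimes H^*(Spin(n);\Ztwo)$ converging to $H^*(Ss(n);\Ztwo)$. The bottom row contributes the powers of $y_1$; the relation $y_1^{2^r}=0$ is forced once one identifies the first transgressive generator of $H^*(Spin(n);\Ztwo)$ hitting a power of $y_1$ --- I expect a class in degree $2^r-1$ to transgress to $y_1^{2^r}$, which simultaneously removes $x_{2^r-1}$ from the generators and imposes $y_1^{2^r}=0$. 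I would pin this down by restricting $y_1$ along $\bar T\hookrightarrow Ss(n)$, where it becomes an explicit mod-$2$ class whose exact nilpotence order is $2^r$ precisely because $n=2^rn'$ with $n'$ odd, and double-check it against the $Ss(n)\to PO(n)$ covering. The remaining algebra generators are the $x_i$ of $\SO(n)$ that persist in $Spin(n)$ (namely $0<i<n$ with $i$ not a power of $2$) and that are not further killed (so also $i\neq2^r-1$), together with the distinguished high-degree class $z_{2^{s-1}}$ replacing $x_{2^{s-1}}$, where $2^{s-1}<n\le2^s$; I would show the listed classes generate $E_\infty$ and that the only relations are $y_1^{2^r}=0$ and the squaring relations $x_i^2=x_{2i}$ (resp. $=0$ when $2i$ is no longer a generating degree). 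Naturality of $\mathrm{Sq}^*$ together with the Wu-type formula $\mathrm{Sq}^i x_j=\binom{j}{i}x_{i+j}$ in $H^*(\SO(n);\Ztwo)$, carried across $p^*$ and the edge homomorphisms, is what verifies these squares and rules out multiplicative extension problems.

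Step 2 (coproduct). Now $H^*(Ss(n);\Ztwo)$ is a connected Hopf algebra over $\Ztwo$, so Borel's structure theorem applies and $\bar\mu^*$ is determined on generators. The class $y_1$ is primitive, being of minimal positive degree (cf. the general discussion in Section~\ref{sec:coholiegroup}), so $\mu^*(y_1^k)=\sum_i\binom{k}{i}y_1^i\otimes y_1^{k-i}$ since $\mu^*$ is an algebra map. As $\pi$ is a group homomorphism, $\pi^*$ is a map of Hopf algebras, and in $H^*(Spin(n);\Ztwo)$ --- whose coproduct is inherited from $H^*(\SO(n);\Ztwo)$ via the Hopf map $p^*$, where the $x_j$ are primitive --- the surviving classes have a controlled coproduct. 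The $y_1$-correction terms in $\bar\mu^* x_i$ measure the defect of $x_i$ from lying in $\mathrm{im}\,\pi^*$, and this defect is read off from the Cartan--Leray spectral sequence: the permanent cycle representing $x_i$ has the form $x_i\otimes1+\sum_k(\text{lower }x)\otimes y_1^k$, and combining the $\SO(n)$-coproduct of $x_j$ with $\mu^*(y_1^k)$ and matching degrees yields exactly $\bar\mu^* x_{2j}=\sum_{k=1}^{j-1}\binom{j}{k}y_1^{2k}\otimes x_{2j-2k}$ and $\bar\mu^* x_{2j-1}=x_{2j-2}\otimes y_1+\sum_{k=1}^{j-1}\binom{j-1}{k}y_1^{2k}\otimes x_{2j-2k-1}$. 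I would cross-check the binomial coefficients by applying $\mathrm{Sq}^*$ to both sides ($\bar\mu^*$ is $\mathrm{Sq}$-equivariant) and by pulling back along $\pi$ and along $\bar T\hookrightarrow Ss(n)$.

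The crux is Step~1: correctly identifying the differentials in the Cartan--Leray spectral sequence --- in particular the class transgressing to $y_1^{2^r}$ and the origin of the generator $z_{2^{s-1}}$ --- and proving the displayed list of generators with relations $y_1^{2^r}=0$, $x_i^2=x_{2i}$ is complete (no hidden generators, no extension problems). This is delicate exactly because $Ss(n)=Spin(n)/\langle b\rangle$ and $\SO(n)=Spin(n)/\langle a\rangle$ are quotients by different order-$2$ central subgroups with genuinely different cohomology, so the argument must exploit the specific position of $b$ in $Z(Spin(n))$ --- equivalently the exact $2$-adic valuation $r$ of $n$ --- rather than any generic covering-space input. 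By contrast, the primitivity of $y_1$, the $\mathrm{Sq}$-computations, and the bookkeeping converting spectral-sequence representatives into the binomial coproduct formulas are routine once Step~1 is in hand.
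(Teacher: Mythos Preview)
This theorem is not proved in the paper. It appears in Appendix~\ref{chapter:app}, which merely \emph{lists} known cohomology Hopf algebras of non-simply connected simple Lie groups as reference material; the result is attributed to Ishitoya--Kono--Toda \cite{IKT} and is quoted without argument. So there is no proof in the paper to compare your proposal against.

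That said, your outline is broadly the right strategy for the original computation and aligns with how \cite{IKT} approaches it: analyze the covering $Spin(n)\to Ss(n)$ via the associated Serre/Cartan--Leray spectral sequence, identify the transgression killing $y_1^{2^r}$, and then read off the Hopf algebra structure using Steenrod operations and naturality under the various coverings among $Spin(n)$, $SO(n)$, $Ss(n)$, and $PO(n)$. The genuinely delicate point you flag --- that $Ss(n)$ and $SO(n)$ are quotients by \emph{different} central involutions, so one must track the specific position of $b$ in $Z(Spin(n))$ --- is exactly what makes the computation in \cite{IKT} nontrivial, and your sketch does not actually carry this out. In particular, the appearance and role of the exceptional generator $z_{2^{s-1}}$ (and why the index set $N$ has precisely the form stated) require careful bookkeeping that your proposal acknowledges but does not supply. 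If you want to reconstruct the proof, you should consult \cite{IKT} directly rather than the thesis.
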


\begin{appthm}\cite{Ko} \label{thm:PEsix} As an algebra,
$$H^*(PE_6; \Z_3) \cong  \Z_3[y_2, y_8]/(y_2^9,y_8^3) \otimes  \Lambda(x_1, x_3, x_7, x_9, x_{11}, x_{15}).$$  The non-trivial reduced coproducts are given by 
\begin{align*}
\bar\mu^*x_3&=y_2\otimes x_1, \quad \bar\mu^*x_7=y^3_2\otimes x_1, \quad \bar\mu^*y_8=y_2^3\otimes y_2 \\
  \bar\mu^*x_9&=y_2\otimes x_7-y_2^3\otimes x_3 +y_8\otimes x_1 + y_2^4\otimes x_1 \\
  \bar\mu^*x_{11}&=y_2\otimes x_9 -y_2^2\otimes x_7 +y_8\otimes x_3-y_2^4 \otimes x_3 + y_8y_2\otimes x_1 -y_2^5\otimes x_1 \\
  \bar\mu^*x_{15}&=y_2^3\otimes x_9 +y_8\otimes x_7 + y_2^6\otimes x_3 + y_8y_2^3\otimes x_1.
 \end{align*}
\end{appthm}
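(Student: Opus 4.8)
Write $PE_6 = E_6/\Z_3$, where $E_6$ is simply connected and $\Z_3$ is its center. The classifying map of the covering $\pi\colon E_6\to PE_6$ gives a fibration $E_6\to PE_6\to B\Z_3$, and, exactly as in the discussion of $H^3(G;\Z)$ in Section~\ref{sec:coholiegroup}, translation by a central element is homotopic to the identity, so $\Z_3$ acts trivially on $H^*(E_6;\Z_3)$. Hence the mod~$3$ Serre spectral sequence of this fibration has
\[
E_2^{*,*}\cong H^*(B\Z_3;\Z_3)\otimes H^*(E_6;\Z_3)\cong \Lambda(v_1)\otimes\Z_3[y_2]\otimes H^*(E_6;\Z_3),
\]
with $|v_1|=1$, $|y_2|=2$ and $\beta v_1=y_2$. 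The plan is to take as input the known mod~$3$ cohomology Hopf algebra $H^*(E_6;\Z_3)\cong \Z_3[x_8]/(x_8^3)\otimes\Lambda(x_3,x_7,x_9,x_{11},x_{15},x_{17})$ together with its coproduct, to run this spectral sequence, and then to recover the coproduct on $H^*(PE_6;\Z_3)$ by naturality.

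The base classes $v_1,y_2$ are permanent cycles. First I would show that each fibre generator $x_3,x_7,x_8,x_9,x_{11},x_{15}$ is a permanent cycle. For $x_3$ this follows from a degree-$3$ computation: $\pi^*\colon H^3(PE_6;\Z_3)\to H^3(E_6;\Z_3)$ is onto (since $\pi^*$ is an isomorphism on $H^3(-;\Z)$, by the same kind of argument as for $\pi^*\colon H^3(PU(n);\Z)\to H^3(SU(n);\Z)$ with $n$ odd in Section~\ref{sec:interestingcase}), so the edge homomorphism forces $x_3$ to survive; then an inductive degree-by-degree check through total degree $16$ rules out every potential differential off the remaining listed generators, each candidate target being a product of already-established permanent cycles. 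The one fibre generator that cannot survive is $x_{17}$: since $PE_6$ is a closed manifold, $H^*(PE_6;\Z_3)$ is finite-dimensional, so $y_2$ must become nilpotent, and the only differential capable of hitting a power of $y_2$ is the transgression $d_{18}\colon E_{18}^{0,17}\to E_{18}^{18,0}=\Z_3\,y_2^9$ (for every $r<18$ the group $E_r^{18-r,r-1}$ consists of permanent cycles). Thus $d_{18}(x_{17})=c\,y_2^9$ with $c\ne0$, $y_2^8\ne0=y_2^9$, and
\[
E_{19}=E_\infty\cong \Z_3[y_2]/(y_2^9)\otimes\Z_3[x_8]/(x_8^3)\otimes\Lambda(v_1,x_3,x_7,x_9,x_{11},x_{15}).
\]
A Poincar\'e-duality check — top degree $16+16+(1+3+7+9+11+15)=78=\dim PE_6$ — confirms the size, and since each generator already has the correct nilpotence degree on $E_\infty$ there are no multiplicative extension problems in the relevant range; this identifies $H^*(PE_6;\Z_3)$ with the stated algebra, with $x_1=[v_1]$, $y_2=[y_2]$, $y_8=[x_8]$ and the $x_i$ the images of the fibre classes.

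For the Hopf-algebra structure, naturality of the coproduct under $\pi^*$ — a Hopf-algebra map sending $x_1,y_2\mapsto 0$ (as $H^1(E_6)=H^2(E_6)=0$), $y_8\mapsto x_8$, $x_i\mapsto x_i$, and injective on the subalgebra generated by $x_3,x_7,x_9,x_{11},x_{15},y_8$ — reads off from the coproduct of $H^*(E_6;\Z_3)$ the part of each $\bar\mu^*x_i$ lying in that subalgebra. The classes $x_1,y_2$ are pulled back from $B\Z_3$, where they are primitive, hence are primitive in $H^*(PE_6;\Z_3)$. The remaining, $y_2$- and $y_8$-twisted, terms — including $\bar\mu^*y_8=y_2^3\otimes y_2$ and the $x_1$-terms of $\bar\mu^*x_9,\bar\mu^*x_{11},\bar\mu^*x_{15}$ — are then forced by coassociativity together with the action of the Bockstein $\beta$ (sending $x_1\mapsto y_2$) and the Steenrod operation $\mathcal{P}^1$ (sending $x_3\mapsto x_7$), propagated to the higher generators, once a generator is fixed in each degree. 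The main obstacle is the rigorous control of the spectral-sequence differentials, above all establishing $d_{18}(x_{17})\ne 0$ (equivalently, pinpointing the single place where $y_2$ stops being a polynomial generator) and the simultaneous vanishing of all lower differentials on the fibre generators; once the multiplicative structure is in hand, the coproduct is pinned down by naturality and coassociativity with only routine bookkeeping.
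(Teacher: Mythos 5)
First, a point of comparison: the thesis does not prove this statement at all --- Theorem \ref{thm:PEsix} is an appendix entry quoted verbatim from Kono \cite{Ko}, so the ``paper's proof'' is simply the citation. Your proposal is therefore an attempt to reconstruct Kono's computation, and while the overall strategy (run the mod $3$ Serre spectral sequence of $E_6\to PE_6\to B\Z_3$ with the known Hopf algebra $H^*(E_6;\Z_3)\cong \Z_3[x_8]/(x_8^3)\otimes\Lambda(x_3,x_7,x_9,x_{11},x_{15},x_{17})$ as input, then recover the coproduct by naturality) is the right one, two steps do not go through as written. The first is the identification of the unique surviving transgression. Your reasoning that a candidate differential is ruled out because ``each candidate target is a product of already-established permanent cycles'' is backwards: targets of differentials are always permanent cycles in the sense that differentials vanish \emph{on} them; that does not prevent them from being \emph{hit}. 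Concretely, the alternative pattern $d_{10}(x_9)=y_2^5$ with $x_{17}$ surviving also yields a finite-dimensional $E_\infty$ with top degree $2\cdot 4+8\cdot 2+(1+3+7+11+15+17)=78$, so your Poincar\'e-duality check cannot distinguish it from the claimed $d_{18}(x_{17})=y_2^9$. One needs a further input --- e.g.\ Browder's theorem that an even-degree element of the mod $p$ cohomology of a finite $H$-space has height a power of $p$ (height $5$ is then impossible, while $9=3^2$ is forced since there is no generator in degree $5$ or $53$), or Kudo's transgression theorem combined with the Steenrod module structure. This is precisely the kind of argument Baum--Browder and Kono supply and that your sketch omits.

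The second gap is the coproduct, which is the actual content of the theorem. Naturality under $\pi^*$ gives essentially nothing here, since every listed term of $\bar\mu^*x_i$ involves $x_1$ or $y_2$ and hence dies under $\pi^*\otimes\pi^*$ (indeed $x_3,x_7,x_9,x_{11},x_{15},x_8$ are all primitive in $H^*(E_6;\Z_3)$ for degree reasons). Your claim that the remaining terms are ``forced by coassociativity together with $\beta$ and $\mathcal{P}^1$'' is not substantiated: these tools do propagate $\bar\mu^*x_3=y_2\otimes x_1$ to $\bar\mu^*x_7=\mathcal{P}^1(y_2\otimes x_1)=y_2^3\otimes x_1$ and $\bar\mu^*y_8=\beta(y_2^3\otimes x_1)=y_2^3\otimes y_2$ (consistent with $x_7=\mathcal{P}^1x_3$, $y_8=\beta x_7$), but they do not reach $x_9$, $x_{11}$, $x_{15}$, which are not visibly Steenrod operations on lower generators ($\beta y_8=\beta^2x_7=0$, $\mathcal{P}^1x_7=2\mathcal{P}^2x_3=0$ by instability). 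Nor do they determine the seed itself: the constraint $\beta\bar\mu^*x_3=\bar\mu^*\beta x_3$ only pins down the sum of the coefficients of $x_1\otimes y_2$ and $y_2\otimes x_1$, not the individual ones, so even the asymmetric form $\bar\mu^*x_3=y_2\otimes x_1$ requires additional input (Hopf-algebra duality, the antipode, or restriction to a suitable subgroup). Finally, the multiplicative extension $y_8^3=0$ (i.e.\ that $y_8^3$ is not a nonzero class of higher filtration) is asserted rather than argued. None of this means the approach is wrong --- it is the standard one --- but the hard points are exactly the ones your sketch labels as routine.
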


\begin{appthm}\cite{IKT} \label{thm:PEseven} As an algebra,
$$
H^*(PE_7;\Z_2)\cong \Z_2[x_1, x_5, x_9]/(x_1^4, x_5^4, x_9^4) \otimes \Lambda(x_6, x_{15}, x_{17}, x_{23}, x_{27}).
$$
The non-trivial reduced coproducts are given by $\bar\mu^*x_{15}=x_5^2\otimes x_5$, $\bar\mu^*x_{23}=x_9^2\otimes x_5 + x_6\otimes x_{17}$, and $\bar\mu^*x_{27}=x_9^2\otimes x_9 + x^2_5\otimes x_{17}$.
\end{appthm}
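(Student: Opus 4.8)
The plan is to derive Theorem~\ref{thm:PEseven} from the (classical) Hopf-algebra structure of $H^*(E_7;\Z_2)$ by running the Serre spectral sequence of the fibration sequence $E_7 \toby{\pi} PE_7 \toby{\rho} B\Z_2$, in which $\rho$ classifies the double cover $E_7\to PE_7$ by the central $\Z_2=Z(E_7)$ (equivalently, $PE_7$ is the Borel construction for the free $\Z_2$-action on $E_7$ by left translation). Translation by a central element is homotopic to the identity, so $\Z_2$ acts trivially on $H^*(E_7;\Z_2)$, the local system is trivial, and $E_2^{p,q}\cong H^p(B\Z_2;\Z_2)\otimes H^q(E_7;\Z_2)=\Z_2[t]\otimes H^*(E_7;\Z_2)$ with $|t|=1$. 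As input I take $H^*(E_7;\Z_2)\cong \Z_2[x_3,x_5,x_9]/(x_3^4,x_5^4,x_9^4)\otimes\Lambda(x_{15},x_{17},x_{23},x_{27})$ together with its Steenrod-module structure ($Sq^2x_3=x_5$, $Sq^4x_5=x_9$, and so on) and its coproduct, all of which are standard.

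First I would determine the differentials. Since $E_7$ is $2$-connected, the base edge $E_2^{*,0}=\Z_2[t]$ survives through degree $2$, giving $H^1(PE_7;\Z_2)=\Z_2\langle t\rangle$ and $H^2(PE_7;\Z_2)=\Z_2\langle t^2\rangle$, consistent with $\pi_1(PE_7)=\Z_2$. The class $x_3\in E_4^{0,3}$ transgresses to $\tau(x_3)\in H^4(B\Z_2;\Z_2)=\Z_2\langle t^4\rangle$, and I claim $\tau(x_3)=t^4$: were it zero, $x_3$ would survive and $H^3(PE_7;\Z_2)$ would be $2$-dimensional, contradicting $H^3(PE_7;\Z_2)\cong H^3(\RP^3;\Z_2)=\Z_2$, which holds because of the map $\RP^3\to PE_7$ that is a weak equivalence through dimension $3$ (constructed from $\pi_1$ exactly as in the proof of Theorem~\ref{mapcon}). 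Granting $\tau(x_3)=t^4$, the ideal $(t^4)\subset\Z_2[t]$ is annihilated by $E_5$, so $E_r^{p,0}=0$ for $p\geq4$ and $r\geq5$; combining this with the vanishing of $E_2^{p,q}$ for $q=1,2$, the Kudo transgression theorem (transgression commutes with $Sq^i$, and $Sq^2t^4=\binom{4}{2}t^6=0$, hence $Sq^4Sq^2t^4=0$, etc.), and compatibility of the $d_r$ with Steenrod operations, shows that $x_5$, $x_9$ and the exterior generators $x_{15},x_{17},x_{23},x_{27}$ are all permanent cycles, and that $x_3^2\in E_4^{0,6}$ is a permanent cycle — since $d_4(x_3^2)=2x_3t^4=0$ and its only later potential target $t^7\in E_7^{7,0}$ has already been killed by $d_4(t^3x_3)$.

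Next I would read off $E_\infty$ as a ring. The $d_4$-stable factor $\bigl(\Z_2[t]\otimes\Z_2[x_3]/(x_3^4),\,d_4\bigr)$ with $d_4(x_3)=t^4$, $d_4(t)=0$ has homology $\Z_2[t]/(t^4)\otimes\Lambda(x_3^2)$ (the cycles are generated over $\Z_2[t]$ by $1$ and $x_3^2$, the boundaries are $t^4\Z_2[t]\cdot\{1,x_3^2\}$, and $(x_3^2)^2=x_3^4=0$); tensoring with the untouched factors yields $E_\infty\cong \Z_2[t]/(t^4)\otimes\Z_2[x_5,x_9]/(x_5^4,x_9^4)\otimes\Lambda(x_3^2,x_{15},x_{17},x_{23},x_{27})$. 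Writing $x_1$ for (the lift of) $t$, in degree $1$, and $x_6$ for $x_3^2$, in degree $6$, this is the associated graded of the asserted algebra. The remaining multiplicative extension problem is settled by Borel's structure theorem for connected graded-commutative Hopf algebras over $\Z_2$: $H^*(PE_7;\Z_2)$ must be a tensor product of monogenic algebras $\Z_2[x]/(x^{2^k})$, and matching Poincar\'e series forces the only relations to be $x_1^4=x_5^4=x_9^4=0$ together with the vanishing of the squares of the odd-degree generators and of $x_6$, i.e.\ $H^*(PE_7;\Z_2)\cong\Z_2[x_1,x_5,x_9]/(x_1^4,x_5^4,x_9^4)\otimes\Lambda(x_6,x_{15},x_{17},x_{23},x_{27})$.

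Finally, for the coproducts I would use that $\pi$ is a group homomorphism, so $\pi^*\bar\mu^*_{PE_7}=\bar\mu^*_{E_7}\circ\pi^*$, together with the edge homomorphisms $\pi^*(x_1)=0$, $\pi^*(x_6)=x_3^2$, and $\pi^*(x_j)=x_j$ for $j\in\{5,9,15,17,23,27\}$. The generators $x_1$ and $x_6$ are primitive for degree reasons and because $x_3^2=(x_3\otimes1+1\otimes x_3)^2$ is primitive in $H^*(E_7;\Z_2)$; likewise $\bar\mu^*x_5=\bar\mu^*x_9=0$ since this holds in $H^*(E_7;\Z_2)$ and $\pi^*\otimes\pi^*$ is injective in the relevant degrees. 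For $x_{15}$, $x_{23}$, $x_{27}$ the terms not involving $x_6$ are forced by applying $\pi^*\otimes\pi^*$ and matching the known reduced coproducts $\bar\mu^*x_{15}=x_5^2\otimes x_5$, etc., of $H^*(E_7;\Z_2)$; the surviving $x_6$-term in $\bar\mu^*x_{23}$ lies in $\ker(\pi^*\otimes\pi^*)$ and is pinned down by working one degree at a time up the Hopf algebra, using coassociativity and compatibility with the Steenrod action. The step I expect to be the genuine obstacle is the determination of the differentials — above all the identification $\tau(x_3)=t^4$, which is exactly the point where global information about $PE_7$ (not merely about $E_7$) enters; everything afterward is bookkeeping with Kudo's theorem, Borel's structure theorem, and naturality of $\bar\mu^*$.
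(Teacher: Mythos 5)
First, note that the paper does not prove Theorem~\ref{thm:PEseven} at all: like the other statements in Appendix~\ref{chapter:app}, it is quoted from \cite{IKT}, so there is no in-paper argument to compare yours against. Your route --- the Serre spectral sequence of $E_7\to PE_7\to B\Z_2$ viewed as a spectral sequence of Hopf algebras, with Kudo's transgression theorem controlling the differentials and Borel's structure theorem resolving the multiplicative extensions --- is essentially the method of \cite{BB} and \cite{IKT} themselves, and most of the bookkeeping (the computation of $E_\infty$, the Poincar\'e series argument fixing the heights of $x_1$, $x_5$, $x_9$) is sound.

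There are, however, two genuine problems. The first is the step you yourself flag as the crux, $\tau(x_3)=t^4$. You justify it by asserting $H^3(PE_7;\Z_2)\cong\Z_2$ via a map $g:\RP^3\to PE_7$ that is ``a weak equivalence through dimension $3$, constructed as in the proof of Theorem~\ref{mapcon}.'' But that obstruction-theoretic construction only produces a map inducing an isomorphism on $\pi_1$ and $\pi_2$; it gives no control over $\pi_3(g)$ or $H^3(g)$, and the paper's own conclusion that such maps are cohomology isomorphisms through degree $3$ is drawn ``by the structure of the cohomology rings'' of the target --- that is, by citing \cite{IKT}, the very statement you are trying to prove. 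As written the argument is circular. A non-circular substitute is needed: for instance, identify $\tau(x_3)$ with the restriction of the generator of $H^4(BE_7;\Z_2)$ along $BZ(E_7)\to BE_7$ and compute that restriction Lie-theoretically (the $E_7$ analogue of the computation $f^*(c_2)=\tfrac{n(n-1)}{2}\alpha^2$ in Chapter~\ref{chapter:calc}), or equivalently use the fact that $\pi^*:H^3(PE_7;\Z)\to H^3(E_7;\Z)$ is multiplication by $2$. The second problem is in the coproduct step: you claim the term $x_6\otimes x_{17}$ of $\bar\mu^*x_{23}$ ``lies in $\ker(\pi^*\otimes\pi^*)$,'' which contradicts your own (correct) identification $\pi^*(x_6)=x_3^2\neq 0$. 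In fact $\ker(\pi^*\otimes\pi^*)$ is the ideal of terms involving $x_1$, so every term actually appearing in the stated formulas is detected by $\pi^*\otimes\pi^*$ and can simply be read off from $\bar\mu^*_{E_7}$; what requires an argument is the \emph{absence} of further terms involving $x_1$, which your reasoning does not address (primitivity of $x_1$ together with coassociativity, or the Hopf-algebra structure of the spectral sequence, handles it).
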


\addcontentsline{toc}{chapter}{Bibliography}


\end{document}